\documentclass[a4paper,USenglish, cleveref, autoref, nolineno]{socg-lipics-v2019}
%This is a template for producing LIPIcs articles. 
%See lipics-manual.pdf for further information.
%for A4 paper format use option "a4paper", for US-letter use option "letterpaper"
%for british hyphenation rules use option "UKenglish", for american hyphenation rules use option "USenglish"
%for section-numbered lemmas etc., use "numberwithinsect"
%for enabling cleveref support, use "cleveref"
%for enabling cleveref support, use "autoref"

%\usepackage{natbib}
%\setcitestyle{square}

\makeatletter
\let\cite\relax
\DeclareRobustCommand{\cite}{%
  \let\new@cite@pre\@gobble
  \@ifnextchar[\new@cite{\@citex[]}}
\def\new@cite[#1]{\@ifnextchar[{\new@citea{#1}}{\@citex[#1]}}
\def\new@citea#1{\def\new@cite@pre{#1}\@citex}
\def\@cite#1#2{[{\new@cite@pre\space#1\if\relax\detokenize{#2}\relax\else, #2\fi}]}
\makeatother
\usepackage[all]{xy}

%\graphicspath{{./graphics/}}%helpful if your graphic files are in another directory

\bibliographystyle{plainurl}% the mandatory bibstyle

\title{Homotopy Reconstruction via the Cech Complex and the Vietoris-Rips Complex} %TODO Please add

\titlerunning{Homotopy Reconstruction via Cech Complex and Vietoris-Rips Complex}%optional, please use if title is longer than one line

\author{Jisu Kim}{Inria Saclay -- \^{I}le-de-France, Palaiseau, France, \url{http://pages.saclay.inria.fr/jisu.kim/} }{jisu.kim@inria.fr}{}{Partially supported by Samsung Scholarship}%TODO mandatory, please use full name; only 1 author per \author macro; first two parameters are mandatory, other parameters can be empty. Please provide at least the name of the affiliation and the country. The full address is optional

\author{Jaehyeok Shin}{Department of Statistics \& Data Science, Carnegie Mellon University, Pittsburgh, USA \url{http://www.stat.cmu.edu/~jaehyeos/}}{shinjaehyeok@cmu.edu}{}{}

\author{Fr\'{e}d\'{e}ric Chazal}{Inria Saclay -- \^{I}le-de-France, Palaiseau, France, \url{https://geometrica.saclay.inria.fr/team/Fred.Chazal/}}{frederic.chazal@inria.fr}{}{}

\author{Alessandro Rinaldo}{Department of Statistics \& Data Science, Carnegie Mellon University, Pittsburgh, USA, \url{http://www.stat.cmu.edu/~arinaldo/}}{arinaldo@cmu.edu}{}{}

\author{Larry Wasserman}{\{Department of Statistics \& Data Science, Machine Learning Department\}, Carnegie Mellon University, Pittsburgh, USA, \url{http://www.stat.cmu.edu/~larry/}}{larry@stat.cmu.edu}{}{}

\authorrunning{J. Kim, J. Shin, F. Chazal, A. Rinaldo, and L. Wasserman}%TODO mandatory. First: Use abbreviated first/middle names. Second (only in severe cases): Use first author plus 'et al.'

\Copyright{Jisu Kim, Jaehyeok Shin, Fr\'{e}d\'{e}ric Chazal, Alessandro Rinaldo, and Larry Wasserman}%TODO mandatory, please use full first names. LIPIcs license is "CC-BY";  http://creativecommons.org/licenses/by/3.0/

%\ccsdesc[100]{General and reference~General literature}
%\ccsdesc[100]{General and reference}%TODO mandatory: Please choose ACM 2012 classifications from https://dl.acm.org/ccs/ccs_flat.cfm 

\begin{CCSXML}
	<ccs2012>
	<concept>
	<concept_id>10002950.10003741.10003742.10003744</concept_id>
	<concept_desc>Mathematics of computing~Algebraic topology</concept_desc>
	<concept_significance>500</concept_significance>
	</concept>
	<concept>
	<concept_id>10003752.10010061.10010063</concept_id>
	<concept_desc>Theory of computation~Computational geometry</concept_desc>
	<concept_significance>500</concept_significance>
	</concept>
	</ccs2012>
\end{CCSXML}

\ccsdesc[500]{Mathematics of computing~Algebraic topology}
\ccsdesc[500]{Theory of computation~Computational geometry}

\keywords{
Computational topology, Homotopy reconstruction, Homotopy Equivalence, Vietoris-Rips complex, \v{C}ech complex, Reach, $\mu$-reach, Nerve Theorem, Offset, Double offset, Consistency
}%TODO mandatory; please add comma-separated list of keywords

\category{}%optional, e.g. invited paper

\relatedversion{The full version of the paper is available at \url{https://arxiv.org/abs/1903.06955} and \url{https://hal.archives-ouvertes.fr/hal-02425686}}%optional, e.g. full version hosted on arXiv, HAL, or other respository/website
%\relatedversion{A full version of the paper is available at \url{...}.}

\supplement{The code is available at \url{https://github.com/jisuk1/nerveshape}}%optional, e.g. related research data, source code, ... hosted on a repository like zenodo, figshare, GitHub, ...

%\funding{(Optional) general funding statement \dots}%optional, to capture a funding statement, which applies to all authors. Please enter author specific funding statements as fifth argument of the \author macro.

\acknowledgements{We want to thank Andr{\'e} Lieutier and Henry Adams for the thoughtful discussions and comments.}%optional

\nolinenumbers %uncomment to disable line numbering

\hideLIPIcs  %uncomment to remove references to LIPIcs series (logo, DOI, ...), e.g. when preparing a pre-final version to be uploaded to arXiv or another public repository

%Editor-only macros:: begin (do not touch as author)%%%%%%%%%%%%%%%%%%%%%%%%%%%%%%%%%%
\EventEditors{Sergio Cabello and Danny Z. Chen}
\EventNoEds{2}
\EventLongTitle{36th International Symposium on Computational Geometry (SoCG 2020)}
\EventShortTitle{SoCG 2020}
\EventAcronym{SoCG}
\EventYear{2020}
\EventDate{June 23--26, 2020}
\EventLocation{Z\"{u}rich, Switzerland}
\EventLogo{socg-logo}
\SeriesVolume{164}
\ArticleNo{54}
%%%%%%%%%%%%%%%%%%%%%%%%%%%%%%%%%%%%%%%%%%%%%%%%%%%%%%

\begin{document}

\maketitle

%TODO mandatory: add short abstract of the document

\begin{abstract}
	We derive conditions under which the reconstruction of a target space is topologically correct via the \v{C}ech complex or the Vietoris-Rips complex obtained from possibly noisy point cloud data. We provide two novel theoretical results. First, we describe sufficient conditions under which any non-empty intersection of finitely many Euclidean balls intersected with a positive reach set is contractible, so that the Nerve theorem applies for the restricted \v{C}ech complex. Second, we demonstrate the homotopy equivalence of a positive $\mu$-reach set and its offsets. Applying these results to the restricted \v{C}ech complex and using the interleaving relations with the \v{C}ech complex (or the Vietoris-Rips complex), we formulate conditions guaranteeing that the target space is homotopy equivalent to the \v{C}ech complex (or the Vietoris-Rips complex), in terms of the $\mu$-reach. Our results sharpen existing results.
\end{abstract}

\section{Introduction}

A fundamental task in topological data analysis, geometric inference, and computational geometry is that of estimating the topology of a set $\mathbb{X} \subset \mathbb{R}^d$ based on a finite collection of data points $\mathcal{X}$ that lie in it or in its proximity. This problem naturally occurs in many applications area, such as cosmology \cite{StarckMDLQS2005}, time series data \cite{RobinsMB2000}, machine learning \cite{Dey2006}, and so on.

A natural way to approximate the target space  is to consider an $r$-offset of the data points, that is, to take the union of the open balls of radius $r>0$ centered at the data points. Under appropriate conditions, by the Nerve theorem~\cite{Alexandroff1928} this offset is topologically equivalent to the target space $\mathbb{X}$ via the \v{C}ech complex \cite{Bjorner1996, Hatcher2002}. For computational reasons, the Alpha shape complex may be used instead, which is homotopy equivalent to the \v{C}ech complex \cite{EdelsbrunnerM1994}. To further speed up calculations, and in particular if the data are high dimensional, the Vietoris-Rips complex may be preferable as only the pairwise distances between the data points are used.

To guarantee that the topological approximation based on the data points recovers correctly the homotopy type of $\mathbb{X}$, it is necessary that the data points are dense and close to the target space, and that the radius parameter used for constructing the \v{C}ech complex or the Vietoris-Rips complex be of appropriate size.

The conditions require the offset $r$ to be lower bounded by a constant times the Hausdorff distance between the target space and the data points, and upper bounded by another constant times a measure of the size of the topological features of the target space. Originally, the topological feature size was described as a sufficiently small number, for the Vietoris-Rips complex in \cite{Hausmann1995,Latschev2001}. Then, the topological feature size was expressed in terms of the reach of $\mathbb{X}$: see, for the \v{C}ech complex, in  \cite{ChazalL2008,NiyogiSW2008}. Subsequently, the notion of $\mu$-reach was put forward to allow for more general target spaces: the condition for the \v{C}ech complex is studied in \cite{AttaliLS2013, ChazalCL2009}, and the condition for the Vietoris-Rips complex is studied in \cite{AttaliLS2013}. Also, the radii parameters are allowed to vary across the data points in \cite{ChazalL2008}. For the case when the target space equals the data points, the conditions for the \v{C}ech complex or the Vietoris-Rips complex is studied in \cite{AdamaszekAF2018,AdamsM2019}. When the offset $r$ is beyond the topological feature size so that the homotopy equivalence does not hold, the homotopy type of the Vietoris-Rips complex was studied for the circle in \cite{AdamaszekA2017}.

In this paper, we derive  conditions under which the homotopy type of the target space is correctly recovered via the \v{C}ech complex or the Vietoris-Rips complex, in terms of the Hausdorff distance and the $\mu$-reach of the target space. To tackle this problem, we provide two novel theoretical results. First, we describe sufficient conditions under which any non-empty intersection of finitely many Euclidean balls intersected with a set of positive reach  is contractible, so that the Nerve theorem applies for the restricted \v{C}ech complex. Second, we demonstrate the homotopy equivalence of a positive $\mu$-reach set and its offsets. These results are new and of independent interest. 

Overall, our new bounds offer significant improvements over the previous results in \cite{NiyogiSW2008,AttaliLS2013} and are sharp: in particular, they achieve the optimal upper bound for the parameter of the \v{C}ech complex and the Vietoris-Rips complex under a positive reach condition. We will provide a detailed comparison of our results with existing ones in Section \ref{sec:conclusion}.

\section{Background}
\label{sec:background}

This section provides a brief introduction to simplicial complex, Nerve theorem, reach, and $\mu$-reach. We refer 
% to Appendix \ref{app:background_more} 
 to Appendix A
 and \cite{Hatcher2002, EdelsbrunnerH2010, Federer1959, AamariKCMRW2019, ChazalCL2009, ChazalO2008, Lee2013, Eckhoff1993} for further definitions and details.
Throughout the paper, we let $\mathbb{X}$ and $\mathcal{X}$ be subsets of $\mathbb{R}^{d}$. For $x,y\in\mathbb{R}^{d}$, we let $d(x, y) : = \| x- y \|$ be the Euclidean distance with $\| \cdot \|$ being the Euclidean norm. Let $d(x,\mathbb{X})=\inf_{y\in \mathbb{X}}d(x,y)$ denotes the distance from a point $x$ to a set $\mathbb{X}$, and let $d_{\mathbb{X}}:\mathbb{R}^{d}\to\mathbb{R}$ be the distance function $x\mapsto d(x,\mathbb{X})$. For $r>0$, we let $\mathbb{B}_{\mathbb{X}}(x,r) := \left\{y \in \mathbb{X} : d(x, y) < r \right\}$ be the open restricted ball centered at $x \in \mathbb{R}^d$ of radius $r>0$. For $r>0$, we let $\mathbb{X}^{r}$ be an $r$-offset  of a set $\mathbb{X}$ defined by  the collection of all points that are within $r$ distance to $\mathbb{X}$, that is, $\mathbb{X}^{r}:=\bigcup_{x\in\mathbb{X}}\mathbb{B}_{\mathbb{R}^{d}}(x,r)$. Finally, for two sets $X,Y\subset\mathbb{R}^{d}$, we let $d_{H}(X,Y):=\inf\{r>0:X\subset Y^{r}\text{ and }Y\subset X^{r}\}$ be the Hausdorff distance between $X$ and $Y$.

\subsection{Simplicial complex and Nerve theorem}

A natural way to approximate the target space $\mathbb{X}$ with the data points $\mathcal{X}$ is to take the union of open balls centered at the data points. In detail, let $r = \{r_x, x \in \mathcal{X} \} \in \mathbb{R}^{\mathcal{X}}_{+}$ be pre-specified radii 
and consider the union of restricted balls
\begin{equation}
	\label{eq:background_unionballs}
	\bigcup_{x\in\mathcal{X}} \mathbb{B}_{\mathbb{X}}(x,r_{x}).
\end{equation}
%where $\mathbb{B}_{\mathbb{X}}(x,r)$ is the open {\it restricted ball} centered at $x \in \mathbb{R}^d$ of radius $r>0$, defined as 
%\begin{equation}\label{eq:restricted.ball}
%	\mathbb{B}_{\mathbb{X}}(x,r) := \left\{y \in \mathbb{X} : d(x, y) < r \right\},
%\end{equation}
%where $d(x, y) : = \| x- y \|$, with $\| \cdot \|$ denoting the Euclidean norm.
Though we allow for the points in $\mathcal{X}$ to lie outside $\mathbb{X}$, we will assume throughout that $\mathbb{B}_{\mathbb{X}}(x,r_{x}) \neq \emptyset$ for all $x \in \mathcal{X}$.

To infer the topological properties of the union of balls in \eqref{eq:background_unionballs}, we rely on a simplicial complex, which can be seen as a high dimensional generalization of a graph. Given a set $V$, an \textit{(abstract) simplicial complex} is a collection $K$ of finite subsets of $V$ such that $\alpha\in K$ and $\beta\subset\alpha$ implies $\beta\in K$. Each set $\alpha\in K$ is called its \textit{simplex}, and each element of $\alpha$ is called a \textit{vertex} of $\alpha$.

A simplicial complex encoding the topological properties of the union of balls in \eqref{eq:background_unionballs} is the \v{C}ech complex. 

\begin{definition}[\v{C}ech complex]
	\label{def:background_cech}
	Let
	%$\mathcal{X} \subset \mathbb{X}$
	$\mathcal{X}$, $\mathbb{X}$ be two subsets
	and $r \in \mathbb{R}^{\mathcal{X}}_{+}$. The (weighted) \emph{\v{C}ech complex} $\textrm{\v{C}ech}_{\mathbb{X}}(\mathcal{X},r)$ is the simplicial complex 
	\begin{equation}
		\textrm{\v{C}ech}_{\mathbb{X}}(\mathcal{X},r) :=\left\{\sigma=\{x_{1},\dots,x_{k}\}\subset\mathcal{X} : \bigcap\limits_{j=1}^k \mathbb{B}_{\mathbb{X}}(x_{j},r_{x_{j}}) \neq \emptyset\right\}.\label{eq:background_cech}
	\end{equation}
	%	We will drop the subscript $\mathbb{X}$ when it is clear from the context. \textcolor{red}{We don't really most of the times...}
\end{definition}

Computing the \v{C}ech complex requires computing all possible intersections of the balls. To further speed up the calculation, we only check the pairwise distances between the data points and instead build the Vietoris-Rips complex.

\begin{definition}[Vietoris-Rips complex] \label{def:background_rips}
	Let
	%$\mathcal{X} \subset \mathbb{X}$
	$\mathcal{X}$, $\mathbb{X}$ be two subsets
	and $r \in \mathbb{R}^{\mathcal{X}}_{+}$. 
	The weighted \emph{Vietoris-Rips complex} $\textrm{Rips}(\mathcal{X},r)$ is the simplicial complex defined as
	\begin{equation}
		\textrm{Rips}(\mathcal{X},r):=\left\{ \sigma\subset\mathcal{X}:d(x_{i},x_{j})<r_{x_{i}}+r_{x_{j}},\text{ for all } x_{i},x_{j}\in\sigma\right\} .\label{eq:background_rips}
	\end{equation}
\end{definition}

The ambient \v{C}ech complex in \eqref{eq:background_cech} (that is, $\mathbb{X}=\mathbb{R}^{d}$) and the Vietoris-Rips complex in \eqref{eq:background_rips} have the following interleaving relationship when all radii are equal (e.g., see Theorem 2.5 in \cite{deSilvaG2007}). That is, when $r_{x}=r > 0$ for all $x\in\mathcal{X}$, then
\begin{equation}
	\textrm{\v{C}ech}_{\mathbb{R}^{d}}(\mathcal{X},r)\subset\textrm{Rips}(\mathcal{X},r)\subset\textrm{\v{C}ech}_{\mathbb{R}^{d}}\left(\mathcal{X},\sqrt{\frac{2d}{d+1}}r\right).\label{eq:background_interleaving_rips_ambientcech}
\end{equation}
This interleaving relation is extended to the case of different radii in Lemma~\ref{lem:homotopy_interleaving_rips_ambientcech}.

The union of balls in \eqref{eq:background_unionballs} and the \v{C}ech complex in \eqref{eq:background_cech} are homotopy equivalent  under appropriate conditions. This remarkable result is precisely the renowned nerve theorem \cite{Alexandroff1928, Bjorner1996, Hatcher2002},  which we recall next. We first introduce the \emph{nerve}, which is a more abstract notion of the \v{C}ech complex.

%Under appropriate conditions, the homotopy type of the resulting \v{C}ech complex in fact coincides with the homotopy type of $\mathbb{X}$ itself. This remarkable result is precisely the renowned nerve theorem \cite{Bjorner1996, Hatcher2002},  which we recall next. 

%The relation between the homotopy type of the union of balls over the point cloud given above in \eqref{eq:background_unionballs} and the homotopy type of the corresponding \v{C}ech complex is fully characterized by the Nerve Theorem, a renown result which we recall next.

%we will resit is convenient to consider a more abstract the notion \v{C}ech complex. The \v{C}ech complex can be thought as we set all the balls $\{\mathbb{B}_{\mathbb{X}}(x_{\alpha},r_{x_{\alpha}})\}$ as its vertex set, and add a simplex whenever corresponding balls $\{\mathbb{B}_{\mathbb{X}}(x_{j},r_{x_{j}})\}_{j=1}^{k}$ have a nonempty intersection $\bigcap\limits_{j=1}^k \mathbb{B}_{\mathbb{X}}(x_{j},r_{x_{j}}) \neq \emptyset$. When a ball is generalized to an open set, {\v C}ech complex is generalized to setting a collection of open sets as its vertex set and adding a simplex whenever corresponding open sets have a nonempty intersection. For an open cover $\mathcal{U}$, we refer to this generalization of the \v{C}ech complex as the \emph{nerve} of $\mathcal{U}$.

\begin{definition}[Nerve] Let $\mathcal{U}=\{U_{\alpha}\}$ be an
	open cover of a given topological space $\mathbb{X}$. The nerve of $\mathcal{U}$,
	denoted by $\mathcal{N}(\mathcal{U})$, is the abstract simplicial complex defined
	as 
	\[
	\mathcal{N}(\mathcal{U})=\left\{ \{U_{1},\ldots,U_{k}\}\subset\mathcal{U}:\,\bigcap_{j=1}^{k}U_{j}\neq\emptyset\right\} .
	\]
	
\end{definition}

The nerve theorem prescribes conditions under which  the nerve of an open cover of $\mathbb{X}$ is homotopy equivalent to $\mathbb{X}$ itself.%As a result, the homotopy type of the space can be recovered from the homotopy type of the nerve complex of any open covers satisfying the conditions in the following Nerve theorem.

\begin{theorem}[Nerve theorem]
	
	\label{thm:background_nerve}
	
	Let $\mathbb{X}$ be a paracompact space and $\mathcal{U}$ be an open cover of $\mathbb{X}$.
	If every nonempty intersection of finitely many sets in $\mathcal{U}$
	is contractible, then $\mathbb{X}$ is homotopy equivalent to the
	nerve $\mathcal{N}(\mathcal{U})$.
	
\end{theorem}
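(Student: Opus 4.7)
The plan is to construct an intermediate ``blow-up'' space $B$ that admits natural projections to both $\mathbb{X}$ and $|\mathcal{N}(\mathcal{U})|$, and to show that each projection is a homotopy equivalence; the theorem then follows by composition. Concretely, for each simplex $\sigma = [U_{\alpha_0},\ldots,U_{\alpha_k}]$ of $\mathcal{N}(\mathcal{U})$ I write $U_\sigma := \bigcap_{i=0}^{k} U_{\alpha_i}$, and set
\[
B := \left(\coprod_{\sigma \in \mathcal{N}(\mathcal{U})} U_\sigma \times |\sigma|\right) \big/ \sim ,
\]
where $(x,t) \in U_\sigma \times |\sigma|$ is identified with its image in $U_\tau \times |\tau|$ whenever $\tau$ is a face of $\sigma$ and $t \in |\tau| \subset |\sigma|$. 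The first-coordinate projection yields $p_{\mathbb{X}} : B \to \mathbb{X}$ and the second yields $p_{\mathcal{N}} : B \to |\mathcal{N}(\mathcal{U})|$.

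For the projection $p_{\mathbb{X}}$, the fiber over $x \in \mathbb{X}$ is the geometric realization of the subcomplex of $\mathcal{N}(\mathcal{U})$ spanned by those indices $\alpha$ with $x \in U_\alpha$, which is a nonempty simplex and hence contractible. Using paracompactness of $\mathbb{X}$ I would choose a locally finite partition of unity $\{\varphi_\alpha\}$ subordinate to $\mathcal{U}$ and define a section $s(x) := (x, \sum_\alpha \varphi_\alpha(x)\, v_\alpha)$; then $p_{\mathbb{X}} \circ s = \mathrm{id}_{\mathbb{X}}$, and a fiberwise straight-line homotopy in the simplicial coordinate proves $s \circ p_{\mathbb{X}} \simeq \mathrm{id}_B$.

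For $p_{\mathcal{N}}$ the contractibility hypothesis on each $U_\sigma$ is precisely what is needed. I would induct over the skeleta of $|\mathcal{N}(\mathcal{U})|$: the preimage of the open interior of a simplex $\sigma$ is the product $U_\sigma \times \mathrm{int}\,|\sigma|$, which deformation retracts onto $\mathrm{int}\,|\sigma|$ because $U_\sigma$ is contractible. The main obstacle is propagating these local equivalences across the CW-attaching maps of the nerve: one must verify that the pairs $(B|_{\mathrm{sk}_{n+1}}, B|_{\mathrm{sk}_{n}})$ are cofibration pairs and invoke a gluing lemma for homotopy equivalences of pushouts, so that the pointwise retractions assemble into a global equivalence $B \simeq |\mathcal{N}(\mathcal{U})|$. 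Paracompactness and the openness of the cover are exactly what make the gluing lemma applicable without any extra hypotheses on $\mathbb{X}$, and combining the two equivalences $\mathbb{X} \simeq B \simeq |\mathcal{N}(\mathcal{U})|$ yields the conclusion.
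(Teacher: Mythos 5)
The paper does not prove this theorem: it is stated as background and cited to Bj\"{o}rner and Hatcher, with no argument supplied. Your sketch is a correct outline of the proof given in Hatcher's Appendix 4.G via the Mayer--Vietoris blowup (homotopy colimit) construction; it is therefore not comparable to a proof ``in'' the paper, but it does match one of the references the paper relies on. The three ingredients you use are the right ones: the blowup $B$, the section $s$ built from a locally finite partition of unity subordinate to $\mathcal{U}$ (this is exactly where paracompactness enters), and the skeleton-by-skeleton comparison of $B$ with the nerve using that each $U_\sigma$ is contractible. The straight-line homotopy $s \circ p_{\mathbb{X}} \simeq \mathrm{id}_B$ is well-defined because if $(x,t)$ has $t \in |\sigma|$ with $x \in U_\sigma$ and $s(x) \in |\tau|$ with $x \in U_\tau$, then $x \in U_{\sigma \cup \tau}$ and the segment from $t$ to $s(x)$ lies in $|\sigma \cup \tau|$.

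One point is stated more strongly than the hypothesis warrants: you claim $U_\sigma \times \mathrm{int}\,|\sigma|$ ``deformation retracts'' onto $\mathrm{int}\,|\sigma|$ ``because $U_\sigma$ is contractible.'' Contractibility of $U_\sigma$ gives only that the projection $U_\sigma \times |\sigma| \to |\sigma|$ is a homotopy equivalence, not automatically a deformation retraction, unless one assumes something extra (e.g.\ that $U_\sigma$ admits a strong deformation retraction to a point, which follows if $U_\sigma$ has the homotopy type of a CW complex but not in general). This does not affect the conclusion, since the gluing argument only needs a pointwise homotopy equivalence of diagrams (plus the cofibration condition you flag) to deduce an equivalence $B \simeq |\mathcal{N}(\mathcal{U})|$; but the phrasing in terms of deformation retractions slightly overstates what the hypotheses deliver. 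Similarly, the closing remark that paracompactness and openness are ``exactly'' what make the gluing lemma work for $p_{\mathcal{N}}$ conflates two roles: paracompactness is needed only on the $p_{\mathbb{X}}$ side (for the partition of unity); on the $p_{\mathcal{N}}$ side the relevant input is that skeleton inclusions are cofibrations and the levelwise maps are equivalences, which is independent of paracompactness of $\mathbb{X}$.
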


%\textcolor{red}{It may be confusing to use $\mathbb{X}$ in the definition of nerve and the nerve theorem as well. When applying the nerve theorem to our problem $\mathbb{X}$ (in the nerve theorem statement) is in fact $\mathcal{X}$!}

Thus, in order to conclude that the $\textrm{\v{C}ech}_{\mathbb{X}}(\mathcal{X},r)$ complex in \eqref{eq:background_cech} has the same homotopy type as $\mathbb{X}$, it is enough to show, by the nerve theorem, that the union of restricted balls $\bigcup_{x\in\mathcal{X}} \mathbb{B}_{\mathbb{X}}(x,r_{x})$ covers the target space $\mathbb{X}$ and that any arbitrary non-empty intersection of restricted balls is contractible. The difficulty in establishing the latter, more technical, condition lies in the fact that it is not clear a priori what properties of $\mathbb{X}$ will imply it. If $\mathbb{X}$ is a convex set, then the nerve theorem applies straightforwardly. But for more general spaces, such as smooth lower-dimensional manifolds, it is not obvious how contractibility may be guaranteed. 
One of the main results of this paper, given below in Theorem~\ref{thm:nerve_contractible}, asserts that if $\mathbb{X}$ has positive reach and the radii of the restricted balls are small compared to the reach, then any non-empty intersection of restricted balls is contractible.

%In the next subsection, we review some basic concepts of the reach which is used in the note. 

%\textcolor{red}{The next paragraph and corollary should be moved elsewhere: I am not sure why we need to discuss $\textrm{\v{C}ech}_{\mathbb{R}^d}(\mathcal{X},r)$ when our results are about $\textrm{\v{C}ech}_{\mathbb{X}}(\mathcal{X},r)$. This is probably confusing. We should stick with just $\textrm{\v{C}ech}_{\mathbb{X}}(\mathcal{X},r)$, at least for now. }
%Now consider the case where $\mathbb{X}$ is a subset $\mathcal{X}$ of $\mathbb{R}^d$ (hence paracompct), and the open cover consists of open ambient balls. Since an open ambient ball is convex, any nonempty intersection of finitely many open ambient balls is convex, and hence contractible. Thus, the Nerve Theorem applies, and the homotopy type of the union of balls in \eqref{eq:background_unionballs} is identical to the homotopy type of the corresponding \v{C}ech complex.
%\begin{corollary}[Nerve Theorem on ambient Balls]
%	\label{cor:background_nerve_cech_ambient}
%	Let $\mathcal{X}\subset\mathbb{R}^{d}$ and $r \in \mathbb{R}^{\mathcal{X}}_{+}$. Then the union of ambient balls $\bigcup\limits_{x\in\mathcal{X}} \mathbb{B}_{\mathbb{R}^{d}}(x,r_{x})$ is homotopy equivalent to the ambient \v{C}ech complex $\textrm{\v{C}ech}_{\mathbb{R}^{d}}(\mathcal{X},r)$.
%\end{corollary}

\subsection{The reach}
\label{subsec:background_reach}
First introduced by \cite{Federer1959}, the reach is a quantity expressing the degree of geometric regularity of a set. In detail, given a closed subset $\mathbb{X}\subset\mathbb{R}^{d}$,
the medial axis of $\mathbb{X}$, denoted by ${\rm Med}(\mathbb{X})$, is the subset
of $\mathbb{R}^{d}$ consisting of all the points that have at least two
nearest neighbors in $\mathbb{X}$. Formally, 
\begin{equation}
	{\rm Med}(\mathbb{X})=\left\{ x\in\mathbb{R}^{d} \setminus \mathbb{X} \colon \text{there exist } q_{1}\neq q_{2}\in \mathbb{X},||q_{1}-x||=||q_{2}-x||=d(x,\mathbb{X})\right\} ,\label{eq:background_medialaxis}
\end{equation}
The reach of $\mathbb{X}$ is then defined as the minimal distance from $\mathbb{X}$ to ${\rm Med}(\mathbb{X})$. %See Figure \ref{fig:background_reach_medialaxis} for an example.
\begin{definition}
	The reach of a closed subset $\mathbb{X}\subset\mathbb{R}^{d}$ is defined
	as 
	\begin{align}
		\tau_{\mathbb{X}}=\inf_{q\in \mathbb{X}}d\left(q,{\rm Med}(\mathbb{X})\right) = \inf_{q\in \mathbb{X},x\in \mathrm{Med}(\mathbb{X})}||q-x||.\label{eq:background_reach_medial_axis}
	\end{align}
\end{definition}

%\begin{figure}
%	\centering
%	\includegraphics{figs/background_reach_medialaxis.pdf}
%	\caption{
%		The reach $\tau_{\mathbb{X}}$ of a closed subset $\mathbb{X}$ is the minimal distance from $\mathbb{X}$ to its media axis ${\rm Med}(\mathbb{X})$.}\label{fig:background_reach_medialaxis}
%\end{figure}
Some authors \cite[see, e.g.][]{NiyogiSW2008, SingerW2012} refer to $\tau_\mathbb{X}^{-1}$ as the \emph{condition number}.
From the definition of the medial axis in \eqref{eq:background_medialaxis}, the projection $\pi_{\mathbb{X}}(x) = \arg\min_{p \in \mathbb{X}} \left\Vert p-x \right\Vert$ onto $\mathbb{X}$ is well defined (i.e. unique) outside $Med(\mathbb{X})$. 
In fact, the reach is the largest distance $\rho \geq 0$ such that $\pi_{\mathbb{X}}$ is well defined on the $\rho$-offset $\left\{ x \in \mathbb{R}^{d} : d(x,\mathbb{X}) < \rho \right\}$. 
Hence, assuming the set $\mathbb{X}$ has positive reach can be seen as a generalization or weakening of convexity, since a set $\mathbb{X} \subset \mathbb{R}^{d}$ is convex if and only if $\tau_{\mathbb{X}} = \infty$.
In the next section, we describe how to use the reach condition to ensure that the union of restricted balls is contractible, which in turn allows us to apply the Nerve theorem to recover the homotopy type of the target space $\mathbb{X}$.

%\subsection{\texorpdfstring{$\mu$}{mu}-reach}
%In the previous section, we proved that any finite intersection of the subspace is contractible if the target space $\mathbb{X}$ has a positive reach and radii of balls are small enough compared to the reach. However, 

For a non-smooth target space, the reach of the space can be zero. In this case, we can deploy a more general notion of feature size, called $\mu$-reach, introduced by \cite{ChazalCL2009}. For any point $x \in \mathbb{R}^d \setminus \mathbb{X}$, let $\Gamma_{\mathbb{X}}(x)$ be the set of points in $\mathbb{X}$ closest to $x$. Let $\Theta_{\mathbb{X}}(x)$ be the center of the unique smallest closed ball enclosing $\Gamma_{\mathbb{X}}(x)$.
Then, for $x \in \mathbb{R}^d \setminus \mathbb{X}$, the generalized gradient of the distance function $d_{\mathbb{X}}$ is defined as
\begin{equation}
	\nabla_{\mathbb{X}}(x) = \frac{x - \Theta_{\mathbb{X}}(x)}{d_{\mathbb{X}}(x)},	\label{eq:background_distancefunction_gradient}
\end{equation}
and set $\nabla_{\mathbb{X}}(x)=0$ for $x\in\mathbb{X}$. See Figure \ref{fig:background_reach_gradient} for a graphical illustration.
Then, for $\mu\in(0,1]$, the $\mu$-medial axis of $\mathbb{X}$ is defined as 
\begin{equation}
	{\rm Med}_{\mu}(\mathbb{X}) = \left\{x \in \mathbb{R}^d \setminus \mathbb{X}: \|\nabla_{\mathbb{X}}(x)\| < \mu  \right\},
\end{equation}
Finally, the $\mu$-reach of $\mathbb{X}$ is defined as the minimal distance from $\mathbb{X}$ to ${\rm Med}_{\mu}(\mathbb{X})$.
\begin{definition}
	The $\mu$-reach of a closed subset $\mathbb{X}\subset\mathbb{R}^{d}$ is defined
	as 
	\begin{align}
		\tau_{\mathbb{X}}^\mu=\inf_{q\in \mathbb{X}}d\left(q,{\rm Med}_{\mu}(\mathbb{X})\right) = \inf_{q\in \mathbb{X},x\in \mathrm{Med}_{\mu}(\mathbb{X})}||q-x||.\label{eq:background_mu_reach}
	\end{align}
\end{definition}
Note that if $\mu = 1$, the corresponding $\mu$-reach equals to the reach of $\mathbb{X}$.

\begin{figure}
	\centering
	\includegraphics{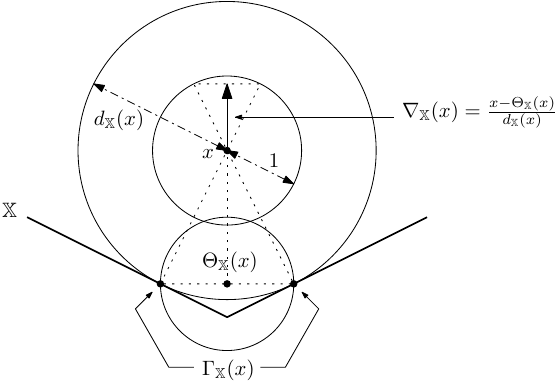}
	\caption{The graphical illustration for the generalized gradient $\nabla_{\mathbb{X}}(x)$, from \cite{ChazalCLT2007, ChazalCL2009}.}
	\label{fig:background_reach_gradient}
\end{figure}

Two offsets $\mathbb{X}^{r}$ and $\mathbb{X}^{s}$ of the target space $\mathbb{X}$ are topologically equivalent if they are free of critical points of the distance function $d_{\mathbb{X}}$ in the sense specified below (see e.g., \cite{Grove1993} or Proposition 3.4 in \cite{ChazalL2005}).
\begin{lemma}[Isotopy Lemma]
	\label{lem:background_offset_isotopy}
	Let $\mathbb{X}\subset\mathbb{R}^{d}$ be a set, and for $r,s>0$
	with $s\leq r$, let $\mathbb{X}^{r}$ and $\mathbb{X}^{s}$ be two
	offsets of $\mathbb{X}$. Suppose the distance function $d_{\mathbb{X}}$
	does not have a critical point on $\overline{\mathbb{X}^{r}}\backslash\mathbb{X}^{s}$,
	that is, $\nabla_{\mathbb{X}}(x)\neq0$ for all $x\in\overline{\mathbb{X}^{r}}\backslash\mathbb{X}^{s}$
	where $\nabla_{\mathbb{X}}$ is from \eqref{eq:background_distancefunction_gradient}. Then $\mathbb{X}^{r}$ and
	$\mathbb{X}^{s}$ are homeomorphic.
\end{lemma}
Note that requiring $\nabla_{\mathbb{X}}(x)\neq0$ for all $x\in\overline{\mathbb{X}^{r}}\backslash\mathbb{X}^{s}$ is weaker than the $\mu$-reach condition $\tau_{\mathbb{X}}^{\mu}>r$ for any $\mu\in(0,1]$. One of the main results of the paper, given in Theorem~\ref{thm:defretract_mureach}, generalizes this topological relation to the relation between the target space and its offset under a stronger positive $\mu$-reach condition.

\subsection{Restricted versus Ambient balls}

It is important to point out that the nerve theorem needs not to be applied to the \v{C}ech complex built using ambient, as opposed, to restricted balls. In particular, the homotopy type of $\mathbb{X}$, may not be correctly recovered using unions of ambient balls even if the point cloud is dense in $\mathbb{X}$ and the radii of the balls all vanish. We elucidate this point in the next example. Below, $\mathbb{B}_{\mathbb{R}^d}(x,r)$ denotes the open ambient ball in $\mathbb{R}^d$ centered at $x$ and of radius $r>0$.

%Before presenting the contractibility of the union of restricted balls and corresponding Nerve theorem applied on sets of positive reach, we first demonstrate why the usage of restricted balls instead of the usual ambient balls is necessary to recover the homotopy type of the target space $\mathbb{X}$ in the following example. 

%\textcolor{red}{The semicircle does not have positive reach! We should modify the example?}

\begin{example}
	\label{ex:eg_unionballs_nonhomotopic}
	Let $\mathbb{X}=(\partial\mathbb{B}_{\mathbb{R}^{2}}(0,1))\cap\{x\in\mathbb{R}^{2}:\,x_{2}\geq0\}$
	be a semicircle in $\mathbb{R}^{2}$. Let $\epsilon\in(0,1)$ be fixed,
	and $x_{1}$, $x_{2}$ be points on $\mathbb{X}$ satisfying $\left\Vert x_{1}-x_{2}\right\Vert \in\left(\epsilon\sqrt{4-\epsilon^{2}},2\epsilon\right)$.
	Then, $\mathbb{B}_{\mathbb{R}^{2}}(x_{1},\epsilon)\cap\mathbb{B}_{\mathbb{R}^{2}}(x_{2},\epsilon)$
	is nonempty but has an empty intersection with $\mathbb{X}$. Now, choose
	$\rho<d(\mathbb{X},\,\mathbb{B}_{\mathbb{R}^{2}}(x_{1},\epsilon)\cap\mathbb{B}_{\mathbb{R}^{2}}(x_{2},\epsilon))$ and choose $\mathcal{X}_{0}\subset\mathbb{X}$
	be dense enough so that $\bigcup_{x\in\mathcal{X}_{0}}\mathbb{B}_{\mathbb{R}^{2}}(x,\rho)$
	covers $\mathbb{X}$. Now, consider the union of ambient balls 
	\begin{equation}
		\left(\mathbb{B}_{\mathbb{R}^{2}}(x_{1},\epsilon)\bigcup\mathbb{B}_{\mathbb{R}^{2}}(x_{2},\epsilon)\right)\bigcup\left(\bigcup_{x\in\mathcal{X}_{0}}\mathbb{B}_{\mathbb{R}^{2}}(x,\rho)\right).\label{eq:eg_unionballs_nonhomotopic}
	\end{equation}
	Then from the fact $\rho<d\left(\mathbb{X},\,\mathbb{B}_{\mathbb{R}^{2}}(x_{1},\epsilon)\cap\mathbb{B}_{\mathbb{R}^{2}}(x_{2},\epsilon)\right)$
	and $\bigcup_{x\in\mathcal{X}_{0}}\mathbb{B}_{\mathbb{R}^{2}}(x,\rho)$ is a
	covering of $\mathbb{X}$, we have that the union of balls in \eqref{eq:eg_unionballs_nonhomotopic}
	is homotopy equivalent to a circle, hence its homotopy is different from the
	semicircle $\mathbb{X}$. Note that the above construction holds for all choices of $\epsilon \in (0,1)$. Since $\rho\to 0$ as $\epsilon\to0$, $\mathcal{X}_{0}$ can be arbitrary dense in $\mathbb{X}$. See \Cref{fig:eg_unionballs_nonhomotopic}.

	\begin{figure}
		\centering
		\includegraphics[width=0.7\linewidth]{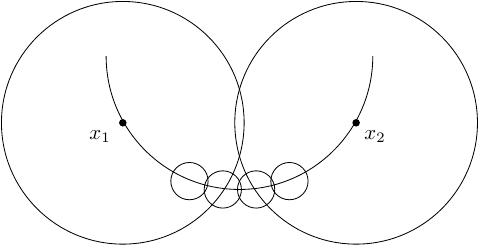}
		\caption{\small \emph{An example in which the union of balls is different from the underlying space in terms of the homotopy. In the figure, the union of balls deformation retracts to a circle, hence its homotopy is different from the underlying semicircle.}}
		\label{fig:eg_unionballs_nonhomotopic}
	\end{figure}	
\end{example}

%Example~\ref{ex:eg_unionballs_nonhomotopic} demonstrates that even though we have dense observations $\mathcal{X}$ from the target space $\mathbb{X}$, the union of ambient balls can fail to recover the homotopy type of the target space. Based on this observation, we take more direct approach -- we consider the union of the restricted balls $\bigcup\limits_{x\in\mathcal{X}} \mathbb{B}_{\mathbb{X}}(x,r_{x})$ and the restricted \v{C}ech complex $\textrm{\v{C}ech}_{\mathbb{X}}(\mathcal{X},r)$ as a tool to recover the homotopy type of the target space $\mathbb{X}$ via the Nerve theorem. 

\section{The nerve theorem for Euclidean sets of positive reach}
\label{sec:nerve}

In order to apply the nerve theorem to the \v{C}ech complex built on restricted balls, it is enough to check whether any finite intersection of the restricted balls $\bigcap_{j=1}^{k}\mathbb{B}_{\mathbb{X}}(x_{j},r_{x_{j}})$ is contractible
(since $\mathbb{X}$ is a subset of $\mathbb{R}^d$ and is endowed with the subspace topology, it is paracompact.).

%Note that paracompactness condition holds from the fact $\mathbb{X}$ being a subset of a Euclidean space $\mathbb{R}^{d}$. Hence, to apply the Nerve theorem, the only thing to be checked is 

Theorem~\ref{thm:nerve_contractible} is one of the main statements of this paper and shows that, if a subset $\mathbb{X}\subset\mathbb{R}^{d}$ has a positive reach $\tau >0$, 
any non-empty intersection of restricted balls is contractible if the radii are small enough compared to $\tau$.

%\textbf{Proposition~\ref{prop:nerve_contractible}.} \textit{
%	Let $A\subset\mathbb{R}^{d}$ be a subset with reach $\tau>0$. Let $\{B_{\alpha}\}_{\alpha\in I}$
%	be a set of balls with center $x_{\alpha}$ and radius $r_{\alpha}$.
%	If $r_{\alpha}\leq\tau$ for all $\alpha\in I$, then $\underset{\alpha\in I}{\bigcap}B_{\alpha}\cap A$
%	is contractible.
%}

%\textcolor{red}{Below we should say that if $\underset{\alpha\in I}{\bigcap}B_{\alpha}\cap\mathbb{X}$ is non-empty, then it is contractible }\\

%\before{
%\begin{theorem}
%%	\label{thm:nerve_contractible}	
%	Let $\mathbb{X} \subset\mathbb{R}^{d}$ be a subset with reach $\tau>0$ and let $\mathcal{X}$ be a set of finite points in $\mathbb{R}^d$. For any $I \subset \mathcal{X}$, let $\{r_x > 0: x \in I\}$ be a set of radii such that the intersection of restricted balls $\bigcap_{x\in I} \mathbb{B}_{\mathbb{X}}(x,r_{x})$ is non-empty. Then, if $\max_{x \in I } r_x \leq \tau $, the intersection of restricted balls is contractible. Furthermore, if $x \in \mathbb{X}$ for all $x\in I$, then the intersection of restricted balls contractible when $\max_{x \in I } r_x \leq \sqrt{2}\tau $.
%\end{theorem}
%}

\begin{theorem}
	\label{thm:nerve_contractible}	
	Let $\mathbb{X} \subset\mathbb{R}^{d}$ be a subset with reach $\tau>0$ and let $\mathcal{X}\subset\mathbb{R}^{d}$ be a set of points. Let $\{r_x > 0: x \in \mathcal{X}\}$ be a set of radii indexed by $x\in\mathcal{X}$. Then, if $r_x \leq \sqrt{\tau^{2}+(\tau-d_{\mathbb{X}}(x))^{2}}$ for all $x\in\mathcal{X}$, any nonempty intersection of restricted balls $\bigcap_{x\in I} \mathbb{B}_{\mathbb{X}}(x,r_{x})$ for $I\subset\mathcal{X}$ is contractible.

\end{theorem}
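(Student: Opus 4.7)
The plan is to construct an explicit contraction of $A := \bigcap_{x \in I} \mathbb{B}_\mathbb{X}(x, r_x)$ onto a single base point by composing the nearest-point projection $\pi_\mathbb{X}$ onto $\mathbb{X}$ with a straight-line interpolation in $\mathbb{R}^d$.

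First I would observe that non-emptiness of $A$ combined with the radius hypothesis forces $d_\mathbb{X}(x) < \tau$ for every $x \in I$: for any $y \in \mathbb{B}_\mathbb{X}(x, r_x)$ we have $d_\mathbb{X}(x)^2 \leq \|x-y\|^2 < r_x^2 \leq \tau^2 + (\tau - d_\mathbb{X}(x))^2$, which, upon expanding the right-hand side, reduces to $0 < 2\tau(\tau - d_\mathbb{X}(x))$. Consequently each $q_x := \pi_\mathbb{X}(x)$ is uniquely defined, with $\delta_x := \|x - q_x\| < \tau$, so every $x \in I$ has a canonical witness $q_x \in \mathbb{X}$.

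Next I would fix any $y_0 \in A$ and propose the homotopy $H(y, t) := \pi_\mathbb{X}\bigl((1-t) y + t y_0\bigr)$ on $A \times [0,1]$. For $H$ to be a well-defined continuous map into $A$, two geometric conditions must be checked: $(a)$ the segment $z_t := (1-t) y + t y_0$ stays in the open $\tau$-tubular neighborhood of $\mathbb{X}$ for every $t \in [0, 1]$, so that $\pi_\mathbb{X}(z_t)$ is uniquely and continuously defined; and $(b)$ the image $w_t := \pi_\mathbb{X}(z_t)$ satisfies $\|w_t - x\| < r_x$ for every $x \in I$, so that $w_t \in A$. The basic tool here is Federer's normal-tangent inequality $(x - q_x) \cdot (u - q_x) \leq \frac{\delta_x}{2\tau} \|u - q_x\|^2$ for $u \in \mathbb{X}$, which, after rearrangement, yields $\|u - q_x\|^2 \leq \frac{\tau(\|u - x\|^2 - \delta_x^2)}{\tau - \delta_x}$; substituting the radius hypothesis gives the sharp bound $\|u - q_x\| < \sqrt{2}\,\tau$ for every $u \in \mathbb{B}_\mathbb{X}(x, r_x)$.

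For $(a)$ I would combine this bound on $\|u - q_x\|$, applied to $u = y$ and $u = y_0$, with the convex combination structure of $z_t$ and the one-sided (normal-cone) behavior of $\mathbb{X}$ near each $q_x$, a direct consequence of positive reach, in order to keep $d(z_t, \mathbb{X})$ strictly below $\tau$. For $(b)$ I would apply Federer's inequality at $w_t$ rather than at $q_x$, decompose $x - w_t = (x - q_x) + (q_x - w_t)$, and bound the inner product of this vector with the normal direction $z_t - w_t$ to conclude $\|w_t - x\| < r_x$. I expect the main technical obstacle to be step $(b)$: it is precisely here that the tight form $r_x \leq \sqrt{\tau^2 + (\tau - \delta_x)^2}$ must be exploited, and this is what governs the sharpness of the theorem. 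Once $(a)$ and $(b)$ are in place, continuity of $\pi_\mathbb{X}$ on the open tubular neighborhood yields continuity of $H$, and $H(\cdot, 0) = \mathrm{id}_A$ together with $H(\cdot, 1) \equiv y_0$ deliver the required contraction of $A$ onto $y_0$.
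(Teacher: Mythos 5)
Your overall strategy matches the paper's exactly: both fix a base point $y_0$, define the homotopy $H(y,t)=\pi_{\mathbb{X}}\bigl((1-t)y+ty_0\bigr)$, and reduce to showing that the segment stays in the domain of $\pi_{\mathbb{X}}$ and that the projection stays in every $B_x$. Your preliminary computations are also correct: the deduction $d_{\mathbb{X}}(x)<\tau$ and the bound $\|u-q_x\|<\sqrt{2}\,\tau$ for $u\in\mathbb{B}_{\mathbb{X}}(x,r_x)$ both check out, and your ``step $(b)$'' (bounding $\langle x-w_t,\,z_t-w_t\rangle$ via Federer's inequality applied at $w_t$) is essentially the content of the paper's Lemma~\ref{lem:projreach_distance_projection_point}, so that part would go through with more work.

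The genuine gap is step $(a)$, not step $(b)$. Knowing that $y$, $y_0$ and hence $z_t$ lie within $\sqrt{2}\,\tau$ of $q_x$ does not bound $d_{\mathbb{X}}(z_t)$ below $\tau$, and I don't see how the ``normal-cone behavior near $q_x$'' you invoke closes that gap: $\sqrt{2}\,\tau$ is larger than $\tau$, and in general a convex combination of two points of $\mathbb{X}$ can escape the $\tau$-tube when their separation exceeds $2\tau$, which a priori is possible here since $\|y-y_0\|$ can be close to $2\sqrt{2}\,\tau$. The paper handles this with a short but crucial pigeonhole argument that your proposal omits. Using the elementary identity (Claim~\ref{claim:projreach_distance_midpoint}, applied twice)
\[
\|x-z_t\|^2+\lambda\|y-z_t\|^2+(1-\lambda)\|y_0-z_t\|^2
=\lambda\|x-y\|^2+(1-\lambda)\|x-y_0\|^2 < r_x^2\leq \tau^2+(\tau-d_{\mathbb{X}}(x))^2,
\]
one concludes that either $\|x-z_t\|<\tau-d_{\mathbb{X}}(x)$, giving $d_{\mathbb{X}}(z_t)\leq d_{\mathbb{X}}(x)+\|x-z_t\|<\tau$, or else $\lambda\|y-z_t\|^2+(1-\lambda)\|y_0-z_t\|^2<\tau^2$, giving $\min\{\|y-z_t\|,\|y_0-z_t\|\}<\tau$ and again $d_{\mathbb{X}}(z_t)<\tau$. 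This is exactly the point where the specific form $r_x\leq\sqrt{\tau^2+(\tau-d_{\mathbb{X}}(x))^2}$ is used, and it is what makes $\pi_{\mathbb{X}}(z_t)$ well defined. Without some argument of this kind you cannot even invoke Federer's inequality at $w_t$ in step $(b)$, since $w_t$ may not exist. So the plan is structurally sound and identical to the paper's, but it is missing the one idea that makes $(a)$ work, and your stated route for $(a)$ does not replace it.
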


Therefore, by combining Theorem~\ref{thm:nerve_contractible} and the Nerve Theorem (Theorem~\ref{thm:background_nerve}), we can establish that the topology of the subspace $\mathbb{X}$ can be recovered by the corresponding restricted \v{C}ech complex $\textrm{\v{C}ech}_{\mathbb{X}}(\mathcal{X},r)$, provided the radii of the balls are not too large with respect to the reach. This result is summarized in the following corollary.

%\textcolor{red}{Again, below we should say that if $\underset{\alpha\in I}{\bigcap}B_{\alpha}\cap\mathbb{X}$ is non-empty, then it is contractible }

%\begin{corollary}[Nerve Theorem on the restricted balls]
%%	\label{cor:nerve_contractible_covering}
%	Under the same condition in Theorem~\ref{thm:nerve_contractible}, suppose the union of restricted balls covers the target space $\mathbb{X}$, that is, 
%	\begin{equation}
%		\mathbb{X} \subset \bigcup_{x\in\mathcal{X}} \mathbb{B}_{\mathbb{X}}(x,r_{x}).%\label{eq:nerve_covering}
%	\end{equation}
%	If $r_x \leq \sqrt{\tau^{2}+(\tau-d_{\mathbb{X}}(x))^{2}}$ for all $x\in\mathcal{X}$, then $\mathbb{X}$ is homotopy equivalent to the restricted \v{C}ech complex $\textrm{\v{C}ech}_{\mathbb{X}}(\mathcal{X},r)$.
%	
%	% If $\max_{x \in\mathcal{X} } r_x \leq \tau$ then $\mathbb{X}$ is homotopy equivalent to the restricted \v{C}ech complex $\textrm{\v{C}ech}_{\mathbb{X}}(\mathcal{X},r)$. Furthermore, if $\mathcal{X} \subset \mathbb{X}$, the same homotopy equivalence holds under the weaker condition on radii that $\max_{x \in\mathcal{X} } r_x \leq \sqrt{2} \tau$.
%	
%	%	Let $\mathbb{X}\subset\mathbb{R}^{d}$ be a subset with reach $\tau_{\mathbb{X}}>0$. Let $\mathcal{X}\subset\mathbb{X}$ and $r \in \mathbb{R}^{\mathcal{X}}_{+}$, and suppose $r_{x}\leq \sqrt{2}\tau_{\mathbb{X}}$ holds for all $x\in\mathcal{X}$. Then the union of the restricted balls $\bigcup\limits_{x\in\mathcal{X}} \mathbb{B}_{\mathbb{X}}(x,r_{x})$ is homotopy equivalent to the restricted \v{C}ech complex $\textrm{\v{C}ech}_{\mathbb{X}}(\mathcal{X},r)$.
%\end{corollary}

\begin{corollary}[Nerve Theorem on the restricted balls]
	\label{cor:nerve_contractible_covering}
	Under the same condition of Theorem~\ref{thm:nerve_contractible}, suppose $r_x \leq \sqrt{\tau^{2}+(\tau-d_{\mathbb{X}}(x))^{2}}$ for all $x\in\mathcal{X}$, then the union of restricted balls  $\bigcup_{x\in\mathcal{X}}\mathbb{B}_{\mathbb{X}}(x,r_{x})$ is homotopy equivalent to the restricted \v{C}ech complex $\textrm{\v{C}ech}_{\mathbb{X}}(\mathcal{X},r)$. If, in addition, the union of restricted balls covers the target space $\mathbb{X}$, that is, 
	\begin{equation}
	\mathbb{X} \subset \bigcup_{x\in\mathcal{X}} \mathbb{B}_{\mathbb{X}}(x,r_{x}),\label{eq:nerve_covering}
	\end{equation}
	then $\mathbb{X}$ is homotopy equivalent to the restricted \v{C}ech complex $\textrm{\v{C}ech}_{\mathbb{X}}(\mathcal{X},r)$.
	
	% If $\max_{x \in\mathcal{X} } r_x \leq \tau$ then $\mathbb{X}$ is homotopy equivalent to the restricted \v{C}ech complex $\textrm{\v{C}ech}_{\mathbb{X}}(\mathcal{X},r)$. Furthermore, if $\mathcal{X} \subset \mathbb{X}$, the same homotopy equivalence holds under the weaker condition on radii that $\max_{x \in\mathcal{X} } r_x \leq \sqrt{2} \tau$.
	
	%	Let $\mathbb{X}\subset\mathbb{R}^{d}$ be a subset with reach $\tau_{\mathbb{X}}>0$. Let $\mathcal{X}\subset\mathbb{X}$ and $r \in \mathbb{R}^{\mathcal{X}}_{+}$, and suppose $r_{x}\leq \sqrt{2}\tau_{\mathbb{X}}$ holds for all $x\in\mathcal{X}$. Then the union of the restricted balls $\bigcup\limits_{x\in\mathcal{X}} \mathbb{B}_{\mathbb{X}}(x,r_{x})$ is homotopy equivalent to the restricted \v{C}ech complex $\textrm{\v{C}ech}_{\mathbb{X}}(\mathcal{X},r)$.
\end{corollary}

The reach condition $r_{x}\leq\sqrt{\tau^{2}+(\tau-d_{\mathbb{X}}(x))^{2}}$ is tight as the following example shows.

\begin{example}
%	Let $\mathbb{X}$ be the unit Euclidean sphere in $\mathbb{R}^{d}$,
%		and fix $\epsilon>0$. Let $x_{1}:=(1-\epsilon,0,\ldots,0),x_{2}:=(-1+\epsilon,0,\ldots,0)\in\mathbb{R}^{d}$
%		, and set $\mathcal{X}:=\{x_{1},x_{2}\}$. For a unit Euclidean sphere, the
%		reach is equal to its radius $1$. Therefore, if $r=(r_{1},r_{2})\in\left(0,\sqrt{1+(1-\epsilon)^{2}}\right]^{2}$
%		and $\mathbb{X}\subset\mathbb{B}_{\mathbb{X}}(x_{1},r_{1})\bigcup\mathbb{B}_{\mathbb{X}}(x_{2},r_{2})$, 
%		$\mathbb{B}_{\mathbb{X}}(x_{1},r_{1})\bigcup\mathbb{B}_{\mathbb{X}}(x_{2},r_{2})$
%		is homotopy equivalent to $\textrm{\v{C}ech}_{\mathbb{X}}\left(\mathcal{X},r\right)$
%		by Corollary \ref{cor:nerve_contractible_covering}. However, if $r_{1},r_{2}>\sqrt{1+(1-\epsilon)^{2}}$,
%		$\mathbb{B}_{\mathbb{X}}(x_{1},r_{1})\bigcup\mathbb{B}_{\mathbb{X}}(x_{2},r_{2})\simeq\mathbb{X}$
%		but $\textrm{\v{C}ech}_{\mathbb{X}}\left(\mathcal{X},r\right)\simeq0$.
%		Figure \ref{fig:eg_reach_condition} illustrate the 2-dimensional
%		case. 

	Let $\mathbb{X}$ be the unit Euclidean sphere in $\mathbb{R}^{d}$,
and fix $\epsilon>0$. Let $x_{1}:=(1-\epsilon,0,\ldots,0),x_{2}:=(-1+\epsilon,0,\ldots,0)\in\mathbb{R}^{d}$
, and set $\mathcal{X}:=\{x_{1},x_{2}\}$. For a unit Euclidean sphere, the
reach is equal to its radius $1$. Therefore, if $r=(r_{1},r_{2})\in\left(0,\sqrt{1+(1-\epsilon)^{2}}\right]^{2}$ then 
$\mathbb{B}_{\mathbb{X}}(x_{1},r_{1})\bigcup\mathbb{B}_{\mathbb{X}}(x_{2},r_{2})$
is homotopy equivalent to $\textrm{\v{C}ech}_{\mathbb{X}}\left(\mathcal{X},r\right)$
by Corollary \ref{cor:nerve_contractible_covering}. However, if $r_{1},r_{2}>\sqrt{1+(1-\epsilon)^{2}}$,
$\mathbb{B}_{\mathbb{X}}(x_{1},r_{1})\bigcup\mathbb{B}_{\mathbb{X}}(x_{2},r_{2})\simeq\mathbb{X}$
but $\textrm{\v{C}ech}_{\mathbb{X}}\left(\mathcal{X},r\right)\simeq0$.
Figure \ref{fig:eg_reach_condition} illustrates the 2-dimensional
case. 

 	\begin{figure}
 		\centering
 		\includegraphics{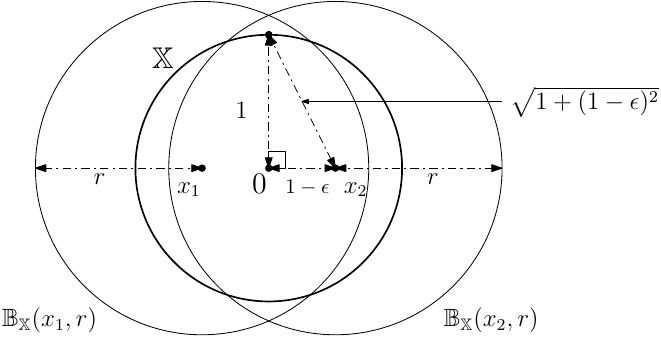}
 		\caption{\small \emph{An example in which $\mathbb{B}_{\mathbb{X}}(x_{1},r)\bigcup\mathbb{B}_{\mathbb{X}}(x_{2},r)$
        is not homotopy equivalent to $\textrm{\v{C}ech}_{\mathbb{X}}\left(\mathcal{X},r\right)$
        where $\mathbb{X}=\left\{ x\in\mathbb{R}^{2}:\|x\|_{2}=1\right\} $,
        $x_{1}=(-1+\epsilon,0)$, $x_{2}=(1-\epsilon,0)$, $\mathcal{X}=\{x_{1},x_{2}\}$, and $r>\sqrt{1+(1-\epsilon)^{2}}$,
        for any $\epsilon>0$.}}
 		\label{fig:eg_reach_condition}
 	\end{figure}
\end{example}

\section{Deformation retraction on positive $\mu$-reach}
\label{sec:defretract}

The positive reach condition is critical for the nerve theorem on the restricted \v{C}ech complex. However, it is not easily generalized to the positive $\mu$-reach condition. Instead, we find a positive reach set that approximates the positive $\mu$-reach set.
And to show their homotopy equivalence, we discover the topological relation between the positive $\mu$-reach set and its offset.

The homeomorphic relation between two offsets $\mathbb{X}^{r}$ and $\mathbb{X}^{s}$ of the target space $\mathbb{X}$ in Lemma~\ref{lem:background_offset_isotopy} does not hold in general between the target space and its offset, but a weakened topological relation holds under a stronger condition on the target space. Theorem~\ref{thm:defretract_mureach}, which is one of the main results in our paper, asserts that if the target space $\mathbb{X}$ has a positive $\mu$-reach, then the offset $\mathbb{X}^{r}$ deformation retracts to $\mathbb{X}$ when the offset size is not large, and in particular, they are homotopy equivalent.

%To see the topological relation between the target space $\mathbb{X}$ of positive $\mu$-reach and its double offset, we first need to understand the topological relation between the positive $\mu$-reach set and its offset. Homotopic relationship between offsets $\mathbb{X}^{r}$ and $\mathbb{X}^{s}$ are well known for positive $\mu$-reach set (see e.g., \cite{Grove1993} or Proposition 3.4 in \cite{ChazalL2005}), but the correspondence to the original set $\mathbb{X}$ is not known before. Theorem~\ref{thm:defretract_mureach}, which is one of the main theorems in our paper, asserts that the homotopy equivalence exists between the positive $\mu$-reach set and its offset when the offset size is not large.

\begin{theorem}
	\label{thm:defretract_mureach}
	
	Let $\mathbb{X}\subset\mathbb{R}^{d}$ be a subset with positive $\mu$-reach $\tau^{\mu}>0$.
	For $r\leq\tau^{\mu}$, the $r$-offset $\mathbb{X}^{r}$
	deformation retracts to $\mathbb{X}$. In particular, $\mathbb{X}$
	and $\mathbb{X}^{r}$ are homotopy equivalent.
	
\end{theorem}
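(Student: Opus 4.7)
The plan is to build an explicit strong deformation retraction $H\colon [0,1]\times \mathbb{X}_r \to \mathbb{X}_r$ driven by the continuous flow of the generalized gradient field $-\nabla_\mathbb{X}$ of the distance function $d_\mathbb{X}$, in the spirit of Lieutier's Euler-scheme construction (used extensively by Chazal, Cohen-Steiner, and Lieutier to compare offsets of sets of positive $\mu$-reach). The first step is to invoke the existence of a continuous semiflow $C\colon [0,\infty)\times \mathbb{R}^d \to \mathbb{R}^d$ with $C(0,\cdot)=\mathrm{id}$, with $C(t,x)=x$ for every $x\in\mathbb{X}$, and whose trajectories off $\mathbb{X}$ satisfy, in the appropriate absolutely continuous sense, $\dot C = -\nabla_\mathbb{X}(C)$. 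This yields the two structural estimates
\[
\frac{d^{+}}{dt}\, d_\mathbb{X}(C(t,x)) \;\leq\; -\|\nabla_\mathbb{X}(C(t,x))\|^{2}, \qquad \operatorname{length}\!\bigl(C(\cdot,x)|_{[0,t]}\bigr) \;=\; \int_{0}^{t} \|\nabla_\mathbb{X}(C(\sigma,x))\|\,d\sigma.
\]

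Next, fix $r\leq \tau_\mu$ and set $T_\star := r/\mu^{2}$. For $x\in \mathbb{X}_r$ we have $d_\mathbb{X}(x)<r\leq \tau_\mu$, so by monotonicity of $d_\mathbb{X}$ along the flow, $d_\mathbb{X}(C(t,x))\leq d_\mathbb{X}(x)<\tau_\mu$ for all $t\geq 0$; hence as long as $C(t,x)\notin \mathbb{X}$, the definition of the $\mu$-medial axis forces $\|\nabla_\mathbb{X}(C(t,x))\|\geq \mu$. Plugging this into the first estimate and integrating delivers $d_\mathbb{X}(C(t,x))\leq \max\{0,\,d_\mathbb{X}(x)-\mu^{2} t\}$, so $C(T_\star,x)\in \mathbb{X}$ and the whole trajectory stays inside $\mathbb{X}_r$. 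Performing the change of variable $d\sigma = -d(d_\mathbb{X})/\|\nabla_\mathbb{X}\|^{2}$ in the second estimate and using $\|\nabla_\mathbb{X}\|\geq \mu$ off $\mathbb{X}$ gives the uniform arc-length bound $\|C(t,x)-x\|\leq d_\mathbb{X}(x)/\mu$ for all $t\geq 0$.

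Define $H(s,x) := C(s T_\star,x)$. Then $H(0,\cdot)=\mathrm{id}$, $H(1,\mathbb{X}_r)\subset \mathbb{X}$, $H(s,\cdot)|_\mathbb{X}=\mathrm{id}_\mathbb{X}$, and $H(s,x)\in \mathbb{X}_r$ for all $(s,x)$. Continuity of $H$ on $[0,1]\times(\mathbb{X}_r\setminus\mathbb{X})$ is inherited from the continuity of $C$; continuity at a point $(s_0,x_0)$ with $x_0\in \mathbb{X}$ follows from the arc-length bound via $\|H(s,x)-x_0\|\leq \|x-x_0\|+d_\mathbb{X}(x)/\mu \to 0$ as $x\to x_0$, uniformly in $s\in[0,1]$. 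This produces the desired strong deformation retraction of $\mathbb{X}_r$ onto $\mathbb{X}$, and in particular $\mathbb{X}_r\simeq \mathbb{X}$.

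The chief technical obstacle is the existence of the semiflow $C$ itself: because $\nabla_\mathbb{X}$ is only upper semi-continuous, classical ODE theory does not apply, and one must appeal to Lieutier's Euler-scheme construction (or a variant) together with a proof that the resulting flow satisfies the semigroup property, is jointly continuous in $(t,x)$, and obeys the two structural estimates displayed above. I would cite this ingredient from the literature and concentrate the written proof on the two quantitative consequences of the $\mu$-reach hypothesis (the distance-decrease and arc-length bounds) and on verifying that the induced homotopy $H$ lands on $\mathbb{X}$ rather than merely on a smaller offset, which is the content the statement adds to the classical offset-to-offset comparison.
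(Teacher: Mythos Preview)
Your approach has a genuine gap: Lieutier's Euler-scheme construction produces the \emph{ascent} flow of $+\nabla_{\mathbb{X}}$ (trajectories move away from $\mathbb{X}$ towards the medial axis), and its continuity hinges on the lower semicontinuity of $\|\nabla_{\mathbb{X}}\|$ together with the upper semicontinuity of $x\mapsto\Gamma_{\mathbb{X}}(x)$. For the descent direction $-\nabla_{\mathbb{X}}$ these properties go the wrong way, and a continuous semiflow solving $\dot C=-\nabla_{\mathbb{X}}(C)$ need not exist on $\mathbb{X}_{r}\setminus\mathbb{X}$. Concretely, even when $r\le\tau_{\mu}$ the set $\mathbb{X}_{r}$ may meet the ordinary medial axis ${\rm Med}(\mathbb{X})$. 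Take $\mathbb{X}=\partial[0,1]^{2}$ and any $\mu<1/\sqrt{2}$, so that $\tau_{\mu}=1/2$; every diagonal point $(t,t)$ with $0<t<r$ lies in $\mathbb{X}_{r}\cap{\rm Med}(\mathbb{X})$. There $\nabla_{\mathbb{X}}((t,t))=(\tfrac12,\tfrac12)$ and the Euler descent trajectory runs along the diagonal to the corner $(0,0)$, whereas at nearby off-diagonal points $(t+\varepsilon,t)$ one has $\nabla_{\mathbb{X}}=(0,1)$ and the descent trajectory drops vertically to $(t+\varepsilon,0)$. These trajectories do not converge as $\varepsilon\to 0$, so no continuous choice of $C$ is possible, and the ingredient you propose to cite is not available in the literature.

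The paper avoids this by \emph{smoothing}: instead of integrating $-\nabla_{\mathbb{X}}$ directly, it glues locally constant vectors via a partition of unity to obtain a smooth field $W$ on $\mathbb{X}_{r}\setminus\mathbb{X}$ with $\langle W(y),\nabla_{\mathbb{X}}(y)\rangle$ bounded above by a negative constant depending only on $\mu$. (Such local vectors exist because, by upper semicontinuity of the Clarke subdifferential, $\nabla_{\mathbb{X}}(y)$ for $y$ near $x$ lies close to the convex set $\partial d_{\mathbb{X}}(x)$, of which $\nabla_{\mathbb{X}}(x)$ is the minimum-norm element; this forces $\langle\nabla_{\mathbb{X}}(x),\nabla_{\mathbb{X}}(y)\rangle\gtrsim\|\nabla_{\mathbb{X}}(x)\|^{2}\ge\mu^{2}$.) Smoothness of $W$ licenses the classical fundamental theorem on flows; the inner-product bound still makes $d_{\mathbb{X}}$ decrease at a uniform rate along integral curves, so trajectories hit $\mathbb{X}$ in bounded time, and continuity of the extended map at points of $\mathbb{X}$ is then checked by hand. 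Your hitting-time and arc-length estimates have the right shape, but they must be run for this smoothed $W$ rather than for $-\nabla_{\mathbb{X}}$ itself.
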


The positive $\mu$-reach condition $r\leq\tau^{\mu}$ in Theorem~\ref{thm:defretract_mureach} is critical and cannot be weakened to $\nabla_{\mathbb{X}}(x)\neq0$ for all $x\in\overline{\mathbb{X}^{r}}\backslash\mathbb{X}$ as in Lemma~\ref{lem:background_offset_isotopy}. Indeed, Example \ref{ex:defretract_mureach_distancefunction} shows that the offset does not deformation retract to the target space although $\nabla_{\mathbb{X}}(x)\neq0$ for all $x\in\mathbb{R}^{d}$.

\begin{example}
	\label{ex:defretract_mureach_distancefunction}
	Let $\mathbb{X}\subset\mathbb{R}^{2}$ be a topologist's sine circle,
	that is, $\mathbb{X}=\mathbb{X}_{0}\cup\mathbb{X}_{1}\cup\mathbb{X}_{2}$,
	with $\mathbb{X}_{0}=\left\{ \left(x,\sin\frac{\pi}{x}\right)\in\mathbb{R}^{2}:x\in(0,1]\right\} $,
	$\mathbb{X}_{1}=\{0\}\times[-1,1]$, and $\mathbb{X}_{2}$ is a sufficiently
	smooth curve joining $(0,1)$ and $(1,0)$ and meets $\mathbb{X}_{0}\cup\mathbb{X}_{1}$
	only at its endpoints. See Figure \ref{fig:defretract_topologist_circle}. Then, $\tau_{\mathbb{X}}^{\mu}=0$
	for any $\mu\in(0,1]$ but $\nabla_{\mathbb{X}}$ is nonzero for all
	$x\in\mathbb{R}^{2}\backslash\mathbb{X}$. Now, $H_{1}(\mathbb{X})=0$,
	but for any sufficiently small $r>0$, $\mathbb{X}^{r}$ is homeomorphic
	to an annulus $\mathbb{B}_{\mathbb{R}^{2}}(0,2)\backslash\overline{\mathbb{B}_{\mathbb{R}^{2}}(0,1)}$
	and hence $H_{1}(\mathbb{X}^{r})=\mathbb{Z}$. Hence $\mathbb{X}^{r}$
	cannot deformation retract to $\mathbb{X}$.
	
	 \begin{figure}
		\centering
		\includegraphics[width=0.5\linewidth]{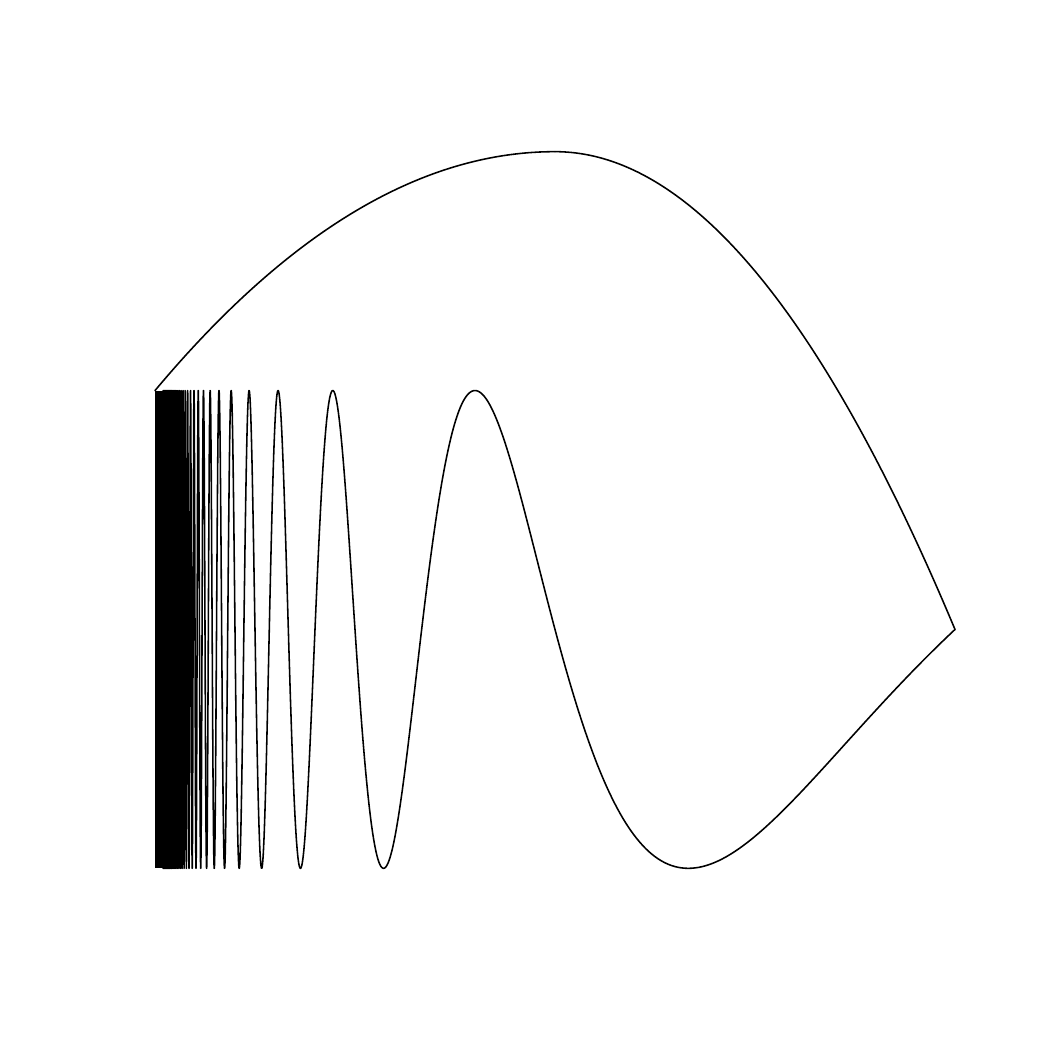}
		\caption{\small \emph{An example where $\mathbb{X}^{r}$ does not deformation retracts to $\mathbb{X}$. $\mathbb{X}$ is a topologist's sine circle, that is, $\mathbb{X}=\mathbb{X}_{0}\cup\mathbb{X}_{1}\cup\mathbb{X}_{2}$,
				with $\mathbb{X}_{0}=\left\{ \left(x,\sin\frac{\pi}{x}\right)\in\mathbb{R}^{2}:x\in(0,1]\right\} $,
				$\mathbb{X}_{1}=\{0\}\times[-1,1]$, and $\mathbb{X}_{2}$ is a sufficiently
				smooth curve joining $(0,1)$ and $(1,0)$  and meeting $\mathbb{X}_{0}\cup\mathbb{X}_{1}$
				only at its endpoints.}}
		\label{fig:defretract_topologist_circle}
	\end{figure}
\end{example}

Using Theorem~\ref{thm:defretract_mureach}, we find a positive reach set that approximates the positive $\mu$-reach set. 
The set we will use %to approximate positive $\mu$-reach set
is the double offset \cite{ChazalCLT2007}. Recall that, for $r>0$, an $r$-offset $\mathbb{X}^{r}$ of a set $\mathbb{X}$ is the collection of all points that are within $r$ distance to $\mathbb{X}$, that is, $\mathbb{X}^{r}:=\bigcup_{x\in\mathbb{X}}\mathbb{B}_{\mathbb{R}^{d}}(x,r)$. The double offset is to take offset, take complement, take offset, and take complement, that is, for $s\geq t>0$, $\mathbb{X}^{s,t}:=(((\mathbb{X}^{s})^{\complement})^{t})^{\complement}$. Roughly speaking, it is to inflate your set first, and then deflate your set, so that sharp corners become smooth. See \cite{ChazalCLT2007} for more details.
To set up the homotopy equivalence of the positive $\mu$-reach set and its double offset, we need another tool for finding the homotopy equivalence of the complement set. This is done in the next lemma. 
\begin{lemma}
	
	\label{lem:defretract_complement}
	
	Let $\mathbb{X}\subset\mathbb{R}^{d}$ be a subset with positive reach $\tau>0$. For $r\leq\tau$,
	$\mathbb{X}^{\complement}$ deformation retracts to $(\mathbb{X}^{r})^{\complement}$.
	In particular, $\mathbb{X}^{\complement}$ and $(\mathbb{X}^{r})^{\complement}$
	are homotopy equivalent.
	
\end{lemma}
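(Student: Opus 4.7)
My plan is to construct an explicit strong deformation retraction $H : \mathbb{X}^{\complement} \times [0,1] \to \mathbb{X}^{\complement}$ that fixes every point already at distance $\geq r$ from $\mathbb{X}$ and slides every other point radially outward along its normal ray until its distance to $\mathbb{X}$ becomes exactly $r$. Since $r \leq \tau$, the metric projection $\pi_{\mathbb{X}}$ is well defined and continuous on the open set $U := \{x \in \mathbb{R}^d : d(x,\mathbb{X}) < r\} \subseteq \{d(\cdot,\mathbb{X}) < \tau\}$, and for $x \in U$ one has $x \neq \pi_{\mathbb{X}}(x)$, so I would set
\[
H(x,t) = \begin{cases}
\pi_{\mathbb{X}}(x) + \bigl((1 - t)\, d(x,\mathbb{X}) + t\, r\bigr)\,\dfrac{x - \pi_{\mathbb{X}}(x)}{d(x,\mathbb{X})} & \text{if } d(x,\mathbb{X}) < r,\\[4pt]
x & \text{if } d(x,\mathbb{X}) \geq r.
\end{cases}
\]
The two pieces glue at the interface $\{d(\cdot,\mathbb{X}) = r\}$, where the first formula also evaluates to $x$ for every $t$; the retraction identities $H(\cdot,0) = \mathrm{id}$ and $H|_{(\mathbb{X}^r)^{\complement}} = \mathrm{id}$ are then immediate.

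The geometric core of the argument is the standard consequence of positive reach: whenever $p \in \mathbb{X}$ is the unique nearest point of some $y$ with $d(y,\mathbb{X}) < \tau$ and $u := (y - p)/d(y,\mathbb{X})$ is the outward unit normal, one has $\pi_{\mathbb{X}}(p + s u) = p$ and $d(p + s u, \mathbb{X}) = s$ for every $s \in [0,\tau)$, and by continuity of the distance function also $d(p + \tau u, \mathbb{X}) = \tau$. I would apply this with $p = \pi_{\mathbb{X}}(x)$ and $s = (1-t) d(x,\mathbb{X}) + t r \in [d(x,\mathbb{X}), r] \subseteq [0,\tau]$ to conclude that $H(x,t)$ stays in $\mathbb{X}^{\complement}$ throughout the homotopy and that $H(x,1)$ sits at distance exactly $r$ from $\mathbb{X}$, hence in $(\mathbb{X}^r)^{\complement}$. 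Continuity of $H$ on the open pieces $U \times [0,1]$ and $\{d > r\} \times [0,1]$ is clear; at the interface I would use a compactness argument: for any sequence $x_n \to x_0$ with $x_n \in U$ and $d(x_0,\mathbb{X}) = r$, every subsequential limit of $\pi_{\mathbb{X}}(x_n)$ is a nearest point $p$ of $x_0$ at distance $r$, and substitution into the inside formula always yields $H(x_n,t) \to x_0$, matching the outside branch.

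The only genuine subtlety is the borderline case $r = \tau$, where $\pi_{\mathbb{X}}$ can fail to be single-valued on the set $\{d(\cdot,\mathbb{X}) = \tau\}$. The compactness argument above is designed precisely to sidestep this: declaring $H$ to equal the identity there means no nearest point has to be selected, and the inside formula agrees with the identity in the limit regardless of which subsequential nearest point is picked. A tidy alternative is to first establish the lemma for $r < \tau$ and then pass to $r = \tau$ by continuity of the distance function (to verify $H(x,1) \in (\mathbb{X}^\tau)^{\complement}$). With these ingredients in place, I expect the remaining work to be routine bookkeeping of the piecewise definition.
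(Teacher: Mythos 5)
Your construction is essentially identical to the paper's: the paper defines the endpoint map $\psi(x) = x + \frac{(r - \|x - \pi_{\mathbb{X}}(x)\|)_+}{\|x - \pi_{\mathbb{X}}(x)\|}(x - \pi_{\mathbb{X}}(x))$ and sets $H(x,t) = (1-t)x + t\psi(x)$, which, once you substitute $x = \pi_{\mathbb{X}}(x) + d_{\mathbb{X}}(x)\,v$, unwinds to exactly your piecewise radial-pushing formula. You are, if anything, more careful than the paper about continuity across the interface $\{d(\cdot,\mathbb{X}) = r\}$ and about the borderline case $r = \tau$ (where $\pi_{\mathbb{X}}$ need not be single-valued on the sphere $\{d(\cdot,\mathbb{X}) = \tau\}$), both of which the paper treats as immediate.
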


Now, combining Theorem~\ref{thm:defretract_mureach} and Lemma~\ref{lem:defretract_complement} gives the desired homotopy equivalence between the target set of positive $\mu$-reach and its double offset, where the double offset has a positive reach.

\begin{corollary}

\label{cor:defretract_doubleoffset}

Let $\mathbb{X}\subset\mathbb{R}^{d}$ be a subset with positive $\mu$-reach $\tau^{\mu}>0$.
For $s,t>0$ with $t\leq s$, let $\mathbb{X}^{s,t}:=(((\mathbb{X}^{s})^{\complement})^{t})^{\complement}$
be the double offset of $\mathbb{X}$. If $s < \tau^{\mu}$ and $t < \mu s$, then $\mathbb{X}^{s,t}$ and $\mathbb{X}$ are homotopy equivalent, and  the reach of $\mathbb{X}^{s,t}$ is greater than or equal to $t$, that is, $\tau_{\mathbb{X}^{s,t}} \geq t$. 
\end{corollary}

\section{Homotopy Reconstruction via Cech complex and Vietoris-Rips complex}
\label{sec:homotopy}

Next, we derive conditions under which the homotopy type of the target space is correctly recovered via the \v{C}ech complex and the Vietoris-Rips complex.
We first extend the interleaving relationship of the ambient \v{C}ech complex and the Vietoris-Rips complex in \eqref{eq:background_interleaving_rips_ambientcech} to the different radii case in Lemma~\ref{lem:homotopy_interleaving_rips_ambientcech}.

%We first reprove the well-known interleaving relationship of the ambient \v{C}ech complex and the Vietoris-Rips complex. The interleaving result when all radii are the same is well known (e.g., see Theorem 2.5 in \cite{deSilvaG2007}). With the same proof technique, we are extending to the different radii case.

\begin{lemma}
	
	\label{lem:homotopy_interleaving_rips_ambientcech}
	
	Let $\mathcal{X}\subset\mathbb{R}^{d}$ be a set of points and $r=\{r_{x}>0:x\in\mathcal{X}\}$
	be a set of radii indexed by $x\in\mathcal{X}$. Then, 
	\[
	\textrm{\v{C}ech}_{\mathbb{R}^{d}}(\mathcal{X},r)\subset\textrm{Rips}(\mathcal{X},r)\subset\textrm{\v{C}ech}_{\mathbb{R}^{d}}\left(\mathcal{X},\sqrt{\frac{2d}{d+1}}r\right).
	\]
	
\end{lemma}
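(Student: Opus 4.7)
The plan is to prove the two inclusions separately. The first inclusion, $\textrm{\v{C}ech}_{\mathbb{R}^{d}}(\mathcal{X},r)\subset\textrm{Rips}(\mathcal{X},r)$, follows from the triangle inequality: if $\sigma=\{x_{1},\ldots,x_{k}\}$ satisfies $\bigcap_{j}\mathbb{B}_{\mathbb{R}^{d}}(x_{j},r_{x_{j}})\neq\emptyset$ with a witness $z$, then for any pair $i,j$ we have $\|x_{i}-x_{j}\|\le\|x_{i}-z\|+\|z-x_{j}\|<r_{x_{i}}+r_{x_{j}}$, so the pairwise open balls meet and $\sigma$ belongs to the Rips complex.

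The second inclusion is the weighted analogue of Jung's theorem and is the heart of the lemma. Given $\sigma\in\textrm{Rips}(\mathcal{X},r)$ with pairwise $\|x_{i}-x_{j}\|\le r_{x_{i}}+r_{x_{j}}$, I need to exhibit a common witness for the enlarged \v{C}ech complex, i.e., $\bigcap_{j}\mathbb{B}_{\mathbb{R}^{d}}(x_{j},c\,r_{x_{j}})\neq\emptyset$ with $c=\sqrt{2d/(d+1)}$. I formulate this as the weighted minimax $\lambda^{*}:=\min_{z}\max_{j}\|z-x_{j}\|/r_{x_{j}}$, attained at some $z^{*}$ by compactness, and set out to prove $\lambda^{*}\le c$.

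By the KKT conditions of this convex program, $z^{*}$ lies in the convex hull of $\{x_{j}\}_{j\in\mathcal{S}}$, where $\mathcal{S}:=\{j:\|z^{*}-x_{j}\|=\lambda^{*}r_{x_{j}}\}$ is the active set; Carath\'eodory's theorem then furnishes a subset $\mathcal{S}'\subset\mathcal{S}$ of size $k\le d+1$ and nonnegative weights $\{\alpha_{j}\}_{j\in\mathcal{S}'}$ summing to $1$ with $z^{*}=\sum_{j\in\mathcal{S}'}\alpha_{j}x_{j}$. Plugging the active equalities $\|z^{*}-x_{j}\|=\lambda^{*}r_{x_{j}}$ into the standard centroid identity $\sum_{j}\alpha_{j}\|z^{*}-x_{j}\|^{2}=\tfrac{1}{2}\sum_{i,j}\alpha_{i}\alpha_{j}\|x_{i}-x_{j}\|^{2}$ and bounding the right side using the Rips hypothesis $\|x_{i}-x_{j}\|^{2}\le(r_{x_{i}}+r_{x_{j}})^{2}$, a short expansion gives $\lambda^{*2}B\le B+P^{2}-2Q$, where $P:=\sum\alpha_{j}r_{x_{j}}$, $B:=\sum\alpha_{j}r_{x_{j}}^{2}$, and $Q:=\sum\alpha_{j}^{2}r_{x_{j}}^{2}$.

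The concluding dimensional step, which I expect to be the main obstacle, is to bound $(P^{2}-2Q)/B\le(k-2)/k$ via two applications of Cauchy--Schwarz in the substitution $t_{j}:=\alpha_{j}r_{x_{j}}$: first $P^{2}=(\sum t_{j})^{2}\le k\sum t_{j}^{2}=kQ$, and second $P^{2}\le(\sum t_{j}/r_{x_{j}})(\sum t_{j}r_{x_{j}})=B$, which uses $\sum\alpha_{j}=1$. Combining these with $k\le d+1$ gives $\lambda^{*2}\le 1+(k-2)/k=2(k-1)/k\le 2d/(d+1)=c^{2}$. The two Cauchy--Schwarz bounds are simultaneously tight precisely when all $r_{x_{j}}$ are equal and the $x_{j}$ form a regular simplex with $k=d+1$ vertices, which recovers the sharp Jung constant of the unweighted de Silva--Ghrist argument.
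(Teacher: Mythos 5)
Your first inclusion matches the paper. For the second inclusion you take a genuinely different route. The paper, after passing to the minimizer $y_{0}$ of the weighted Chebyshev radius, runs a weighted version of the classical de Silva--Ghrist argument: it singles out a single obtuse pair $(\hat{x}_{0},\hat{x}_{i})$ by an averaging (pigeonhole) argument over a vanishing nonnegative combination $\sum (a_{i}/r_{x_{i}})\hat{x}_{i}=0$, applies the AM--GM inequality to that one pair, and handles more than $d+1$ points separately via Helly's theorem. You instead (i) use Carath\'eodory on the active set to reduce to at most $d+1$ points from the outset, which subsumes the Helly step; (ii) expand the centroid identity $\sum_{j}\alpha_{j}\|z^{*}-x_{j}\|^{2}=\tfrac{1}{2}\sum_{i\neq j}\alpha_{i}\alpha_{j}\|x_{i}-x_{j}\|^{2}$ (this is exactly Claim~\ref{claim:projreach_distance_midpoint} evaluated at $x=z^{*}$) against the Rips hypothesis on \emph{all} pairs simultaneously; and (iii) close with the two Cauchy--Schwarz bounds $P^{2}\le kQ$ and $P^{2}\le B$ in place of AM--GM. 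The calculation is correct: for $k\ge 2$ one gets $P^{2}-2Q\le P^{2}(k-2)/k\le B(k-2)/k$, hence $\lambda^{*2}\le 1+(k-2)/k\le 2d/(d+1)$, and your description of the equality case is the right one. Structurally your route is a bit cleaner (no separate Helly case, aggregate rather than single-pair bounding), while the paper's route makes the underlying geometry --- one pair of near-antipodal directions from the center --- more explicit.
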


To recover the homotopy of the target set via the ambient \v{C}ech complex and the Vietoris-Rips complex, we utilize the restricted \v{C}ech complex. Hence, we set up the interleaving relationship between the restricted \v{C}ech complex and the ambient \v{C}ech complex in Lemma~\ref{lem:homotopy_interleaving_ambientcech_restrictedcech} and between the restricted \v{C}ech complex and the Vietoris-Rips complex in Corollary \ref{cor:homotopy_interleaving_rips_restrictedcech}.

%	\begin{lemma}
%		Let $\mathbb{X}\subset\mathbb{R}^{d}$ be a subset with reach $\tau>0$
%		and let $\mathcal{X}\subset\mathbb{R}^{d}$ be a set of points. Let
%		$r=\{r_{x}>0:x\in\mathcal{X}\}$ be a set of radii indexed by $x\in\mathcal{X}$.
%		Then, 
%		\[
%		\textrm{\v{C}ech}_{\mathbb{X}}(\mathcal{X},r)\subset\textrm{\v{C}ech}_{\mathbb{R}^{d}}(\mathcal{X},r)\subset\textrm{\v{C}ech}_{\mathbb{X}}(\mathcal{X},r'),
%		\]
%		where $r'=\{r'_{x}>0:x\in\mathcal{X}\}$ with $r'_{x}=\sqrt{2r_{x}^{2}+d_{\mathbb{X}}(x)(2\tau-d_{\mathbb{X}}(x))}$.
%		In addition, 
%		\[
%		\textrm{\v{C}ech}_{\mathbb{X}}(\mathcal{X},r)\subset\textrm{Rips}(\mathcal{X},r)\subset\textrm{\v{C}ech}_{\mathbb{X}}(\mathcal{X},r''),
%		\]
%		where $r''=\{r''_{x}>0:x\in\mathcal{X}\}$ with $r''_{x}=\sqrt{\frac{4d}{d+1}r_{x}^{2}+d_{\mathbb{X}}(x)(2\tau-d_{\mathbb{X}}(x))}$.
%	\end{lemma}

\begin{lemma}
	\label{lem:homotopy_interleaving_ambientcech_restrictedcech}	
	Let $\mathbb{X}\subset\mathbb{R}^{d}$ be a subset with reach $\tau>0$
    and let $\mathcal{X}\subset\mathbb{R}^{d}$ be a set of points. Let
    $r=\{r_{x}>0:x\in\mathcal{X}\}$ be a set of radii indexed by $x\in\mathcal{X}$.
    Then, 
    \[
    \textrm{\v{C}ech}_{\mathbb{X}}(\mathcal{X},r)\subset\textrm{\v{C}ech}_{\mathbb{R}^{d}}(\mathcal{X},r)\subset\textrm{\v{C}ech}_{\mathbb{X}}(\mathcal{X},r'),
    \]
    where $r'=\{r'_{x}>0:x\in\mathcal{X}\}$ with
    %$r'_{x}=\sqrt{2r_{x}^{2}+d_{\mathbb{X}}(x)(2\tau-d_{\mathbb{X}}(x))}$.
    \[
    r'_{x}=\sqrt{\frac{2\tau\left(r_{x}^{2}+d_{\mathbb{X}}(x)\left(2\tau-d_{\mathbb{X}}(x)\right)\right)}{\tau+\sqrt{\tau^{2}-\left(r_{x}^{2}+d_{\mathbb{X}}(x)\left(2\tau-d_{\mathbb{X}}(x)\right)\right)}}-d_{\mathbb{X}}(x)\left(2\tau-d_{\mathbb{X}}(x)\right)}.
    \]
    Equivalently, 
    \[
    \textrm{\v{C}ech}_{\mathbb{R}^{d}}(\mathcal{X},r'')\subset\textrm{\v{C}ech}_{\mathbb{X}}(\mathcal{X},r)\subset\textrm{\v{C}ech}_{\mathbb{R}^{d}}(\mathcal{X},r),
    \]
    where $r''=\{r''_{x}>0:x\in\mathcal{X}\}$ with 
    \[
    r''_{x}=\sqrt{\tau^{2}-d_{\mathbb{X}}(x)(2\tau-d_{\mathbb{X}}(x))-\frac{(2\tau^{2}-r_{x}^{2}-d_{\mathbb{X}}(x)(2\tau-d_{\mathbb{X}}(x)))^{2}}{4\tau^{2}}}.
    \]
\end{lemma}
\begin{corollary}
	\label{cor:homotopy_interleaving_rips_restrictedcech}
	Let $\mathbb{X}\subset\mathbb{R}^{d}$ be a subset with reach $\tau>0$
	and let $\mathcal{X}\subset\mathbb{R}^{d}$ be a set of points. Let
	$r=\{r_{x}>0:x\in\mathcal{X}\}$ be a set of radii indexed by $x\in\mathcal{X}$.
	Then, 
    \[
	\textrm{\v{C}ech}_{\mathbb{X}}(\mathcal{X},r)\subset\textrm{Rips}(\mathcal{X},r)\subset\textrm{\v{C}ech}_{\mathbb{X}}(\mathcal{X},r'''),
	\]
	where $r'''=\{r'''_{x}>0:x\in\mathcal{X}\}$ with
	%$r'_{x}=\sqrt{2r_{x}^{2}+d_{\mathbb{X}}(x)(2\tau-d_{\mathbb{X}}(x))}$.
	\[
	r'''_{x}=\sqrt{\frac{2\tau\left(\frac{2d}{d+1}r_{x}^{2}+d_{\mathbb{X}}(x)\left(2\tau-d_{\mathbb{X}}(x)\right)\right)}{\tau+\sqrt{\tau^{2}-\left(\frac{2d}{d+1}r_{x}^{2}+d_{\mathbb{X}}(x)\left(2\tau-d_{\mathbb{X}}(x)\right)\right)}}-d_{\mathbb{X}}(x)\left(2\tau-d_{\mathbb{X}}(x)\right)}.
	\]	
\end{corollary}

%Combining Lemma~\ref{lem:homotopy_interleaving_rips_ambientcech} and \ref{lem:homotopy_interleaving_ambientcech_restrictedcech} together with the Nerve Theorem on the restricted balls (Corollary \ref{cor:nerve_contractible_covering}) gives the homotopy equivalence between the target space and the ambient \v{C}ech complex when the target space has positive reach.
Combining Nerve Theorem on the restricted balls (Corollary \ref{cor:nerve_contractible_covering}) with the covering condition \eqref{eq:nerve_covering} and Lemma~\ref{lem:homotopy_interleaving_ambientcech_restrictedcech} or Corollary \ref{cor:homotopy_interleaving_rips_restrictedcech} gives the following commutative diagram:
\begin{equation}
\xymatrix{ & \mathbb{X}\ar[dl]\\
	\textrm{\v{C}ech}_{\mathbb{X}}(\mathcal{X},r)\ar[dr]\ar[rr] &  & \textrm{\v{C}ech}_{\mathbb{X}}(\mathcal{X},r'''')\ar[ul]\\
	& \mathcal{S}\ar[ur]
},\label{eq:homotopy_diagram}
\end{equation}
where $\mathcal{S}$ is either the ambient \v{C}ech complex $\textrm{\v{C}ech}_{\mathbb{R}^{d}}(\mathcal{X},r)$ or the Vietoris-Rips complex $\textrm{Rips}(\mathcal{X},r)$. Using this diagram, we develop the homotopy equivalence between the target space and either the ambient \v{C}ech complex or the Vietoris-Rips complex. First, Theorem~\ref{thm:homotopy_cech} asserts that when the target space of positive reach is densely covered by the data points and if they are not too far apart, the ambient \v{C}ech complex can be used to recover the homotopy type.

\begin{theorem} \label{thm:homotopy_cech}

Let $\mathbb{X}\subset\mathbb{R}^{d}$ be a subset with reach $\tau>0$
and let $\mathcal{X}\subset\mathbb{R}^{d}$ be a closed discrete set of points. Let
$\{r_{x}>0:x\in\mathcal{X}\}$ be a set of radii indexed by $x\in\mathcal{X}$
with $r_{\min}:=\min_{x\in\mathcal{X}}\{r_{x}\}$ and $r_{\max}:=\max_{x\in\mathcal{X}}\{r_{x}\}$,
and let $\epsilon:=\max\{d_{\mathbb{X}}(x):x\in\mathcal{X}\}$. Suppose
$\mathbb{X}$ is covered by the union of balls centered at $x\in\mathcal{X}$
and radius $\delta$ as 
\begin{equation}
\mathbb{X}\subset\bigcup_{x\in\mathcal{X}}\mathbb{B}_{\mathbb{R}}(x,\delta).\label{eq:homotopy_cech_covering}
\end{equation}
Suppose that the maximum radius $r_{\max}$ is bounded as 
\begin{equation}
r_{\max}\leq\tau-\epsilon.\label{eq:homotopy_cech_condition_radius}
\end{equation}
Also, suppose $\delta$ satisfies the following condition: 
%\begin{align}
%	& \delta+\sqrt{r_{\max}^{2}+\epsilon(2\tau-\epsilon)-\frac{1}{4}\left(r_{\min}-\frac{r_{\max}^{2}}{\tau-\epsilon+\sqrt{(\tau-\epsilon)^{2}-r_{\max}^{2}}}-\epsilon-\delta\right)^{2}}\leq r_{\min},\nonumber \\
%	& \delta\leq\frac{1}{2}\left(\sqrt{\frac{2(d+1)}{d}}\sqrt{\frac{r_{\min}^{2}-\epsilon(2\tau-\epsilon)}{2}}-\sqrt{\frac{2\tau(r_{\min}^{2}+\epsilon(2\tau-\epsilon))}{2\tau+\sqrt{4\tau^{2}-2\left(r_{\min}^{2}+\epsilon(2\tau-\epsilon)\right)}}}\right).
%\end{align}
\begin{align}
& \delta+\sqrt{r_{\max}^{2}-\tilde{l}^{2}+\epsilon(2\tau-\epsilon)-((\tau-\epsilon)^{2}-r_{\max}^{2}+\tilde{l}^{2}+(\tau-\epsilon_{\tilde{l}})^{2})\left(\frac{\tau}{\sqrt{\tau^{2}-\tilde{r}_{\delta,c}}}-1\right)}\nonumber \\
& \leq r_{\min},\nonumber \\
& \sqrt{\frac{d}{2(d+1)}}\frac{r_{\max}}{r_{\min}}\left(\sqrt{\tilde{r}_{b}^{2}-(2\tau^{2}-\tilde{r}_{b}^{2})\left(\frac{\tau}{\sqrt{\tau^{2}-\tilde{r}_{\delta,b}^{2}}}-1\right)}+2\delta\right)\leq r_{\min}'',\label{eq:homotopy_cech_condition_covering}
\end{align}
\begin{align*}
& \tilde{l}:=\frac{1}{2}\left(r_{\min}-\tau+\sqrt{(\tau-\epsilon)^{2}-r_{\max}^{2}}-\delta\right),\qquad\epsilon_{\tilde{l}}:=\tau-\sqrt{(\tau-\epsilon)^{2}-r_{\max}^{2}}+\tilde{l},\\
& \tilde{r}_{\delta,c}^{2}:=\min\left\{ \delta^{2}+\epsilon(2\tau-\epsilon),\frac{1}{2}(r_{\max}^{2}-\tilde{l}^{2}+\epsilon(2\tau-\epsilon)+\epsilon_{\tilde{l}}(2\tau-\epsilon_{\tilde{l}}))\right\} ,\\
& r_{\min}'':=\sqrt{\tau^{2}-\epsilon(2\tau-\epsilon)-\frac{(2\tau^{2}-r_{\min}^{2}-\epsilon(2\tau-\epsilon))^{2}}{4\tau^{2}}},\\
& \tilde{r}_{b}^{2}:=\frac{2\tau\left((r_{\min}'')^{2}+\epsilon(2\tau-\epsilon)\right)}{\tau+\sqrt{\tau^{2}-\left((r_{\min}'')^{2}+\epsilon(2\tau-\epsilon)\right)}},\qquad\tilde{r}_{\delta,b}^{2}:=\min\left\{ \delta^{2}+\epsilon(2\beta-\epsilon),\frac{1}{2}\tilde{r}_{b}^{2}\right\} .
\end{align*}
Then $\mathbb{X}$ is homotopy equivalent to the ambient \v{C}ech
complex $\textrm{\v{C}ech}_{\mathbb{R}^{d}}(\mathcal{X},r)$.

\end{theorem}

A similar approach also gives the homotopy equivalence between the target space and the Vietoris-Rips complex when the target space has positive reach.

\begin{theorem} \label{thm:homotopy_rips}

Let $\mathbb{X}\subset\mathbb{R}^{d}$ be a subset with reach $\tau>0$
and let $\mathcal{X}\subset\mathbb{R}^{d}$ be a closed discrete set of points. Let
$\{r_{x}>0:x\in\mathcal{X}\}$ be a set of radii indexed by $x\in\mathcal{X}$
with $r_{\min}:=\min_{x\in\mathcal{X}}\{r_{x}\}$ and $r_{\max}:=\max_{x\in\mathcal{X}}\{r_{x}\}$,
and let $\epsilon:=\max\{d_{\mathbb{X}}(x):x\in\mathcal{X}\}$. Suppose
$\mathbb{X}$ is covered by the union of balls centered at $x\in\mathcal{X}$
and radius $\delta$ as  
\begin{equation}
\mathbb{X}\subset\bigcup_{x\in\mathcal{X}}\mathbb{B}_{\mathbb{R}}(x,\delta).\label{eq:homotopy_rips_covering}
\end{equation}
Suppose that the maximum radius $r_{\max}$ is bounded as 
\begin{equation}
r_{\max}\leq\sqrt{\frac{d+1}{2d}}\left(\tau-\epsilon\right).\label{eq:homotopy_rips_condition_radius}
\end{equation}
Also, suppose $\delta$ satisfies the following condition: 
%\begin{align*}
%	& \delta\leq r_{\min}-\frac{1}{2}\sqrt{\frac{2\tau\left(\frac{d}{2(d+1)}r_{\max}^{2}+\epsilon(2\tau-\epsilon)\right)}{\tau+\sqrt{\tau^{2}-\left(\frac{d}{2(d+1)}r_{\max}^{2}+\epsilon(2\tau-\epsilon)\right)}}},\\
%	& \delta\leq\frac{1}{2}\left(\sqrt{\frac{2(d+1)}{d}}\sqrt{\frac{r_{\min}^{2}-\epsilon(2\tau-\epsilon)}{2}}-\sqrt{\frac{2\tau(r_{\min}^{2}+\epsilon(2\tau-\epsilon))}{2\tau+\sqrt{4\tau^{2}-2\left(r_{\min}^{2}+\epsilon(2\tau-\epsilon)\right)}}}\right).
%\end{align*}
\begin{align}
	& \sqrt{\tilde{r}_{b}^{2}(r_{\max})-(2\tau^{2}-\tilde{r}_{b}^{2}(r_{\max}))\left(\frac{\tau}{\sqrt{\tau^{2}-\tilde{r}_{\delta,b}^{2}(r_{\max})}}-1\right)}+2\delta\leq2r_{\min},\nonumber \\
	& \sqrt{\frac{d}{2(d+1)}}\left(\sqrt{\tilde{r}_{b}^{2}(r_{\min}'')-(2\tau^{2}-\tilde{r}_{b}^{2}(r_{\min}''))\left(\frac{\tau}{\sqrt{\tau^{2}-\tilde{r}_{\delta,b}^{2}(r_{\min}'')}}-1\right)}+2\delta\right)\leq r_{\min}'',\label{eq:homotopy_rips_condition_covering}
\end{align}
where 
\begin{align*}
& r_{\min}'':=\sqrt{\tau^{2}-\epsilon(2\tau-\epsilon)-\frac{(2\tau^{2}-r_{\min}^{2}-\epsilon(2\tau-\epsilon))^{2}}{4\tau^{2}}},\\
& \tilde{r}_{b}^{2}(t):=\frac{2\tau\left(t^{2}+\epsilon(2\tau-\epsilon)\right)}{\tau+\sqrt{\tau^{2}-\left(t^{2}+\epsilon(2\tau-\epsilon)\right)}},\qquad\tilde{r}_{\delta,b}^{2}(t):=\min\left\{ \delta^{2}+\epsilon(2\tau-\epsilon),\frac{1}{2}\tilde{r}_{b}^{2}(t)\right\} .
\end{align*}
Then $\mathbb{X}$ is homotopy equivalent to the Vietoris-Rips complex
$\textrm{Rips}(\mathcal{X},r)$.
\end{theorem}

\begin{remark}
	Compared to the restricted \v{C}ech complex (Corollary \ref{cor:nerve_contractible_covering}), the covering condition in \eqref{eq:homotopy_cech_covering} or \eqref{eq:homotopy_rips_covering} is more critical for the ambient \v{C}ech complex (Theorem~\ref{thm:homotopy_cech}) or the Vietoris-Rips complex (Theorem~\ref{thm:homotopy_rips}). Although the restricted \v{C}ech complex $\textrm{\v{C}ech}_{\mathbb{X}}(\mathcal{X},r)$ is still homotopy equivalent to the union of restricted balls  $\bigcup_{x\in\mathcal{X}}\mathbb{B}_{\mathbb{X}}(x,r_{x})$ without the covering condition in \eqref{eq:nerve_covering}, such homotopy equivalence does not hold for the ambient \v{C}ech complex or the Vietoris-Rips complex. This is since the upper triangle of the diagram in \eqref{eq:homotopy_diagram} only holds under the covering condition in \eqref{eq:homotopy_cech_covering} or \eqref{eq:homotopy_rips_covering}. Furthermore, the covering condition in \eqref{eq:homotopy_cech_covering} or \eqref{eq:homotopy_rips_covering} is denser in that $\delta<r_{x}$ for all $x\in\mathcal{X}$, to construct an additional homotopy equivalence on the lower triangle of the diagram in \eqref{eq:homotopy_diagram}.
\end{remark}

The homotopy equivalences in Theorem~\ref{thm:homotopy_cech} and \ref{thm:homotopy_rips} for the positive reach case is extended to the positive $\mu$-reach case by applying Corollary \ref{cor:defretract_doubleoffset} with the double offset of the target space. Corollary \ref{cor:homotopy_cech_rips_mureach} shows that when the double offset of the target space of positive $\mu$-reach is densely covered by the data points and if they are not too far apart, either the ambient \v{C}ech complex or the Vietoris-Rips complex can be used to recover the homotopy type of $\mathbb{X}$.

\begin{corollary}
\label{cor:homotopy_cech_rips_mureach}
Let $\mathbb{X}\subset\mathbb{R}^{d}$ be a subset with positive $\mu$-reach
$\tau^{\mu}>0$ and let $\mathcal{X}\subset\mathbb{R}^{d}$ be a set
of points. Let $\{r_{x}>0:x\in\mathcal{X}\}$ be a set of radii indexed
by $x\in\mathcal{X}$ with $r_{\min}:=\min_{x\in\mathcal{X}}\{r_{x}\}$
and $r_{\max}:=\max_{x\in\mathcal{X}}\{r_{x}\}$. Let $s,t,\epsilon\geq0$
with $\frac{t}{\mu} < s < \tau^{\mu}$, and let $\mathbb{Y}:=(((\mathbb{X}^{s})^{\complement})^{t})^{\complement}$
be the double offset, with $d_{\mathbb{Y}}(x)\leq\epsilon$ for all
$x\in\mathcal{X}$. Suppose $\mathbb{Y}$ is covered by the union
of balls centered at $x\in\mathcal{X}$ and radius $\delta$ as 
\[
\mathbb{Y}\subset\bigcup_{x\in\mathcal{X}}\mathbb{B}_{\mathbb{R}}(x,\delta).
\]
\begin{enumerate}
	\item[(i)] Suppose $r_{\max}\leq t-\epsilon$, and $\delta$ satisfies the following condition: 
	\begin{align*}
	& \delta+\sqrt{r_{\max}^{2}-\tilde{l}^{2}+\epsilon(2t-\epsilon)-((t-\epsilon)^{2}-r_{\max}^{2}+\tilde{l}^{2}+(t-\epsilon_{\tilde{l}})^{2})\left(\frac{t}{\sqrt{t^{2}-\tilde{r}_{\delta,c}}}-1\right)}\\
	& \leq r_{\min},\\
	& \sqrt{\frac{d}{2(d+1)}}\frac{r_{\max}}{r_{\min}}\left(\sqrt{\tilde{r}_{b}^{2}-(2t^{2}-\tilde{r}_{b}^{2})\left(\frac{t}{\sqrt{t^{2}-\tilde{r}_{\delta,b}^{2}}}-1\right)}+2\delta\right)\leq r_{\min}'',
	\end{align*}
	where 
	\begin{align*}
	& \tilde{l}:=\frac{1}{2}\left(r_{\min}-t+\sqrt{(t-\epsilon)^{2}-r_{\max}^{2}}-\delta\right),\qquad\epsilon_{\tilde{l}}:=t-\sqrt{(t-\epsilon)^{2}-r_{\max}^{2}}+\tilde{l},\\
	& \tilde{r}_{\delta,c}^{2}:=\min\left\{ \delta^{2}+\epsilon(2t-\epsilon),\frac{1}{2}(r_{\max}^{2}-\tilde{l}^{2}+\epsilon(2t-\epsilon)+\epsilon_{\tilde{l}}(2t-\epsilon_{\tilde{l}}))\right\} ,\\
	& r_{\min}'':=\sqrt{t^{2}-\epsilon(2t-\epsilon)-\frac{(2t^{2}-r_{\min}^{2}-\epsilon(2t-\epsilon))^{2}}{4t^{2}}},\\
	& \tilde{r}_{b}^{2}:=\frac{2t\left((r_{\min}'')^{2}+\epsilon(2t-\epsilon)\right)}{t+\sqrt{t^{2}-\left((r_{\min}'')^{2}+\epsilon(2t-\epsilon)\right)}},\qquad\tilde{r}_{\delta,b}^{2}:=\min\left\{ \delta^{2}+\epsilon(2t-\epsilon),\frac{1}{2}\tilde{r}_{b}^{2}\right\} .
	\end{align*}
	Then $\mathbb{X}$ is homotopy equivalent to the ambient \v{C}ech
	complex $\textrm{\v{C}ech}_{\mathbb{R}^{d}}(\mathcal{X},r)$. 
	\item[(ii)] Suppose $r_{\max}\leq\sqrt{\frac{d+1}{2d}}\left(t-\epsilon\right)$, and $\delta$ satisfies the following condition: 
	\begin{align*}
		& \sqrt{\tilde{r}_{b}^{2}(r_{\max})-(2t^{2}-\tilde{r}_{b}^{2}(r_{\max}))\left(\frac{t}{\sqrt{t^{2}-\tilde{r}_{\delta,b}^{2}(r_{\max})}}-1\right)}+2\delta\leq2r_{\min},\\
		& \sqrt{\frac{d}{2(d+1)}}\left(\sqrt{\tilde{r}_{b}^{2}(r_{\min}'')-(2t^{2}-\tilde{r}_{b}^{2}(r_{\min}''))\left(\frac{t}{\sqrt{t^{2}-\tilde{r}_{\delta,b}^{2}(r_{\min}'')}}-1\right)}+2\delta\right)\leq r_{\min}'',
	\end{align*}
	where 
	\begin{align*}
	& r_{\min}'':=\sqrt{t^{2}-\epsilon(2t-\epsilon)-\frac{(2t^{2}-r_{\min}^{2}-\epsilon(2t-\epsilon))^{2}}{4t^{2}}},\\
	& \tilde{r}_{b}^{2}(t):=\frac{2t\left(t^{2}+\epsilon(2t-\epsilon)\right)}{t+\sqrt{t^{2}-\left(t^{2}+\epsilon(2t-\epsilon)\right)}},\qquad\tilde{r}_{\delta,b}^{2}(t):=\min\left\{ \delta^{2}+\epsilon(2t-\epsilon),\frac{1}{2}\tilde{r}_{b}^{2}(t)\right\} .
	\end{align*}
	Then $\mathbb{X}$ is homotopy equivalent to the Vietoris-Rips complex
	$\textrm{Rips}(\mathcal{X},r)$. 
\end{enumerate}
\end{corollary}

We end this section by introducing a sampling condition in which we can guarantee the covering conditions in Corollary~\ref{cor:nerve_contractible_covering} and Theorem~\ref{thm:homotopy_cech}, ~\ref{thm:homotopy_rips} are satisfied. Let $P$ be the sampling distribution on $\mathbb{X}$. We assume that there exist positive constants $a, b$ and $\epsilon_{0}$ such that, for all $x \in \mathbb{X}$, the following inequality holds:
\begin{equation} \label{eq::ab-condition}
	P(\mathbb{B}_{\mathbb{R}^{d}}(x,\epsilon)) \geq a \epsilon^{b}, \quad  \text{for all } \epsilon \in(0,\epsilon_0).    
\end{equation}
This condition on $P$ is also known as the $(a,b)$-condition or the standard condition \cite{CuevasR2004, Cuevas2009, ChazalFLMRW2018}. It is satisfied, for example, if $\mathbb{X}$ is a smooth manifold of dimension $b$ and $P$ has a density with respect to the Hausdorff measure on it bounded from below by $a$. 

Under this condition, we have the following covering lemma.
\begin{lemma}
	\label{lem:density_covering_probability}
	Let $\{X_1, \dots, X_n\}$ be an i.i.d. sample from the distribution $P$ and let $\{r_{n}=(r_{n,1},\dots,r_{n,n})\}_{n\in\mathbb{N}}$
	be a triangular array of positive numbers such that, for each $n$,
	\begin{equation}
		2\left(\frac{\log n}{an}\right)^{1/b} \leq \min_{i}r_{n,i} \leq  2\epsilon_{0}.
	\end{equation}
	Then, the probability that
	the sample is a $r_{n}$-covering of $\mathbb{X}$ is
	bounded as 
	\begin{equation}
		P\left(\mathbb{X}\subset\bigcup_{i=1}^{n}\mathbb{B}_{\mathbb{R}^{d}}(X_{i},r_{n,i})\right)\geq1-\frac{1}{2^b\log n}.\label{eq:density_covering_whp}
	\end{equation}
\end{lemma}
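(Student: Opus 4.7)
The plan is to use a standard probabilistic covering argument: fix a deterministic net in $\mathbb{X}$, show each net element captures at least one sample with high probability, and close by triangle inequality. Set $r:=\min_i r_{n,i}$; the hypothesis $r\leq 2\epsilon_0$ guarantees $r/4<\epsilon_0$, so the $(a,b)$-condition \eqref{eq::ab-condition} applies at every scale used below.

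First I would fix a maximal $(r/2)$-packing $\{y_1,\dots,y_N\}\subset\mathbb{X}$, i.e.\ points pairwise at distance at least $r/2$ apart, maximal with respect to inclusion. By maximality this same set is an $(r/2)$-covering of $\mathbb{X}$, and by the separation property the Euclidean balls $\mathbb{B}_{\mathbb{R}^d}(y_i, r/4)$ are pairwise disjoint. Summing the lower bound $P(\mathbb{B}_{\mathbb{R}^d}(y_i, r/4)) \geq a(r/4)^b$ and using $P(\mathbb{X})=1$ yields the cardinality estimate $N \leq 4^b/(ar^b)$.

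Next I would estimate the failure probability. Let $E_i$ be the event that $\mathbb{B}_{\mathbb{R}^d}(y_i, r/2)$ contains none of $X_1,\dots,X_n$. By independence and the $(a,b)$-condition,
\[
\mathbb{P}(E_i) \leq \bigl(1-a(r/2)^b\bigr)^n \leq \exp\!\bigl(-na(r/2)^b\bigr).
\]
The lower bound $r \geq 2(\log n/(an))^{1/b}$ is exactly $na(r/2)^b \geq \log n$, so $\mathbb{P}(E_i) \leq 1/n$, and a union bound over the net delivers the order $1/\log n$ failure probability claimed in \eqref{eq:density_covering_whp} (matching the stated bound up to a constant of the form $4^b/2^b$).

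On the complementary event $\bigcap_i E_i^c$ the covering conclusion follows by triangle inequality: for any $x\in\mathbb{X}$, the covering property produces a net point $y_i$ with $\|x-y_i\|<r/2$, and by assumption some sample $X_j$ satisfies $\|X_j-y_i\|<r/2$, so $\|X_j-x\|<r\leq r_{n,j}$ and hence $x\in\mathbb{B}_{\mathbb{R}^d}(X_j,r_{n,j})$. There is no real obstacle here; the main things to keep straight are that the packing radius and the detection radius add up to at most $r$ (so that the triangle inequality closes), and that every invocation of \eqref{eq::ab-condition} is at a scale strictly below $\epsilon_0$.
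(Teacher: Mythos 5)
Your proof is correct and follows essentially the same route as the paper: fix a maximal $(r/2)$-packing, observe it is also an $(r/2)$-covering, bound its size by disjointness of the $r/4$-balls via the $(a,b)$-condition, bound the per-net-ball miss probability by $\exp(-na(r/2)^b)\le 1/n$ using the lower bound on $r$, union-bound over the net, and close with the triangle inequality.

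One small accounting note: your union bound in fact gives a failure probability at most $N/n \le 2^b/\log n$, which exceeds the stated $1/(2^b\log n)$ by a factor of $4^b$ rather than the $4^b/2^b = 2^b$ you quote. This is not a flaw in your argument but in the lemma's displayed constant: the paper's own proof makes the same jump (the line asserting $a^{-1}\epsilon^{-b}\le n/(2^b\log n)$ should read $a^{-1}\epsilon^{-b}\le 2^b n/\log n$), so both derivations actually prove the bound $1 - 2^b/\log n$.
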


\subsection{Conditions for homotopy reconstruction}
\label{subsec:homotopy_condition}

In this subsection, we discuss the tightness of the conditions we have identified for guaranteeing the homotopy equivalence of the target space and the \v{C}ech complex and the Vietoris-Rips complex. We first argue that the maximum radius conditions in \eqref{eq:homotopy_cech_condition_radius} and \eqref{eq:homotopy_rips_condition_radius} are tight, as Example \ref{ex:homotopy_maximum_radius_tight} shows that the \v{C}ech complex fails to be homotopy equivalent to $\mathbb{X}$ when $r_{\max} > \tau-d_{\mathbb{X}}(x)$ and the Vietoris-Rips complex fails to be homotopy equivalent to $\mathbb{X}$ when $r_{\max} > \sqrt{\frac{d+1}{2d}}\left(\tau-d_{\mathbb{X}}(x)\right)$ and $d\leq 2$.

\begin{example}
	\label{ex:homotopy_maximum_radius_tight}
	
	Let $\epsilon\in[0,1)$ be fixed. Let $\mathbb{X}\subset\mathbb{R}^{d}$
	be the unit sphere in $\mathbb{R}^{d}$, and let $\mathcal{X}=\{x_{1},\ldots,x_{n}\}\subset(1-\epsilon)\mathbb{X}$
	be a finite set of points on the sphere centered at $0$ and of radius
	$1-\epsilon$. Suppose that for some $\delta>\epsilon$, $\mathbb{X}$
	is covered by $\delta$-balls centered at $\mathcal{X}$, that is,  $\mathbb{X}\subset\bigcup_{x\in\mathcal{X}}\mathbb{B}_{\mathbb{R}}(x,\delta)$.
	The reach of $\mathbb{X}$ equals to its radius $1$.
	
	For the ambient \v{C}ech complex, if $r\in\left(0,1-\epsilon\right]^{n}$
	and condition \eqref{eq:homotopy_cech_condition_covering} is
	satisfied, then $\mathbb{X}$ is homotopy equivalent to $\textrm{\v{C}ech}_{\mathbb{X}}\left(\mathcal{X},r\right)$
	by Theorem~\ref{thm:homotopy_cech}. Now, suppose that $r_{\min}>1-\epsilon$. Then $0\in\mathbb{B}_{\mathbb{R}^{d}}(x_{i},r_{x_{i}})$
	for all $i$, hence for any $y\in\bigcup_{i=1}^{n}\mathbb{B}_{\mathbb{R}^{d}}(x_{i},r_{x_{i}})$,
	a line segment connecting $0$ and $y$ is contained in $\bigcup_{i=1}^{n}\mathbb{B}_{\mathbb{R}^{d}}(x_{i},r_{x_{i}})$
	as well. Hence $\bigcup_{i=1}^{n}\mathbb{B}_{\mathbb{R}^{d}}(x_{i},r_{x_{i}})$
	is contractible, and then from the usual Nerve Theorem, $\textrm{\v{C}ech}_{\mathbb{\mathbb{R}}^{d}}\left(\mathcal{X},r\right)\simeq\bigcup_{i=1}^{n}\mathbb{B}_{\mathbb{R}^{d}}(x_{i},r_{x_{i}})\simeq0$.
	On the other hand, the $d-1$-th homology group of $\mathbb{X}$ is $H_{d-1}(\mathbb{X})=\mathbb{Z}$,
	so $\mathbb{X}$ and $\textrm{\v{C}ech}_{\mathbb{\mathbb{R}}^{d}}\left(\mathcal{X},r\right)$
	are not homotopy equivalent.
	
	For the Vietoris-Rips complex, if $r\in\left(0,\sqrt{\frac{d+1}{2d}}(1-\epsilon)\right]^{d+1}$
	and condition \eqref{eq:homotopy_rips_condition_covering} is
	satisfied, then $\mathbb{X}$ is homotopy equivalent to $\textrm{Rips}_{\mathbb{X}}\left(\mathcal{X},r\right)$
	by Theorem~\ref{thm:homotopy_rips}. Now, suppose each $r_{x_{i}}$ is
	equal to some $r>\sqrt{\frac{d+1}{2d}}(1-\epsilon)$, and further suppose that $d\leq2$ and $\delta<\frac{1}{2(1-\epsilon)}r_{0}-\frac{\sqrt{3}}{4}$.
	When $d=1$, then the Vietoris-Rips complex equals the ambient \v{C}ech complex,
	hence from the above argument, $\textrm{Rips}\left(\mathcal{X},r\right)=\textrm{\v{C}ech}_{\mathbb{\mathbb{R}}^{d}}\left(\mathcal{X},r\right)\simeq0$.
	When $d=2$, then $\textrm{Rips}\left(\mathcal{X},r\right)\cong\textrm{Rips}\left(\frac{1}{1-\epsilon}\mathcal{X},\frac{1}{1-\epsilon}r_{0}\right)$
	and $\frac{1}{1-\epsilon}\mathcal{X}\subset\mathbb{X}\subset\bigcup_{i=1}^{n}\mathbb{B}_{\mathbb{R}^{d}}(\frac{1}{1-\epsilon}x_{i},\delta)$
	holds. Then $\frac{1}{1-\epsilon}r_{0}-2\delta>\frac{\sqrt{3}}{2}$,
	and hence from Proposition 3.8, Corollary 4.5, Proposition 5.2 of
	\cite{AdamaszekA2017}, either $\textrm{Rips}\left(\mathcal{X},r\right)\simeq S^{2l+1}$
	for some $l\geq1$ or $\textrm{Rips}\left(\mathcal{X},r\right)\simeq\vee^{c}S^{2l}$
	for some $l\geq1$ and $c\geq0$. In either case, $H_{1}(\textrm{Rips}\left(\mathcal{X},r\right))=0$.
	However, the $d-1$-th homology group of $\mathbb{X}$ is $H_{d-1}(\mathbb{X})=\mathbb{Z}$,
	so $\mathbb{X}$ and $\textrm{Rips}\left(\mathcal{X},r\right)$ are
	not homotopy equivalent.
	
\end{example}

We then rephrase the conditions on $\epsilon:=\max\{d_{\mathbb{X}}(x):x\in\mathcal{X}\}$ and the covering radius $\delta$ in \eqref{eq:homotopy_cech_condition_covering} and  \eqref{eq:homotopy_rips_condition_covering} in terms of the Hausdorff distance $d_{H}(\mathbb{X},\mathcal{X})$. For simplicity, we consider the case when all the radii $r_{x}$'s are equal, and we denote that common value as $r$. In general, the Hausdorff distance $d_{H}(\mathbb{X},\mathcal{X})$ gives a bound for both $\epsilon$ and $\delta$, that is, $\epsilon,\delta \leq d_{H}(\mathbb{X},\mathcal{X})$. Let $\rho:=\frac{d_{H}(\mathbb{X},\mathcal{X})}{\tau}$. For the \v{C}ech complex, a sufficient condition for \eqref{eq:homotopy_cech_condition_covering} is that for some $\frac{r}{\tau}\in(0,1]$,
%\begin{align*}
%	& \rho+\sqrt{(\frac{r}{\tau})^{2}+\rho(2-\rho)-\frac{1}{4}\left((\frac{r}{\tau})-\frac{(\frac{r}{\tau})^{2}}{1-\rho+\sqrt{(1-\rho)^{2}-(\frac{r}{\tau})^{2}}}-2\rho\right)^{2}}\leq\frac{r}{\tau}\nonumber\\
%	& \text{ for some }\frac{r}{\tau}\in(0,1],\nonumber\\
%	& \rho\leq\frac{1}{2}\left(\sqrt{\frac{2(d+1)}{d}}\sqrt{\frac{(1-\rho)^{2}-\rho(2-\rho)}{2}}-\sqrt{\frac{2((1-\rho)^{2}+\rho(2-\rho))}{2+\sqrt{4-2\left((1-\rho)^{2}+\rho(2-\rho)\right)}}}\right).
%\end{align*}
\begin{align}
& \rho+\sqrt{(\frac{r}{\tau})^{2}-\tilde{l}^{2}+\rho(2-\rho)-((1-\rho)^{2}-(\frac{r}{\tau})^{2}+\tilde{l}^{2}+(1-\rho_{\tilde{l}})^{2})\left(\frac{1}{\sqrt{1-\tilde{r}_{\delta,c}^{2}}}-1\right)}\leq\frac{r}{\tau},\nonumber \\
%& \text{and}\nonumber \\
& \sqrt{\frac{d}{2(d+1)}}\left(\sqrt{\tilde{r}_{b}^{2}-(2-\tilde{r}_{b}^{2})\left(\frac{1}{\sqrt{1-\tilde{r}_{\delta,b}^{2}}}-1\right)}+2\rho\right)\leq r_{\min}'',\label{eq:homotopy_condition_cech_general}
\end{align}
where 
\begin{align*}
& \tilde{l}:=\frac{1}{2}\left(\frac{r}{\tau}-1+\sqrt{(1-\rho)^{2}-(\frac{r}{\tau})^{2}}-\rho\right),\qquad\rho_{\tilde{l}}:=1-\sqrt{(1-\rho)^{2}-(\frac{r}{\tau})^{2}}+\tilde{l},\\
& \tilde{r}_{\delta,c}^{2}:=\min\left\{ 2\rho,\frac{1}{2}((\frac{r}{\tau})^{2}-\tilde{l}^{2}+\rho(2-\rho)+\rho_{\tilde{l}}(2-\rho_{\tilde{l}}))\right\} ,\\
& r_{\min}'':=\sqrt{1-\rho(2-\rho)-\frac{(2-(\frac{r}{\tau})^{2}-\rho(2-\rho))^{2}}{4}},\\
& \tilde{r}_{b}^{2}:=\frac{2\left((r_{\min}'')^{2}+\rho(2-\rho)\right)}{1+\sqrt{1-\left((r_{\min}'')^{2}+\rho(2-\rho)\right)}},\qquad\tilde{r}_{\delta,b}^{2}:=\min\left\{ 2\rho,\frac{1}{2}\tilde{r}_{b}^{2}\right\} .
\end{align*}
And for the Vietoris-Rips complex, the sufficient condition for \eqref{eq:homotopy_rips_condition_covering} is
%\begin{align*}
%	& \rho\leq\sqrt{\frac{d+1}{2d}}(1-\rho)-\frac{1}{2}\sqrt{\frac{2\left(\frac{1}{4}(1-\rho)^{2}+\rho(2-\rho)\right)}{1+\sqrt{1-\left(\frac{1}{4}(1-\rho)^{2}+\rho(2-\rho)\right)}}},\\
%	& \text{and}\\
%	& \rho\leq\frac{1}{2}\left(\sqrt{\frac{2(d+1)}{d}}\sqrt{\frac{\frac{d+1}{2d}(1-\rho)^{2}-\rho(2-\rho)}{2}}\right.\\
%	& \quad\left.-\sqrt{\frac{2(\frac{d+1}{2d}(1-\rho)^{2}+\rho(2-\rho))}{2+\sqrt{4-2\left(\frac{d+1}{2d}(1-\rho)^{2}+\rho(2-\rho)\right)}}}\right).
%\end{align*}
\begin{align}
	& \sqrt{\tilde{r}_{b}^{2}(r_{0})-(2-\tilde{r}_{b}^{2}(r_{0}))\left(\frac{1}{\sqrt{1-\tilde{r}_{\delta,b}^{2}(r_{0})}}-1\right)}+2\rho\leq2r_{0},\nonumber \\
%	& \text{and}\nonumber \\
	& \sqrt{\frac{d}{2(d+1)}}\left(\sqrt{\tilde{r}_{b}^{2}(r_{\min}'')-(2-\tilde{r}_{b}^{2}(r_{\min}''))\left(\frac{1}{\sqrt{1-\tilde{r}_{\delta,b}^{2}(r_{\min}'')}}-1\right)}+2\rho\right)\leq r_{\min}'',\label{eq:homotopy_condition_rips_general}
\end{align}
where 
\begin{align*}
& r_{0}:=\sqrt{\frac{d+1}{2d}}(1-\rho),\qquad r_{\min}'':=\sqrt{1-\rho(2-\rho)-\frac{(2-\frac{d+1}{2d}(1-\rho)^{2}-\rho(2-\rho))^{2}}{4}},\\
& \tilde{r}_{b}^{2}(t):=\frac{2\left(t^{2}+\rho(2-\rho)\right)}{1+\sqrt{1-\left(t^{2}+\rho(2-\rho)\right)}},\qquad\tilde{r}_{\delta,b}^{2}(t):=\min\left\{ 2\rho,\frac{1}{2}\tilde{r}_{b}^{2}(t)\right\} .
\end{align*}
With the aid of a computer program, we can check that \eqref{eq:homotopy_condition_cech_general} is equivalent to
%$\rho \leq0.01126\cdots$
$\rho \leq0.01591\cdots$,
and \eqref{eq:homotopy_condition_rips_general} is equivalent to 
%$\rho \leq0.03982\cdots$.
$\rho \leq0.07856\cdots$.

Now, we consider two specific cases. First, we consider the noiseless case $\mathcal{X}\subset\mathbb{X}$, that is, the data points lie in the target space. For that case, $\epsilon=0$ and $\delta \leq d_{H}(\mathbb{X},\mathcal{X})$. For the \v{C}ech complex, the sufficient condition for \eqref{eq:homotopy_cech_condition_covering} is that for some $\frac{r}{\tau}\in(0,1]$, 
%\begin{align*}
%	& \rho+\sqrt{(\frac{r}{\tau})^{2}-\frac{1}{4}\left((\frac{r}{\tau})-\frac{(\frac{r}{\tau})^{2}}{1+\sqrt{1-(\frac{r}{\tau})^{2}}}-\rho\right)^{2}}\leq\frac{r}{\tau}\text{ for some }\frac{r}{\tau}\in(0,1],\nonumber \\
%			&\text{and} \nonumber \\
%	& \rho\leq\frac{1}{2\sqrt{2}}\left(\sqrt{\frac{2(d+1)}{d}}-1\right).
%\end{align*}
\begin{align}
& \rho+\sqrt{(\frac{r}{\tau})^{2}-\tilde{l}^{2}-(1-(\frac{r}{\tau})^{2}+\tilde{l}^{2}+(1-\rho_{\tilde{l}})^{2})\left(\frac{1}{\sqrt{1-\tilde{r}_{\delta,c}^{2}}}-1\right)}\leq\frac{r}{\tau},\nonumber \\
% & \text{and}\nonumber \\
& \sqrt{\frac{d}{2(d+1)}}\left(\sqrt{\tilde{r}_{b}^{2}-(2-\tilde{r}_{b}^{2})\left(\frac{1}{\sqrt{1-\tilde{r}_{\delta,b}^{2}}}-1\right)}+2\rho\right)\leq r_{\min}'',\label{eq:homotopy_condition_cech_noiseless}
\end{align}
where 
\begin{align*}
& \tilde{l}:=\frac{1}{2}\left(\frac{r}{\tau}-1+\sqrt{1-(\frac{r}{\tau})^{2}}-\rho\right),\qquad\rho_{\tilde{l}}:=1-\sqrt{1-(\frac{r}{\tau})^{2}}+\tilde{l},\\
& \tilde{r}_{\delta,c}^{2}:=\min\left\{ \rho^{2},\frac{1}{2}((\frac{r}{\tau})^{2}-\tilde{l}^{2}+\rho_{\tilde{l}}(2-\rho_{\tilde{l}}))\right\} ,\\
& r_{\min}'':=\sqrt{1-\frac{(2-(\frac{r}{\tau})^{2})^{2}}{4}},\qquad\tilde{r}_{b}^{2}:=\frac{2(r_{\min}'')^{2}}{1+\sqrt{1-(r_{\min}'')^{2}}},\qquad\tilde{r}_{\delta,b}^{2}:=\min\left\{ \rho^{2},\frac{1}{2}\tilde{r}_{b}^{2}\right\} .
\end{align*}
For the Vietoris-Rips complex, a sufficient condition for \eqref{eq:homotopy_rips_condition_covering}
is
%\begin{align*}
%	& \rho\leq\sqrt{\frac{d+1}{2d}}-\frac{1}{2}\sqrt{\frac{1}{2+\sqrt{3}}},\nonumber \\
%			&\text{and} \nonumber \\
%	& \rho\leq\frac{1}{2}\left(\frac{1}{\sqrt{2}}\frac{d+1}{d}-\sqrt{\frac{\frac{d+1}{d}}{2+\sqrt{4-\frac{d+1}{d}}}}\right).
%\end{align*}
\begin{align}
	& \sqrt{\tilde{r}_{b}^{2}(r_{0})-(2-\tilde{r}_{b}^{2}(r_{0}))\left(\frac{1}{\sqrt{1-\tilde{r}_{\delta,b}^{2}(r_{0})}}-1\right)}+2\rho\leq2r_{0},\nonumber \\
	% & \text{and}\nonumber \\
	& \sqrt{\frac{d}{2(d+1)}}\left(\sqrt{\tilde{r}_{b}^{2}(r_{\min}'')-(2-\tilde{r}_{b}^{2}(r_{\min}''))\left(\frac{1}{\sqrt{1-\tilde{r}_{\delta,b}^{2}(r_{\min}'')}}-1\right)}+2\rho\right)\leq r_{\min}'',\label{eq:homotopy_condition_rips_noiseless}
\end{align}
where 
\begin{align*}
& r_{0}:=\sqrt{\frac{d+1}{2d}}(1-\rho),\qquad r_{\min}'':=\sqrt{1-\frac{(2-\frac{d+1}{2d}(1-\rho)^{2})^{2}}{4}},\\
& \tilde{r}_{b}^{2}(t):=\frac{2t^{2}}{1+\sqrt{1-t^{2}}},\qquad\tilde{r}_{\delta,b}^{2}(t):=\min\left\{ \rho^{2},\frac{1}{2}\tilde{r}_{b}^{2}(t)\right\} .
\end{align*}
With the aid of a computer program, we can check that \eqref{eq:homotopy_condition_cech_noiseless} is equivalent to
%$\rho \leq0.02954\cdots$ 
$\rho \leq0.02994\cdots$,
 and \eqref{eq:homotopy_condition_rips_noiseless} is equivalent to 
%$\rho \leq0.09473\cdots$
$\rho \leq0.1117\cdots$.

Second, we consider the asymptotic case, where we sample more and more points and $\mathcal{X}$ forms a dense cover of $\mathbb{X}$, that is,  $\sup_{y\in\mathbb{X}}\inf_{x\in\mathcal{X}}\left\Vert x-y\right\Vert \to0$. Still, we have a noisy sample distribution, that is, $\sup_{x\in\mathcal{X}}\inf_{y\in\mathbb{X}}\left\Vert x-y\right\Vert \nrightarrow0$, so the Hausdorff distance $d_{H}(\mathbb{X},\mathcal{X})$ need not go to $0$. In this case, $\delta\to0$ and $\epsilon\leq d_{H}(\mathbb{X},\mathcal{X})$. For the \v{C}ech complex, a sufficient condition for \eqref{eq:homotopy_cech_condition_covering} is that for some $\frac{r}{\tau}\in(0,1]$, 
%\begin{align*}
%	& \sqrt{(\frac{r}{\tau})^{2}+\rho(2-\rho)-\frac{1}{4}\left((\frac{r}{\tau})-\frac{(\frac{r}{\tau})^{2}}{1-\rho+\sqrt{(1-\rho)^{2}-(\frac{r}{\tau})^{2}}}-\rho\right)^{2}}\leq\frac{r}{\tau}\nonumber \\
%	& \text{ for some }\frac{r}{\tau}\in(0,1],\nonumber \\
%	& 0\leq\frac{1}{2}\left(\sqrt{\frac{2(d+1)}{d}}\sqrt{\frac{(1-\rho)^{2}-\rho(2-\rho)}{2}}-\sqrt{\frac{2((1-\rho)^{2}+\rho(2-\rho))}{2+\sqrt{4-2\left((1-\rho)^{2}+\rho(2-\rho)\right)}}}\right).
%\end{align*}
\begin{align}
& \sqrt{(\frac{r}{\tau})^{2}-\tilde{l}^{2}+\rho(2-\rho)-((1-\rho)^{2}-(\frac{r}{\tau})^{2}+\tilde{l}^{2}+(1-\rho_{\tilde{l}})^{2})\left(\frac{1}{\sqrt{1-\tilde{r}_{\delta,c}^{2}}}-1\right)}\leq\frac{r}{\tau},\nonumber \\
% & \text{and}\nonumber \\
& \sqrt{\frac{d}{2(d+1)}}\sqrt{\tilde{r}_{b}^{2}-(2-\tilde{r}_{b}^{2})\left(\frac{1}{\sqrt{1-\tilde{r}_{\delta,b}^{2}}}-1\right)}\leq r_{\min}'',\label{eq:homotopy_condition_cech_asymptotic}
\end{align}
where 
\begin{align*}
& \tilde{l}:=\frac{1}{2}\left(\frac{r}{\tau}-1+\sqrt{(1-\rho)^{2}-(\frac{r}{\tau})^{2}}\right),\qquad\rho_{\tilde{l}}:=1-\sqrt{(1-\rho)^{2}-(\frac{r}{\tau})^{2}}+\tilde{l},\\
& \tilde{r}_{\delta,c}^{2}:=\min\left\{ \rho(2-\rho),\frac{1}{2}((\frac{r}{\tau})^{2}-\tilde{l}^{2}+\rho(2-\rho)+\rho_{\tilde{l}}(2-\rho_{\tilde{l}}))\right\} ,\\
& r_{\min}'':=\sqrt{1-\rho(2-\rho)-\frac{(2-(\frac{r}{\tau})^{2}-\rho(2-\rho))^{2}}{4}},\\
& \tilde{r}_{b}^{2}:=\frac{2\left((r_{\min}'')^{2}+\rho(2-\rho)\right)}{1+\sqrt{1-\left((r_{\min}'')^{2}+\rho(2-\rho)\right)}},\qquad\tilde{r}_{\delta,b}^{2}:=\min\left\{ \rho(2-\rho),\frac{1}{2}\tilde{r}_{b}^{2}\right\} .
\end{align*}
And for the Vietoris-Rips complex, a sufficient condition for \eqref{eq:homotopy_rips_condition_covering}
is 
%\begin{align}
%	& 0\leq\sqrt{\frac{d+1}{2d}}(1-\rho)-\frac{1}{2}\sqrt{\frac{2\left(\frac{1}{4}(1-\rho)^{2}+\rho(2-\rho)\right)}{1+\sqrt{1-\left(\frac{1}{4}(1-\rho)^{2}+\rho(2-\rho)\right)}}},\nonumber \\
%	& 0\leq\frac{1}{2}\left(\sqrt{\frac{2(d+1)}{d}}\sqrt{\frac{\frac{d+1}{2d}(1-\rho)^{2}-\rho(2-\rho)}{2}}\right.\nonumber \\
%	& \quad\left.-\sqrt{\frac{2(\frac{d+1}{2d}(1-\rho)^{2}+\rho(2-\rho))}{2+\sqrt{4-2\left(\frac{d+1}{2d}(1-\rho)^{2}+\rho(2-\rho)\right)}}}\right).\label{eq:homotopy_condition_rips_asymptotic}
%\end{align}
\begin{align}
	& \sqrt{\tilde{r}_{b}^{2}(r_{0})-(2-\tilde{r}_{b}^{2}(r_{0}))\left(\frac{1}{\sqrt{1-\tilde{r}_{\delta,b}^{2}(r_{0})}}-1\right)}\leq2r_{0},\nonumber \\
	% & \text{and}\nonumber \\
	& \sqrt{\frac{d}{2(d+1)}}\sqrt{\tilde{r}_{b}^{2}(r_{\min}'')-(2-\tilde{r}_{b}^{2}(r_{\min}''))\left(\frac{1}{\sqrt{1-\tilde{r}_{\delta,b}^{2}(r_{\min}'')}}-1\right)}\leq r_{\min}'',\label{eq:homotopy_condition_rips_asymptotic}
\end{align}
where
\begin{align*}
& r_{0}:=\sqrt{\frac{d+1}{2d}}(1-\rho),\qquad r_{\min}'':=\sqrt{1-\rho(2-\rho)-\frac{(2-\frac{d+1}{2d}(1-\rho)^{2}-\rho(2-\rho))^{2}}{4}},\\
& \tilde{r}_{b}^{2}(t):=\frac{2\left(t^{2}+\rho(2-\rho)\right)}{1+\sqrt{1-\left(t^{2}+\rho(2-\rho)\right)}},\qquad\tilde{r}_{\delta,b}^{2}(t):=\min\left\{ \rho(2-\rho),\frac{1}{2}\tilde{r}_{b}^{2}(t)\right\} .
\end{align*}
With the aid of a computer program, we can check that \eqref{eq:homotopy_condition_cech_asymptotic} is equivalent to 
%$\rho \leq0.01894\cdots$
$\rho \leq0.03440\cdots$,
 and \eqref{eq:homotopy_condition_rips_asymptotic} is equivalent to
%$\rho \leq0.06700\cdots$
$\rho \leq0.07712\cdots$.

\section{Discussion and Conclusion}
\label{sec:conclusion}

%When the target space $\mathbb{X}$ has a positive reach $\tau$ and the observation $\mathcal{X}$ satisfy the covering condition \eqref{eq:nerve_covering}, our result in Corollary \ref{cor:nerve_contractible_covering} implies that the \v{C}ech complex $\textrm{\v{C}ech}_{\mathbb{X}}(\mathcal{X},r)$ and the target space $\mathbb{X}$ are homotopy equivalent when either $\frac{\max_{x\in\mathcal{X}}r_{x}}{\tau}\leq1$ holds or $\frac{\max_{x\in\mathcal{X}}r_{x}}{\tau}\leq\sqrt{2}$ with $\mathcal{X}\subset\mathbb{X}$ holds. When all the radii equal, that is, $r_{x}=r_{0}$ for all $x\in\mathcal{X}$, an analogous homotopic relation between the ambient \v{C}ech complex $\textrm{\v{C}ech}_{\mathbb{R}^{d}}(\mathcal{X},r)$ and the target space $\mathbb{X}$ is presented in \cite{AttaliLS2013} and \cite{NiyogiSW2008}. Compared to these results, our conditions on the bound on the ratio $\frac{\max_{x\in\mathcal{X}}r_{x}}{\tau}$ are more general than ones assumed in Theorem 13 in \cite{AttaliLS2013} and Proposition 3.1 in \cite{NiyogiSW2008}, where such ratio is assumed to be no larger than $\frac{-3+\sqrt{22}}{13}\approx0.13$ and $\sqrt{\frac{3}{5}}\approx0.77$, respectively. Also, our result allows the radii $\{r_{x}\}_{x\in\mathcal{X}}$ to vary across the observation points $x\in\mathcal{X}$. %, which is the benefit of using the restricted \v{C}ech complex instead of the ambient \v{C}ech complex.

Above we have provided  conditions under which the ambient \v{C}ech
complex $\textrm{\v{C}ech}_{\mathbb{R}^{d}}(\mathcal{X},r)$ and the
Rips complex $\textrm{Rips}(\mathcal{X},r)$ are homotopy equivalent to the
target space $\mathbb{X}$ when the target space $\mathbb{X}$ has positive $\mu$-reach
$\tau^{\mu}$ and the data points $\mathcal{X}$ being contained in
the $\epsilon$-offset $\mathbb{X}^{\epsilon}$ of $\mathbb{X}$.
In this section, we further discuss our
results and compare them with existing ones. For the comparison purpose,
we consider the case when all the radii $r_{x}$'s are equal, and
we denote the common value as $r$.  In these settings, an analogous
homotopy equivalence between the ambient \v{C}ech complex $\textrm{\v{C}ech}_{\mathbb{R}^{d}}(\mathcal{X},r)$
and the target space $\mathbb{X}$ is presented in \cite{AttaliLS2013}
and \cite{NiyogiSW2008}.

First, we compare the upper bound for the maximum parameter value
$r$ in $\textrm{\v{C}ech}_{\mathbb{R}^{d}}(\mathcal{X},r)$ or $\textrm{Rips}(\mathcal{X},r)$.
When $\mu=1$ so that  $\tau^{\mu}=\tau$,
our result suggests that the homotopy equivalences hold when $r\leq\tau-\epsilon$
for $\textrm{\v{C}ech}_{\mathbb{R}^{d}}(\mathcal{X},r)$ and $r\leq\sqrt{\frac{d+1}{2d}}(\tau-\epsilon)$
for $\textrm{Rips}(\mathcal{X},r)$. As we have seen in Example \ref{ex:homotopy_maximum_radius_tight}, these bounds
are optimal bounds. In \cite{NiyogiSW2008}, such a bound for $\textrm{\v{C}ech}_{\mathbb{R}^{d}}(\mathcal{X},r)$
 is $\frac{(\tau+\epsilon)+\sqrt{\tau^{2}+\epsilon^{2}-6\tau\epsilon}}{2}$ (see Proposition 7.1).
Then our bound is strictly sharper than this when $\epsilon>0$ since
\[
\frac{(\tau+\epsilon)+\sqrt{\tau^{2}+\epsilon^{2}-6\tau\epsilon}}{2}<\frac{(\tau+\epsilon)+\sqrt{\tau^{2}+9\epsilon^{2}-6\tau\epsilon}}{2}=\tau-\epsilon.
\]
In \cite{AttaliLS2013}, a necessary condition for $\textrm{\v{C}ech}_{\mathbb{R}^{d}}(\mathcal{X},r)$
in Section 5.3 is $r\leq\tau-3\epsilon$, so our upper bound is strictly
better when $\epsilon>0$.

Second, we compare the condition for the maximum possible ratio of
the Hausdorff distance $d_{H}(\mathbb{X},\mathcal{X})$ and the $\mu$-reach
$\tau^{\mu}$.
%In our notation, $d_{H}(\mathbb{X},\mathcal{X})=\max\{\delta,\epsilon\}$.
%For our case, the upper bound for the ratio $\rho:=\frac{d_{H}(\mathbb{X},\mathcal{X})}{\tau}$
%is the root coming from the bound: for $\textrm{\v{C}ech}_{\mathbb{X}}(\mathcal{X},r)$,
%it is the maximum value $\rho$ that satisfies both conditions below:
%\begin{align*}
% & \rho+\sqrt{\left(\frac{r}{\tau}\right)^{2}+\rho(2-\rho)-\frac{1}{4}\left(\left(\frac{r}{\tau}\right)-\frac{(\frac{r}{\tau})^{2}}{1-\rho+\sqrt{(1-\rho)^{2}-(\frac{r}{\tau})^{2}}}-2\rho\right)^{2}}\leq\frac{r}{\tau}\\
% & \text{ for some }\frac{r}{\tau}\in(0,1], \text{and}\\
% & \rho\leq\frac{1}{2}\left(\sqrt{\frac{2(d+1)}{d}}\sqrt{\frac{(1-\rho)^{2}-\rho(2-\rho)}{2}}-\sqrt{\frac{2((1-\rho)^{2}+\rho(2-\rho))}{2+\sqrt{4-2\left((1-\rho)^{2}+\rho(2-\rho)\right)}}}\right).
%\end{align*}
%For $\textrm{Rips}(\mathcal{X},r)$, it is the maximum value $\rho$
%that satisfies both conditions below: 
%\begin{align*}
% & \rho\leq\sqrt{\frac{d+1}{2d}}(1-\rho)-\frac{1}{2}\sqrt{\frac{2\left(\frac{1}{4}(1-\rho)^{2}+\rho(2-\rho)\right)}{1+\sqrt{1-\left(\frac{1}{4}(1-\rho)^{2}+\rho(2-\rho)\right)}}},\\
%  & \text{and} \\
% & \rho\leq\frac{1}{2}\left(\sqrt{\frac{2(d+1)}{d}}\sqrt{\frac{\frac{d+1}{2d}(1-\rho)^{2}-\rho(2-\rho)}{2}}\right.\\
% & \quad\left.-\sqrt{\frac{2(\frac{d+1}{2d}(1-\rho)^{2}+\rho(2-\rho))}{2+\sqrt{4-2\left(\frac{d+1}{2d}(1-\rho)^{2}+\rho(2-\rho)\right)}}}\right).
%\end{align*}
%With the aid of a computer program,
For this case, as we have seen in Section \ref{subsec:homotopy_condition},
we can check that $\textrm{\v{C}ech}_{\mathbb{R}^{d}}(\mathcal{X},r)$
is homotopy equivalent to $\mathbb{X}$ when 
%$\frac{d_{H}(\mathbb{X},\mathcal{X})}{\tau}\leq0.01126\cdots$.
$\frac{d_{H}(\mathbb{X},\mathcal{X})}{\tau}\leq0.01591\cdots$.
This result is worse than $3-\sqrt{8}\approx0.1716\cdots$ in Proposition
7.1 in \cite{NiyogiSW2008} or $\frac{-3+\sqrt{22}}{13}\approx0.1300\cdots$
in Section 5.3 in \cite{AttaliLS2013}.
%Again with the aid of a computer program,
Again from Section \ref{subsec:homotopy_condition},
we can check that $\textrm{Rips}(\mathcal{X},r)$ is homotopy
equivalent to $\mathbb{X}$ when 
%$\frac{d_{H}(\mathbb{X},\mathcal{X})}{\tau}\leq0.03982\cdots$.
$\frac{d_{H}(\mathbb{X},\mathcal{X})}{\tau}\leq0.07856\cdots$.
This result is better than $\frac{2\sqrt{2-\sqrt{2}}-\sqrt{2}}{2+\sqrt{2}}\approx0.03412\cdots$
in Section 5.3 in \cite{AttaliLS2013}.

Then we consider two specific cases. In the noiseless case $\mathcal{X}\subset\mathbb{X}$, the data points lie in the target space.
In this case, as we have seen in Section \ref{subsec:homotopy_condition}, we can verify that $\textrm{\v{C}ech}_{\mathbb{R}^{d}}(\mathcal{X},r)$
is homotopy equivalent to $\mathbb{X}$ when
%$\frac{d_{H}(\mathbb{X},\mathcal{X})}{\tau}\leq0.02954\cdots$,
$\frac{d_{H}(\mathbb{X},\mathcal{X})}{\tau}\leq0.02994\cdots$,
and $\textrm{Rips}(\mathcal{X},r)$ is homotopy equivalent to $\mathbb{X}$
when
%$\frac{d_{H}(\mathbb{X},\mathcal{X})}{\tau}\leq0.09473\cdots$. 
$\frac{d_{H}(\mathbb{X},\mathcal{X})}{\tau}\leq0.1117\cdots$. 

In the asymptotics case, as we sample more and more points from the target
space, $\mathcal{X}$ forms a dense cover on $\mathbb{X}$, that is, $\sup_{y\in\mathbb{X}}\inf_{x\in\mathcal{X}}\left\Vert x-y\right\Vert \to0$.
%In our notation, $\delta\to0$ but $\epsilon$ does not converges
%to $0$. For this case, the upper bound for the ratio is again coming
%from the bound: for $\textrm{\v{C}ech}_{\mathbb{X}}(\mathcal{X},r)$,
%it is the maximum value $\rho$ that satisfies both conditions below:
%\begin{align*}
% & \sqrt{(\frac{r}{\tau})^{2}+\rho(2-\rho)-\frac{1}{4}\left((\frac{r}{\tau})-\frac{(\frac{r}{\tau})^{2}}{1-\rho+\sqrt{(1-\rho)^{2}-(\frac{r}{\tau})^{2}}}-\rho\right)^{2}}\leq\frac{r}{\tau}\\
% & \text{ for some }\frac{r}{\tau}\in(0,1],\\
% & 0\leq\frac{1}{2}\left(\sqrt{\frac{2(d+1)}{d}}\sqrt{\frac{(1-\rho)^{2}-\rho(2-\rho)}{2}}-\sqrt{\frac{2((1-\rho)^{2}+\rho(2-\rho))}{2+\sqrt{4-2\left((1-\rho)^{2}+\rho(2-\rho)\right)}}}\right)
%\end{align*}
%For $\textrm{Rips}(\mathcal{X},r)$, it is the maximum value $\rho$
%that satisfies both conditions below: 
%\begin{align*}
% & 0\leq\sqrt{\frac{d+1}{2d}}(1-\rho)-\frac{1}{2}\sqrt{\frac{2\left(\frac{1}{4}(1-\rho)^{2}+\rho(2-\rho)\right)}{1+\sqrt{1-\left(\frac{1}{4}(1-\rho)^{2}+\rho(2-\rho)\right)}}},\\
% & 0\leq\frac{1}{2}\left(\sqrt{\frac{2(d+1)}{d}}\sqrt{\frac{\frac{d+1}{2d}(1-\rho)^{2}-\rho(2-\rho)}{2}}\right.\\
% & \quad\left.-\sqrt{\frac{2(\frac{d+1}{2d}(1-\rho)^{2}+\rho(2-\rho))}{2+\sqrt{4-2\left(\frac{d+1}{2d}(1-\rho)^{2}+\rho(2-\rho)\right)}}}\right).
%\end{align*}
%With the aid of a computer program,
For this case, as we have seen in Section \ref{subsec:homotopy_condition},
we can check that $\textrm{\v{C}ech}_{\mathbb{R}^{d}}(\mathcal{X},r)$
is homotopy equivalent to $\mathbb{X}$ when 
%$\frac{d_{H}(\mathbb{X},\mathcal{X})}{\tau}\leq0.1096\cdots$
$\frac{d_{H}(\mathbb{X},\mathcal{X})}{\tau}\leq0.03440\cdots$,
and $\textrm{Rips}(\mathcal{X},r)$ is homotopy equivalent to $\mathbb{X}$
when
%$\frac{d_{H}(\mathbb{X},\mathcal{X})}{\tau}\leq0.06700\cdots$. 
$\frac{d_{H}(\mathbb{X},\mathcal{X})}{\tau}\leq0.07712\cdots$. 

Finally, we emphasize that our result also allows the radii $\{r_{x}\}_{x\in\mathcal{X}}$
to vary across the points $x\in\mathcal{X}$. Considering different radii is of practical interest if each data point has different importance. For example, one might want to use large radii on the flat and sparse region, while to use small radii on the spiky and dense region. However, there remain significant technical difficulties to allow for a different radius per each data point. As it can be seen in Figure \ref{fig:eg_unionballs_nonhomotopic}, an uneven distribution of radii might lead to nonhomotopic between the \v{C}ech complex (or the Vietoris-Rips complex) and the target space. This situation has been studied in \cite{ChazalL2008} for the union of balls under the reach condition, but not the Vietoris-Rips complex or under the $\mu$-reach case. Theorem~\ref{thm:homotopy_rips} or  Corollary \ref{cor:homotopy_cech_rips_mureach} first tackles this homotopy reconstruction problem with different radii for the Vietoris-Rips complex or under the $\mu$-reach condition.

%%
%% Bibliography
%%

%% Please use bibtex, 

\bibliography{myref}

\appendix

\section{More background}
\label{app:background_more}

Throughout the paper, for $x\in\mathbb{R}$, we use the notation $(x)_{+}:=\max\{x,0\}$. Also, for a subset $\mathbb{X}\subset\mathbb{R}^{d}$ and a point $x\in\mathbb{R}^{d}\backslash Med(\mathbb{X})$, let $\pi_{\mathbb{X}}:\mathbb{R}^{d}\to\mathbb{X}$ be the projection function to $\mathbb{X}$, that is, $\pi_{\mathbb{X}}(x)=\arg\min_{y\in\mathbb{X}}\left\Vert x-y \right\Vert$.

We first review some topological equivalent relations. Two spaces
$X,Y$ are \emph{homeomorphic} and write $X\cong Y$ if there exists
a bijective function $f:X\to Y$ that is continuous and its inverse
$f^{-1}:Y\to X$ is also continuous. Sometimes the homeomorphic equivalence
is too fine and we need a coarser equivalent relation. A \emph{homotopy}
between two continuous functions $f,g:X\to Y$ is a continuous function
$H:X\times[0,1]\to Y$ such that $H(x,0)=f(x)$ and $H(x,1)=g(x)$
for all $x\in X$. We say that $f$ and $g$ are \emph{homotopic}
if there exists a homotopy between them, and write $f\simeq g$. Two
spaces $X,Y$ are \emph{homotopy equivalent} if there exists $f:X\to Y$
and $g:Y\to X$ such that $g\circ f\simeq id_{X}$ and $f\circ g\simeq id_{Y}$.
Then homotopy equivalence is coarser than the homeomorphic equivalence,
in that if $X\cong Y$ then $X\simeq Y$.

A space $X$ \emph{deformation retracts} to a subset $A\subset X$
if there exists a homotopy $H:X\times[0,1]\to X$ such that for all
$x\in X$, $H(x,0)=x$ and $H(x,1)\in A$, and for all $x\in A$ and
$t\in[0,1]$ $H(x,t)=x$. A space $X$ is \emph{contractible} if $X$
is homotopy equivalent to a point $x$. We can see that if a space
$X$ deformation retracts to a point $x\in X$ then $X$ is contractible,
but the converse is not always true.

%partition of unity, ...

The following theorem is from \cite{Lee2013}. This theorem is used for constructing the homotopy map giving a deformation retract from $\mathbb{X}^{r}$ to $\mathbb{X}$ in Theorem~\ref{thm:defretract_mureach}.

\begin{theorem}[Fundamental theorem on flows]
	\label{thm:background_fundamental_flow}
	Let $W$ be a smooth vector field on a smooth manifold $M$. Then
	there is a unique maximal flow $\psi:\mathbb{D}\subset M\times\mathbb{R}\to M$
	with $\frac{d}{dt}\psi(x,t)=W(\psi(x,t))$. In particular, the maximal
	domain $\mathbb{D}$ is open in $M\times\mathbb{R}$.
\end{theorem}

The following Lemma 3.3 in \cite{ChazalO2008} shows that the homotopy
equivalence in the Nerve Theorem commutes well with the inclusion
maps. 
This lemma plays a critical role in showing the homotopy equivalence of the target space $\mathbb{X}$ and a simplicial complex $\mathcal{S}$ in Lemma~\ref{lem:homotopy_simplexnerve}.
For a space $\mathbb{X}$, its open cover $\mathcal{U}=\{U_{i}\}_{i\in I}$
is a good cover if its intersection is either empty or contractible,
that is, for each $\sigma\subset I$, the set $\bigcap_{i\in\sigma}U_{i}$
is either empty or contractible.

\begin{lemma}[Lemma 3.3 in \cite{ChazalO2008}]
	
	\label{lem:background_nerve_interleaving}
	
	Let $\mathbb{X}\subset\mathbb{X}'$ be two paracompact spaces, and
	let $\mathcal{U}=\{U_{i}\}_{i\in I}$, $\mathcal{U}'=\{U'_{i}\}_{i\in I'}$,
	be good covers of $\mathbb{X}$ and $\mathbb{X}'$, respectively,
	with $I\subset I'$ and $U_{i}\subset U'_{i}$ for all $i\in I$.
	Then, there exist homotopy equivalences $\mathcal{N}\mathcal{U}\to\mathbb{X}$
	and $\mathcal{N}\mathcal{U}'\to\mathbb{X}'$ that commutes with inclusions
	$\mathbb{X}\hookrightarrow\mathbb{X}'$ and $\mathcal{N}\mathcal{U}\hookrightarrow\mathcal{N}\mathcal{U}'$
	at homotopy level, that is, the following diagram commutes: 
	\[
	\xymatrix{X\ar[r]^{\imath_{\mathbb{X}\to\mathbb{X}'}}\ar[d] & \mathbb{X}'\ar[d]\\
		\mathcal{NU}\ar[r]^{\imath_{\mathcal{N}\mathcal{U}\to\mathcal{N\mathcal{U}}'}}\ar[u] & \mathcal{NU}'\ar[u]
	}
	.
	\]
	
\end{lemma}

\subsection{The reach and the distance function}
\label{app:background_more_reach}

Sometimes we are interested in the local behavior of the target space.
For that, the reach can be defined locally at a point. The reach of
a closed set $\mathbb{X}\subset\mathbb{R}^{d}$ at $q\in\mathbb{X}$,
denoted by $\tau_{\mathbb{X}}(q)$, is defined as 
\[
\tau_{\mathbb{X}}(q):=d\left(q,{\rm Med}(\mathbb{X})\right)=\inf_{x\in\mathrm{Med}(\mathbb{X})}||q-x||.
\]
Then it is direct that the reach is the infimum of the reach at each
point, that is, 
\[
\tau_{\mathbb{X}}=\inf_{q\in\mathbb{X}}\tau_{\mathbb{X}}(q).
\]
The positive condition for the reach imposes smoothness on the target
space. In particular, the reach condition enforces a lower bound on
the inner product between a difference vector between two points and
a normal vector at one of these points, and provides a Lipschitz continuity
on the projection function to the target space.

\begin{theorem}[Theorem 4.8 (7) and (8) in \cite{Federer1959}]
	\label{thm:background_reach_projection}
	Let $\mathbb{X}\subset\mathbb{R}^{d}$ be a subset.
	\begin{enumerate}
		\item[(i)]  If $x\in\mathbb{X}$ and $u\in\mathbb{R}^{d}\backslash Med(\mathbb{X})$,
		then 
		\[
		\left\langle u-\pi_{\mathbb{X}}(u),\pi_{\mathbb{X}}(u)-x\right\rangle \geq-\frac{\left\Vert x-\pi_{\mathbb{X}}(u)\right\Vert ^{2}d_{\mathbb{X}}(u)}{2\tau_{\mathbb{X}}(u)}.
		\]
		\item[(ii)]  If $\epsilon,\tau\in\mathbb{R}$ with $0<\epsilon<\tau$ and $x,y\in\mathbb{R}^{d}\backslash Med(\mathbb{X})$
		with $d_{\mathbb{X}}(x),d_{\mathbb{X}}(u)\leq\epsilon$ and $\tau_{\mathbb{X}}(x),\tau_{\mathbb{X}}(u)\geq\tau$,
		then 
		\[
		\left\Vert \pi_{\mathbb{X}}(x)-\pi_{\mathbb{X}}(u)\right\Vert \leq\frac{\tau}{\tau-\epsilon}\left\Vert x-u\right\Vert .
		\]
	\end{enumerate}
\end{theorem}
In the case of submanifolds, one can reformulate the definition of the reach in the following manner.
%We will use this equivalent definition of the reach in our proofs. 
\begin{proposition}[Theorem 4.18 in \cite{Federer1959}] \label{thm:background_reach_manifold} If $\mathbb{X}\subset\mathbb{R}^{d}$ is a submanifold of $\mathbb{R}^{d}$, then
	\begin{equation}
	\tau_{\mathbb{X}}=\underset{q_{1}\neq q_{2}\in \mathbb{X}}{\inf}~\frac{\|q_{1}-q_{2}\|_{2}^{2}}{2d(q_{2}-q_{1},T_{q_{1}}\mathbb{X})}.\label{eq:background_reach_supremum}
	\end{equation}
\end{proposition}
Above, $T_q \mathbb{X}$ denotes the tangent space of $\mathbb{X}$ at $q \in \mathbb{X}$. 
This formulation has the advantage of involving only points on $\mathbb{X}$ and its tangent spaces, while \eqref{eq:background_reach_medial_axis} uses the distance to the medial axis $Med(\mathbb{X})$, which is a global quantity.
\begin{figure}
	\centering
	\includegraphics{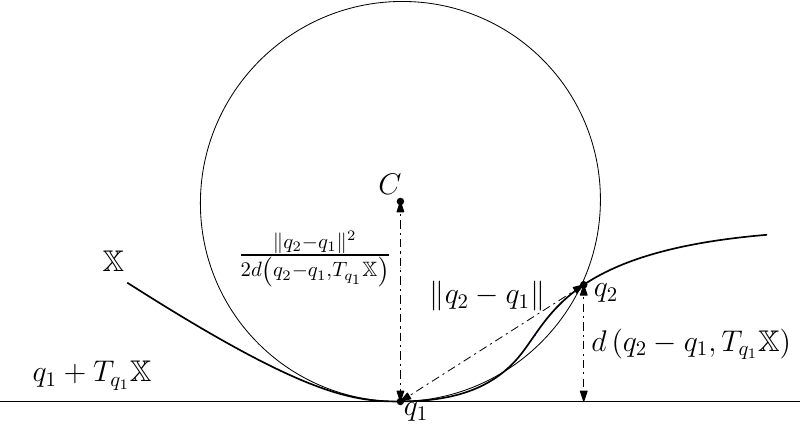}
	\caption{Geometric interpretation of the quantities involved in \eqref{eq:background_reach_supremum}.}\label{fig:background_reach_tangent_ball}
\end{figure}
The ratio appearing in \eqref{eq:background_reach_supremum} has a clear geometric meaning, as it corresponds to the radius of the ambient ball tangent to $\mathbb{X}$ at $q_{1}$ and passing through $q_{2}$. See Figure~\ref{fig:background_reach_tangent_ball}.
Hence, the reach gives a lower bound on the radii of curvature of $\mathbb{X}$.
Equivalently, $\tau_\mathbb{X}^{-1}$ is an upper bound on the directional curvature of $\mathbb{X}$.
\begin{proposition}[Proposition 6.1 in \cite{NiyogiSW2008}]
	\label{thm:background_reach_bound_second_derivative}
	Let $\mathbb{X} \subset \mathbb{R}^{d}$ be a submanifold, and $\gamma_{p,v}$ an arc-length parametrized geodesic of $\mathbb{X}$. Then for all $t >0$,
	\[
	\left\Vert\gamma_{p,v}''(t) \right\Vert \leq 1/ \tau_{\mathbb{X}}.
	\]
\end{proposition}

The reach further provides an upper bound on the injectivity radius and the sectional curvature of $\mathbb{X}$; see Proposition A.1. part (ii) and (iii) respectively, in \cite{AamariKCMRW2019}. Hence the reach is a quantity that characterizes the overall structure of $\mathbb{X}$ and, as argued in \cite{AamariKCMRW2019}, captures structural properties of $\mathbb{X}$ of both global and local nature. In particular, assuming a lower bound on the reach prevents the manifold from being nearly self-intersecting or from having portions with very high curvature \cite[Theorem 3.4]{AamariKCMRW2019}.

We end this section with the bound on the distance function, which is Lemma 3.2 in \cite{ChazalCL2009}. We enhance this bound in Lemma~\ref{lem:defretract_distance_bound_derivative}.

\begin{lemma}
	
	\label{lem:background_distance_concavity}
	
	Let $\mathbb{X}\subset\mathbb{R}^{d}$ be a closed set and $x\in Med_{\mu}(\mathbb{X})$.
	Then for any $y\in\mathbb{R}^{d}$, the distance $d_{\mathbb{X}}(y)$
	is bounded as 
	\[
	d_{\mathbb{X}}(y)^{2}\leq d_{\mathbb{X}}(x)^{2}+\left\Vert x-y\right\Vert ^{2}+2d_{\mathbb{X}}(x)\left\Vert \nabla_{\mathbb{X}}(x)\right\Vert \left\Vert x-y\right\Vert .
	\]
	
\end{lemma}

\begin{theorem}[Theorem 4.1 in \cite{ChazalCLT2007}]
	
	\label{thm:background_reach_offset}
	
	Let $\mathbb{X}\subset\mathbb{R}^{d}$ be a closed set with $\mu$-reach
	$\tau_{\mathbb{X}}^{\mu}>0$. Then for $r\in(0,\tau_{\mathbb{X}}^{\mu})$,
	\[
	\tau_{(\mathbb{X}^{r})^{\complement}}\geq\mu r.
	\]
	
\end{theorem}

\section{Projection on a Positive Reach set}

%To show the contractibility of Proposition~\ref{prop:cech_contractible}, we analyze the geometry and topology of a set of positive reach.

The positive reach condition imposes smoothness in geometry and topology.
In particular, given two points $x,u\in\mathbb{R}^{d}$ where $u$
has the unique projection $\pi_{\mathbb{X}}(u)\in\mathbb{X}$ to the
target space $\mathbb{X}$, the positive reach condition gives a bound
on the distance $\left\Vert x-\pi_{\mathbb{X}}(u)\right\Vert $ from
the projection $\pi_{\mathbb{X}}(u)$ to $x$ in terms of the distance
$\left\Vert x-u\right\Vert $. When $x$ also has the unique projection
$\pi_{\mathbb{X}}(x)$ and $\tau\geq\tau_{\mathbb{X}}(\pi_{\mathbb{X}}(x)),\tau_{\mathbb{X}}(\pi_{\mathbb{X}}(u))$,
a direct consequence of Theorem~\ref{thm:background_reach_projection} (ii)
gives a bound as 
\begin{equation}
\left\Vert x-\pi_{\mathbb{X}}(u)\right\Vert \leq\left\Vert d_{\mathbb{X}}(x)-\pi_{\mathbb{X}}(u)\right\Vert +\left\Vert x-\pi_{\mathbb{X}}(x)\right\Vert \leq\frac{\tau}{\tau-d_{\mathbb{X}}(u)}\left\Vert x-u\right\Vert +d_{\mathbb{X}}(x).\label{eq:projreach_naivebound}
\end{equation}
This section is devoted to improve \eqref{eq:projreach_naivebound} and give
a tighter bound on $\left\Vert x-\pi_{\mathbb{X}}(u)\right\Vert $.
An improved bound is given in Lemma~\ref{lem:projreach_distance_projection_point}.
Then, for the case where the distance $d_{\mathbb{X}}(u)$ from $u$ to $\mathbb{X}$ is not directly known, 
a bound for a general case is given in Lemma~\ref{lem:projreach_distance_projection_general}
in terms of $\left\Vert x-u\right\Vert $, and a bound for a case
when $u$ is in a convex hull of a set of points $x_{1},\ldots,x_{k}$
is given in Lemma~\ref{lem:projreach_distance_projection_center}
in terms of $\left\Vert x-x_{i}\right\Vert $'s for $i=1,\ldots,k$.
Lemma~\ref{lem:projreach_distance_projection_general} and \ref{lem:projreach_distance_projection_center}
play key roles in showing the homotopy equivalence of the target
space $\mathbb{X}$ to the restricted \v{C}ech complex (Corollary~\ref{cor:nerve_contractible_covering}), the ambient \v{C}ech
complex (Theorem~\ref{thm:homotopy_cech}), and the Vietoris-Rips complex (Theorem~\ref{thm:homotopy_rips}).

The proof of Theorem~\ref{thm:background_reach_projection} (ii) in
\cite{Federer1959} is based on Theorem~\ref{thm:background_reach_projection}
(i). To improve \eqref{eq:projreach_naivebound}, we generalize Theorem
\ref{thm:background_reach_projection} (i) to the case where the point
$x$ need not be on the target space $\mathbb{X}$, in Claim~\ref{claim:projreach_innerproduct_asymmetric}.
The proof is similar to the proof of Theorem~\ref{thm:background_reach_projection} (i) in \cite{Federer1959}.

\begin{claim}
	\label{claim:projreach_innerproduct_asymmetric}
	
	Let $\tau>0$ and $\mathbb{X}\subset\mathbb{R}^{d}$ be a set. Let
	$x\in\mathbb{R}^{d}$ and $u\in\mathbb{R}^{d}\backslash Med(\mathbb{X})$
	with ${\rm reach}(\mathbb{X},\pi_{\mathbb{X}}(u))\geq\tau$ and $d_{\mathbb{X}}(x)\leq\tau$.
	Then 
	\[
	\left\langle u-\pi_{\mathbb{X}}(u),\pi_{\mathbb{X}}(u)-x\right\rangle \geq-\frac{\left\Vert x-\pi_{\mathbb{X}}(u)\right\Vert ^{2}d_{\mathbb{X}}(u)}{2\tau}-d_{\mathbb{X}}(x)d_{\mathbb{X}}(u)\left(1-\frac{d_{\mathbb{X}}(x)}{2\tau}\right).
	\]
	
\end{claim}

\begin{proof}[Proof of Claim~\ref{claim:projreach_innerproduct_asymmetric}]
	
	If $\pi_{\mathbb{X}}(u)=u$, then $d_{\mathbb{X}}(u)=0$ and the inequality
	trivially holds. Assume $\pi_{\mathbb{X}}(u)\neq u$, and we will
	find a lower bound for $\left\langle u-\pi_{\mathbb{X}}(u),\pi_{\mathbb{X}}(u)-x\right\rangle $.
	Let 
	\[
	v=\frac{u-\pi_{\mathbb{X}}(u)}{d_{\mathbb{X}}(u)},\qquad S=\{t>0:\pi_{\mathbb{X}}(\pi_{\mathbb{X}}(u)+tv)=\pi_{\mathbb{X}}(u)\}.
	\]
	Then $\left\Vert u-\pi_{\mathbb{X}}(u)\right\Vert \in S$ implies
	$\sup S>0$, and then Theorem 4.8 (6) in \cite{Federer1959} implies
	that 
	\begin{equation}
	\sup S\geq\tau.\label{eq:projreach_innerproduct_asymmetric_sup}
	\end{equation}
	Now, if $t<\sup S$, then $\left\Vert \pi_{\mathbb{X}}(u)+tv-x\right\Vert $ is lower bounded using $\pi_{\mathbb{X}}(\pi_{\mathbb{X}}(u)+tv)=\pi_{\mathbb{X}}(u)$
	as (see Figure~\ref{fig:projreach_innerproduct_asymmetric_norm_bound} for a graphical illustration)
	\begin{align}
	\left\Vert \pi_{\mathbb{X}}(u)+tv-x\right\Vert  & \geq\left\Vert \pi_{\mathbb{X}}(u)+tv-\pi_{\mathbb{X}}(x)\right\Vert -\left\Vert x-\pi_{\mathbb{X}}(x)\right\Vert \nonumber\\
	& \geq\left\Vert \pi_{\mathbb{X}}(u)+tv-\pi_{\mathbb{X}}(\pi_{\mathbb{X}}(u)+tv)\right\Vert -d_{\mathbb{X}}(x) \nonumber\\
	& =\left\Vert \pi_{\mathbb{X}}(u)+tv-\pi_{\mathbb{X}}(u)\right\Vert -d_{\mathbb{X}}(x)=t-d_{\mathbb{X}}(x).\label{eq:projreach_innerproduct_asymmetric_norm_bound}
	\end{align}
	And since both $\left\Vert \pi_{\mathbb{X}}(u)+tv-x\right\Vert $
	and $t-d_{\mathbb{X}}(x)$ are continuous on $t$, so for all $t\leq\sup S$,
	$\left\Vert \pi_{\mathbb{X}}(u)+tv-x\right\Vert $ is lower bounded
	as 
	\[
	\left\Vert \pi_{\mathbb{X}}(u)+tv-x\right\Vert \geq t-d_{\mathbb{X}}(x).
	\]
	Then additionally under $t\geq d_{\mathbb{X}}(x)$, squaring and expanding
	gives 
	\[
	\left\Vert \pi_{\mathbb{X}}(u)-x\right\Vert ^{2}+2t\left\langle v,\pi_{\mathbb{X}}(u)-x\right\rangle +t^{2}\geq(t-d_{\mathbb{X}}(x))^{2}.
	\]
	Rearranging this gives that if $d_{\mathbb{X}}(x)\leq t\leq\sup S$,
	then $\left\langle v,\pi_{\mathbb{X}}(u)-x\right\rangle $ is lower
	bounded as 
	\begin{equation}
	\left\langle v,\pi_{\mathbb{X}}(u)-x\right\rangle \geq-\frac{\left\Vert \pi_{\mathbb{X}}(u)-x\right\Vert ^{2}}{2t}-d_{\mathbb{X}}(x)\left(1-\frac{d_{\mathbb{X}}(x)}{2t}\right).\label{eq:projreach_innerproduct_asymmetric_innerproduct_bound}
	\end{equation}
	Hence\eqref{eq:projreach_innerproduct_asymmetric_sup} and \eqref{eq:projreach_innerproduct_asymmetric_innerproduct_bound}
	implies that, under the condition $d_{\mathbb{X}}(x)\leq\tau$, applying
	$v=\frac{u-\pi_{\mathbb{X}}(u)}{\left\Vert u-\pi_{\mathbb{X}}(u)\right\Vert }$
	and $t=\tau$ gives a lower bound for $\left\langle u-\pi_{\mathbb{X}}(u),\pi_{\mathbb{X}}(u)-x\right\rangle $
	as 
	\[
	\left\langle u-\pi_{\mathbb{X}}(u),\pi_{\mathbb{X}}(u)-x\right\rangle \geq-\frac{\left\Vert \pi_{\mathbb{X}}(u)-x\right\Vert ^{2}d_{\mathbb{X}}(u)}{2\tau}-d_{\mathbb{X}}(x)d_{\mathbb{X}}(u)\left(1-\frac{d_{\mathbb{X}}(x)}{2\tau}\right).
	\]
	
\end{proof}

\begin{figure}
	\centering
	\includegraphics{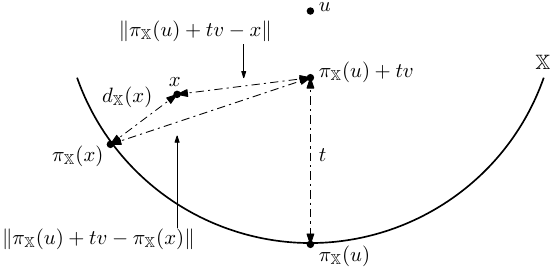}
	\caption{The graphical illustration involved in \eqref{eq:projreach_innerproduct_asymmetric_norm_bound}.}
	\label{fig:projreach_innerproduct_asymmetric_norm_bound}
\end{figure}

%The situation in which the center of the ball lies in the set $\mathbb{X}$, we will rely on a slight modification of  in \cite{Federer1959}.

Now we are ready to pose a bound for $\left\Vert x-\pi_{\mathbb{X}}(u)\right\Vert $
in Lemma~\ref{lem:projreach_distance_projection_point} that is improved
from \eqref{eq:projreach_naivebound}. The proof is similar to the proof
of Theorem~\ref{thm:background_reach_projection} (ii) in \cite{Federer1959},
while we use Claim~\ref{claim:projreach_innerproduct_asymmetric}
instead of Theorem~\ref{thm:background_reach_projection} (i).

\begin{lemma}
	\label{lem:projreach_distance_projection_point}
	Let $\tau>0$ and $\mathbb{X}\subset\mathbb{R}^{d}$ be a set. Let
	$\epsilon\geq0$, $x\in\mathbb{R}^{d}$, and $u\in\mathbb{R}^{d}\backslash Med(\mathbb{X})$
	with ${\rm reach}(\mathbb{X},\pi_{\mathbb{X}}(u))\geq\tau$ and $d_{\mathbb{X}}(x)\leq\epsilon\leq\tau$.
	Then 
	\begin{equation}
	\left\Vert x-\pi_{\mathbb{X}}(u)\right\Vert \leq\sqrt{\frac{\tau}{\tau-d_{\mathbb{X}}(u)}\left(\left\Vert x-u\right\Vert ^{2}-d_{\mathbb{X}}(u)\left(d_{\mathbb{X}}(u)-2\epsilon+\frac{\epsilon^{2}}{\tau}\right)\right)}.\label{eq:projreach_distance_projection_point}
	\end{equation}
	
\end{lemma}

\begin{proof}[Proof of Lemma~\ref{lem:projreach_distance_projection_point}]
	
	From Claim~\ref{claim:projreach_innerproduct_asymmetric}, $\left\langle u-\pi_{\mathbb{X}}(u),\pi_{\mathbb{X}}(u)-x\right\rangle $
	is lower bounded as 
	\[
	\left\langle u-\pi_{\mathbb{X}}(u),\pi_{\mathbb{X}}(u)-x\right\rangle \geq-\frac{\left\Vert \pi_{\mathbb{X}}(u)-x\right\Vert ^{2}d_{\mathbb{X}}(u)}{2\tau}-d_{\mathbb{X}}(x)d_{\mathbb{X}}(u)\left(1-\frac{d_{\mathbb{X}}(x)}{2\tau}\right).
	\]
	Hence $\left\Vert x-u\right\Vert ^{2}$ can be expanded and lower
	bounded as 
	\begin{align*}
	\left\Vert x-u\right\Vert ^{2} & =\left\Vert u-\pi_{\mathbb{X}}(u)\right\Vert ^{2}+\left\Vert \pi_{\mathbb{X}}(u)-x\right\Vert ^{2}+2\left\langle u-\pi_{\mathbb{X}}(u),\,\pi_{\mathbb{X}}(u)-x\right\rangle \\
	& \geq d_{\mathbb{X}}(u)^{2}+\left\Vert \pi_{\mathbb{X}}(u)-x\right\Vert ^{2}\left(1-\frac{d_{\mathbb{X}}(u)}{\tau}\right)-2d_{\mathbb{X}}(x)d_{\mathbb{X}}(u)\left(1-\frac{d_{\mathbb{X}}(x)}{2\tau}\right).
	\end{align*}
	By rearranging, we obtain an upper bound for $\left\Vert x-\pi_{\mathbb{X}}(u)\right\Vert $
	as 
	\[
	\left\Vert x-\pi_{\mathbb{X}}(u)\right\Vert \leq\sqrt{\frac{\tau}{\tau-d_{\mathbb{X}}(u)}\left(\left\Vert x-u\right\Vert ^{2}-d_{\mathbb{X}}(u)\left(d_{\mathbb{X}}(u)-2d_{\mathbb{X}}(x)+\frac{d_{\mathbb{X}}(x)^{2}}{\tau}\right)\right)}.
	\]
	And $d_{\mathbb{X}}(x)\leq\epsilon\leq\tau$ implies $2d_{\mathbb{X}}(x)-\frac{d_{\mathbb{X}}(x)^{2}}{\tau}\leq2\epsilon-\frac{\epsilon^{2}}{\tau}$,
	and hence $\left\Vert x-\pi_{\mathbb{X}}(u)\right\Vert $ is further
	upper bounded as 
	\[
	\left\Vert x-\pi_{\mathbb{X}}(u)\right\Vert \leq\sqrt{\frac{\tau}{\tau-d_{\mathbb{X}}(u)}\left(\left\Vert x-u\right\Vert ^{2}-d_{\mathbb{X}}(u)\left(d_{\mathbb{X}}(u)-2\epsilon+\frac{\epsilon^{2}}{\tau}\right)\right)}.
	\]
\end{proof}

Note that by setting $\epsilon=d_{\mathbb{X}}(x)$, the bound of \eqref{eq:projreach_distance_projection_point}
is upper bounded by the bound of \eqref{eq:projreach_naivebound} as
\begin{align*}
& \sqrt{\frac{\tau}{\tau-d_{\mathbb{X}}(u)}\left(\left\Vert x-u\right\Vert ^{2}-d_{\mathbb{X}}(u)\left(d_{\mathbb{X}}(u)-2\epsilon+\frac{\epsilon^{2}}{\tau}\right)\right)}\\
& =\sqrt{\frac{\tau}{\tau-d_{\mathbb{X}}(u)}\left\Vert x-u\right\Vert ^{2}-\frac{\tau}{\tau-d_{\mathbb{X}}(u)}(d_{\mathbb{X}}(u)-\epsilon)^{2}+\epsilon^{2}}\\
& \leq\sqrt{\frac{\tau}{\tau-d_{\mathbb{X}}(u)}\left\Vert x-u\right\Vert ^{2}+\epsilon^{2}}\leq\sqrt{\frac{\tau}{\tau-d_{\mathbb{X}}(u)}}\left\Vert x-u\right\Vert +\epsilon\\
& \leq\frac{\tau}{\tau-d_{\mathbb{X}}(u)}\left\Vert x-u\right\Vert +\epsilon,
\end{align*}
so \eqref{eq:projreach_distance_projection_point} is indeed tighter than \eqref{eq:projreach_naivebound}.

For many cases, we don't have direct access to the distance $d_{\mathbb{X}}(u)$
from $u$ to $\mathbb{X}$ but only through a bound $d_{\mathbb{X}}(u)\leq\epsilon_{u}$
for some $\epsilon_{u}\geq0$. For this case, we need to maximize
the bound of \eqref{eq:projreach_distance_projection_point} with
respect to $d_{\mathbb{X}}(u)$. As a result, the bound for $\left\Vert x-\pi_{\mathbb{X}}(u)\right\Vert $
is expressed in terms of $\left\Vert x-u\right\Vert $ and $\epsilon_{u}$
in Lemma~\ref{lem:projreach_distance_projection_general}.
Lemma~\ref{lem:projreach_distance_projection_general} plays a key role in showing the interleaving relationship between the restricted \v{C}ech complex and the ambient \v{C}ech complex in Lemma~\ref{lem:homotopy_interleaving_ambientcech_restrictedcech}.

\begin{lemma} \label{lem:projreach_distance_projection_general}
	
	Let $\tau>0$ and $\mathbb{X}\subset\mathbb{R}^{d}$ be a set. Let
	$\epsilon\geq0$, $x\in\mathbb{R}^{d}$, and $u\in\mathbb{R}^{d}\backslash Med(\mathbb{X})$
	with ${\rm reach}(\mathbb{X},\pi_{\mathbb{X}}(u))\geq\tau$ and $d_{\mathbb{X}}(x)\leq\epsilon\leq\tau$.
	\begin{enumerate}
		\item[(i)] Let $\epsilon_{u}\in\mathbb{R}$ and suppose that $d_{\mathbb{X}}(u)\leq\epsilon_{u}<\tau$.
		Then 
		\[
		\left\Vert x-\pi_{\mathbb{X}}(u)\right\Vert \leq\sqrt{\left\Vert x-u\right\Vert ^{2}+\tilde{r}^{2}-(\tau^{2}+(\tau-\epsilon)^{2}-\left\Vert x-u\right\Vert ^{2}-\tilde{r}^{2})\left(\frac{\tau}{\sqrt{\tau^{2}-\tilde{r}^{2}}}-1\right)},
		\]
		where 
		\begin{align*}
		\tilde{r}^{2} & :=\min\left\{ \left\Vert x-u\right\Vert ^{2}+\epsilon(2\tau-\epsilon),\tau^{2}-(\tau-\epsilon_{u})^{2}\right\} .
		\end{align*}
%	\begin{align*}
%		& \left\Vert x-\pi_{\mathbb{X}}(u)\right\Vert \\
%		& \leq\sqrt{\frac{2\tau\left(\left\Vert u-x\right\Vert ^{2}+\epsilon\left(2\tau-\epsilon\right)\right)}{\tau+\sqrt{\tau^{2}-\left(\left\Vert u-x\right\Vert ^{2}+\epsilon\left(2\tau-\epsilon\right)\right)}}-\epsilon\left(2\tau-\epsilon\right)}\\
%		& \leq\min\left\{ \sqrt{\frac{\tau\left(2\left\Vert u-x\right\Vert ^{2}+\epsilon\left(2\tau-\epsilon\right)\right)}{\tau+\sqrt{\tau^{2}-\left(\left\Vert u-x\right\Vert ^{2}+\epsilon\left(2\tau-\epsilon\right)\right)}}},\right.\\
%		& \quad\left(\left(\sqrt{\left\Vert u-x\right\Vert ^{2}+\epsilon\left(2\tau-\epsilon\right)}+\frac{\sqrt{2}-1}{\tau}\left(\left\Vert u-x\right\Vert ^{2}+\epsilon\left(2\tau-\epsilon\right)\right)\right)^{2}\right.\\
%		& \quad\left.-\epsilon\left(2\tau-\epsilon\right)\Biggr)^{\frac{1}{2}}\right\} .
%	\end{align*}
		\item[(ii)] Suppose that $\left\Vert x-u\right\Vert \leq\tau-\epsilon$, then
		\[
		\left\Vert x-\pi_{\mathbb{X}}(u)\right\Vert \leq\sqrt{\frac{2\tau\left(\left\Vert u-x\right\Vert ^{2}+\epsilon\left(2\tau-\epsilon\right)\right)}{\tau+\sqrt{\tau^{2}-\left(\left\Vert u-x\right\Vert ^{2}+\epsilon\left(2\tau-\epsilon\right)\right)}}-\epsilon\left(2\tau-\epsilon\right)}.
		\]
	\end{enumerate}
\end{lemma}

\begin{proof}[Proof of Lemma~\ref{lem:projreach_distance_projection_general}]
	
	(i)
	
	First, considering Lemma~\ref{lem:projreach_distance_projection_point}
	gives an upper bound for $\left\Vert x-\pi_{\mathbb{X}}(u)\right\Vert $
	as 
	\begin{equation}
	\left\Vert x-\pi_{\mathbb{X}}(u)\right\Vert \leq\sqrt{\frac{\tau}{\tau-d_{\mathbb{X}}(u)}\left(\left\Vert u-x\right\Vert ^{2}-d_{\mathbb{X}}(u)\left(d_{\mathbb{X}}(u)-2\epsilon+\frac{\epsilon^{2}}{\tau}\right)\right)}.\label{eq:projreach_distance_projection_general_initial}
	\end{equation}
	We further bound \eqref{eq:projreach_distance_projection_general_initial}
	by regarding \eqref{eq:projreach_distance_projection_general_initial}
	as a function of $d_{\mathbb{X}}(u)$ and maximize with respect to
	$d_{\mathbb{X}}(u)$. Let $\tilde{r}_{u}:=\left\Vert u-x\right\Vert $,
	$\tilde{r}_{\mathbb{X}}:=2\epsilon-\frac{\epsilon^{2}}{\tau}$, and
	$\tilde{r}_{yz}:=\sqrt{\tau^{2}-(\tau-\epsilon_{u})^{2}}$ for convenience,
	so that $\epsilon_{u}=\tau-\sqrt{\tau^{2}-\tilde{r}_{yz}^{2}}$. Now,
	consider the function 
	\[
	t\in\left[0,\tau-\sqrt{\tau^{2}-\tilde{r}_{yz}^{2}}\right]\mapsto f(t):=\frac{\tau(\tilde{r}_{u}^{2}-t^{2}+\tilde{r}_{\mathbb{X}}t)}{\tau-t}.
	\]
	Then its derivative is 
	\[
	f'(t)=\frac{\tau(t^{2}-2\tau t+\tilde{r}_{u}^{2}+\tilde{r}_{\mathbb{X}}\tau)}{(\tau-t)^{2}}.
	\]
	Hence if $\tilde{r}_{u}^{2}+\tilde{r}_{\mathbb{X}}\tau\geq\tau^{2}$
	then $f'(t)\geq0$ holds for all $t\in\left[0,\tau-\sqrt{\tau^{2}-\tilde{r}_{yz}^{2}}\right]$,
	and if $\tilde{r}_{u}^{2}+\tilde{r}_{\mathbb{X}}\tau\leq\tau^{2}$
	then $f'(t)\geq0$ if and only if $t\leq\tau-\sqrt{\tau^{2}-(\tilde{r}_{u}^{2}+\tilde{r}_{\mathbb{X}}\tau)}$.
	Hence if $\tilde{r}_{yz}^{2}\leq\tilde{r}_{u}^{2}+\tilde{r}_{\mathbb{X}}\tau$
	then $f$ attains its maximum at $t=\tau-\sqrt{\tau^{2}-\tilde{r}_{yz}^{2}}$,
	and if $\tilde{r}_{u}^{2}+\tilde{r}_{\mathbb{X}}\tau\leq\tilde{r}_{yz}^{2}$
	then $f$ attains its maximum at $t=\tau-\sqrt{\tau^{2}-(\tilde{r}_{u}^{2}+\tilde{r}_{\mathbb{X}}\tau)}$.
	Hence by letting $\tilde{r}:=\min\{\sqrt{\tilde{r}_{u}^{2}+\tilde{r}_{\mathbb{X}}\tau},\tilde{r}_{yz}\}$,
	for all $t\in\left[0,\tau-\sqrt{\tau^{2}-\tilde{r}_{yz}^{2}}\right]$,
	\[
	f(t)\leq f\left(\tau-\sqrt{\tau^{2}-\tilde{r}^{2}}\right)=\frac{\tau\left(\tilde{r}_{u}^{2}+\tilde{r}^{2}-2\tau^{2}+\tilde{r}_{\mathbb{X}}\tau+(2\tau-\tilde{r}_{\mathbb{X}})\sqrt{\tau^{2}-\tilde{r}^{2}}\right)}{\sqrt{\tau^{2}-\tilde{r}^{2}}}.
	\]
	Hence \eqref{eq:projreach_distance_projection_general_initial} is
	correspondingly further upper bounded as 
	\begin{align}
	\left\Vert x-\pi_{\mathbb{X}}(u)\right\Vert  & \leq\sqrt{\frac{\tau}{\tau-d_{\mathbb{X}}(u)}\left(\left\Vert u-x\right\Vert ^{2}-d_{\mathbb{X}}(u)\left(d_{\mathbb{X}}(u)-2\epsilon+\frac{\epsilon^{2}}{\tau}\right)\right)}\nonumber \\
	& \leq\sqrt{\frac{\tau\left(\tilde{r}_{u}^{2}+\tilde{r}^{2}-2\tau^{2}+\tilde{r}_{\mathbb{X}}\tau+(2\tau-\tilde{r}_{\mathbb{X}})\sqrt{\tau^{2}-\tilde{r}^{2}}\right)}{\sqrt{\tau^{2}-\tilde{r}^{2}}}}\label{eq:projreach_distance_projection_general_maxexpand}\\
	& =\sqrt{\tilde{r}_{u}^{2}+\tilde{r}^{2}-(2\tau^{2}-\tilde{r}_{u}^{2}-\tilde{r}^{2}-\tilde{r}_{\mathbb{X}}\tau)\left(\frac{\tau}{\sqrt{\tau^{2}-\tilde{r}^{2}}}-1\right)}\nonumber \\
	& =\sqrt{\left\Vert x-u\right\Vert ^{2}+\tilde{r}^{2}-(\tau^{2}+(\tau-\epsilon)^{2}-\left\Vert x-u\right\Vert ^{2}-\tilde{r}^{2})\left(\frac{\tau}{\sqrt{\tau^{2}-\tilde{r}^{2}}}-1\right)},\nonumber 
	\end{align}
	where 
	\begin{align*}
	\tilde{r}^{2} & :=\min\left\{ \left\Vert x-u\right\Vert ^{2}+\epsilon(2\tau-\epsilon),\tau^{2}-(\tau-\epsilon_{u})^{2}\right\} .
	\end{align*}
	
	(ii)
	
	Since $d_{\mathbb{X}}(u)<\tau$, we can set $\epsilon_{u}\to\tau$,
	which implies $\tilde{r}_{yz}^{2}\to\tau^{2}$. And under $\left\Vert x-u\right\Vert \leq\tau-\epsilon$,
	$\tilde{r}_{u}^{2}+\tilde{r}_{\mathbb{X}}\tau\leq(\tau-\epsilon)^{2}+\epsilon(2\tau-\epsilon)=\tau^{2}$,
	and hence $\tilde{r}^{2}\to\tilde{r}_{u}^{2}+\tilde{r}_{\mathbb{X}}\tau$.
	Then \eqref{eq:projreach_distance_projection_general_maxexpand} converges
	to
	\[
	\frac{\tau\left(\tilde{r}_{u}^{2}+\tilde{r}^{2}-2\tau^{2}+\tilde{r}_{\mathbb{X}}\tau+(2\tau-\tilde{r}_{\mathbb{X}})\sqrt{\tau^{2}-\tilde{r}^{2}}\right)}{\sqrt{\tau^{2}-\tilde{r}^{2}}}\to\tau\left(2\tau-\tilde{r}_{\mathbb{X}}-2\sqrt{\tau^{2}-(\tilde{r}_{u}^{2}+\tilde{r}_{\mathbb{X}}\tau)}\right),
	\]
	and hence $\left\Vert x-\pi_{\mathbb{X}}(u)\right\Vert $ is correspondingly
	bounded as 
	\begin{align*}
	\left\Vert x-\pi_{\mathbb{X}}(u)\right\Vert  & \leq\sqrt{\tau\left(2\tau-\tilde{r}_{\mathbb{X}}-2\sqrt{\tau^{2}-(\tilde{r}_{u}^{2}+\tilde{r}_{\mathbb{X}}\tau)}\right)}\\
	& =\sqrt{\frac{2\tau(\tilde{r}_{u}^{2}+\tilde{r}_{\mathbb{X}}\tau)}{\tau+\sqrt{\tau^{2}-(\tilde{r}_{u}^{2}+\tilde{r}_{\mathbb{X}}\tau)}}-\tilde{r}_{\mathbb{X}}\tau}\\
	& =\sqrt{\frac{2\tau\left(\left\Vert u-x\right\Vert ^{2}+\epsilon\left(2\tau-\epsilon\right)\right)}{\tau+\sqrt{\tau^{2}-\left(\left\Vert u-x\right\Vert ^{2}+\epsilon\left(2\tau-\epsilon\right)\right)}}-\epsilon\left(2\tau-\epsilon\right)}.
	\end{align*}
\end{proof}

Now, we consider the case when the point $u$ is in a convex hull of a set of points $x_{1},\ldots,x_{k}$. 
We first start with a simple calculation of the distance from one vertex of a simplex to another point lying on the opposite side.
%See Figure~\ref{fig:projreach_distance_midpoint}.
\begin{claim}
	\label{claim:projreach_distance_midpoint}
	Let $x,x_{1},\ldots,x_{k}\in\mathbb{R}^{d}$ and $\lambda_{1},\ldots,\lambda_{k}\in[0,1]$
	with $\sum_{i=1}^{k}\lambda_{i}=1$. Then 
	\begin{align}
	\left\Vert x-\sum_{i=1}^{k}\lambda_{i}x_{i}\right\Vert  & =\sqrt{\sum_{i=1}^{k}\lambda_{i}\left\Vert x-x_{i}\right\Vert ^{2}-\sum_{1\leq i<j\leq k}\lambda_{i}\lambda_{j}\left\Vert x_{i}-x_{j}\right\Vert ^{2}}\label{eq:projreach_distance_midpoint_pairwise}\\
	& =\sqrt{\sum_{i=1}^{k}\lambda_{i}\left\Vert x-x_{i}\right\Vert ^{2}-\sum_{i=1}^{k}\lambda_{i}\left\Vert x_{i}-\sum_{j=1}^{k}\lambda_{j}x_{j}\right\Vert ^{2}}.\label{eq:projreach_distance_midpoint_midpoint}
	\end{align}
\end{claim}

%\begin{figure}
%	\centering
%	\includegraphics{figs/distance_midpoint}
%	\caption{The distance from one point $x$ of a triangle to another point $\lambda y+(1-\lambda)z$
%		on the opposite side, as in Claim~\ref{claim:projreach_distance_midpoint}.}
%	\label{fig:projreach_distance_midpoint}
%\end{figure}

\begin{proof}[Proof of Claim~\ref{claim:projreach_distance_midpoint}]
	The distance from $\sum_{i=1}^{k}\lambda_{i}x_{i}$ to $x$ can be
	expanded as 
	\begin{align}
	\left\Vert x-\sum_{i=1}^{k}\lambda_{i}x_{i}\right\Vert ^{2} & =\left\Vert \sum_{i=1}^{k}\lambda_{i}(x-x_{i})\right\Vert ^{2}\nonumber \\
	& =\sum_{i=1}^{k}\lambda_{i}^{2}\left\Vert x-x_{i}\right\Vert ^{2}+2\sum_{1\leq i<j\leq k}\lambda_{i}\lambda_{j}\left\langle x-x_{i},x-x_{j}\right\rangle .\label{eq:projreach_distance_midpoint_expansion}
	\end{align}
	Then applying the identity $2\left\langle x-x_{i},x-x_{j}\right\rangle =\left\Vert x-x_{i}\right\Vert ^{2}+\left\Vert x-x_{j}\right\Vert ^{2}-\left\Vert x_{i}-x_{j}\right\Vert ^{2}$
	to \eqref{eq:projreach_distance_midpoint_expansion} gives 
	\begin{equation}
	\left\Vert x-\sum_{i=1}^{k}\lambda_{i}x_{i}\right\Vert ^{2}=\sum_{i=1}^{k}\lambda_{i}\left\Vert x-x_{i}\right\Vert ^{2}-\sum_{1\leq i<j\leq k}\lambda_{i}\lambda_{j}\left\Vert x_{i}-x_{j}\right\Vert ^{2},\label{eq:projreach_distance_midpoint_pairwise_square}
	\end{equation}
	which gives \eqref{eq:projreach_distance_midpoint_pairwise}. Then,
	applying $x=\sum_{i=1}^{k}\lambda_{i}x_{i}$ to \eqref{eq:projreach_distance_midpoint_pairwise_square}
	gives 
	\begin{equation}
	\sum_{i=1}^{k}\lambda_{i}\left\Vert x_{i}-\sum_{j=1}^{k}\lambda_{j}x_{j}\right\Vert ^{2}=\sum_{1\leq i<j\leq k}\lambda_{i}\lambda_{j}\left\Vert x_{i}-x_{j}\right\Vert ^{2},\label{eq:projreach_distance_midpoint_pairwise_midpoint}
	\end{equation}
	and combining \eqref{eq:projreach_distance_midpoint_pairwise_square}
	and \eqref{eq:projreach_distance_midpoint_pairwise_midpoint} gives
	\eqref{eq:projreach_distance_midpoint_midpoint}.
\end{proof}

If the points $x_{1},\ldots,x_{k}$ are not far from the target space $\mathbb{X}$, then so is the convex
combination $u$. Lemma~\ref{lem:projreach_distance_segment} gives
a bound on the distance from any point $u$ on the convex hull to
its projection $\pi_{\mathbb{X}}(u)$ on $\mathbb{X}$. See Figure
\ref{fig:projreach_distance_segment}.

\begin{lemma}
	\label{lem:projreach_distance_segment}
	
	Let $\tau>0$ and $\mathbb{X}\subset\mathbb{R}^{d}$ be a subset with reach $\tau_{\mathbb{X}}\geq\tau$.
	Let $\epsilon_{1},\ldots,\epsilon_{k}\geq0$ and $x_{1},\ldots,x_{k}\in\mathbb{R}^{d}$
	with $d_{\mathbb{X}}(x_{i})\leq\epsilon_{i}\leq\tau$ for $i=1,\ldots,k$.
	Let $\lambda_{1},\ldots,\lambda_{k}\in[0,1]$ with $\sum_{i=1}^{k}\lambda_{i}=1$,
	and let $u:=\sum_{i=1}^{k}\lambda_{i}x_{i}$ be such that $d_{\mathbb{X}}(u)<\tau$. Then 
	\begin{align*}
	\left\Vert u-\pi_{\mathbb{X}}(u)\right\Vert  & \leq\tau-\sqrt{\left(\sum_{i=1}^{k}\lambda_{i}(\tau-\epsilon_{i})^{2}-\sum_{1\leq i<j\leq k}\lambda_{i}\lambda_{j}\left\Vert x_{i}-x_{j}\right\Vert ^{2}\right)_{+}}\\
	& =\tau-\sqrt{\left(\sum_{i=1}^{k}\lambda_{i}(\tau-\epsilon_{i})^{2}-\sum_{i=1}^{k}\lambda_{i}\left\Vert x_{i}-\sum_{j=1}^{k}\lambda_{j}x_{j}\right\Vert ^{2}\right)_{+}}.
	\end{align*}
	
\end{lemma}

\begin{figure}
	\centering
	\includegraphics{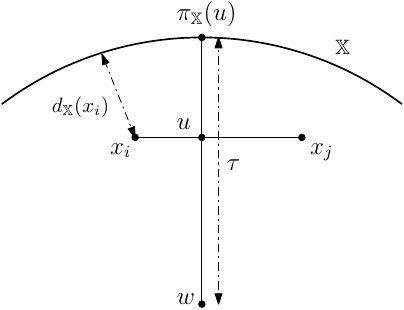}
	\caption{Bound on the distance from any point on the segment to its projection
		on $\mathbb{X}$, as in Lemma~\ref{lem:projreach_distance_segment}.}
	\label{fig:projreach_distance_segment}
\end{figure}

\begin{proof}[Proof of Lemma~\ref{lem:projreach_distance_segment}]
	
	If $\pi_{\mathbb{X}}(u)=u$, then there is nothing to prove. Now,
	suppose $\pi_{\mathbb{X}}(u)\neq u$, and let $w:=\pi_{\mathbb{X}}(u)+\tau\frac{u-\pi_{\mathbb{X}}(u)}{\left\Vert u-\pi_{\mathbb{X}}(u)\right\Vert }$.
	Then, we have that $\left\Vert w-\pi_{\mathbb{X}}(u)\right\Vert =\tau$
	and $w-u=\left(\frac{\tau-\left\Vert \pi_{\mathbb{X}}(u)-u\right\Vert }{\left\Vert \pi_{\mathbb{X}}(u)-u\right\Vert }\right)(u-\pi_{\mathbb{X}}(u))$.
	Since $\left\Vert u-\pi_{\mathbb{X}}(u)\right\Vert <\tau$, it follows
	that $\left\langle w-u,u-\pi_{\mathbb{X}}(u)\right\rangle =\left\Vert w-u\right\Vert \left\Vert u-\pi_{\mathbb{X}}(u)\right\Vert $
	and $\left\Vert u-\pi_{\mathbb{X}}(u)\right\Vert +\left\Vert w-u\right\Vert =\left\Vert w-\pi_{\mathbb{X}}(u)\right\Vert $,
	as in Figure~\ref{fig:projreach_distance_segment}. Since $\pi_{\mathbb{X}}\left(\pi_{\mathbb{X}}(u)+\left\Vert u-\pi_{\mathbb{X}}(u)\right\Vert \frac{u-\pi_{\mathbb{X}}(u)}{\left\Vert u-\pi_{\mathbb{X}}(u)\right\Vert }\right)=\pi_{\mathbb{X}}(u)$
	and $\pi_{\mathbb{X}}(u)+r\frac{u-\pi_{\mathbb{X}}(u)}{\left\Vert u-\pi_{\mathbb{X}}(u)\right\Vert }\notin Med(\mathbb{X})$
	for all $r<\tau$, Theorem 4.8 (2) and (6) in \cite{Federer1959}
	imply that 
	\[
	\pi_{\mathbb{X}}\left(\pi_{\mathbb{X}}(u)+r\frac{u-\pi_{\mathbb{X}}(u)}{\left\Vert u-\pi_{\mathbb{X}}(u)\right\Vert }\right)=\pi_{\mathbb{X}}(u)
	\]
	for all $r<\tau$. Thus, $\mathbb{B}(w,\tau)\cap\mathbb{X}=\emptyset$
	and we can conclude that $\left\Vert w-x_{i}\right\Vert \geq\tau-d_{\mathbb{X}}(x_{i})\geq\tau-\epsilon_{i}$
	for $i=1,\ldots,k$. Applying Claim~\ref{claim:projreach_distance_midpoint}
	to $\left\Vert w-u\right\Vert $ gives a lower bound for $\left\Vert w-u\right\Vert $
	as 
	\begin{align*}
	\left\Vert w-u\right\Vert  & =\sqrt{\sum_{i=1}^{k}\lambda_{i}\left\Vert x_{i}-w\right\Vert ^{2}-\sum_{1\leq i<j\leq k}\lambda_{i}\lambda_{j}\left\Vert x_{i}-x_{j}\right\Vert ^{2}}\\
	& \geq\sqrt{\left(\sum_{i=1}^{k}\lambda_{i}(\tau-\epsilon_{i})^{2}-\sum_{1\leq i<j\leq k}\lambda_{i}\lambda_{j}\left\Vert x_{i}-x_{j}\right\Vert ^{2}\right)_{+}}.
	\end{align*}
	Then, applying $\left\Vert u-\pi_{\mathbb{X}}(u)\right\Vert =\left\Vert w-\pi_{\mathbb{X}}(u)\right\Vert -\left\Vert w-u\right\Vert $
	and Claim~\ref{claim:projreach_distance_midpoint} gives upper bounds
	for $\left\Vert u-\pi_{\mathbb{X}}(u)\right\Vert $ as 
	\begin{align*}
	\left\Vert u-\pi_{\mathbb{X}}(u)\right\Vert  & \leq\tau-\sqrt{\left(\sum_{i=1}^{k}\lambda_{i}(\tau-\epsilon_{i})^{2}-\sum_{1\leq i<j\leq k}\lambda_{i}\lambda_{j}\left\Vert x_{i}-x_{j}\right\Vert ^{2}\right)_{+}}\\
	& =\tau-\sqrt{\left(\sum_{i=1}^{k}\lambda_{i}(\tau-\epsilon_{i})^{2}-\sum_{i=1}^{k}\lambda_{i}\left\Vert x_{i}-\sum_{j=1}^{k}\lambda_{j}x_{j}\right\Vert ^{2}\right)_{+}}.
	\end{align*}
\end{proof}

Now, combining Lemma~\ref{lem:projreach_distance_projection_general}
and \ref{lem:projreach_distance_segment} and gives the bound on $\left\Vert x-u\right\Vert $
in terms of $\left\Vert x-x_{i}\right\Vert $'s for $i=1,\ldots,k$,
in Lemma~\ref{lem:projreach_distance_projection_center}.
Lemma~\ref{lem:projreach_distance_projection_center} plays a key role in showing the contractibility of the intersection of restricted balls in Theorem~\ref{thm:nerve_contractible}.

\begin{lemma} \label{lem:projreach_distance_projection_center}
	
	Let $\tau>0$ and $\mathbb{X}\subset\mathbb{R}^{d}$ be a subset with
	reach $\tau_{\mathbb{X}}\geq\tau$. Let $\epsilon,\epsilon_{1},\ldots,\epsilon_{k}\geq0$
	and $x,x_{1},\ldots,x_{k}\in\mathbb{R}^{d}$ with $d_{\mathbb{X}}(x)\leq\epsilon\leq\tau$,
	$d_{\mathbb{X}}(x_{i})\leq\epsilon_{i}\leq\tau$, and 
	\[
	\left\Vert x-x_{i}\right\Vert <\sqrt{\left(\tau-\epsilon\right)^{2}+\left(\tau-\epsilon_{i}\right)^{2}},
	\]
	for each $i=1,\ldots,k$. Let $\lambda_{1},\ldots,\lambda_{k}\in[0,1]$
	with $\sum_{i=1}^{k}\lambda_{i}=1$, and let $u:=\sum_{i=1}^{k}\lambda_{i}x_{i}$.
	Then
%	\begin{align*}
%		\left\Vert x-\pi_{\mathbb{X}}(u)\right\Vert  & \leq\min\left\{ \sqrt{\sum\lambda_{i}\left(\left\Vert x_{i}-x\right\Vert ^{2}+\epsilon_{i}(2\tau-\epsilon_{i})\right)},\right.\\
%		& \qquad\left.\sqrt{2\left\Vert x-u\right\Vert ^{2}+\epsilon\left(2\tau-\epsilon\right)}\right\} .
%	\end{align*}
%	Suppose further that $\sum_{i=1}^{k}\lambda_{i}\left(\left\Vert x_{i}-u\right\Vert ^{2}+2\tau \epsilon_{i}-\epsilon_{i}^{2}\right)\leq\left\Vert x-u\right\Vert ^{2}+2\tau \epsilon-\epsilon^{2}$,
%	then 
%	\begin{align*}
%		& \left\Vert x-\pi_{\mathbb{X}}(u)\right\Vert \\
%		& \leq\left(\sum_{i=1}^{k}\lambda_{i}\left(\left\Vert x_{i}-x\right\Vert ^{2}+\epsilon_{i}(2\tau-\epsilon_{i})\right)\right.\\
%		& \quad-\left((\tau-\epsilon)^{2}+\sum_{i=1}^{k}\lambda_{i}\left((\tau-\epsilon_{i})^{2}-\left\Vert x_{i}-x\right\Vert ^{2}\right)\right)\\
%		& \quad\left.\times\left(\frac{\tau}{\sqrt{\sum_{i=1}^{k}\lambda_{i}(\tau-\epsilon_{i})^{2}-\sum_{i=1}^{k}\lambda_{i}\left\Vert x_{i}-u\right\Vert ^{2}}}-1\right)\right)^{\frac{1}{2}}.
%	\end{align*}
	$d_{\mathbb{X}}(u)<\tau$ and
	\[
	\left\Vert x-\pi_{\mathbb{X}}(u)\right\Vert \leq\sqrt{\tilde{r}_{x}^{2}-(\tau^{2}-\tilde{r}_{x}^{2}+(\tau-\epsilon)^{2})\left(\frac{\tau}{\sqrt{\tau^{2}-\tilde{r}_{u,x}^{2}}}-1\right)},
	\]
	where 
	\begin{align*}
		\tilde{r}_{x}^{2} & :=\sum_{i=1}^{k}\lambda_{i}\left(\left\Vert x_{i}-x\right\Vert ^{2}+\epsilon_{i}(2\tau-\epsilon_{i})\right),\\
		\tilde{r}_{u,x}^{2} & :=\min\left\{ \sum_{i=1}^{k}\lambda_{i}\left(\left\Vert x_{i}-u\right\Vert ^{2}+\epsilon_{i}(2\tau-\epsilon_{i})\right),\frac{1}{2}(\tilde{r}_{x}^{2}+\epsilon(2\tau-\epsilon))\right\} .
	\end{align*}
\end{lemma}

\begin{proof}[Proof of Lemma~\ref{lem:projreach_distance_projection_center}]

	Let $r:=\sqrt{\sum_{i=1}^{k}\lambda_{i}\left\Vert x_{i}-x\right\Vert ^{2}}$,
	then 
	\[
	r<\sqrt{\left(\tau-\epsilon\right)^{2}+\sum_{i=1}^{k}\lambda_{i}\left(\tau-\epsilon_{i}\right)^{2}}.
	\]
	Then from Claim~\ref{claim:projreach_distance_midpoint},
	\begin{equation}
		\left\Vert x-u\right\Vert =\sqrt{\sum_{i=1}^{k}\lambda_{i}\left\Vert x_{i}-x\right\Vert ^{2}-\sum_{i=1}^{k}\lambda_{i}\left\Vert x_{i}-u\right\Vert ^{2}}=\sqrt{r^{2}-\sum_{i=1}^{k}\lambda_{i}\left\Vert x_{i}-u\right\Vert ^{2}},\label{eq:projreach_distance_midpoint_support}
	\end{equation}
	whose rearrangement gives 
	\[
	\left\Vert x-u\right\Vert ^{2}+\sum_{i=1}^{k}\lambda_{i}\left\Vert x_{i}-u\right\Vert ^{2}=r^{2}.
	\]
	Then $r<\sqrt{\left(\tau-\epsilon\right)^{2}+\sum_{i=1}^{k}\lambda_{i}\left(\tau-\epsilon_{i}\right)^{2}}$
	implies that either $\left\Vert x-u\right\Vert <\tau-\epsilon$ or
	$\left\Vert x_{i}-u\right\Vert <\tau-\epsilon_{i}$ for some $i,$and
	hence $d_{\mathbb{X}}(u)<\tau$ holds for either case. Hence applying Lemma
	\ref{lem:projreach_distance_segment} implies that 
	\begin{equation}
	\left\Vert u-\pi_{\mathbb{X}}(u)\right\Vert \leq\tau-\sqrt{\left(\tau^{2}-\sum_{i=1}^{k}\lambda_{i}\epsilon_{i}(2\tau-\epsilon_{i})-\sum_{i=1}^{k}\lambda_{i}\left\Vert x_{i}-u\right\Vert ^{2}\right)_{+}}.\label{eq:projreach_distance_segment_support}
	\end{equation}
	Hence, combining \eqref{eq:projreach_distance_segment_support} and
	Lemma~\ref{lem:projreach_distance_projection_point} gives 
	\begin{equation}
	\left\Vert x-\pi_{\mathbb{X}}(u)\right\Vert \leq\sqrt{\left\Vert x-u\right\Vert ^{2}+\tilde{r}^{2}-(\tau^{2}+(\tau-\epsilon)^{2}-\left\Vert x-u\right\Vert ^{2}-\tilde{r}^{2})\left(\frac{\tau}{\sqrt{\tau^{2}-\tilde{r}^{2}}}-1\right)},\label{eq:projreach_distance_projection_center_initial}
	\end{equation}
	where 
	\begin{align*}
	\tilde{r}^{2} & :=\min\left\{ \left\Vert x-u\right\Vert ^{2}+\epsilon(2\tau-\epsilon),\min\left\{ \sum_{i=1}^{k}\lambda_{i}\epsilon_{i}(2\tau-\epsilon_{i})+\sum_{i=1}^{k}\lambda_{i}\left\Vert x_{i}-u\right\Vert ^{2},\tau^{2}\right\} \right\} .
	\end{align*}
	For further bounding \eqref{eq:projreach_distance_projection_center_initial},
	use the following notations for convenience: 
	\[
	\tilde{r}_{u}:=\left\Vert u-x\right\Vert ,\qquad\tilde{r}_{\mathbb{X}}:=2\epsilon-\frac{\epsilon^{2}}{\tau},\qquad\tilde{r}_{yz}:=\sqrt{\sum_{i=1}^{k}\lambda_{i}\epsilon_{i}(2\tau-\epsilon_{i})+\sum_{i=1}^{k}\lambda_{i}\left\Vert x_{i}-u\right\Vert ^{2}}.
	\]
	First, note that $\tilde{r}_{u}^{2}+\tilde{r}_{yz}^{2}$
	can be expanded as 
	\begin{align}
	\tilde{r}_{u}^{2}+\tilde{r}_{yz}^{2} & =\left\Vert x-u\right\Vert ^{2}+\sum_{i=1}^{k}\lambda_{i}\epsilon_{i}(2\tau-\epsilon_{i})+\sum_{i=1}^{k}\lambda_{i}\left\Vert x_{i}-u\right\Vert ^{2}\nonumber \\
	& =\left(r^{2}-\sum_{i=1}^{k}\lambda_{i}\left\Vert x_{i}-u\right\Vert ^{2}\right)+\sum_{i=1}^{k}\lambda_{i}\epsilon_{i}(2\tau-\epsilon_{i})+\sum_{i=1}^{k}\lambda_{i}\left\Vert x_{i}-u\right\Vert ^{2}\nonumber \\
	& =r^{2}+\sum_{i=1}^{k}\lambda_{i}\epsilon_{i}(2\tau-\epsilon_{i})\nonumber \\
	& =\sum_{i=1}^{k}\lambda_{i}\left(\left\Vert x_{i}-x\right\Vert ^{2}+\epsilon_{i}(2\tau-\epsilon_{i})\right)=:\tilde{r}_{x}^{2}.\label{eq:projreach_distance_projection_center_sum}
	\end{align}
	Then, $\left\Vert x-x_{i}\right\Vert <\sqrt{(\tau-\epsilon)^{2}+(\tau-\epsilon_{i})^{2}}$
	implies that $\tilde{r}_{u}^{2}+\tilde{r}_{\mathbb{X}}\tau+\tilde{r}_{yz}^{2}$
	is bounded as 
	\begin{align*}
	\tilde{r}_{u}^{2}+\tilde{r}_{\mathbb{X}}\tau+\tilde{r}_{yz}^{2} & =\sum_{i=1}^{k}\lambda_{i}\left(\left\Vert x_{i}-x\right\Vert ^{2}+\epsilon_{i}(2\tau-\epsilon_{i})\right)+\epsilon(2\tau-\epsilon)\\
	& <\sum_{i=1}^{k}\lambda_{i}\left((\tau-\epsilon)^{2}+(\tau-\epsilon_{i})^{2}+\epsilon_{i}(2\tau-\epsilon_{i})\right)+\epsilon(2\tau-\epsilon)=2\tau^{2},
	\end{align*}
	and hence 
	\[
	\tilde{r}^{2}=\min\left\{ \tilde{r}_{u}^{2}+\tilde{r}_{\mathbb{X}}\tau,\tilde{r}_{yz}^{2}\right\} .
	\]
	Now, we split into cases whether $\tilde{r}_{yz}\leq\sqrt{\tilde{r}_{u}^{2}+\tilde{r}_{\mathbb{X}}\tau}$
	or $\sqrt{\tilde{r}_{u}^{2}+\tilde{r}_{\mathbb{X}}\tau}\leq\tilde{r}_{yz}$.
	For $\tilde{r}_{yz}\leq\sqrt{\tilde{r}_{u}^{2}+\tilde{r}_{\mathbb{X}}\tau}$
	case, applying $\tilde{r}=\tilde{r}_{yz}$ to \eqref{eq:projreach_distance_projection_center_initial}
	gives 
	\begin{equation}
	\left\Vert x-\pi_{\mathbb{X}}(u)\right\Vert \leq\sqrt{\tilde{r}_{u}^{2}+\tilde{r}_{yz}^{2}-(2\tau^{2}-\tilde{r}_{u}^{2}-\tilde{r}_{yz}^{2}-\tilde{r}_{\mathbb{X}}\tau)\left(\frac{\tau}{\sqrt{\tau^{2}-\tilde{r}_{yz}^{2}}}-1\right)}.\label{eq:projreach_distance_projection_center_first}
	\end{equation}
	For $\sqrt{\tilde{r}_{u}^{2}+\tilde{r}_{\mathbb{X}}\tau}\leq\tilde{r}_{yz}$
	case, applying $\tilde{r}=\sqrt{\tilde{r}_{u}^{2}+\tilde{r}_{\mathbb{X}}\tau}$
	to \eqref{eq:projreach_distance_projection_center_initial} gives
	\begin{align*}
	\left\Vert x-\pi_{\mathbb{X}}(u)\right\Vert  & \leq\sqrt{2\tilde{r}_{u}^{2}+\tilde{r}_{\mathbb{X}}\tau-(2\tau^{2}-2\tilde{r}_{u}^{2}-2\tilde{r}_{\mathbb{X}}\tau)\left(\frac{\tau}{\sqrt{\tau^{2}-(\tilde{r}_{u}^{2}+\tilde{r}_{\mathbb{X}}\tau)}}-1\right)}\\
	& =\sqrt{2\tau^{2}-\tilde{r}_{\mathbb{X}}\tau-2\tau\sqrt{\tau^{2}-(\tilde{r}_{u}^{2}+\tilde{r}_{\mathbb{X}}\tau)}}
	\end{align*}
	Then RHS is an increasing function of $\tilde{r}_{u}$. Hence applying
	$\tilde{r}_{u}^{2}\leq\frac{1}{2}(\tilde{r}_{u}^{2}+\tilde{r}_{yz}^{2}-\tilde{r}_{\mathbb{X}}\tau)$
	gives 
	\begin{equation}
	\left\Vert x-\pi_{\mathbb{X}}(u)\right\Vert \leq\sqrt{\tilde{r}_{u}^{2}+\tilde{r}_{yz}^{2}-(2\tau^{2}-\tilde{r}_{u}^{2}-\tilde{r}_{yz}^{2}-\tilde{r}_{\mathbb{X}}\tau)\left(\frac{\tau}{\sqrt{\tau^{2}-\frac{1}{2}(\tilde{r}_{u}^{2}+\tilde{r}_{yz}^{2}+\tilde{r}_{\mathbb{X}}\tau)}}-1\right)}.\label{eq:projreach_distance_projection_center_second}
	\end{equation}
	Hence, combining \eqref{eq:projreach_distance_projection_center_first}
	and \eqref{eq:projreach_distance_projection_center_second} gives
	that 
	\begin{align*}
	& \left\Vert x-\pi_{\mathbb{X}}(u)\right\Vert \\
	& \leq\sqrt{\tilde{r}_{u}^{2}+\tilde{r}_{yz}^{2}-(2\tau^{2}-\tilde{r}_{u}^{2}-\tilde{r}_{yz}^{2}-\tilde{r}_{\mathbb{X}}\tau)\left(\frac{\tau}{\sqrt{\tau^{2}-\max\left\{ \tilde{r}_{yz}^{2},\frac{1}{2}(\tilde{r}_{u}^{2}+\tilde{r}_{yz}^{2}+\tilde{r}_{\mathbb{X}}\tau)\right\} }}-1\right)}.
	\end{align*}
	Then, \eqref{eq:projreach_distance_projection_center_sum} gives that
	$\left\Vert x-\pi_{\mathbb{X}}(u)\right\Vert $ is upper bounded as
	\[
	\left\Vert x-\pi_{\mathbb{X}}(u)\right\Vert \leq\sqrt{\tilde{r}_{x}^{2}-(\tau^{2}-\tilde{r}_{x}^{2}+(\tau-\epsilon)^{2})\left(\frac{\tau}{\sqrt{\tau^{2}-\tilde{r}_{u,x}^{2}}}-1\right)},
	\]
	where 
	\begin{align*}
	\tilde{r}_{x}^{2} & :=\sum_{i=1}^{k}\lambda_{i}\left(\left\Vert x_{i}-x\right\Vert ^{2}+\epsilon_{i}(2\tau-\epsilon_{i})\right),\\
	\tilde{r}_{u,x}^{2} & :=\min\left\{ \sum_{i=1}^{k}\lambda_{i}\left(\left\Vert x_{i}-u\right\Vert ^{2}+\epsilon_{i}(2\tau-\epsilon_{i})\right),\frac{1}{2}(\tilde{r}_{x}^{2}+\epsilon(2\tau-\epsilon)\right\} .
	\end{align*}

\end{proof}

\section{Proofs for Section~\ref{sec:nerve}}
\label{app:nerve_proof}

This section provides the proofs for Section~\ref{sec:nerve}, in
particular focuses on proving Theorem~\ref{thm:nerve_contractible}.
To show the contractibility of the intersection of restricted balls $\bigcap_{x\in I}\mathbb{B}_{\mathbb{X}}(x,r_{x})$,
we fix a point $y_{0}\in\bigcap_{x\in I}\mathbb{B}_{\mathbb{X}}(x,r_{x})$
and show that $\bigcap_{x\in I}\mathbb{B}_{\mathbb{X}}(x,r_{x})$
deformation retracts to $\{y_{0}\}$. This deformation retract is
constructed by sending each $y\in\bigcap_{x\in I}\mathbb{B}_{\mathbb{X}}(x,r_{x})$
to $y_{0}$ via a curve $\pi_{\mathbb{X}}(l_{y_{0},y})$, where $l_{y_{0},y}$
is the line segment joining $y_{0}$ and $y$ in $\mathbb{R}^{d}$.
The key part is to show that $\pi_{\mathbb{X}}(l_{y_{0},y})\subset\bigcap_{x\in I}\mathbb{B}_{\mathbb{X}}(x,r_{x})$,
or equivalently, to show that for all $x\in I$ and $t\in[0,1]$,
\[
\left\Vert x-\pi_{\mathbb{X}}(ty_{0}+(1-t)y)\right\Vert <r_{x}.
\]
This is implied from Lemma~\ref{lem:projreach_distance_projection_center},
as $\left\Vert x-y_{0}\right\Vert ,\left\Vert x-y\right\Vert <r_{x}$
implies that 
\[
\left\Vert x-\pi_{\mathbb{X}}(ty_{0}+(1-t)y)\right\Vert \leq\sqrt{t\left\Vert x-y_{0}\right\Vert ^{2}+(1-t)\left\Vert x-y\right\Vert ^{2}}<r_{x}.
\]
We restate Theorem~\ref{thm:nerve_contractible} and formally write
its proof below.

\textbf{Theorem~\ref{thm:nerve_contractible}.} \textit{
Let $\mathbb{X} \subset\mathbb{R}^{d}$ be a subset with reach $\tau>0$ and let $\mathcal{X}\subset\mathbb{R}^{d}$ be a set of points. Let $\{r_x > 0: x \in \mathcal{X}\}$ be a set of radii indexed by $x\in\mathcal{X}$. Then, if $r_x \leq \sqrt{\tau^{2}+(\tau-d_{\mathbb{X}}(x))^{2}}$ for all $x\in\mathcal{X}$, any nonempty intersection of restricted balls $\bigcap_{x\in I} \mathbb{B}_{\mathbb{X}}(x,r_{x})$ for $I\subset\mathcal{X}$ is contractible.}

\begin{proof}[Proof of Theorem~\ref{thm:nerve_contractible}]
	Fix $x\in I$ and let $y_{1},y_{2}\in\mathbb{B}_{\mathbb{X}}(x,r_{x})$.
	Let $l_{y_{1},y_{2}}:[0,1]\to\mathbb{B}_{\mathbb{R}^{d}}(x,r_{x})$
	with $l_{y_{1},y_{2}}(t)=ty_{1}+(1-t)y_{2}$ be the line segment joining
	$y_{1}$ and $y_{2}$. Then $d_{\mathbb{X}}(y_{i})=0$ and $\left\Vert x-y_{i}\right\Vert <r_{x}\leq\sqrt{(\tau-d_{\mathbb{X}}(x))^{2}+\tau^{2}}$
	for $i=1,2$, hence Lemma~\ref{lem:projreach_distance_projection_center}
	implies that $d_{\mathbb{X}}(l_{y_{1},y_{2}}(t))<\tau$ for all $t\in[0,1]$.
	Hence $\pi_{\mathbb{X}}(l_{y_{1},y_{2}}(t))\in\mathbb{X}$ is uniquely
	defined for each $t\in[0,1]$. And hence the curve $\gamma_{y_{1},y_{2}}:[0,1]\to\mathbb{X}$
	defined as $\gamma(t):=\pi_{\mathbb{X}}(l(t))$ is well-defined.
	
	Now we argue that $\gamma_{y_{1},y_{2}}(t)\in\mathbb{B}_{\mathbb{X}}(x,r_{x})$
	for all $t\in[0,1]$, in other words, we show $\left\Vert x-\gamma_{y_{1},y_{2}}(t)\right\Vert <r_{x}$.
	Again, applying Lemma~\ref{lem:projreach_distance_projection_center}
	gives the bound for$\left\Vert x-\gamma_{y_{1},y_{2}}(t)\right\Vert $
	as 
	\[
	\left\Vert x-\gamma_{y_{1},y_{2}}(t)\right\Vert \leq\sqrt{t\left\Vert x-y_{1}\right\Vert ^{2}+(1-t)\left\Vert x-y_{2}\right\Vert ^{2}}<r_{x}.
	\]
	Hence $\gamma_{y_{1},y_{2}}(t)\in\mathbb{B}_{\mathbb{X}}(x,r_{x})$
	for all $t\in[0,1]$.
	
	Now, fix $y_{0}\in\underset{x\in I}{\bigcap}\mathbb{B}_{\mathbb{X}}(x,r_{x})$,
	and define the map $F:\left(\underset{x\in I}{\bigcap}\mathbb{B}_{\mathbb{X}}(x,r_{x})\right)\times[0,1]\to\underset{x\in I}{\bigcap}\mathbb{B}_{\mathbb{X}}(x,r_{x})$
	as $F(y,t)=\gamma_{y_{0},y}(t)$. As we have shown above, $\gamma_{y_{0},y}(t)\in\underset{x\in I}{\bigcap}\mathbb{B}_{\mathbb{X}}(x,r_{x})$
	for all $t$, so $F$ is a well-defined. Also, for any $y,\tilde{y}\in\underset{x\in I}{\bigcap}\mathbb{B}_{\mathbb{X}}(x,r_{x})$
	and $t,\tilde{t}\in[0,1]$, Theorem \eqref{thm:background_reach_projection}
	(ii) implies
	\begin{align*}
	\left\Vert F(y,t)-F(\tilde{y},\tilde{t})\right\Vert  & =\left\Vert \gamma_{y_{0},y}(t)-\gamma_{y_{0},\tilde{y}}(\tilde{t})\right\Vert =\left\Vert \pi_{\mathbb{X}}(l_{y_{0},y}(t))-\pi_{\mathbb{X}}(l_{y_{0},\tilde{y}}(\tilde{t}))\right\Vert \\
	& \leq\frac{\tau\left\Vert l_{y_{0},y}(t)-l_{y_{0},\tilde{y}}(\tilde{t})\right\Vert }{\tau-\max\left\{ d_{\mathbb{X}}(l_{y_{0},y}(t)),d_{\mathbb{X}}(l_{y_{0},\tilde{y}}(\tilde{t}))\right\} }.
	\end{align*}
	Then $(\tilde{y},\tilde{t})\to(y,t)$ implies $l_{y_{0},\tilde{y}}(\tilde{t})=\tilde{t}y_{0}+(1-\tilde{t})\tilde{y}\to ty_{0}+(1-t)y=l_{y_{0},y}(t)$
	and $d_{\mathbb{X}}(l_{y_{0},\tilde{y}}(\tilde{t}))\to d_{\mathbb{X}}(l_{y_{0},y}(t))<\tau$,
	and hence $\left\Vert F(y,t)-F(\tilde{y},\tilde{t})\right\Vert \to0$,
	and hence $F$ is continuous.
	
	Now, for all $y\in\underset{x\in I}{\bigcap}\mathbb{B}_{\mathbb{X}}(x,r_{x})$,
	$F(y,0)=y$ and $F(y,1)=y_{0}$, and for all $t\in[0,1]$, $F(y_{0},t)=y_{0}$.
	Hence the intersection $\underset{x\in I}{\bigcap}\mathbb{B}_{\mathbb{X}}(x,r_{x})$
	deformation retracts to a point $\{y_{0}\}$. And hence the intersection
	$\underset{x\in I}{\bigcap}\mathbb{B}_{\mathbb{X}}(x,r_{x})$ is contractible.
\end{proof}

Then Corollary~\ref{cor:nerve_contractible_covering} is a direct
application of Theorem~\ref{thm:nerve_contractible} to Nerve Theorem
(Theorem~\ref{thm:background_nerve}).

\textbf{Corollary~\ref{cor:nerve_contractible_covering} (Nerve Theorem
	on the restricted balls).} \textit{Under the same condition of Theorem~\ref{thm:nerve_contractible},
	suppose $r_{x}\leq\sqrt{\tau^{2}+(\tau-d_{\mathbb{X}}(x))^{2}}$ for
	all $x\in\mathcal{X}$, then the union of restricted balls $\bigcup_{x\in\mathcal{X}}\mathbb{B}_{\mathbb{X}}(x,r_{x})$
	is homotopy equivalent to the restricted \v{C}ech complex $\textrm{\v{C}ech}_{\mathbb{X}}(\mathcal{X},r)$.
	If, in addition, the union of restricted balls covers the target space
	$\mathbb{X}$, that is, 
	\begin{equation}
	\mathbb{X}\subset\bigcup_{x\in\mathcal{X}}\mathbb{B}_{\mathbb{X}}(x,r_{x}),\label{eq:nerve_covering_proof}
	\end{equation}
	then $\mathbb{X}$ is homotopy equivalent to the restricted \v{C}ech
	complex $\textrm{\v{C}ech}_{\mathbb{X}}(\mathcal{X},r)$.}

\begin{proof}[Proof of Corollary~\ref{cor:nerve_contractible_covering}]
	
	Let $\mathcal{U}:=\left\{ \mathbb{B}_{\mathbb{X}}(x,r_{x})\right\} _{x\in\mathcal{X}}$ be the collection of balls.
	Then since $\mathbb{R}^{d}$ is paracompact, $\bigcup_{x\in\mathcal{X}}\mathbb{B}_{\mathbb{X}}(x,r_{x})\subset\mathbb{R}^{d}$
	is paracompact as well. And from Theorem~\ref{thm:nerve_contractible},
	any nonempty finite intersection of $\mathcal{U}$ is contractible.
	Hence from Nerve Theorem (Theorem~\ref{thm:background_nerve}), $\bigcup_{x\in\mathcal{X}}\mathbb{B}_{\mathbb{X}}(x,r_{x})$
	is homotopy equivalent to the nerve $\mathcal{N}(\mathcal{U})$, which
	is the restricted \v{C}ech complex $\textrm{\v{C}ech}_{\mathbb{X}}(\mathcal{X},r)$.
	Further, under the covering condition \eqref{eq:nerve_covering_proof},
	$\mathbb{X}=\bigcup_{x\in\mathcal{X}}\mathbb{B}_{\mathbb{X}}(x,r_{x})$,
	so $\mathbb{X}$ is homotopy equivalent to $\textrm{\v{C}ech}_{\mathbb{X}}(\mathcal{X},r)$
	as well.
	
\end{proof}

\section{Proofs for Section~\ref{sec:defretract}}
\label{app:defretract_proof}

This section provides the proofs for Section~\ref{sec:defretract},
in particular focuses on proving Theorem~\ref{thm:defretract_mureach}.
As in Section~\ref{subsec:background_reach}, we use the following
notation: For a closed set $\mathbb{X}$ and $x\in\mathbb{R}^{d}\backslash\mathbb{X}$,
let $\Gamma_{\mathbb{X}}(x)$ be the set of points in $\mathbb{X}$
closest to $x$, and $\Theta_{\mathbb{X}}(x)$ be the center of the
smallest ball enclosing $\Gamma_{\mathbb{X}}(x)$.

To prove the deformation retract, we proceed similar to \cite{Grove1993}.
We find a continuous vector field $W:\mathbb{X}^{r}\backslash\mathbb{X}\to\mathbb{R}^{d}$
that satisfies 
\begin{equation}
\sup_{x\in\mathbb{X}^{r}\backslash\mathbb{X}}\left\langle W(x),\nabla_{\mathbb{X}}(x)\right\rangle <0.\label{eq:defretract_innerproduct}
\end{equation}
And use the flow $\psi$ generated from $W$, that is, $\frac{d}{dt}\psi(x,t)=W(\psi(x,t))$
to find a homotopy map giving a deformation retract from $\mathbb{X}^{r}$
to $\mathbb{X}$. To use $\psi$ as a homotopy map, we show that the
distance function $d_{\mathbb{X}}$ decreases to $0$ in a finite
time on the integral curve $\psi^{x}(\cdot):=\psi(x,\cdot)$: sufficiently,
we show that 
\begin{equation}
\sup_{(x,s):\psi(x,s)\in\mathbb{X}^{r}\backslash\mathbb{X}}\limsup_{h\to0}\frac{d_{\mathbb{X}}(\psi(x,s+h))-d_{\mathbb{X}}(\psi(x,s))}{h}<0.\label{eq:defretract_derivative}
\end{equation}

To construct a vector field $W$ satisfying \eqref{eq:defretract_innerproduct},
the generalized gradient of the distance function $\nabla_{\mathbb{X}}$
is necessarily required not to change too much. Lemma~\ref{lem:defretract_innerproduct_bound}
asserts that the generalized gradient of the distance function is
not necessarily continuous but it also does not change too much in
terms of the inner product geometry.
\begin{lemma}
	
	\label{lem:defretract_innerproduct_bound}
	
	Let $\mathbb{X}$ be a closed set and $x\in\mathbb{R}^{d}\backslash\mathbb{X}$.
	Then 
	\[
	\liminf_{y\to x}\left\langle \nabla_{\mathbb{X}}(y),\nabla_{\mathbb{X}}(x)\right\rangle \geq\left\Vert \nabla_{\mathbb{X}}(x)\right\Vert ^{2}.
	\]
	
\end{lemma}

\begin{proof}
	
	Fix a small $\epsilon>0$. Let $\mathcal{F}_{\mathbb{X}}(x)$ be the
	radius of the ball enclosing $\Gamma_{\mathbb{X}}(x)$, and consider
	a compact set $K:=\partial\mathbb{B}_{\mathbb{R}^{d}}(x,d_{\mathbb{X}}(x))\backslash\mathbb{B}_{\mathbb{R}^{d}}(\Theta_{\mathbb{X}}(x),\mathcal{F}_{\mathbb{X}}(x)+\epsilon)$.
	Since $\partial\mathbb{B}_{\mathbb{R}^{d}}(x,d_{\mathbb{X}}(x))\cap\mathbb{X}\subset\mathbb{B}_{\mathbb{R}^{d}}(\Theta_{\mathbb{X}}(x),\mathcal{F}_{\mathbb{X}}(x)+\epsilon)$,
	$K$ does not intersect with $\mathbb{X}$, i.e. $K\subset\mathbb{R}^{d}\backslash\mathbb{X}$.
	Then since $\mathbb{R}^{d}\backslash\mathbb{X}$ is an open set, for
	each $q\in K$, there exists $r_{q}>0$ such that $\mathbb{B}_{\mathbb{R}^{d}}(q,2r_{q})\subset\mathbb{R}^{d}\backslash\mathbb{X}$.
	And $\left\{ \mathbb{B}_{\mathbb{R}^{d}}(q,r_{q})\right\} _{q\in K}$
	covers $K$, hence there exists a finite subcover $\left\{ \mathbb{B}_{\mathbb{R}^{d}}(q_{i},r_{q_{i}})\right\} _{i=1}^{m}$
	that covers $K$. Now, let $\Delta r:=\min\{r_{q_{i}}:1\leq i\leq m\}$,
	then $K^{\Delta r}\subset\bigcup_{i=1}^{m}\mathbb{B}_{\mathbb{R}^{d}}(q_{i},2r_{q_{i}})\subset\mathbb{R}^{d}\backslash\mathbb{X}$
	holds.
	
	Now, from the expansion 
	\[
	\left\Vert q-\Theta_{\mathbb{X}}(x)\right\Vert ^{2}=\left\Vert x-q\right\Vert ^{2}+\left\Vert x-\Theta_{\mathbb{X}}(x)\right\Vert ^{2}-2\left\langle x-q,x-\Theta_{\mathbb{X}}(x)\right\rangle ,
	\]
	it is implied that if $x\in\partial\mathbb{B}_{\mathbb{R}^{d}}(x,d_{\mathbb{X}}(x))$,
	then $x\in K$ is equivalent to 
	\begin{align*}
		\left\langle x-q,x-\Theta_{\mathbb{X}}(x)\right\rangle  & \leq\frac{1}{2}\left(d_{\mathbb{X}}(x)^{2}+d_{\mathbb{X}}(x)^{2}\left\Vert \nabla_{\mathbb{X}}(x)\right\Vert ^{2}-(\mathcal{F}_{\mathbb{X}}(x)+\epsilon)^{2}\right)\\
		& =d_{\mathbb{X}}(x)^{2}\left\Vert \nabla_{\mathbb{X}}(x)\right\Vert ^{2}-\epsilon\mathcal{F}_{\mathbb{X}}(x)-\frac{1}{2}\epsilon^{2},
	\end{align*}
	using $\mathcal{F}_{\mathbb{X}}(x)^{2}=d_{\mathbb{X}}(x)^{2}\left(1-\left\Vert \nabla_{\mathbb{X}}(x)\right\Vert ^{2}\right)$.
	Hence $q\in\mathbb{B}_{\mathbb{R}^{d}}(x,d_{\mathbb{X}}(x)+\Delta r)\backslash\mathbb{B}_{\mathbb{R}^{d}}(x,d_{\mathbb{X}}(x))$
	with $\left\langle x-q,x-\Theta_{\mathbb{X}}(x)\right\rangle \leq d_{\mathbb{X}}(x)^{2}\left\Vert \nabla_{\mathbb{X}}(x)\right\Vert ^{2}-\epsilon\mathcal{F}_{\mathbb{X}}(x)-\frac{1}{2}\epsilon^{2}$
	implies that $q\in K^{\Delta r}$. Then from $\mathbb{B}_{\mathbb{R}^{d}}(x,d_{\mathbb{X}}(x))\cap\mathbb{X}=\emptyset$
	and $K^{\Delta r}\cap\mathbb{X}=\emptyset$, for any $q\in\mathbb{B}_{\mathbb{R}^{d}}(x,d_{\mathbb{X}}(x)+\Delta r)\cap\mathbb{X}$,
	\begin{equation}
	\left\langle x-q,x-\Theta_{\mathbb{X}}(x)\right\rangle \geq d_{\mathbb{X}}(x)^{2}\left\Vert \nabla_{\mathbb{X}}(x)\right\Vert ^{2}-\epsilon\mathcal{F}_{\mathbb{X}}(x)-\frac{1}{2}\epsilon^{2}.\label{eq:defretract_innerproduct_bound_ball}
	\end{equation}
	Now, for $\delta\in(0,\frac{1}{2}\Delta r)$, suppose $y\in\mathbb{R}^{d}$
	with $\left\Vert y-x\right\Vert <\delta$. Then $d_{\mathbb{X}}(y)<d_{\mathbb{X}}(x)+\delta$,
	so $\Gamma_{\mathbb{X}}(y)\subset\overline{\mathbb{B}_{\mathbb{R}^{d}}(y,d_{\mathbb{X}}(y))}\subset\mathbb{B}_{\mathbb{R}^{d}}(y,d_{\mathbb{X}}(x)+\delta)\subset\mathbb{B}_{\mathbb{R}^{d}}(x,d_{\mathbb{X}}(x)+2\delta)\subset\mathbb{B}_{\mathbb{R}^{d}}(x,d_{\mathbb{X}}(x)+\Delta r).$
	Hence for any $q\in\Gamma_{\mathbb{X}}(y)$, \eqref{eq:defretract_innerproduct_bound_ball}
	implies that 
	\begin{align}
		\left\langle y-q,x-\Theta_{\mathbb{X}}(x)\right\rangle  & =\left\langle x-q,x-\Theta_{\mathbb{X}}(x)\right\rangle +\left\langle y-x,x-\Theta_{\mathbb{X}}(x)\right\rangle \nonumber \\
		& \geq\left(d_{\mathbb{X}}(x)^{2}\left\Vert \nabla_{\mathbb{X}}(x)\right\Vert ^{2}-\epsilon\mathcal{F}_{\mathbb{X}}(x)-\frac{1}{2}\epsilon^{2}\right)-\left\Vert y-x\right\Vert \left\Vert x-\Theta_{\mathbb{X}}(x)\right\Vert \nonumber \\
		& \geq d_{\mathbb{X}}(x)^{2}\left\Vert \nabla_{\mathbb{X}}(x)\right\Vert ^{2}-\epsilon\mathcal{F}_{\mathbb{X}}(x)-\frac{1}{2}\epsilon^{2}-\delta d_{\mathbb{X}}(x)\left\Vert \nabla_{\mathbb{X}}(x)\right\Vert .\label{eq:defretract_innerproduct_bound_point}
	\end{align}
	Since $\Theta_{\mathbb{X}}(y)$ is in the convex hull of $\Gamma_{\mathbb{X}}(y)$,
	there exists $q_{1},\ldots,q_{k}\in\Gamma_{\mathbb{X}}(y)$ and $\lambda_{1},\ldots,\lambda_{k}\in[0,1]$
	with $\sum_{i=1}^{k}\lambda_{i}q_{i}=\Theta_{\mathbb{X}}(y)$. Then \eqref{eq:defretract_innerproduct_bound_point}
	implies that 
	\[
	\left\langle y-\Theta_{\mathbb{X}}(y),x-\Theta_{\mathbb{X}}(x)\right\rangle \geq d_{\mathbb{X}}(x)^{2}\left\Vert \nabla_{\mathbb{X}}(x)\right\Vert ^{2}-\epsilon\mathcal{F}_{\mathbb{X}}(x)-\frac{1}{2}\epsilon^{2}-\delta d_{\mathbb{X}}(x)\left\Vert \nabla_{\mathbb{X}}(x)\right\Vert ,
	\]
	which implies that 
	\begin{align*}
		\left\langle \nabla_{\mathbb{X}}(y),\nabla_{\mathbb{X}}(x)\right\rangle  & \geq\frac{d_{\mathbb{X}}(x)^{2}\left\Vert \nabla_{\mathbb{X}}(x)\right\Vert ^{2}-\epsilon\mathcal{F}_{\mathbb{X}}(x)-\frac{1}{2}\epsilon^{2}-\delta d_{\mathbb{X}}(x)\left\Vert \nabla_{\mathbb{X}}(x)\right\Vert }{d_{\mathbb{X}}(x)d_{\mathbb{X}}(y)}\\
		& \geq\frac{d_{\mathbb{X}}(x)^{2}\left\Vert \nabla_{\mathbb{X}}(x)\right\Vert ^{2}-\epsilon\mathcal{F}_{\mathbb{X}}(x)-\frac{1}{2}\epsilon^{2}-\delta d_{\mathbb{X}}(x)\left\Vert \nabla_{\mathbb{X}}(x)\right\Vert }{d_{\mathbb{X}}(x)(d_{\mathbb{X}}(x)+\delta)}.
	\end{align*}
	Hence by sending $\delta\to0$, 
	\[
	\liminf_{y\to x}\left\langle \nabla_{\mathbb{X}}(y),\nabla_{\mathbb{X}}(x)\right\rangle \geq\left\Vert \nabla_{\mathbb{X}}(x)\right\Vert ^{2}-\frac{\epsilon\mathcal{F}_{\mathbb{X}}(x)+\frac{1}{2}\epsilon^{2}}{d_{\mathbb{X}}(x)^{2}}.
	\]
	And since the choice of $\epsilon$ was arbitrary small, 
	\[
	\liminf_{y\to x}\left\langle \nabla_{\mathbb{X}}(y),\nabla_{\mathbb{X}}(x)\right\rangle \geq\left\Vert \nabla_{\mathbb{X}}(x)\right\Vert ^{2}.
	\]
	
\end{proof}

To show \eqref{eq:defretract_derivative}, we generally show that
for $x,y\in\mathbb{R}^{d}$, the distance function is bounded as 
\[
d_{\mathbb{X}}(y)^{2}\leq d_{\mathbb{X}}(x)^{2}+\left\Vert x-y\right\Vert ^{2}+2d_{\mathbb{X}}(x)\left\langle y-x,\nabla_{\mathbb{X}}(x)\right\rangle .
\]
This implies that for $x\in\mathbb{R}^{d}\backslash\mathbb{X}$ and
any differentiable curve $\gamma$ with $\gamma(s)=x$, 
\[
\limsup_{h\to0}\frac{d_{\mathbb{X}}(\gamma(s+h))-d_{\mathbb{X}}(\gamma(s))}{h}\leq\left\langle \gamma'(s),\nabla_{\mathbb{X}}(x)\right\rangle ,
\]
and hence together with Lemma~\ref{lem:defretract_innerproduct_bound}
implies \eqref{eq:defretract_derivative}. Lemma~\ref{lem:defretract_distance_bound_derivative}
bounds the distance function values that are close to each other and
improves Lemma~\ref{lem:background_distance_concavity}.
\begin{lemma}
	\label{lem:defretract_distance_bound_derivative}
	
	Let $\mathbb{X}$ be a closed set and $x,y\in\mathbb{R}^{d}$. Then
	the distance $d_{\mathbb{X}}(y)$ is bounded as 
	\begin{equation}
	d_{\mathbb{X}}(y)^{2}\leq d_{\mathbb{X}}(x)^{2}+\left\Vert x-y\right\Vert ^{2}+2d_{\mathbb{X}}(x)\left\langle y-x,\nabla_{\mathbb{X}}(x)\right\rangle .\label{eq:defretract_distance_bound}
	\end{equation}
	In particular, suppose $x\in\mathbb{R}^{d}\backslash\mathbb{X}$ and
	let $\gamma$ be a differentiable curve with $\gamma(s)=x$, then
	\begin{equation}
	\limsup_{h\to0}\frac{d_{\mathbb{X}}(\gamma(s+h))-d_{\mathbb{X}}(\gamma(s))}{h}\leq\left\langle \gamma'(s),\nabla_{\mathbb{X}}(x)\right\rangle .\label{eq:defretract_distance_derivative}
	\end{equation}
	
\end{lemma}

\begin{proof}
	
	We first show \eqref{eq:defretract_distance_bound}. For any $q\in\Gamma_{\mathbb{X}}(x)$,
	\begin{align}
		\left\Vert y-q\right\Vert ^{2} & =\left\Vert x-q\right\Vert ^{2}+\left\Vert y-x\right\Vert ^{2}+2\left\langle y-x,x-q\right\rangle \nonumber \\
		& =\left\Vert x-q\right\Vert ^{2}+\left\Vert y-x\right\Vert ^{2}+2\left\langle y-x,x-\Theta_{\mathbb{X}}(x)\right\rangle +2\left\langle y-x,\Theta_{\mathbb{X}}(x)-q\right\rangle .\label{eq:defretract_distance_bound_expand}
	\end{align}
	Then since $\Theta_{\mathbb{X}}(x)$ is in the convex hull of $\Gamma_{\mathbb{X}}(x)$,
	there exists $q_{1},\ldots,q_{k}\in\Gamma_{\mathbb{X}}(x)$ and $\lambda_{1},\ldots,\lambda_{k}\in[0,1]$
	with $\sum_{i=1}^{k}\lambda_{i}q_{i}=\Theta_{\mathbb{X}}(x)$. Then
	$2\sum_{i=1}^{k}\lambda_{i}\left\langle y-x,\Theta_{\mathbb{X}}(x)-q_{i}\right\rangle =0$,
	so there exists $q_{j}\in\Gamma_{\mathbb{X}}(x)$ with 
	\[
	\left\langle y-x,\Theta_{\mathbb{X}}(x)-q_{j}\right\rangle \leq0,
	\]
	and applying this to \eqref{eq:defretract_distance_bound_expand}
	gives 
	\begin{align*}
		\left\Vert y-q_{j}\right\Vert ^{2} & \leq\left\Vert x-q_{j}\right\Vert ^{2}+\left\Vert y-x\right\Vert ^{2}+2\left\langle y-x,x-\Theta_{\mathbb{X}}(x)\right\rangle \\
		& =\left\Vert x-q_{j}\right\Vert ^{2}+\left\Vert y-x\right\Vert ^{2}+2d_{\mathbb{X}}(x)\left\langle y-x,\nabla_{\mathbb{X}}(x)\right\rangle .
	\end{align*}
	Then applying $d_{\mathbb{X}}(y)\leq\left\Vert y-q_{j}\right\Vert $
	and $d_{\mathbb{X}}(x)=\left\Vert x-q_{j}\right\Vert $ gives \eqref{eq:defretract_distance_bound}.
	
	For \eqref{eq:defretract_distance_derivative}, from $d_{\mathbb{X}}(\gamma(s))=d_{\mathbb{X}}(x)>0$, applying $\gamma(s+h)$
	and $\gamma(s)$ to \eqref{eq:defretract_distance_bound} and rearranging
	gives 
	\[
	\frac{d_{\mathbb{X}}(\gamma(s+h))-d_{\mathbb{X}}(\gamma(s))}{h}\leq\frac{\frac{1}{h}\left\Vert \gamma(s+h)-\gamma(s)\right\Vert ^{2}+2d_{\mathbb{X}}(\gamma(s))\left\langle \frac{1}{h}(\gamma(s+h)-\gamma(s)),\nabla_{\mathbb{X}}(x)\right\rangle }{d_{\mathbb{X}}(\gamma(s+h))+d_{\mathbb{X}}(\gamma(s))}.
	\]
	Then from $\gamma$ being differentiable, as $h\to0$, $\gamma(s+h)\to\gamma(s)$,
	$\frac{1}{h}\left\Vert \gamma(s+h)-\gamma(s)\right\Vert ^{2}\to0$,
	and $\frac{1}{h}(\gamma(s+h)-\gamma(s))\to\gamma'(s)$ hold, and \eqref{eq:defretract_distance_derivative}
	follows.
	
\end{proof}

Now, from Lemma~\ref{lem:defretract_innerproduct_bound} and Lemma
\ref{lem:defretract_distance_bound_derivative}, we can construct
a vector field $W$ with the desired properties, and build a homotopy
map from $W$. We restate Theorem~\ref{thm:defretract_mureach} and
formally write its proof below.

\textbf{Theorem~\ref{thm:defretract_mureach}.} \textit{Let $\mathbb{X}\subset\mathbb{R}^{d}$
	be a subset with positive $\mu$-reach $\tau^{\mu}>0$. For $r\leq\tau^{\mu}$,
	the $r$-offset $\mathbb{X}^{r}$ deformation retracts to $\mathbb{X}$.
	In particular, $\mathbb{X}$ and $\mathbb{X}^{r}$ are homotopy equivalent.}

\begin{proof}[Proof of Theorem~\ref{thm:defretract_mureach}]

	For each $x\in\mathbb{X}^{r}\backslash\mathbb{X}$, let $W_{x}:U_{x}\to\mathbb{R}^{d}$ be a vector field on a small neighborhood $U_{x}$ of $x$ defined as a constant $W_{x}(y)=-\nabla_{\mathbb{X}}(x)$. Then from Lemma~\ref{lem:defretract_innerproduct_bound} and the $\mu$-reach condition, $U_{x}$ can be chosen arbitrary small so that 
	\begin{equation}
	\left\langle W_{x}(y),\nabla_{\mathbb{X}}(y)\right\rangle \leq-\frac{\mu}{2}\left\Vert \nabla_{\mathbb{X}}(y)\right\Vert .\label{eq:defretract_mureach_innerproduct_pointwise}
	\end{equation}
	From the open cover $\{U_{x}\}\supset\mathbb{X}^{r}\backslash\mathbb{X}$,
	take a locally finite covering $\{U_{x_{i}}\}_{i\in\mathbb{N}}$,
	and take a smooth partition of unity $\{\rho_{i}\}_{i\in\mathbb{N}}$
	subordinate to $\{U_{x_{i}}\}_{i\in\mathbb{N}}$. Then we define a
	vector field $W:\mathbb{X}^{r}\backslash\mathbb{X}\to\mathbb{R}^{d}$
	as 
	\[
	W=\sum_{i\in\mathbb{N}}\rho_{i}W_{i}.
	\]
	Then note that for any $x\in\mathbb{X}^{r}\backslash\mathbb{X}$,
	\eqref{eq:defretract_mureach_innerproduct_pointwise} implies 
	\begin{equation}
	\left\langle W(x),\nabla_{\mathbb{X}}(x)\right\rangle =\left\langle \sum_{i\in\mathbb{N}}\rho_{i}W_{i}(x),\nabla_{\mathbb{X}}(x)\right\rangle \leq-\frac{\mu}{2}\left\Vert \nabla_{\mathbb{X}}(x)\right\Vert .\label{eq:defretract_mureach_innerproduct_sum}
	\end{equation}
	Then, since $\mathbb{X}^{r}\backslash\mathbb{X}$ is an open set in
	$\mathbb{R}^{d}$ and $W$ is smooth on $\mathbb{X}^{r}\backslash\mathbb{X}$,
	Fundamental Theorem of flows (Theorem~\ref{thm:background_fundamental_flow}
	implies that there exists a domain $\mathbb{D}\subset(\mathbb{X}^{r}\backslash\mathbb{X})\times[0,\infty)$
	and the unique smooth flow $\psi:\mathbb{D}\times[0,\infty)\to\mathbb{X}^{r}\backslash\mathbb{X}$,
	i.e. $\frac{d}{ds}\psi(x,s)=W(\psi(x,s))$. Such $\mathbb{D}$ is
	a maximal in that $\mathbb{D}$ is the set of points $(x,s)$ such
	that the integral curve $\psi^{x}(\cdot):=\psi(x,\cdot)$ exists on
	an interval containing $[0,s]$.
	
	Now, Lemma~\ref{lem:defretract_distance_bound_derivative} and \eqref{eq:defretract_mureach_innerproduct_sum}
	implies 
	\begin{align}
		\limsup_{h\to0}\frac{d_{\mathbb{X}}(\psi(x,s+h))-d_{\mathbb{X}}(\psi(x,s))}{h} & \leq\left\langle \nabla_{\mathbb{X}}(\psi(x,s)),\frac{d}{ds}\psi(x,s)\right\rangle \nonumber \\
		& =\left\langle \nabla_{\mathbb{X}}(\psi(x,s)),W(\psi(x,s))\right\rangle \nonumber \\
		& \leq-\frac{\mu}{2}\left\Vert \nabla_{\mathbb{X}}(x)\right\Vert \leq-\frac{\mu^{2}}{2}.\label{eq:defretract_mureach_integralcurve_diff_bound}
	\end{align}
	Hence $d_{\mathbb{X}}$ is strictly decreasing along the integral
	curve $\psi^{x}$.
	
	Let $\mathbb{D}_{x}:=\{s\in[0,\infty):(x,s)\in\mathbb{D}\}$, then
	$\mathbb{D}_{x}$ is a connected open set in $[0,\infty)$, and \eqref{eq:defretract_mureach_integralcurve_diff_bound}
	implies that $s_{x}:=\sup\mathbb{D}_{x}\leq\frac{d_{\mathbb{X}}(x)}{\mu^{2}/2}\leq\frac{2\tau^{\mu}}{\mu^{2}}<\infty$,
	so $\mathbb{D}_{x}=[0,s_{x})$. And since $\left\Vert (\psi^{x})'(s)\right\Vert =\left\Vert W(\psi^{x}(s))\right\Vert <\infty$
	for all $s\in[0,s_{x})$, $\psi^{x}([0,s_{x}))$ is a curve of finite
	length. Hence there exists a limit $\psi^{x}(s_{x}):=\lim_{s\to s_{x}}\psi^{x}(s)$,
	and we can extend $\psi^{x}$ continuously on $[0,\infty)$ by $\psi^{x}(s)=\psi^{x}(s_{x})$
	if $s\geq s_{x}$. Using this, we extend $\psi$ on $\mathbb{X}^{r}\times[0,\infty)$
	as 
	\[
	\psi(x,s)=\begin{cases}
	\psi^{x}(s_{x}), & \text{if }x\in\mathbb{X}^{r}\backslash\mathbb{X},\\
	x, & \text{if }x\in\mathbb{X}.
	\end{cases}
	\]
	Now we show that $\psi$ is continuous on $\mathbb{X}^{r}\times[0,\infty)$.
	If $(x_{0},s_{0})\in\mathbb{D}$, then $\psi$ is smooth on an open
	set $\mathbb{D}$, so $\psi$ is continuous at $(x_{0},s_{0})$. When
	$x_{0}\in\mathbb{X}^{r}\backslash\mathbb{X}$ and $s_{0}\notin\mathbb{D}_{x}$,
	let $\rho_{x}>0$ be small enough so that $B(x_{0},\rho_{x})\times\left[0,s_{x_{0}}-\frac{\mu^{2}}{8}\epsilon\right]\subset\mathbb{D}$
	and $\left|\psi(x,s)-\psi(x_{0},s)\right|\leq\frac{\mu^{2}\epsilon}{8}$
	for all $(x,s)\in B(x_{0},\rho_{x})\times\left[0,s_{x_{0}}-\frac{\mu^{2}}{8}\epsilon\right]$.
	Then $s_{0}\geq s_{x_{0}}$ holds. And for any $x\in B(x_{0},\rho_{x})$,
	\begin{align*}
		& d_{\mathbb{X}}\left(\psi\left(x,s_{x_{0}}-\frac{\mu^{2}}{8}\epsilon\right)\right)\\
		& \leq\left|d_{\mathbb{X}}\left(\psi\left(x,s_{x_{0}}-\frac{\mu^{2}}{8}\epsilon\right)\right)-d_{\mathbb{X}}\left(\psi\left(x_{0},s_{x_{0}}-\frac{\mu^{2}}{8}\epsilon\right)\right)\right|\\
		& \quad+\left|d_{\mathbb{X}}\left(\psi\left(x_{0},s_{x_{0}}-\frac{\mu^{2}}{8}\epsilon\right)\right)-d_{\mathbb{X}}(\psi(x_{0},s_{x_{0}}))\right|+d_{\mathbb{X}}(\psi(x_{0},s_{x_{0}}))\\
		& \leq\frac{\mu^{2}\epsilon}{4},
	\end{align*}
	then \eqref{eq:defretract_mureach_integralcurve_diff_bound} implies
	that $\left|s_{x}-\left(s_{x_{0}}-\frac{\mu^{2}}{8}\epsilon\right)\right|\leq\frac{\epsilon}{2}$.
	Hence for $(x,s)$ with $\left\Vert x-x_{0}\right\Vert <\rho_{x}$
	and $\left\Vert s-s_{0}\right\Vert <\frac{\mu^{2}}{8}\epsilon$, $s_{0}\geq s_{x_{0}}$
	and $s\geq s_{0}-\frac{\mu^{2}}{8}\epsilon\geq s_{x_{0}}-\frac{\mu^{2}}{8}\epsilon$
	imply 
	\begin{align*}
		& \left\Vert \psi(x,s)-\psi(x_{0},s_{0})\right\Vert =\left\Vert \psi(x,\min\{s,s_{x}\})-\psi(x_{0},s_{x_{0}})\right\Vert \\
		& \leq\left\Vert \psi(x,\min\{s,s_{x}\})-\psi\left(x,s_{x_{0}}-\frac{\mu^{2}}{8}\epsilon\right)\right\Vert +\left\Vert \psi\left(x,s_{x_{0}}-\frac{\mu^{2}}{8}\epsilon\right)-\psi\left(x_{0},s_{x_{0}}-\frac{\mu^{2}}{8}\epsilon\right)\right\Vert \\
		& \quad+\left\Vert \psi\left(x_{0},s_{x_{0}}-\frac{\mu^{2}}{8}\epsilon\right)-\psi\left(x_{0},s_{x_{0}}\right)\right\Vert \\
		& \leq\frac{\epsilon}{2}+\frac{\mu^{2}\epsilon}{8}+\frac{\mu^{2}\epsilon}{8}<\epsilon.
	\end{align*}
	Hence $\psi$ is continuous at $(x_{0},s_{0})$. When $x_{0}\in\mathbb{X}$,
	let $x\in B(x_{0},\frac{\mu^{2}}{4}\epsilon)$. Then $d_{\mathbb{X}}(x)<\frac{\mu^{2}}{4}\epsilon$,
	so $s_{x}<\frac{\epsilon}{2}$. Then for any $x\in B(x_{0},\frac{\mu^{2}}{4}\epsilon)$
	and for any $s\geq0$, 
	\begin{align*}
		\left\Vert \psi(x,s)-\psi(x_{0},s)\right\Vert  & =\left\Vert \psi(x,s)-x_{0}\right\Vert =\left\Vert \psi(x,s)-x\right\Vert +\left\Vert x-x_{0}\right\Vert \\
		& <\frac{\epsilon}{2}+\frac{\mu^{2}\epsilon}{4}\leq\epsilon.
	\end{align*}
	Hence $\psi$ is continuous at $(x_{0},s_{0})$.
	
	Now, we define the deformation retract $H:\mathbb{X}^{r}\times[0,1]\to\mathbb{X}^{r}$
	as $H(x,t)=\psi\left(x,\frac{2}{\mu^{2}}t\right)$. Then, $H(x,0)=x$
	for all $x\in\mathbb{X}^{r}$ and $H(x,t)=x$ for all $x\in\mathbb{X}$.
	Also, since $s_{x}\leq\frac{2}{\mu^{2}}$ for all $x$, $H(x,1)=\psi\left(x,\frac{2}{\mu^{2}}\right)\in\mathbb{X}$
	for all $x\in\mathbb{X}^{r}$. Also, $H$ is continuous since $\psi$
	is continuous. Hence $H$ gives the deformation retract from $\mathbb{X}^{r}$
	to $\mathbb{X}$.
\end{proof}

For showing Lemma~\ref{lem:defretract_complement}, we directly construct
a map $\psi:\mathbb{X}^{\complement}\to(\mathbb{X}^{r})^{\complement}$ as 
\[
\psi(x)=\begin{cases}
x+\frac{\left(r-d_{\mathbb{X}}(x)\right)}{d_{\mathbb{X}}(x)}(x-\pi_{\mathbb{X}}(x)), & \text{if }x\in\mathbb{X}^{r}\backslash\mathbb{X},\\
x, & \text{if }x\notin\mathbb{X}^{r},
\end{cases}
\]
and show that $\mathbb{X}$ deformation retracts to $(\mathbb{X}^{r})^{\complement}$ using the map $\psi$.
We restate Lemma~\ref{lem:defretract_complement} and formally write
its proof below.

\textbf{Lemma~\ref{lem:defretract_complement}.} \textit{Let $\mathbb{X}\subset\mathbb{R}^{d}$
	be a subset with positive reach $\tau>0$. For $r\leq\tau$, $\mathbb{X}^{\complement}$
	deformation retracts to $(\mathbb{X}^{r})^{\complement}$. In particular,
	$\mathbb{X}^{\complement}$ and $(\mathbb{X}^{r})^{\complement}$
	are homotopy equivalent.}

\begin{proof}[Proof of Lemma~\ref{lem:defretract_complement}]

For $x\in\mathbb{X}^{\complement}$, let the function $\psi:\mathbb{X}^{\complement}\to(\mathbb{X}^{r})^{\complement}$
be 
\[
\psi(x)=\begin{cases}
x+\frac{\left(r-d_{\mathbb{X}}(x)\right)}{d_{\mathbb{X}}(x)}(x-\pi_{\mathbb{X}}(x)), & \text{if }x\in\mathbb{X}^{r}\backslash\mathbb{X},\\
x, & \text{if }x\notin\mathbb{X}^{r}.
\end{cases}
\]
From $r\leq\tau$, this function is well defined, and $\psi$ is continuous
on $\mathbb{X}^{\complement}$. Also, if $x\notin\mathbb{X}^{r}$
then $\psi(x)=x\notin\mathbb{X}^{r}$ and if $x\in\mathbb{X}^{r}\backslash\mathbb{X}$,
\[
\left\Vert \psi(x)-\pi_{\mathbb{X}}(x)\right\Vert =\left(1+\frac{r-d_{\mathbb{X}}(x)}{d_{\mathbb{X}}(x)}\right)\left\Vert x-\pi_{\mathbb{X}}(x)\right\Vert =r,
\]
so $\psi(x)\notin\mathbb{X}^{r}$. Hence in any case, $\psi(x)\in(\mathbb{X}^{r})^{\complement}$.
And if $x\in(\mathbb{X}^{r})^{\complement}$, then $\psi(x)=x$ on
$(\mathbb{X}^{r})^{\complement}$.

Now, we define the deformation retract $H:\mathbb{X}^{\complement}\times[0,1]\to\mathbb{X}^{\complement}$
as $H(x,t)=(1-t)x+t\psi(x)$. Then, $H(x,0)=x$ for all $x\in\mathbb{X}^{\complement}$,
$H(x,t)=x$ for all $x\in(\mathbb{X}^{r})^{\complement}$, and $H(x,1)\in(\mathbb{X}^{r})^{\complement}$
for all $x\in\mathbb{X}^{r}$. Also, $H$ is continuous since $\psi$
is continuous. Hence $H$ gives the deformation retract from $\mathbb{X}^{\complement}$
to $(\mathbb{X}^{r})^{\complement}$.

\end{proof}

\textbf{Corollary~\ref{cor:defretract_doubleoffset}.} \textit{Let
	$\mathbb{X}\subset\mathbb{R}^{d}$ be a subset with positive $\mu$-reach
	$\tau^{\mu}>0$. For $r,s>0$ with $s\leq r$, let $\mathbb{X}^{r,s}:=(((\mathbb{X}^{r})^{\complement})^{s})^{\complement}$
	be the double offset of $\mathbb{X}$. If $r<\tau^{\mu}$ and $s<\mu r$,
	then $\mathbb{X}^{r,s}$ and $\mathbb{X}$ are homotopy equivalent, and
	the reach of $\mathbb{X}^{r,s}$ is greater than or equal to $s$,
	that is, $\tau_{\mathbb{X}^{r,s}}\geq s$.}

\begin{proof}[Proof of Corollary~\ref{cor:defretract_doubleoffset}]

	Since $\mathbb{X}$ has a positive $\mu$-reach $\tau^{\mu}$ and
	$r<\tau^{\mu}$, Theorem~\ref{thm:defretract_mureach} implies that
	$\mathbb{X}$ and $\mathbb{X}^{r}$ are homotopy equivalent. Then,
	by $r<\tau^{\mu}$ and Theorem~\ref{thm:background_reach_offset},
	\[
	\tau_{(\mathbb{X}^{r})^{\complement}}\geq\mu r.
	\]
	Then, from Lemma~\ref{lem:defretract_complement} and $s<\mu r$,
	$\mathbb{X}^{r}=((\mathbb{X}^{r})^{\complement})^{\complement}$ and
	$\mathbb{X}^{r,s}=(((\mathbb{X}^{r})^{\complement})^{s})^{\complement}$
	are homotopy equivalent. And hence, $\mathbb{X}$ and $\mathbb{X}^{r,s}$
	are homotopy equivalent as well. Also, again by $s<\mu r$ and Theorem
	\ref{thm:background_reach_offset}, 
	\[
	\tau_{\mathbb{X}^{r,s}}=\tau_{(((\mathbb{X}^{r})^{\complement})^{s})^{\complement}}\geq s.
	\]

\end{proof}

\section{Proofs for Section~\ref{sec:homotopy}}
\label{app:homotopy_proof}

This section provides the proofs for Section~\ref{sec:homotopy}, and in particular, focuses on proving Theorem~\ref{thm:homotopy_cech}
and \ref{thm:homotopy_rips}. For this section, we define some notions
for (weighted) barycenter and radius. For a given set of radii $r=\{r_{x}:x\in\mathcal{X}\}$,
Let (weighted) barycenter ${\rm bc}_{r}:2^{\mathcal{X}}\to\mathbb{R}$
and (weighted) radius ${\rm Rad}_{r}:2^{\mathcal{X}}\to\mathbb{R}$
be functions defined on a simplex defined as 
\begin{align}
{\rm bc}_{r}(\sigma) & =\arg\min_{y\in\mathbb{R}^{d}}\max_{x\in\sigma}\frac{\left\Vert x-y\right\Vert }{r_{x}},\nonumber \\
{\rm Rad}_{r}(\sigma) & =\max_{x\in\sigma}\left\Vert x-{\rm bc}_{r}(\sigma)\right\Vert .\label{eq:homotopy_barycenter_radius_r}
\end{align}
And we drop the notation $r$ when $r_{x}$'s are all equal, i.e.
\begin{align}
{\rm bc}(\sigma) & =\arg\min_{y\in\mathbb{R}^{d}}\max_{x\in\sigma}\left\Vert x-y\right\Vert ,\nonumber \\
{\rm Rad}(\sigma) & =\max_{x\in\sigma}\left\Vert x-{\rm bc}(\sigma)\right\Vert .\label{eq:homotopy_barycenter_radius}
\end{align}
${\rm bc}(\sigma)$ is the usual center of the smallest enclosing
ball, and ${\rm Rad}(\sigma)$ is its radius. Also, note that ${\rm bc}_{r}(\sigma)\in\bigcap_{x\in\sigma}\mathbb{B}_{\mathbb{R}^{d}}(x,r_{x})$
if and only if $\min_{y\in\mathbb{R}^{d}}\max_{x\in\sigma}\frac{\left\Vert x-y\right\Vert }{r_{x}}<1$.

We first extend the interleaving relationship of the ambient \v{C}ech
complex and the Vietoris-Rips complex in \eqref{eq:background_interleaving_rips_ambientcech}
to the different radii case in Lemma~\ref{lem:homotopy_interleaving_rips_ambientcech}.
The proof is similar to the proof of Theorem 2.5 in \cite{deSilvaG2007}
but modified to adapt to different radii case.

\textbf{Lemma~\ref{lem:homotopy_interleaving_rips_ambientcech}.} \textit{
	Let $\mathcal{X}\subset\mathbb{R}^{d}$ be a set of points. Let $\{r_{x}>0:x\in\mathcal{X}\}$
	be a set of radii indexed by $x\in\mathcal{X}$. Then, 
	\[
	\textrm{\v{C}ech}_{\mathbb{R}^{d}}(\mathcal{X},r)\subset\textrm{Rips}(\mathcal{X},r)\subset\textrm{\v{C}ech}_{\mathbb{R}^{d}}\left(\mathcal{X},\sqrt{\frac{2d}{d+1}}r\right).
	\]
}

\begin{proof}[Proof of Lemma~\ref{lem:homotopy_interleaving_rips_ambientcech}]
	
	The first inclusion $\textrm{\v{C}ech}_{\mathbb{R}^{d}}(\mathcal{X},r)\subset\textrm{Rips}(\mathcal{X},r)$
	is trivial.
	
	For the second inclusion, the equivalent statement is as follows:
	if $\sigma=\{x_{0},\ldots,x_{k}\}\subset\mathbb{R}^{d}$ satisfies
	that $\left\Vert x_{i}-x_{j}\right\Vert <r_{i}+r_{j}$ for all $0\leq i,j\leq k$,
	then the intersection of the balls $\bigcap_{i=0}^{k}\mathbb{B}_{\mathbb{R}^{d}}\left(x_{i},\sqrt{\frac{2d}{d+1}}r_{x_{i}}\right)$
	is nonempty.
	
	We first prove this for the case $k\leq d$. Consider a function $f:\mathbb{R}^{d}\to\mathbb{R}$
	defined as 
	\[
	f(y)=\max_{0\leq i\leq k}\frac{\left\Vert x_{i}-y\right\Vert }{r_{x_{i}}}.
	\]
	This is continuous, and $f(y)\to\infty$ as $y\to\infty$, so $f$
	has a global minimum $f(y_{0})$ at $y_{0}:={\rm bc}_{r}(\sigma)$.
	Then $y_{0}\in co(\sigma)$ where $co(\sigma)$ is the convex hull
	of $\sigma$, since otherwise $\left\Vert x_{i}-\pi_{co(\sigma)}(y_{0})\right\Vert <\left\Vert x_{i}-y_{0}\right\Vert $
	for each $x_{i}\in\mathcal{X}$, contradicting the minimality of $y_{0}$.
	
	Let $\hat{x}_{i}:=x_{i}-y_{0}$ be the translated $x_{i}$'s. We can
	find a convex combination $(a_{0}/r_{x_{0}})\hat{x}_{0}+\cdots+(a_{k}/r_{x_{k}})\hat{x}_{k}=0$
	for some $j\leq k$, after relabeling so that $a_{0}>0$ is the largest
	among $a_{0},\ldots,a_{k}$ and all $a_{i}$'s are nonnegative. Then
	$-\hat{x}_{0}=\sum_{i=1}^{k}\frac{a_{i}/r_{x_{i}}}{a_{0}/r_{x_{0}}}\hat{x}_{i}$,
	and so 
	\[
	-r_{x_{0}}^{2}f(y_{0})^{2}=-\left\Vert \hat{x}_{0}\right\Vert ^{2}=\sum_{i=1}^{k}\frac{a_{i}/r_{x_{i}}}{a_{0}/r_{x_{0}}}\left\langle \hat{x}_{0},\hat{x}_{i}\right\rangle .
	\]
	Then at least one $i$ should satisfy $(a_{i}/a_{0})\left\langle \hat{x}_{0},\hat{x}_{i}\right\rangle \leq-\frac{r_{x_{0}}r_{x_{i}}f(y_{0})^{2}}{k},$which
	can be weakened to $\left\langle \hat{x}_{0},\hat{x}_{i}\right\rangle \leq-\frac{r_{x_{0}}r_{x_{i}}f(y_{0})^{2}}{d}$.
	Putting these together gives 
	\begin{align*}
	f(y_{0})^{2}(r_{x_{0}}^{2}+\frac{2}{d}r_{x_{0}}r_{x_{i}}+r_{x_{i}}^{2}) & \leq\left\Vert \hat{x}_{0}\right\Vert ^{2}-2\left\langle \hat{x}_{0},\hat{x}_{i}\right\rangle +\left\Vert \hat{x}_{i}\right\Vert ^{2}\\
	& =\left\Vert \hat{x}_{0}-\hat{x}_{i}\right\Vert ^{2}=(r_{x_{0}}+r_{x_{i}})^{2}.
	\end{align*}
	Then from AM-GM inequality, 
	\begin{align*}
	r_{x_{0}}^{2}+\frac{2}{d}r_{x_{0}}r_{x_{i}}+r_{x_{i}}^{2} & =\frac{d-1}{2d}(r_{x_{0}}^{2}+r_{x_{i}}^{2})+\left(\frac{d+1}{2d}(r_{x_{0}}^{2}+r_{x_{i}}^{2})+\frac{2}{d}r_{x_{0}}r_{x_{i}}\right)\\
	& \geq\frac{d-1}{d}r_{x_{0}}r_{x_{i}}+\left(\frac{d+1}{2d}(r_{x_{0}}^{2}+r_{x_{i}}^{2})+\frac{2}{d}r_{x_{0}}r_{x_{i}}\right)\\
	& =\frac{d+1}{2d}(r_{x_{0}}+r_{x_{i}})^{2}.
	\end{align*}
	Hence combining these gives 
	\[
	f(y_{0})^{2}\frac{d+1}{2d}(r_{x_{0}}+r_{x_{i}})^{2}\leq f(y_{0})^{2}(r_{x_{0}}^{2}+\frac{2}{d}r_{x_{0}}r_{x_{i}}+r_{x_{i}}^{2})\leq(r_{x_{0}}+r_{x_{i}})^{2},
	\]
	and hence 
	\[
	f(y_{0})\leq\sqrt{\frac{2d}{d+1}}.
	\]
	Therefore $y_{0}\in\bigcap_{i=0}^{k}\mathbb{B}_{\mathbb{R}^{d}}\left(x_{i},\sqrt{\frac{2d}{d+1}}r_{x_{i}}\right)$,
	i.e. $\bigcap_{i=0}^{k}\mathbb{B}_{\mathbb{R}^{d}}\left(x_{i},\sqrt{\frac{2d}{d+1}}r_{x_{i}}\right)$
	is nonempty.
	
	For the case $k>d$, the result follows by the Helly's theorem \cite{Eckhoff1993}.
	This asserts that a collection of $k\geq d+2$ convex sets in $\mathbb{R}^{d}$
	has a nonempty intersection if and only if it is true for each subcollection
	of size $d+1$. Now, for any $k+1$ points $\sigma=\{x_{0},\ldots,x_{k}\}$
	with $k\geq d+1$, any subset $\varsigma\subset\sigma$ with $|\varsigma|=d+1$
	satisfy that $\bigcap_{x\in\varsigma}\mathbb{B}_{\mathbb{R}^{d}}\left(x,\sqrt{\frac{2d}{d+1}}r_{x}\right)$
	is nonempty. Hence by Helly's theorem, $\bigcap_{i=0}^{k}\mathbb{B}_{\mathbb{R}^{d}}\left(x_{i},\sqrt{\frac{2d}{d+1}}r_{x_{i}}\right)$
	is nonempty as well.
\end{proof}

We also set up the interleaving relationship between the restricted
\v{C}ech complex $\textrm{\v{C}ech}_{\mathbb{X}}(\mathcal{X},r)$
and the ambient \v{C}ech complex $\textrm{\v{C}ech}_{\mathbb{R}^{d}}(\mathcal{X},r)$
in Lemma~\ref{lem:homotopy_interleaving_ambientcech_restrictedcech}.
The proof is a direct application of Lemma~\ref{lem:projreach_distance_projection_general}.
We restate Lemma~\ref{lem:homotopy_interleaving_ambientcech_restrictedcech}
and formally write its proof.

\textbf{Lemma~\ref{lem:homotopy_interleaving_ambientcech_restrictedcech}.} \textit{
	Let $\mathbb{X}\subset\mathbb{R}^{d}$ be a subset with reach $\tau>0$
	and let $\mathcal{X}\subset\mathbb{R}^{d}$ be a set of points. Let
	$\{r_{x}>0:x\in\mathcal{X}\}$ be a set of radii indexed by $x\in\mathcal{X}$.
	Then,
	\[
	\textrm{\v{C}ech}_{\mathbb{X}}(\mathcal{X},r)\subset\textrm{\v{C}ech}_{\mathbb{R}^{d}}(\mathcal{X},r)\subset\textrm{\v{C}ech}_{\mathbb{X}}(\mathcal{X},r'),
	\]
	where $r'=\{r'_{x}>0:x\in\mathcal{X}\}$ with
	%$r'_{x}=\sqrt{2r_{x}^{2}+d_{\mathbb{X}}(x)(2\tau-d_{\mathbb{X}}(x))}$.
	\[
	r'_{x}=\sqrt{\frac{2\tau\left(r_{x}^{2}+d_{\mathbb{X}}(x)\left(2\tau-d_{\mathbb{X}}(x)\right)\right)}{\tau+\sqrt{\tau^{2}-\left(r_{x}^{2}+d_{\mathbb{X}}(x)\left(2\tau-d_{\mathbb{X}}(x)\right)\right)}}-d_{\mathbb{X}}(x)\left(2\tau-d_{\mathbb{X}}(x)\right)}.
	\]
	Equivalently, 
	\[
	\textrm{\v{C}ech}_{\mathbb{R}^{d}}(\mathcal{X},r'')\subset\textrm{\v{C}ech}_{\mathbb{X}}(\mathcal{X},r)\subset\textrm{\v{C}ech}_{\mathbb{R}^{d}}(\mathcal{X},r),
	\]
	where $r''=\{r''_{x}>0:x\in\mathcal{X}\}$ with 
	\[
	r''_{x}=\sqrt{\tau^{2}-d_{\mathbb{X}}(x)(2\tau-d_{\mathbb{X}}(x))-\frac{(2\tau^{2}-r_{x}^{2}-d_{\mathbb{X}}(x)(2\tau-d_{\mathbb{X}}(x)))^{2}}{4\tau^{2}}}.
	\]
}

\begin{proof}[Proof of Lemma~\ref{lem:homotopy_interleaving_ambientcech_restrictedcech}]
	
	For $\textrm{\v{C}ech}_{\mathbb{X}}(\mathcal{X},r)\subset\textrm{\v{C}ech}_{\mathbb{R}^{d}}(\mathcal{X},r)$,
	this is implied from $\mathbb{B}_{\mathbb{X}}(x,r_{x})\subset\mathbb{B}_{\mathbb{R}^{d}}(x,r_{x})$.
	
	For $\textrm{\v{C}ech}_{\mathbb{R}^{d}}(\mathcal{X},r)\subset\textrm{\v{C}ech}_{\mathbb{X}}(\mathcal{X},r')$,
	let $[x_{1},\ldots,x_{k}]\in\textrm{\v{C}ech}_{\mathbb{R}^{d}}(\mathcal{X},r)$,
	then there exists $\lambda_{1},\ldots,\lambda_{k}\in[0,1]$ with $\sum\lambda_{i}=1$
	such that 
	\[
	u:=\sum\lambda_{i}x_{i}\in\bigcap_{i=1}^{k}\mathbb{B}_{\mathbb{R}^{d}}(x_{i},r_{x_{i}}).
	\]
	Then from Lemma~\ref{lem:projreach_distance_projection_general}, 
	%	\begin{align*}
	%		\left\Vert x_{i}-\pi_{\mathbb{X}}(u)\right\Vert  & \leq\sqrt{2\left\Vert u-x_{i}\right\Vert ^{2}+d_{\mathbb{X}}(x_{i})(2\tau-d_{\mathbb{X}}(x_{i}))}\\
	%		& <\sqrt{2r_{x_{i}}^{2}+d_{\mathbb{X}}(x_{i})(2\tau-d_{\mathbb{X}}(x_{i}))}.
	%	\end{align*}
	\begin{align*}
	& \left\Vert x_{i}-\pi_{\mathbb{X}}(u)\right\Vert \\
	& \leq\sqrt{\frac{2\tau\left(\left\Vert u-x_{i}\right\Vert ^{2}+d_{\mathbb{X}}(x_{i})\left(2\tau-d_{\mathbb{X}}(x_{i})\right)\right)}{\tau+\sqrt{\tau^{2}-\left(\left\Vert u-x_{i}\right\Vert ^{2}+d_{\mathbb{X}}(x_{i})\left(2\tau-d_{\mathbb{X}}(x_{i})\right)\right)}}-d_{\mathbb{X}}(x_{i})\left(2\tau-d_{\mathbb{X}}(x_{i})\right)}\\
	& <\sqrt{\frac{2\tau\left(r_{x_{i}}^{2}+d_{\mathbb{X}}(x_{i})\left(2\tau-d_{\mathbb{X}}(x_{i})\right)\right)}{\tau+\sqrt{\tau^{2}-\left(r_{x_{i}}^{2}+d_{\mathbb{X}}(x_{i})\left(2\tau-d_{\mathbb{X}}(x_{i})\right)\right)}}-d_{\mathbb{X}}(x_{i})\left(2\tau-d_{\mathbb{X}}(x_{i})\right)}:=r'_{x_{i}}.
	\end{align*}
	And hence 
	\[
	\pi_{\mathbb{X}}(u)\in\bigcap_{j=1}^{k}\mathbb{B}_{\mathbb{X}}\left(x_{j},r'_{x_{j}}\right).
	\]
	Therefore, $[x_{1},\ldots,x_{k}]\in\textrm{\v{C}ech}_{\mathbb{X}}(\mathcal{X},r')$
	as well, and hence $\textrm{\v{C}ech}_{\mathbb{R}^{d}}(\mathcal{X},r)\subset\textrm{\v{C}ech}_{\mathbb{X}}(\mathcal{X},r')$
	holds.
	
\end{proof}

Then, Corollary~\ref{cor:homotopy_interleaving_rips_restrictedcech}
is a combination of Lemma~\ref{lem:homotopy_interleaving_rips_ambientcech}
and \ref{lem:homotopy_interleaving_ambientcech_restrictedcech}. We
restate Corollary~\ref{cor:homotopy_interleaving_rips_restrictedcech}
and formally write its proof.

\textbf{Corollary~\ref{cor:homotopy_interleaving_rips_restrictedcech}.}
\textit{Let $\mathbb{X}\subset\mathbb{R}^{d}$ be a subset with reach
	$\tau>0$ and let $\mathcal{X}\subset\mathbb{R}^{d}$ be a set of
	points. Let $r=\{r_{x}>0:x\in\mathcal{X}\}$ be a set of radii indexed
	by $x\in\mathcal{X}$. Then, 
	\[
	\textrm{\v{C}ech}_{\mathbb{X}}(\mathcal{X},r)\subset\textrm{Rips}(\mathcal{X},r)\subset\textrm{\v{C}ech}_{\mathbb{X}}(\mathcal{X},r'''),
	\]
	where $r'''=\{r'''_{x}>0:x\in\mathcal{X}\}$ with %$r'_{x}=\sqrt{2r_{x}^{2}+d_{\mathbb{X}}(x)(2\tau-d_{\mathbb{X}}(x))}$.
	\[
	r'''_{x}=\sqrt{\frac{2\tau\left(\frac{2d}{d+1}r_{x}^{2}+d_{\mathbb{X}}(x)\left(2\tau-d_{\mathbb{X}}(x)\right)\right)}{\tau+\sqrt{\tau^{2}-\left(\frac{2d}{d+1}r_{x}^{2}+d_{\mathbb{X}}(x)\left(2\tau-d_{\mathbb{X}}(x)\right)\right)}}-d_{\mathbb{X}}(x)\left(2\tau-d_{\mathbb{X}}(x)\right)}.
	\]
}

\begin{proof}[Proof of Corollary~\ref{cor:homotopy_interleaving_rips_restrictedcech}]
	For $\textrm{\v{C}ech}_{\mathbb{X}}(\mathcal{X},r)\subset\textrm{Rips}(\mathcal{X},r)$,
	this is implied by Lemma~\ref{lem:homotopy_interleaving_rips_ambientcech}
	and \ref{lem:homotopy_interleaving_ambientcech_restrictedcech} as
	\[
	\textrm{\v{C}ech}_{\mathbb{X}}(\mathcal{X},r)\subset\textrm{\v{C}ech}_{\mathbb{R}^{d}}(\mathcal{X},r)\subset\textrm{Rips}(\mathcal{X},r).
	\]
	For $\textrm{Rips}(\mathcal{X},r)\subset\textrm{\v{C}ech}_{\mathbb{X}}(\mathcal{X},r''')$,
	this is again implied by Lemma~\ref{lem:homotopy_interleaving_rips_ambientcech}
	and \ref{lem:homotopy_interleaving_ambientcech_restrictedcech} as
	\[
	\textrm{Rips}(\mathcal{X},r)\subset\textrm{\v{C}ech}_{\mathbb{R}^{d}}\left(\mathcal{X},\sqrt{\frac{2d}{d+1}}r\right)\subset\textrm{\v{C}ech}_{\mathbb{X}}(\mathcal{X},r''').
	\]
\end{proof}

To show Theorem~\ref{thm:homotopy_cech} and \ref{thm:homotopy_rips},
we consider a generalized commutative diagram of \eqref{eq:homotopy_diagram}.
Let $\mathbb{X}$ be a paracompact space, let $\mathcal{U}=\{U_{i}\}_{i\in I}$,
$\mathcal{U}'=\{U'_{i}\}_{i\in I'}$, be good covers of $\mathbb{X}$,
and let $\mathcal{S}$ be a simplicial complex satisfying $\mathcal{N}\mathcal{U}\subset\mathcal{S}\subset\mathcal{N}\mathcal{U}'$,
so that the following diagram commutes: 
\begin{equation}
\xymatrix{ & \mathbb{X}\ar[dl]_{\phi}\\
	\mathcal{NU}\ar[dr]_{\imath_{\mathcal{N}\mathcal{U}\to\mathcal{S}}}\ar[rr]^{\imath_{\mathcal{N}\mathcal{U}\to\mathcal{N\mathcal{U}}'}} &  & \mathcal{NU}'\ar[ul]_{\psi'}\\
	& \mathcal{S}\ar[ur]_{\imath_{\mathcal{S}\to\mathcal{NU}'}}
}
.\label{eq:homotopy_simplexnerve_initial}
\end{equation}
In Theorem~\ref{thm:homotopy_cech}, $\mathcal{N}\mathcal{U}=\textrm{\v{C}ech}_{\mathbb{X}}(\mathcal{X},r)$,
$\mathcal{N}\mathcal{U}'=\textrm{\v{C}ech}_{\mathbb{X}}(\mathcal{X},r')$,
and $\mathcal{S}=\textrm{\v{C}ech}_{\mathbb{R}^{d}}(\mathcal{X},r)$,
and in Theorem~\ref{thm:homotopy_rips}, $\mathcal{N}\mathcal{U}=\textrm{\v{C}ech}_{\mathbb{X}}(\mathcal{X},r)$,
$\mathcal{N}\mathcal{U}'=\textrm{\v{C}ech}_{\mathbb{X}}(\mathcal{X},r''')$,
and $\mathcal{S}=\textrm{Rips}(\mathcal{X},r)$. Then our goal is
to construct a homotopy equivalence between $\mathbb{X}$ and $\mathcal{S}$.
The maps between $\mathbb{X}$ and $\mathcal{S}$ are naturally defined
as $\Phi:\mathbb{X}\to\mathcal{S}$ and $\Psi:\mathcal{S}\to\mathbb{X}$
by $\Phi=\imath_{\mathcal{N}\mathcal{U}\to\mathcal{S}}\circ\phi$
and $\Psi=\psi'\circ\imath_{\mathcal{S}\to\mathcal{NU}'}$. Then,
$\Psi\circ\Phi\simeq id_{\mathbb{X}}$ naturally follows by 
the commutative diagram in \eqref{eq:homotopy_simplexnerve_initial}.
However, $\Phi\circ\Psi\simeq id_{\mathcal{S}}$ needs an additional
condition. We suppose the existence of $\rho:\mathcal{S}\to\mathcal{N}\mathcal{U}$
such that $\imath_{\mathcal{N}\mathcal{U}\to\mathcal{S}}\circ\rho:S\to S$
is homotopy equivalent to $id_{\mathcal{S}}:\mathcal{S}\to\mathcal{S}$,
so that the following diagram commutes: 
\begin{equation}
\xymatrix{ & \mathbb{X}\ar[dl]_{\phi}\\
	\mathcal{NU}\ar@/_/[dr]_{\imath_{\mathcal{N}\mathcal{U}\to\mathcal{S}}}\ar[rr]^{\imath_{\mathcal{N}\mathcal{U}\to\mathcal{N\mathcal{U}}'}} &  & \mathcal{NU}'\ar[ul]_{\psi'}\\
	& \mathcal{S}\ar@/_/[ul]_{\rho}\ar[ur]_{\imath_{\mathcal{S}\to\mathcal{NU}'}}
}
.\label{eq:homotopy_simplexnerve_withcondition}
\end{equation}
Then $\Phi\circ\Psi\simeq id_{\mathcal{S}}$ can be deduced from this
commutative diagram. We summarize this result in Lemma~\ref{lem:homotopy_simplexnerve}.

\begin{lemma}
	
	\label{lem:homotopy_simplexnerve}
	
	Let $\mathbb{X}$ be a paracompact space, and let $\mathcal{U}=\{U_{i}\}_{i\in I}$,
	$\mathcal{U}'=\{U'_{i}\}_{i\in I'}$, be good covers of $\mathbb{X}$, with
	$I\subset I'$ and $U_{i}\subset U'_{i}$ for all $i\in I$. Let $\mathcal{S}$
	be a simplicial complex satisfying $\mathcal{N}\mathcal{U}\subset\mathcal{S}\subset\mathcal{N}\mathcal{U}'$.
	Suppose there exists $\rho:\mathcal{S}\to\mathcal{N}\mathcal{U}$
	such that $\imath_{\mathcal{N}\mathcal{U}\to\mathcal{S}}\circ\rho:S\to S$
	is homotopy equivalent to $id_{\mathcal{S}}:\mathcal{S}\to\mathcal{S}$.
	Then $\mathbb{X}$ and $\mathcal{S}$ are homotopy equivalent.
	
\end{lemma}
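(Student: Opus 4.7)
The plan is to combine the functorial version of the nerve theorem with a short diagram chase exploiting the section $\rho$. Write $\imath := \imath_{\mathcal{N}\mathcal{U}\to\mathcal{S}}: \mathcal{N}\mathcal{U} \hookrightarrow \mathcal{S}$ and $\imath' : \mathcal{S} \hookrightarrow \mathcal{N}\mathcal{U}'$ for the simplicial inclusions, and set $j := \imath' \circ \imath : \mathcal{N}\mathcal{U} \hookrightarrow \mathcal{N}\mathcal{U}'$, the inclusion of nerves induced by $\mathcal{U}\subset\mathcal{U}'$.

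First, since $X$ is paracompact and both $\mathcal{U}$ and $\mathcal{U}'$ are good covers, the nerve theorem yields homotopy equivalences $|\mathcal{N}\mathcal{U}|\simeq X$ and $|\mathcal{N}\mathcal{U}'|\simeq X$. I would invoke the functorial form of this statement, which asserts that these equivalences can be chosen naturally in the cover: the triangle with top edge $|j|$ and both legs landing in $X$ commutes up to homotopy. By the two-out-of-three property, $|j|$ is then itself a homotopy equivalence.

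Second, letting $g$ be a homotopy inverse of $|j|$ and using the hypothesis $\imath\circ\rho\simeq\mathrm{id}_{\mathcal{S}}$, I would check that $\rho$ is also a left homotopy inverse of $\imath$:
\begin{equation*}
\rho\circ\imath \;\simeq\; g\circ j\circ\rho\circ\imath \;=\; g\circ\imath'\circ(\imath\circ\rho)\circ\imath \;\simeq\; g\circ\imath'\circ\imath \;=\; g\circ j \;\simeq\; \mathrm{id}_{\mathcal{N}\mathcal{U}}.
\end{equation*}
Combined with the assumed $\imath\circ\rho\simeq\mathrm{id}_{\mathcal{S}}$, this shows that $\imath$ is a homotopy equivalence with inverse $\rho$, hence $|\mathcal{N}\mathcal{U}|\simeq|\mathcal{S}|$. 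Composing with the nerve equivalence $X\simeq|\mathcal{N}\mathcal{U}|$ yields $X\simeq|\mathcal{S}|$ as claimed.

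The main obstacle is the first step: the naturality of the nerve equivalence under the inclusion of good covers $\mathcal{U}\subset\mathcal{U}'$ with $U_i\subset U'_i$. This is a strengthening of the classical nerve theorem, often called the functorial nerve lemma, and it can be established either by constructing compatible partition-of-unity maps $|\mathcal{N}\mathcal{V}|\to X$ for each good cover $\mathcal{V}$, or via the homotopy colimit / Mayer--Vietoris blowup picture; I would simply cite a standard reference (e.g., in the persistence literature) rather than reprove it in detail.
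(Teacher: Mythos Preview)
Your proposal is correct and follows essentially the same route as the paper: both arguments rest on the functorial nerve theorem (the paper cites it as Lemma~3.3 in \cite{ChazalO2008}) to get that the inclusion $j:\mathcal{N}\mathcal{U}\hookrightarrow\mathcal{N}\mathcal{U}'$ is a homotopy equivalence, and then perform a short diagram chase using the hypothesis $\imath\circ\rho\simeq\mathrm{id}_{\mathcal{S}}$. The only cosmetic difference is that the paper builds explicit maps $\Phi:X\to\mathcal{S}$ and $\Psi:\mathcal{S}\to X$ and verifies both composites directly, whereas you first show $\imath:\mathcal{N}\mathcal{U}\to\mathcal{S}$ is a homotopy equivalence and then compose with the nerve equivalence $X\simeq\mathcal{N}\mathcal{U}$; the underlying computations are the same.
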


\begin{proof}[Proof of Lemma~\ref{lem:homotopy_simplexnerve}]
	
	Let $\phi:\mathbb{X}\to\mathcal{N}\mathcal{U}$ and $\psi:\mathcal{N}\mathcal{U}'\to \mathbb{X}$
	be the maps giving homotopy equivalence of the Nerve Theorem. Then from Lemma~\ref{lem:background_nerve_interleaving}, the following diagram commutes
	at homotopy level:
	\begin{equation}
	\xymatrix{ & \mathbb{X}\ar[dl]_{\phi}\\
		\mathcal{NU}\ar[rr]^{\imath_{\mathcal{N}\mathcal{U}\to\mathcal{N\mathcal{U}}'}} &  & \mathcal{NU}'\ar[ul]_{\psi'}
	}
	.\label{eq:homotopy_simplexnerve_nerve}
	\end{equation}
	
	To show homotopy equivalence, we define maps $\Phi:\mathbb{X}\to\mathcal{S}$
	and $\Psi:\mathcal{S}\to \mathbb{X}$ by $\Phi=\imath_{\mathcal{N}\mathcal{U}\to\mathcal{S}}\circ\phi$
	and $\Psi=\psi'\circ\imath_{\mathcal{S}\to\mathcal{NU}'}$. Then,
	we need to show that $\Psi\circ\Phi\simeq id_{\mathbb{X}}$ and $\Phi\circ\Psi\simeq id_{\mathcal{S}}$
	on homotopy level.
	
	For $\Psi\circ\Phi\simeq id_{\mathbb{X}}$, consider the diagram below, which is \eqref{eq:homotopy_simplexnerve_initial}:
	\[
	\xymatrix{ & \mathbb{X}\ar[dl]_{\phi}\\
		\mathcal{NU}\ar[dr]_{\imath_{\mathcal{N}\mathcal{U}\to\mathcal{S}}}\ar[rr]^{\imath_{\mathcal{N}\mathcal{U}\to\mathcal{N\mathcal{U}}'}} &  & \mathcal{NU}'\ar[ul]_{\psi'}\\
		& \mathcal{S}\ar[ur]_{\imath_{\mathcal{S}\to\mathcal{NU}'}}
	}
	.
	\]
	Then from \eqref{eq:homotopy_simplexnerve_nerve}, $\psi'\circ\imath_{\mathcal{N}\mathcal{U}\to\mathcal{NU}'}\circ\phi\simeq id_{\mathbb{X}}$
	holds, and hence 
	\begin{align*}
		\Psi\circ\Phi & =\psi'\circ\imath_{\mathcal{S}\to\mathcal{NU}'}\circ\imath_{\mathcal{N}\mathcal{U}\to\mathcal{S}}\circ\phi\\
		& =\psi'\circ\imath_{\mathcal{N}\mathcal{U}\to\mathcal{NU}'}\circ\phi\\
		& \simeq id_{\mathbb{X}}.
	\end{align*}
	And hence $\Psi\circ\Phi\simeq id_{\mathbb{X}}$ on homotopy level.
	
	For $\Phi\circ\Psi\simeq id_{\mathcal{S}}$, consider the diagram
	below, which is \eqref{eq:homotopy_simplexnerve_withcondition}:
	\[
	\xymatrix{ & \mathbb{X}\ar[dl]_{\phi}\\
		\mathcal{NU}\ar@/_/[dr]_{\imath_{\mathcal{N}\mathcal{U}\to\mathcal{S}}}\ar[rr]^{\imath_{\mathcal{N}\mathcal{U}\to\mathcal{N\mathcal{U}}'}} &  & \mathcal{NU}'\ar[ul]_{\psi'}\\
		& \mathcal{S}\ar@/_/[ul]_{\rho}\ar[ur]_{\imath_{\mathcal{S}\to\mathcal{NU}'}}
	}
	.
	\]
	Then $\imath_{\mathcal{N}\mathcal{U}\to\mathcal{S}}\circ\rho\simeq id_{\mathcal{S}}$
	holds from the condition, and from \eqref{eq:homotopy_simplexnerve_nerve},
	$\phi\circ\psi'\circ\imath_{\mathcal{N}\mathcal{U}\to\mathcal{NU}'}\simeq id_{\mathcal{NU}}$
	holds, and hence 
	\begin{align*}
		\Phi\circ\Psi & =\imath_{\mathcal{N}\mathcal{U}\to\mathcal{S}}\circ\phi\circ\psi'\circ\imath_{\mathcal{S}\to\mathcal{NU}'}\\
		& \simeq\imath_{\mathcal{N}\mathcal{U}\to\mathcal{S}}\circ\phi\circ\psi'\circ\imath_{\mathcal{S}\to\mathcal{NU}'}\circ\imath_{\mathcal{N}\mathcal{U}\to\mathcal{S}}\circ\rho\\
		& =\imath_{\mathcal{N}\mathcal{U}\to\mathcal{S}}\circ\phi\circ\psi'\circ\imath_{\mathcal{N}\mathcal{U}\to\mathcal{NU}'}\circ\rho\\
		& \simeq\imath_{\mathcal{N}\mathcal{U}\to\mathcal{S}}\circ\rho\\
		& \simeq id_{\mathcal{S}}.
	\end{align*}
	And hence $\Phi\circ\Psi\simeq id_{\mathcal{S}}$ on homotopy level.
	
\end{proof}

% \begin{proof}[Proof of Theorem~\ref{thm:homotopy_cech}]
	
% 	From Lemma~\ref{lem:homotopy_interleaving_ambientcech_restrictedcech}, $\textrm{\v{C}ech}_{\mathbb{X}}(\mathcal{X},r)\subset\textrm{\v{C}ech}_{\mathbb{R}^{d}}(\mathcal{X},r)\subset\textrm{\v{C}ech}_{\mathbb{X}}(\mathcal{X},r')$
% 	with $r'_{x}=\sqrt{2r_{x}^{2}+d_{\mathbb{X}}(x)(2\tau-d_{\mathbb{X}}(x))}$.
% 	And from $r_{x}\leq\tau-d_{\mathbb{X}}(x)$, $r'_{x}$ satisfy 
% 	\[
% 	r'_{x}\leq\sqrt{2(\tau-d_{\mathbb{X}}(x))^{2}+d_{\mathbb{X}}(x)(2\tau-d_{\mathbb{X}}(x))}=\sqrt{\tau^{2}+(\tau-d_{\mathbb{X}}(x))^{2}.}
% 	\]
% \end{proof}

Hence for showing Theorem~\ref{thm:homotopy_cech} and \ref{thm:homotopy_rips},
the problem reduces to finding a subcomplex in $\textrm{\v{C}ech}_{\mathbb{X}}(\mathcal{X},r)$
that is homotopy equivalent to $\mathcal{S}=\textrm{\v{C}ech}_{\mathbb{R}^{d}}(\mathcal{X},r)$
or $\mathcal{S}=\textrm{Rips}(\mathcal{X},r)$. To do this, for each
$\sigma=[x_{1}\ldots x_{k}]\in\mathcal{S}$, we choose $b_{\sigma}$
to be a point in a convex hull of $\{x_{1},\ldots,x_{k}\}$, and we
find $y_{\sigma}\in\mathcal{X}$ to be a point close to $\pi_{\mathbb{X}}(b_{\sigma})$
such that 
\begin{equation}
[x_{1},\ldots,x_{k}]\simeq\sum_{i=1}^{k}[x_{1}\cdots\hat{x}_{i}\cdots x_{k}y_{\sigma}]\text{ in }\mathcal{S}.\label{eq:homotopy_cech_rips_equivalence_first}
\end{equation}
We use the notation $[x_{1},\ldots,x_{k}]$ to emphasize that each
simplex is considered with its topology structure, and $[x_{1}\cdots\hat{x}_{i}\cdots x_{k}y_{\sigma}]$
means that the vertex $x_{i}$ is removed. To show \eqref{eq:homotopy_cech_rips_equivalence_first},
we need several bounds for $\left\Vert x_{i}-y_{\sigma}\right\Vert $
and $\left\Vert y_{\varsigma}-y_{\sigma}\right\Vert $, which we collect
in Claim~\ref{claim:homotopy_cech_rips}.

\begin{claim}

\label{claim:homotopy_cech_rips}

Let $\tau>0$, $\mathbb{X}\subset\mathbb{R}^{d}$ be a subset with
reach $\tau_{\mathbb{X}}\geq\tau>0$, and $\mathcal{X}\subset\mathbb{R}^{d}$
be a set of points. Let $\{r_{x}>0:x\in\mathcal{X}\}$ be a set of
radii indexed by $x\in\mathcal{X}$. For each $\sigma\subset\mathcal{X}$,
let $\epsilon_{\sigma}\geq0$ be satisfying $d_{\mathbb{X}}(x)\leq\epsilon_{\sigma}$
for all $x\in\sigma$. Let $\delta>0$, and suppose $\mathbb{X}\subset\bigcup_{x\in\mathcal{X}}\mathbb{B}_{\mathbb{X}}(x,\delta)$.
For each $\sigma\subset\mathcal{X}$, let $b_{\sigma}$ be a point
in the convex hull of $\sigma$, and let $r_{\sigma}:=\max_{x\in\sigma}\left\Vert x-b_{\sigma}\right\Vert $.
Then for each $\sigma\subset\mathcal{X}$, there exists $y_{\sigma}\in\mathcal{X}$
that satisfy the followings: 
\begin{enumerate}
	\item[(i)] If $r_{\sigma}<\tau$, then
	\[
	d_{\mathbb{X}}(b_{\sigma})\leq\tau-\sqrt{(\tau-\epsilon_{\sigma})^{2}-r_{\sigma}^{2}}.
	\]

	\item[(ii)] If $r_{\sigma}\leq\tau-\epsilon_{\sigma}$, then 
%	\begin{align*}
%	\left\Vert x_{i}-y_{\sigma}\right\Vert  & <\sqrt{\frac{2\tau\left(r_{\sigma}^{2}+\epsilon_{\sigma}(2\tau-\epsilon_{\sigma})\right)}{\tau+\sqrt{\tau^{2}-\left(r_{\sigma}^{2}+\epsilon_{\sigma}(2\tau-\epsilon_{\sigma})\right)}}-\epsilon_{\sigma}(2\tau-\epsilon_{\sigma})}+\delta\\
%	 & \leq\sqrt{\left(\sqrt{r_{\sigma}^{2}+\epsilon_{\sigma}(2\tau-\epsilon_{\sigma})}+\frac{\sqrt{2}-1}{\tau}\left(r_{\sigma}^{2}+\epsilon_{\sigma}(2\tau-\epsilon_{\sigma})\right)\right)^{2}-\epsilon_{\sigma}(2\tau-\epsilon_{\sigma})}+\delta.
%	\end{align*}
	\[
	\left\Vert x_{i}-\pi_{\mathbb{X}}(b_{\sigma})\right\Vert \leq\sqrt{\tilde{r}_{b_{\sigma}}^{2}-\epsilon_{\sigma}(2\tau-\epsilon_{\sigma})},
	\]
	and 
	\[
	\left\Vert x_{i}-y_{\sigma}\right\Vert <\sqrt{\tilde{r}_{b_{\sigma}}^{2}-\epsilon_{\sigma}(2\tau-\epsilon_{\sigma})}+\delta,
	\]
	where 
	\[
	\tilde{r}_{b_{\sigma}}^{2}:=\frac{2\tau\left(r_{\sigma}^{2}+\epsilon_{\sigma}(2\tau-\epsilon_{\sigma})\right)}{\tau+\sqrt{\tau^{2}-\left(r_{\sigma}^{2}+\epsilon_{\sigma}(2\tau-\epsilon_{\sigma})\right)}}.
	\]

	\item[(iii)] If $r_{\sigma}<\tau-\epsilon_{\sigma}$ and suppose $\varsigma\subset\sigma$
	, then 
%	\begin{align*}
%	\left\Vert y_{\varsigma}-y_{\sigma}\right\Vert  & <\sqrt{\frac{2\tau\left(r_{\sigma}^{2}+\epsilon_{\sigma}(2\tau-\epsilon_{\sigma})\right)}{\tau+\sqrt{\tau^{2}-\left(r_{\sigma}^{2}+\epsilon_{\sigma}(2\tau-\epsilon_{\sigma})\right)}}}+2\delta\\
%	 & \leq\sqrt{r_{\sigma}^{2}+\epsilon_{\sigma}(2\tau-\epsilon_{\sigma})}+\frac{\sqrt{2}-1}{\tau}\left(r_{\sigma}^{2}+\epsilon_{\sigma}(2\tau-\epsilon_{\sigma})\right)+\delta.
%	\end{align*}
	\[
	\tilde{r}_{b_{\sigma}}^{2}-\epsilon_{\sigma}(2\tau-\epsilon_{\sigma})\leq\tilde{r}_{b_{\sigma}}^{2}-(2\tau^{2}-\tilde{r}_{b_{\sigma}}^{2})\left(\frac{\tau}{\sqrt{\tau^{2}-\tilde{r}_{\delta,b}^{2}}}-1\right)+\delta^{2},
	\]
	and
	\[
	\left\Vert y_{\varsigma}-y_{\sigma}\right\Vert <\sqrt{\tilde{r}_{b_{\sigma}}^{2}-(2\tau^{2}-\tilde{r}_{b_{\sigma}}^{2})\left(\frac{\tau}{\sqrt{\tau^{2}-\tilde{r}_{\delta,b}^{2}}}-1\right)}+2\delta,
	\]
	where $\tilde{r}_{b_{\sigma}}^{2}$ is from (ii) and 
	\[
		\tilde{r}_{\delta,b}^{2}:=\min\left\{ \delta^{2}+\epsilon_{\sigma}(2\tau-\epsilon_{\sigma}),\frac{1}{2}\tilde{r'}_{b_{\sigma}}^{2}\right\}.
	\]

\item[(iv)] Suppose that for all $\varsigma$ with $\{x_{1},\ldots,x_{j}\}\subset\varsigma\subset\sigma$,
$b_{\varsigma}\in\bigcap_{x\in\varsigma}\mathbb{B}_{\mathbb{R}^{d}}(x,r_{x})$. If $r_{\sigma}<\tau-\epsilon_{\sigma}$ and suppose 
%	$\delta+\sqrt{r_{\sigma}^{2}+\epsilon_{\sigma}(2\tau-\epsilon_{\sigma})-\frac{1}{4}\left(r_{\min}-\frac{r_{\sigma}^{2}}{\tau-\epsilon_{\sigma}+\sqrt{(\tau-\epsilon_{\sigma})^{2}-r_{\sigma}^{2}}}-\epsilon_{\sigma}-\delta\right)^{2}}\leq r_{\min}$,
\[
\delta+\sqrt{r_{\sigma}^{2}-\tilde{l}^{2}+\epsilon_{\sigma}(2\tau-\epsilon_{\sigma})-((\tau-\epsilon_{\sigma})^{2}-r_{\sigma}^{2}+\tilde{l}^{2}+(\tau-\epsilon_{\tilde{l}})^{2})\left(\frac{\tau}{\sqrt{\tau^{2}-\tilde{r}_{\delta,c}^{2}}}-1\right)}\leq r_{\min},
\]
where 
\begin{align*}
\tilde{l} & :=\frac{1}{2}\left(r_{\min}-\tau+\sqrt{(\tau-\epsilon_{\sigma})^{2}-r_{\sigma}^{2}}-\delta\right),\\
\epsilon_{\tilde{l}} & :=\tau-\sqrt{(\tau-\epsilon_{\sigma})^{2}-r_{\sigma}^{2}}+\tilde{l},\\
\tilde{r}_{\delta,c}^{2} & :=\min\left\{ \delta^{2}+\epsilon_{\sigma}(2\tau-\epsilon_{\sigma}),\frac{1}{2}(r_{\sigma}^{2}-\tilde{l}^{2}+\epsilon_{\sigma}(2\tau-\epsilon_{\sigma})+\epsilon_{\tilde{l}}(2\tau-\epsilon_{\tilde{l}}))\right\} .
\end{align*}
Then $\sigma\in\textrm{\v{C}ech}_{\mathbb{R}^{d}}(\mathcal{X},r)$
implies that 
\[
[x_{1}\cdots x_{j}y_{[x_{1}\cdots x_{j}]}y_{[x_{1}\cdots x_{j+1}]}\cdots y_{[x_{1}\cdots x_{k}]}]\in\textrm{\v{C}ech}_{\mathbb{R}^{d}}(\mathcal{X},r).
\]
\item[(v)] If $r_{\sigma}<\tau-\epsilon_{\sigma}$ and suppose 
%	$\delta\leq r_{\min}-\frac{1}{2}\sqrt{\frac{2\tau\left(\frac{d}{2(d+1)}r_{\max}^{2}+\epsilon_{\sigma}(2\tau-\epsilon_{\sigma})\right)}{\tau+\sqrt{\tau^{2}-\left(\frac{d}{2(d+1)}r_{\max}^{2}+\epsilon_{\sigma}(2\tau-\epsilon_{\sigma})\right)}}}$,
\[
\sqrt{\tilde{r}_{b_{\sigma}}^{2}-(2\tau^{2}-\tilde{r}_{b_{\sigma}}^{2})\left(\frac{\tau}{\sqrt{\tau^{2}-\tilde{r}_{\delta,b}^{2}}}-1\right)}+2\delta\leq2r_{\min},
\]
where $\tilde{r}_{b_{\sigma}}$ is from (ii) and $\tilde{r}_{\delta,b}$
is from (iii), then $\sigma\in\textrm{Rips}(\mathcal{X},r)$ implies
that
\[
[x_{1}\cdots x_{j}y_{[x_{1}\cdots x_{j}]}y_{[x_{1}\cdots x_{j+1}]}\cdots y_{[x_{1}\cdots x_{k}]}]\in\textrm{Rips}(\mathcal{X},r).
\]
%	\item If $r_{\sigma}<\tau-\epsilon_{\sigma}$ and suppose $\delta\leq\frac{1}{2}\left(\sqrt{\frac{2(d+1)}{d}}r_{\sigma}-\sqrt{\frac{\tau\left(2r_{\sigma}^{2}+\epsilon_{\sigma}(2\tau-\epsilon_{\sigma})\right)}{\tau+\sqrt{\tau^{2}-\left(r_{\sigma}^{2}+\epsilon_{\sigma}(2\tau-\epsilon_{\sigma})\right)}}+\epsilon_{\sigma}(2\tau-\epsilon_{\sigma})}\right)$,
%then $r_{\sigma}=Rad(\sigma)$ implies that $[x_{1}y_{[x_{1}x_{2}]}\cdots y_{[x_{1}\cdots x_{k}]}]\in\textrm{Rips}\left(\mathcal{X},\sqrt{\frac{(d+1)}{2d}}r_{\sigma}\right)$,
%i.e., $\left\Vert x_{1}-y_{\sigma}\right\Vert <\sqrt{\frac{2(d+1)}{d}}r_{\sigma}$
%and $\left\Vert y_{\varsigma}-y_{\sigma}\right\Vert <\sqrt{\frac{2(d+1)}{d}}r_{\sigma}$
%if $\varsigma\subset\sigma$.
\end{enumerate}
\end{claim}

\begin{proof}[Proof of Claim~\ref{claim:homotopy_cech_rips}]

	For choosing $y_{\sigma}\in\mathcal{X}$, we divide the problem into two cases. If $d(\pi_{\mathbb{X}}(b_{\sigma}),\sigma)<\delta$,
	then choose $y_{\sigma}=\arg\min_{x\in\sigma}\left\Vert x-\pi_{\mathbb{X}}(b_{\sigma})\right\Vert $.
	Otherwise, by using covering condition, choose $y_{\sigma}\in\mathcal{X}$
	be satisfying $\left\Vert y_{\sigma}-\pi_{\mathbb{X}}(b_{\sigma})\right\Vert <\delta$.
	
	(i)
	
	From $d_{\mathbb{X}}(x_{i})\leq\epsilon_{\sigma}<\tau$ for $i=1,\ldots,k$, $d_{\mathbb{X}}(b_{\sigma}) \leq r_{\sigma}\leq \tau-\epsilon_{\sigma}$, and $d_{\mathbb{X}}(b_{\sigma}) < \tau$,
	applying Lemma~\ref{lem:projreach_distance_segment} gives 
	\begin{align*}
		\left\Vert b_{\sigma}-\pi_{\mathbb{X}}(b_{\sigma})\right\Vert  & \leq\tau-\sqrt{\left((\tau-\epsilon)^{2}-\sum_{i=1}^{k}\lambda_{i}\left\Vert x_{i}-b_{\sigma}\right\Vert ^{2}\right)_{+}}\\
		& \leq\tau-\sqrt{(\tau-\epsilon_{\sigma})^{2}-r_{\sigma}^{2}}.
	\end{align*}
	
	(ii)
	
	Note that $\left\Vert x_{i}-b_{\sigma}\right\Vert \leq r_{\sigma}\leq \tau-\epsilon_{\sigma}$,
	so applying Lemma~\ref{lem:projreach_distance_projection_general}
	gives 
%	\begin{align*}
%	 & \left\Vert x_{i}-\pi_{\mathbb{X}}(b_{\sigma})\right\Vert \\
%	 & \leq\sqrt{\frac{2\tau\left(\left\Vert x_{i}-b_{\sigma}\right\Vert ^{2}+\epsilon_{\sigma}(2\tau-\epsilon_{\sigma})\right)}{\tau+\sqrt{\tau^{2}-\left(\left\Vert x_{i}-b_{\sigma}\right\Vert ^{2}+\epsilon_{\sigma}(2\tau-\epsilon_{\sigma})\right)}}-\epsilon_{\sigma}(2\tau-\epsilon_{\sigma})}\\
%	 & \leq\sqrt{\frac{\tau\left(2r_{\sigma}^{2}+\epsilon_{\sigma}(2\tau-\epsilon_{\sigma})\right)}{\tau+\sqrt{\tau^{2}-\left(r_{\sigma}^{2}+\epsilon_{\sigma}(2\tau-\epsilon_{\sigma})\right)}}-\epsilon_{\sigma}(2\tau-\epsilon_{\sigma})}.\\
%	 & \leq\sqrt{\left(\sqrt{r_{\sigma}^{2}+\epsilon_{\sigma}(2\tau-\epsilon_{\sigma})}+\frac{\sqrt{2}-1}{\tau}\left(r_{\sigma}^{2}+\epsilon_{\sigma}(2\tau-\epsilon_{\sigma})\right)\right)^{2}-\epsilon_{\sigma}(2\tau-\epsilon_{\sigma})}
%	\end{align*}
%	And hence 
%	\begin{align*}
%	\left\Vert x_{i}-y_{\sigma}\right\Vert  & \leq\left\Vert x_{i}-\pi_{\mathbb{X}}(b_{\sigma})\right\Vert +\left\Vert \pi_{\mathbb{X}}(b_{\sigma})-y_{\sigma}\right\Vert \\
%	 & <\sqrt{\frac{2\tau\left(r_{\sigma}^{2}+\epsilon_{\sigma}(2\tau-\epsilon_{\sigma})\right)}{\tau+\sqrt{\tau^{2}-\left(r_{\sigma}^{2}+\epsilon_{\sigma}(2\tau-\epsilon_{\sigma})\right)}}-\epsilon_{\sigma}(2\tau-\epsilon_{\sigma})}+\delta\\
%	 & \leq\sqrt{\left(\sqrt{r_{\sigma}^{2}+\epsilon_{\sigma}(2\tau-\epsilon_{\sigma})}+\frac{\sqrt{2}-1}{\tau}\left(r_{\sigma}^{2}+\epsilon_{\sigma}(2\tau-\epsilon_{\sigma})\right)\right)^{2}-\epsilon_{\sigma}(2\tau-\epsilon_{\sigma})}+\delta.
%	\end{align*}
	\begin{align*}
		& \left\Vert x_{i}-\pi_{\mathbb{X}}(b_{\sigma})\right\Vert \\
		& \leq\sqrt{\frac{2\tau\left(\left\Vert x_{i}-b_{\sigma}\right\Vert ^{2}+\epsilon_{\sigma}(2\tau-\epsilon_{\sigma})\right)}{\tau+\sqrt{\tau^{2}-\left(\left\Vert x_{i}-b_{\sigma}\right\Vert ^{2}+\epsilon_{\sigma}(2\tau-\epsilon_{\sigma})\right)}}-\epsilon_{\sigma}(2\tau-\epsilon_{\sigma})}.\\
		& \leq\sqrt{\frac{2\tau\left(r_{\sigma}^{2}+\epsilon_{\sigma}(2\tau-\epsilon_{\sigma})\right)}{\tau+\sqrt{\tau^{2}-\left(r_{\sigma}^{2}+\epsilon_{\sigma}(2\tau-\epsilon_{\sigma})\right)}}-\epsilon_{\sigma}(2\tau-\epsilon_{\sigma})}\\
		& \leq\sqrt{\tilde{r}_{b_{\sigma}}^{2}-\epsilon_{\sigma}(2\tau-\epsilon_{\sigma})},		
	\end{align*}
	where 
	\[
	\tilde{r}_{b_{\sigma}}^{2}:=\frac{2\tau\left(r_{\sigma}^{2}+\epsilon_{\sigma}(2\tau-\epsilon_{\sigma})\right)}{\tau+\sqrt{\tau^{2}-\left(r_{\sigma}^{2}+\epsilon_{\sigma}(2\tau-\epsilon_{\sigma})\right)}}.
	\]
	And hence 
	\begin{align*}
		\left\Vert x_{i}-y_{\sigma}\right\Vert  & \leq\left\Vert x_{i}-\pi_{\mathbb{X}}(b_{\sigma})\right\Vert +\left\Vert \pi_{\mathbb{X}}(b_{\sigma})-y_{\sigma}\right\Vert \\
		& <\sqrt{\tilde{r}_{b_{\sigma}}^{2}-\epsilon_{\sigma}(2\tau-\epsilon_{\sigma})}+\delta.
	\end{align*}
	
	(iii)
	
	Without loss of generality assume that $\varsigma=[x_{1}\cdots x_{j}]$
	and $\sigma=[x_{1}\cdots x_{k}]$ with $j\leq k$. Then (ii) implies that for each $i=1,\ldots,j$ , 
	\[
	\left\Vert x_{i}-\pi_{\mathbb{X}}(b_{\sigma})\right\Vert \leq\sqrt{\frac{2\tau\left(r_{\sigma}^{2}+\epsilon_{\sigma}(2\tau-\epsilon_{\sigma})\right)}{\tau+\sqrt{\tau^{2}-\left(r_{\sigma}^{2}+\epsilon_{\sigma}(2\tau-\epsilon_{\sigma})\right)}}-\epsilon_{\sigma}(2\tau-\epsilon_{\sigma})}.
	\]
	We divide the problem into two cases. First, consider the case $d(b_{\varsigma},\varsigma)<\delta$.
	Then $y_{\varsigma}=x_{l}$ for some $x_{l}\in\varsigma$ and 
	\begin{align*}
		\left\Vert y_{\varsigma}-\pi_{\mathbb{X}}(b_{\sigma})\right\Vert  & =\left\Vert x_{l}-\pi_{\mathbb{X}}(b_{\sigma})\right\Vert \leq\sqrt{\frac{2\tau\left(r_{\sigma}^{2}+\epsilon_{\sigma}(2\tau-\epsilon_{\sigma})\right)}{\tau+\sqrt{\tau^{2}-\left(r_{\sigma}^{2}+\epsilon_{\sigma}(2\tau-\epsilon_{\sigma})\right)}}-\epsilon_{\sigma}(2\tau-\epsilon_{\sigma})}\\
		& =\sqrt{\tilde{r}_{b_{\sigma}}^{2}-\epsilon_{\sigma}(2\tau-\epsilon_{\sigma})},
	\end{align*}
	where $\tilde{r}_{b_{\sigma}}^{2}=\frac{2\tau\left(r_{\sigma}^{2}+\epsilon_{\sigma}(2\tau-\epsilon_{\sigma})\right)}{\tau+\sqrt{\tau^{2}-\left(r_{\sigma}^{2}+\epsilon_{\sigma}(2\tau-\epsilon_{\sigma})\right)}}$ is from (ii).
	And hence for $d(b_{\varsigma},\varsigma)<\delta$ case,
	\begin{align}
		\left\Vert y_{\varsigma}-y_{\sigma}\right\Vert  & \leq\left\Vert y_{\varsigma}-\pi_{\mathbb{X}}(b_{\sigma})\right\Vert +\left\Vert \pi_{\mathbb{X}}(b_{\sigma})-y_{\sigma}\right\Vert \nonumber \\
		& <\sqrt{\tilde{r}_{b_{\sigma}}^{2}-\epsilon_{\sigma}(2\tau-\epsilon_{\sigma})}+\delta.\label{eq:homotopy_cech_rips_barycenterdist_close}
	\end{align}
	Otherwise, $d(b_{\varsigma},\varsigma)\geq\delta$, i.e. $\left\Vert x_{i}-b_{\varsigma}\right\Vert \geq\delta$
	for all $1\leq i\leq j$. Then 
	\begin{align*}
		\left\Vert \pi_{\mathbb{X}}(b_{\sigma})-x_{i}\right\Vert  & <\sqrt{\frac{2\tau\left((\tau-\epsilon_{\sigma})^{2}+\epsilon_{\sigma}(2\tau-\epsilon_{\sigma})\right)}{\tau+\sqrt{\tau^{2}-\left((\tau-\epsilon_{\sigma})^{2}+\epsilon_{\sigma}(2\tau-\epsilon_{\sigma})\right)}}-\epsilon_{\sigma}(2\tau-\epsilon_{\sigma})}\\
		& =\sqrt{\tau^{2}+(\tau-\epsilon_{\sigma})^{2}}.
	\end{align*}
	So Lemma~\ref{lem:projreach_distance_projection_center} is applicable
	and gives 
	\begin{align*}
		\left\Vert \pi_{\mathbb{X}}(b_{\sigma})-\pi_{\mathbb{X}}(b_{\varsigma})\right\Vert  & \leq\sqrt{\tilde{r}_{b_{\sigma}}^{2}-(2\tau^{2}-\tilde{r}_{b_{\sigma}}^{2})\left(\frac{\tau}{\sqrt{\tau^{2}-\tilde{r}_{b_{\varsigma},b_{\sigma}}^{2}}}-1\right)},
	\end{align*}
	where 
	\begin{align*}
		\tilde{r'}_{b_{\sigma}}^{2} & :=\sum_{i=1}^{k}\lambda_{i}\left(\left\Vert x_{i}-\pi_{\mathbb{X}}(b_{\sigma})\right\Vert ^{2}+\epsilon_{\sigma}(2\tau-\epsilon_{\sigma})\right),\\
		\tilde{r'}_{b_{\varsigma},b_{\sigma}}^{2} & :=\min\left\{ \sum_{i=1}^{k}\lambda_{i}\left(\left\Vert x_{i}-b_{\varsigma}\right\Vert ^{2}+\epsilon_{\sigma}(2\tau-\epsilon_{\sigma})\right),\frac{1}{2}\tilde{r'}_{b_{\sigma}}^{2}\right\} .
	\end{align*}
	Then, RHS is an increasing function of $\left\Vert x_{i}-\pi_{\mathbb{X}}(b_{\sigma})\right\Vert ^{2}$
	and a decreasing function of $\left\Vert x_{i}-b_{\varsigma}\right\Vert ^{2}$,
	and from $\left\Vert x_{i}-b_{\varsigma}\right\Vert \geq\delta$ for
	all $1\leq i\leq j$ , we have 
	\[
	\left\Vert \pi_{\mathbb{X}}(b_{\varsigma})-\pi_{\mathbb{X}}(b_{\sigma})\right\Vert \leq\sqrt{\tilde{r}_{b_{\sigma}}^{2}-(2\tau^{2}-\tilde{r}_{b_{\sigma}}^{2})\left(\frac{\tau}{\sqrt{\tau^{2}-\tilde{r}_{\delta,b}^{2}}}-1\right)},
	\]
	where 
	\begin{align*}
		\tilde{r}_{b_{\sigma}}^{2} & =\frac{2\tau\left(r_{\sigma}^{2}+\epsilon_{\sigma}(2\tau-\epsilon_{\sigma})\right)}{\tau+\sqrt{\tau^{2}-\left(r_{\sigma}^{2}+\epsilon_{\sigma}(2\tau-\epsilon_{\sigma})\right)}},\\
		\tilde{r}_{\delta,b}^{2} & :=\min\left\{ \delta^{2}+\epsilon_{\sigma}(2\tau-\epsilon_{\sigma}),\frac{1}{2}\tilde{r}_{b_{\sigma}}^{2}\right\} .
	\end{align*}
	And hence for $d(b_{\varsigma},\varsigma)\geq\delta$ case,
	\begin{align}
		\left\Vert y_{\varsigma}-y_{\sigma}\right\Vert  & \leq\left\Vert y_{\varsigma}-\pi_{\mathbb{X}}(b_{\varsigma})\right\Vert +\left\Vert \pi_{\mathbb{X}}(b_{\varsigma})-\pi_{\mathbb{X}}(b_{\sigma})\right\Vert +\left\Vert \pi_{\mathbb{X}}(b_{\sigma})-y_{\sigma}\right\Vert \nonumber \\
		& <\sqrt{\tilde{r}_{b_{\sigma}}^{2}-(2\tau^{2}-\tilde{r}_{b_{\sigma}}^{2})\left(\frac{\tau}{\sqrt{\tau^{2}-\tilde{r}_{\delta,b}^{2}}}-1\right)}+2\delta.\label{eq:homotopy_cech_rips_barycenterdist_far}
	\end{align}
	Hence \eqref{eq:homotopy_cech_rips_barycenterdist_close} and \eqref{eq:homotopy_cech_rips_barycenterdist_far}
	gives that for any case, 
	\begin{align}
		& \left\Vert y_{\varsigma}-y_{\sigma}\right\Vert \nonumber \\
		& <\delta+\max\left\{ \sqrt{\tilde{r}_{b_{\sigma}}^{2}-\epsilon_{\sigma}(2\tau-\epsilon_{\sigma})},\sqrt{\tilde{r}_{b_{\sigma}}^{2}-(2\tau^{2}-\tilde{r}_{b_{\sigma}}^{2})\left(\frac{\tau}{\sqrt{\tau^{2}-\tilde{r}_{\delta,b}^{2}}}-1\right)}+\delta\right\} .\label{eq:homotopy_cech_rips_barycenterdist_max}
	\end{align}
	
	Now, we show $\tilde{r}_{b_{\sigma}}^{2}-\epsilon_{\sigma}(2\tau-\epsilon_{\sigma})\leq\tilde{r}_{b_{\sigma}}^{2}-(2\tau^{2}-\tilde{r}_{b_{\sigma}}^{2})\left(\frac{\tau}{\sqrt{\tau^{2}-\tilde{r'}_{\delta}^{2}}}-1\right)+\delta^{2}$.
	We again divide it into two cases. First, when $\frac{1}{2}\tilde{r}_{b_{\sigma}}^{2}\leq\delta^{2}+\epsilon_{\sigma}(2\tau-\epsilon_{\sigma})$,
	then $\tilde{r}_{\delta,b}^{2}=\frac{1}{2}\tilde{r}_{b_{\sigma}}^{2}$
	and therefore, 
	\begin{align*}
		\tilde{r}_{b_{\sigma}}^{2}-(2\tau^{2}-\tilde{r}_{b_{\sigma}}^{2})\left(\frac{\tau}{\sqrt{\tau^{2}-\tilde{r}_{\delta,b}^{2}}}-1\right) & =\tilde{r}_{b_{\sigma}}^{2}-(2\tau^{2}-\tilde{r}_{b_{\sigma}}^{2})\left(\frac{\tau-\sqrt{\tau^{2}-\frac{1}{2}\tilde{r}_{b_{\sigma}}^{2}}}{\sqrt{\tau^{2}-\frac{1}{2}\tilde{r}_{b_{\sigma}}^{2}}}\right)\\
		& =2\tau^{2}-2\tau\sqrt{\tau^{2}-\frac{1}{2}\tilde{r}_{b_{\sigma}}^{2}}\\
		& =2\tau^{2}-\sqrt{4\tau^{4}-2\tau^{2}\tilde{r}_{b_{\sigma}}^{2}}\\
		& \geq2\tau^{2}-\sqrt{4\tau^{4}-2\tau^{2}\tilde{r}_{b_{\sigma}}^{2}+\frac{1}{4}\tilde{r}_{b_{\sigma}}^{4}}\\
		& =\frac{1}{2}\tilde{r}_{b_{\sigma}}^{2}\\
		& \geq\tilde{r}_{b_{\sigma}}^{2}-(\delta^{2}+\epsilon_{\sigma}(2\tau-\epsilon_{\sigma}))\\
		& =\left(\tilde{r}_{b_{\sigma}}^{2}-\epsilon_{\sigma}(2\tau-\epsilon_{\sigma})\right)-\delta^{2}.
	\end{align*}
	And hence for $\frac{1}{2}\tilde{r}_{b_{\sigma}}^{2}\leq\delta^{2}+\epsilon_{\sigma}(2\tau-\epsilon_{\sigma})$
	case, 
	\begin{equation}
	\tilde{r}_{b_{\sigma}}^{2}-\epsilon_{\sigma}(2\tau-\epsilon_{\sigma})\leq\tilde{r}_{b_{\sigma}}^{2}-(2\tau^{2}-\tilde{r}_{b_{\sigma}}^{2})\left(\frac{\tau}{\sqrt{\tau^{2}-\tilde{r'}_{\delta}^{2}}}-1\right)+\delta^{2}.\label{eq:homotopy_cech_rips_barycenterdist_compare_first}
	\end{equation}
	Second, when $\delta^{2}+\epsilon_{\sigma}(2\tau-\epsilon_{\sigma})\leq\frac{1}{2}\tilde{r}_{b_{\sigma}}^{2}$,
	then $\tilde{r}_{\delta,b}^{2}=\delta^{2}+\epsilon_{\sigma}(2\tau-\epsilon_{\sigma})$,
	and therefore, 
	\begin{align*}
		\tilde{r}_{b_{\sigma}}^{2}-(2\tau^{2}-\tilde{r}_{b_{\sigma}}^{2})\left(\frac{\tau}{\sqrt{\tau^{2}-\tilde{r}_{\delta,b}^{2}}}-1\right) & =\tilde{r}_{b_{\sigma}}^{2}-(2\tau^{2}-\tilde{r}_{b_{\sigma}}^{2})\left(\frac{\tau-\sqrt{\tau^{2}-\tilde{r}_{\delta,b}^{2}}}{\sqrt{\tau^{2}-\tilde{r}_{\delta,b}^{2}}}\right)\\
		& =\tilde{r}_{b_{\sigma}}^{2}-\frac{(2\tau^{2}-\tilde{r}_{b_{\sigma}}^{2})\tilde{r}_{\delta,b}^{2}}{\sqrt{\tau^{2}-\tilde{r}_{\delta,b}^{2}}(\tau+\sqrt{\tau^{2}-\tilde{r}_{\delta,b}^{2}})}.
	\end{align*}
	Then from $\tilde{r}_{\delta,b}^{2}=\delta^{2}+\epsilon_{\sigma}(2\tau-\epsilon_{\sigma})\leq\frac{1}{2}\tilde{r}_{b_{\sigma}}^{2}$,
	$\frac{2\tau^{2}-\tilde{r}_{b_{\sigma}}^{2}}{\sqrt{\tau^{2}-\tilde{r}_{\delta,b}^{2}}(\tau+\sqrt{\tau^{2}-\tilde{r}_{\delta,b}^{2}})}$
	is lower bounded as 
	\begin{align*}
		\frac{2\tau^{2}-\tilde{r}_{b_{\sigma}}^{2}}{\sqrt{\tau^{2}-\tilde{r}_{\delta,b}^{2}}(\tau+\sqrt{\tau^{2}-\tilde{r}_{\delta,b}^{2}})} & =\frac{2\tau^{2}-\tilde{r}_{b_{\sigma}}^{2}}{\tau^{2}-\tilde{r}_{\delta,b}^{2}+\tau\sqrt{\tau^{2}-\tilde{r}_{\delta,b}^{2}}}\\
		& \leq\frac{2\tau^{2}-\tilde{r}_{b_{\sigma}}^{2}}{\tau^{2}-\tilde{r}_{\delta,b}^{2}+(\tau^{2}-\tilde{r}_{\delta,b}^{2})}\\
		& \leq\frac{2\tau^{2}-\tilde{r}_{b_{\sigma}}^{2}}{2\tau^{2}-\tilde{r}_{b_{\sigma}}^{2}}=1,
	\end{align*}
	and hence 
	\begin{align*}
		\tilde{r}_{b_{\sigma}}^{2}-(2\tau^{2}-\tilde{r}_{b_{\sigma}}^{2})\left(\frac{\tau}{\sqrt{\tau^{2}-\tilde{r}_{\delta,b}^{2}}}-1\right) & =\tilde{r}_{b_{\sigma}}^{2}-\frac{(2\tau^{2}-\tilde{r}_{b_{\sigma}}^{2})\tilde{r}_{\delta,b}^{2}}{\sqrt{\tau^{2}-\tilde{r}_{\delta,b}^{2}}(\tau+\sqrt{\tau^{2}-\tilde{r}_{\delta,b}^{2}})}\\
		& \geq\tilde{r}_{b_{\sigma}}^{2}-\tilde{r}_{\delta,b}^{2}\\
		& =\left(\tilde{r}_{b_{\sigma}}^{2}-\epsilon_{\sigma}(2\tau-\epsilon_{\sigma})\right)-\delta^{2}.
	\end{align*}
	And hence for $\delta^{2}+\epsilon_{\sigma}(2\tau-\epsilon_{\sigma})\leq\frac{1}{2}\tilde{r}_{b_{\sigma}}^{2}$
	case, 
	\begin{equation}
	\tilde{r}_{b_{\sigma}}^{2}-\epsilon_{\sigma}(2\tau-\epsilon_{\sigma})\leq\tilde{r}_{b_{\sigma}}^{2}-(2\tau^{2}-\tilde{r}_{b_{\sigma}}^{2})\left(\frac{\tau}{\sqrt{\tau^{2}-\tilde{r'}_{\delta}^{2}}}-1\right)+\delta^{2}.\label{eq:homotopy_cech_rips_barycenterdist_compare_second}
	\end{equation}
	Hence from \eqref{eq:homotopy_cech_rips_barycenterdist_compare_first}
	and \eqref{eq:homotopy_cech_rips_barycenterdist_compare_second},
	for any case, 
	\[
	\tilde{r}_{b_{\sigma}}^{2}-\epsilon_{\sigma}(2\tau-\epsilon_{\sigma})\leq\tilde{r}_{b_{\sigma}}^{2}-(2\tau^{2}-\tilde{r}_{b_{\sigma}}^{2})\left(\frac{\tau}{\sqrt{\tau^{2}-\tilde{r'}_{\delta}^{2}}}-1\right)+\delta^{2}.
	\]
	and applying this to \eqref{eq:homotopy_cech_rips_barycenterdist_max}
	gives that 
	\[
	\left\Vert y_{\varsigma}-y_{\sigma}\right\Vert <\sqrt{\tilde{r}_{b_{\sigma}}^{2}-(2\tau^{2}-\tilde{r}_{b_{\sigma}}^{2})\left(\frac{\tau}{\sqrt{\tau^{2}-\tilde{r}_{\delta,b}^{2}}}-1\right)}+2\delta.
	\]

	(iv)
	
	We need to find $\tilde{b}\in\mathbb{R}^{d}$ and check that $\left\Vert x_{i}-\tilde{b}\right\Vert <r_{x_{i}}$
	for $i=1,\ldots,k$ and $\left\Vert y_{\varsigma}-\tilde{b}\right\Vert <r_{y_{\varsigma}}$
	for all $\varsigma$ with $\{x_{1},\ldots,x_{j}\}\subset\varsigma\subset\sigma=\{x_{1},\ldots,x_{k}\}$.
	We let 
	\[
		\tilde{l}:=\frac{1}{2}\left(r_{\min}-\tau+\sqrt{(\tau-\epsilon_{\sigma})^{2}-r_{\sigma}^{2}}-\delta\right),
	\]
	and divide the problem into two cases.
	
	First, if all $\varsigma$ with $\{x_{1},\ldots,x_{j}\}\subset\varsigma\subset\sigma$
	satisfy that $\left\Vert b_{\varsigma}-b_{\sigma}\right\Vert \leq\tilde{l}$,
	then we can set $\tilde{b}:=b_{\sigma}$. For this case, for $i=1,\ldots,j,$
	\[
	\left\Vert x_{i}-\tilde{b}\right\Vert =\left\Vert x_{i}-b_{\sigma}\right\Vert <r_{x_{i}},
	\]
	and for any $\varsigma$ with $\{x_{1},\ldots,x_{j}\}\subset\varsigma\subset\sigma,$
	(i) and the covering condition imply
	\begin{align*}
		& \left\Vert y_{\varsigma}-\tilde{b}\right\Vert \\
		& \leq\left\Vert y_{\varsigma}-\pi_{\mathbb{X}}(b_{\varsigma})\right\Vert +\left\Vert \pi_{\mathbb{X}}(b_{\varsigma})-b_{\varsigma}\right\Vert +\left\Vert b_{\varsigma}-b_{\sigma}\right\Vert \\
		& <\delta+\left(\tau-\sqrt{(\tau-\epsilon_{\sigma})^{2}-r_{\sigma}^{2}}\right)+\tilde{l}\\
		& =r_{\min}-\tilde{l}\leq r_{\min}\leq r_{y_{\varsigma}}.
	\end{align*}
	
	For the other case, let $\hat{k}:=\max\left\{ j:\left\Vert b_{[x_{1}\cdots x_{j}]}-b_{\sigma}\right\Vert >\tilde{l}\right\} $
	be the largest integer that $\left\Vert b_{[x_{1}\cdots x_{j}]}-b_{\sigma}\right\Vert >\tilde{l}$
	, and let $\hat{\sigma}:=[x_{1}\cdots x_{\hat{k}}]$. Let $\tilde{b}:=\frac{\tilde{l}}{\left\Vert b_{\sigma}-b_{\hat{\sigma}}\right\Vert }b_{\hat{\sigma}}+\left(1-\frac{\tilde{l}}{\left\Vert b_{\sigma}-b_{\hat{\sigma}}\right\Vert }\right)b_{\sigma}$,
	so that $\left\Vert \tilde{b}-b_{\sigma}\right\Vert =\tilde{l}$.
	We first show that $\left\Vert \tilde{b}-x_{i}\right\Vert <r_{x_{i}}$
	for $i=1,\ldots,\hat{k}$. Since $\left\Vert x_{i}-b_{\sigma}\right\Vert <r_{x_{i}}$
	and $\left\Vert x_{i}-b_{\hat{\sigma}}\right\Vert <r_{x_{i}}$, and
	hence from Claim~\ref{claim:projreach_distance_midpoint}, 
	\begin{equation}
	\left\Vert x_{i}-\tilde{b}\right\Vert \leq\sqrt{\frac{\tilde{l}}{\left\Vert b_{\sigma}-b_{\hat{\sigma}}\right\Vert }\left\Vert x_{i}-b_{\hat{\sigma}}\right\Vert ^{2}+\left(1-\frac{\tilde{l}}{\left\Vert b_{\sigma}-b_{\hat{\sigma}}\right\Vert }\right)\left\Vert x_{i}-b_{\sigma}\right\Vert ^{2}}<r_{x_{i}}.\label{eq:homotopy_cech_rips_cech_dist_barypoint_radius}
	\end{equation}
	Next, we show that for all $\varsigma$ with $\hat{\sigma}\subsetneq\varsigma\subset\sigma$,
	$\left\Vert y_{\varsigma}-\tilde{b}\right\Vert <r_{y_{\varsigma}}$.
	Note that $\left\Vert \tilde{b}-b_{\varsigma}\right\Vert \leq\left\Vert \tilde{b}-b_{\sigma}\right\Vert +\left\Vert b_{\sigma}-b_{\varsigma}\right\Vert \leq2\tilde{l}$,
	and then (i) and the covering condition imply 
	\begin{align*}
		& \left\Vert y_{\varsigma}-\tilde{b}\right\Vert \\
		& \leq\left\Vert y_{\varsigma}-\pi_{\mathbb{X}}(b_{\varsigma})\right\Vert +\left\Vert \pi_{\mathbb{X}}(b_{\varsigma})-b_{\varsigma}\right\Vert +\left\Vert b_{\varsigma}-b_{\sigma}\right\Vert \\
		& <\delta+\left(\tau-\sqrt{(\tau-\epsilon_{\sigma})^{2}-r_{\sigma}^{2}}\right)+2\tilde{l}\\
		& =r_{\min}\leq r_{y_{\varsigma}}.
	\end{align*}
	Finally, we show that for all $\varsigma\subset\hat{\sigma}$, $\left\Vert y_{\varsigma}-\tilde{b}\right\Vert <r_{y_{\varsigma}}$.
	Note that $r_{\hat{\sigma}}\leq\sqrt{r_{\sigma}^{2}-\left\Vert b_{\sigma}-b_{\hat{\sigma}}\right\Vert ^{2}}$,
	so $x_{i}\in\mathbb{B}_{\mathbb{R}^{d}}(b_{\sigma},r_{\sigma})\cap\mathbb{B}_{\mathbb{R}^{d}}(b_{\hat{\sigma}},r_{\hat{\sigma}})\subset\mathbb{B}_{\mathbb{R}^{d}}(\tilde{b},\sqrt{r_{\sigma}^{2}-\tilde{l}^{2}})$
	holds. Hence for all $i=1,\ldots,\hat{k}$,
	\begin{equation}
	\left\Vert \tilde{b}-x_{i}\right\Vert <\sqrt{r_{\sigma}^{2}-\tilde{l}^{2}}\leq r_{\sigma}.\label{eq:homotopy_cech_rips_cech_dist_barypoint_nestedsmp}
	\end{equation}
%	Then Lemma~\ref{lem:projreach_distance_projection_center} is applicable
%	and gives 
%	\begin{align*}
%	\left\Vert \pi_{\mathbb{X}}(b_{\varsigma})-\tilde{b}\right\Vert  & \leq\sqrt{\sum\lambda_{i}\left(\left\Vert x_{i}-\tilde{b}\right\Vert ^{2}+d_{\mathbb{X}}(x_{i})(2\tau-d_{\mathbb{X}}(x_{i}))\right)}\\
%	 & \leq\sqrt{r_{\sigma}^{2}-\tilde{l}^{2}+\epsilon_{\sigma}(2\tau-\epsilon_{\sigma})}.
%	\end{align*}
%	Then, 
%	\begin{align*}
%	\left\Vert y_{\varsigma}-\tilde{b}\right\Vert  & \leq\left\Vert y_{\varsigma}-\pi_{\mathbb{X}}(b_{\varsigma})\right\Vert +\left\Vert \pi_{\mathbb{X}}(b_{\varsigma})-\tilde{b}\right\Vert \\
%	 & <\delta+\sqrt{r_{\sigma}^{2}-\tilde{l}^{2}+\epsilon_{\sigma}(2\tau-\epsilon_{\sigma})}\\
%	 & =\delta+\sqrt{r_{\sigma}^{2}+\epsilon_{\sigma}(2\tau-\epsilon_{\sigma})-\frac{1}{4}\left(r_{\min}-\frac{r_{\sigma}^{2}}{\tau-\epsilon_{\sigma}+\sqrt{(\tau-\epsilon_{\sigma})^{2}-r_{\sigma}^{2}}}-\epsilon_{\sigma}-\delta\right)^{2}}\\
%	 & \leq r_{\min}\leq r_{\sigma}.
%	\end{align*}
	We again divide it into two cases. First, consider the case $d(b_{\varsigma},\varsigma)<\delta$.
	Then $y_{\varsigma}=x_{l}$ for some $x_{l}\in\varsigma$, hence from
	\eqref{eq:homotopy_cech_rips_cech_dist_barypoint_radius},
	\[
	\left\Vert y_{\varsigma}-\tilde{b}\right\Vert =\left\Vert x_{l}-\tilde{b}\right\Vert <r_{x_{l}}=r_{y_{\varsigma}}.
	\]
	Otherwise, $\left\Vert x_{i}-b_{\varsigma}\right\Vert \geq\delta$
	for all $x_{i}\in\varsigma$. Then (i) implies 
	\begin{align*}
	d_{\mathbb{X}}(\tilde{b}) & \leq d_{\mathbb{X}}(b_{\sigma})+\left\Vert \tilde{b}-b_{\sigma}\right\Vert \\
	& \leq\tau-\sqrt{(\tau-\epsilon_{\sigma})^{2}-r_{\sigma}^{2}}+\tilde{l}.
	\end{align*}
	And from \eqref{eq:homotopy_cech_rips_cech_dist_barypoint_nestedsmp},
	Lemma~\ref{lem:projreach_distance_projection_center} is applicable
	and gives an upper bound for $\left\Vert \pi_{\mathbb{X}}(b_{\varsigma})-\tilde{b}\right\Vert $
	as 
	\begin{align*}
	\left\Vert \pi_{\mathbb{X}}(b_{\varsigma})-\tilde{b}\right\Vert  & \leq\sqrt{\tilde{r}_{\tilde{b}}^{2}-(\tau^{2}-\tilde{r}_{\tilde{b}}^{2}+(\tau-\epsilon_{\tilde{l}})^{2})\left(\frac{\tau}{\sqrt{\tau^{2}-\tilde{r}_{b_{\varsigma},\tilde{b}}^{2}}}-1\right)},
	\end{align*}
	where 
	\begin{align*}
	\epsilon_{\tilde{l}} & :=\tau-\sqrt{(\tau-\epsilon_{\sigma})^{2}-r_{\sigma}^{2}}+\tilde{l},\\
	\tilde{r}_{\tilde{b}}^{2} & :=\sum_{i=1}^{k}\lambda_{i}\left(\left\Vert x_{i}-\tilde{b}\right\Vert ^{2}+\epsilon_{\sigma}(2\tau-\epsilon_{\sigma})\right),\\
	\tilde{r}_{b_{\varsigma},\tilde{b}}^{2} & :=\min\left\{ \sum_{i=1}^{k}\lambda_{i}\left(\left\Vert x_{i}-b_{\varsigma}\right\Vert ^{2}+\epsilon_{\sigma}(2\tau-\epsilon_{\sigma})\right),\frac{1}{2}(\tilde{r}_{\tilde{b}}^{2}+\epsilon_{\tilde{l}}(2\tau-\epsilon_{\tilde{l}}))\right\} .
	\end{align*}
	Then, RHS is an increasing function of $\left\Vert x_{i}-\tilde{b}\right\Vert ^{2}$
	and decreasing function of $\left\Vert x_{i}-b_{\varsigma}\right\Vert ^{2}$,
	so applying \eqref{eq:homotopy_cech_rips_cech_dist_barypoint_nestedsmp}
	and from $\left\Vert x_{i}-b_{\varsigma}\right\Vert ^{2}\geq\delta$
	for all $x_{i}\in\varsigma$, we have 
	\begin{align*}
	&\left\Vert \pi_{\mathbb{X}}(b_{\varsigma})-\tilde{b}\right\Vert\\ &\leq\sqrt{r_{\sigma}^{2}-\tilde{l}^{2}+\epsilon_{\sigma}(2\tau-\epsilon_{\sigma})-((\tau-\epsilon_{\sigma})^{2}-r_{\sigma}^{2}+\tilde{l}^{2}+(\tau-\epsilon_{\tilde{l}})^{2})\left(\frac{\tau}{\sqrt{\tau^{2}-\tilde{r}_{\delta,c}^{2}}}-1\right)},
	\end{align*}
	where 
	\[
	\tilde{r}_{\delta,c}^{2}:=\min\left\{ \delta^{2}+\epsilon_{\sigma}(2\tau-\epsilon_{\sigma}),\frac{1}{2}(r_{\sigma}^{2}-\tilde{l}^{2}+\epsilon_{\sigma}(2\tau-\epsilon_{\sigma})+\epsilon_{\tilde{l}}(2\tau-\epsilon_{\tilde{l}}))\right\} .
	\]
	Then, 
	\begin{align*}
	& \left\Vert y_{\varsigma}-\tilde{b}\right\Vert \\
	& \leq\left\Vert y_{\varsigma}-\pi_{\mathbb{X}}(b_{\varsigma})\right\Vert +\left\Vert \pi_{\mathbb{X}}(b_{\varsigma})-\tilde{b}\right\Vert \\
	& <\delta+\sqrt{r_{\sigma}^{2}-\tilde{l}^{2}+\epsilon_{\sigma}(2\tau-\epsilon_{\sigma})-((\tau-\epsilon_{\sigma})^{2}-r_{\sigma}^{2}+\tilde{l}^{2}+(\tau-\epsilon_{\tilde{l}})^{2})\left(\frac{\tau}{\sqrt{\tau^{2}-\tilde{r}_{\delta,c}^{2}}}-1\right)}\\
	& \leq r_{\min}<r_{y_{\varsigma}}.
	\end{align*}

	(v)
	
	We need to check that $\left\Vert x_{i}-y_{\sigma}\right\Vert <r_{x_{i}}+r_{y_{\sigma}}$
	and $\left\Vert y_{\varsigma}-y_{\sigma}\right\Vert <r_{y_{\varsigma}}+r_{y_{\sigma}}$.
	
	For $\left\Vert x_{i}-y_{\sigma}\right\Vert <r_{x_{i}}+r_{y_{\sigma}}$,
	note that (ii) (iii) and the condition on $\delta$ gives 
	\begin{align*}
		\left\Vert x_{1}-y_{\sigma}\right\Vert  & <\sqrt{\tilde{r}_{b_{\sigma}}^{2}-\epsilon_{\sigma}(2\tau-\epsilon_{\sigma})}+\delta\\
		& \leq\sqrt{\tilde{r}_{b_{\sigma}}^{2}-(2\tau^{2}-\tilde{r}_{b_{\sigma}}^{2})\left(\frac{\tau}{\sqrt{\tau^{2}-\tilde{r}_{\delta,b}^{2}}}-1\right)}+2\delta\\
		& \leq2r_{\min}\leq r_{x_{i}}+r_{y_{0}},
	\end{align*}
	where $\tilde{r}_{b_{\sigma}}$ is from (ii) and $\tilde{r}_{\delta,b}$
	is from (iii).
	
	For $\left\Vert y_{\varsigma}-y_{\sigma}\right\Vert <r_{y_{\varsigma}}+r_{y_{\sigma}}$,
	note that (iii) and the condition on $\delta$ gives 
	\begin{align*}
		\left\Vert y_{\varsigma}-y_{\sigma}\right\Vert  & <\sqrt{\tilde{r}_{b_{\sigma}}^{2}-(2\tau^{2}-\tilde{r}_{b_{\sigma}}^{2})\left(\frac{\tau}{\sqrt{\tau^{2}-\tilde{r}_{\delta,b}^{2}}}-1\right)}+2\delta\\
		& \leq2r_{\min}\leq r_{y_{\varsigma}}+r_{y_{0}}.
	\end{align*}

\end{proof}

After repeating \eqref{eq:homotopy_cech_rips_equivalence_first}
several times, we get the homotopy equivalence as 
\begin{equation}
[x_{1}\cdots x_{k}]\simeq\sum_{\omega\in S_{k}}[x_{1}y_{[x_{\omega(1)}x_{\omega(2)}]}\cdots y_{[x_{1}\cdots x_{k}]}]\text{ in }\mathcal{S},\label{eq:homotopy_cech_rips_equivalence_subdivision}
\end{equation}
where $S_{k}$ is the permutation group of size $k$. Since we are
trying to map $\mathcal{S}=\textrm{\v{C}ech}_{\mathbb{R}^{d}}(\mathcal{X},r)$
or $\textrm{Rips}(\mathcal{X},r)$ to into $\textrm{\v{C}ech}_{\mathbb{X}}(\mathcal{X},r)$
which is consisting of smaller simplices, we want to each simplex
$[x_{1}y_{[x_{\omega(1)}x_{\omega(2)}]}\cdots y_{[x_{1}\cdots x_{k}]}]$
be of smaller size. We measure the size by ${\rm Rad}_{r}(\sigma)$
or ${\rm Rad}(\sigma)$ as introduced in \eqref{eq:homotopy_barycenter_radius_r}
and \eqref{eq:homotopy_barycenter_radius}. We collect several bounds
for ${\rm Rad}_{r}(\sigma)$ or ${\rm Rad}(\sigma)$ in Claim~\ref{claim:homotopy_cech_rips_radius}.

\begin{claim}
	
	\label{claim:homotopy_cech_rips_radius}
	
	Let $\tau>0$, $\mathbb{X}\subset\mathbb{R}^{d}$ be a subset with
	reach $\tau_{\mathbb{X}}\geq\tau>0$, and $\mathcal{X}\subset\mathbb{R}^{d}$
	be a set of points. Let $\{r_{x}>0:x\in\mathcal{X}\}$ be a set of
	radii indexed by $x\in\mathcal{X}$. Let $\epsilon\geq0$ be satisfying
	$d_{\mathbb{X}}(x)\leq\epsilon$ for all $x\in\mathcal{X}$. Let $\delta>0$,
	and suppose $\mathbb{X}\subset\bigcup_{x\in\mathcal{X}}\mathbb{B}_{\mathbb{X}}(x,\delta)$.
	\begin{enumerate}
		\item[(i)]  
		\[
		{\rm Rad}_{r}(\sigma)\leq\frac{r_{\max}}{r_{\min}}{\rm Rad}(\sigma),
		\]
		where $r_{\min}=\min\{r_{x}:x\in\mathcal{X}\}$ and $r_{\max}=\max\{r_{x}:x\in\mathcal{X}\}$.
		\item[(ii)]  ${\rm bc}_{r}$ and ${\rm Rad}_{r}$ satisfy the following: for
		each $\sigma=\{x_{1},\ldots,x_{k}\}\subset\mathcal{X}$, $y_{\sigma}\in\mathcal{X}$
		can be chosen so that if $r_{\sigma}:={\rm Rad}_{r}(\sigma)<\tau-\epsilon_{\sigma}$,
		then 
		\begin{align*}
		& {\rm Rad}_{r}(\{x_{1}y_{\{x_{1}x_{2}\}}\ldots y_{\{x_{1}\cdots x_{k}\}}\})\\
		& \leq\sqrt{\frac{d}{2(d+1)}}\frac{r_{\max}}{r_{\min}}\left(\sqrt{\tilde{r}_{b}^{2}-(2\tau^{2}-\tilde{r}_{b}^{2})\left(\frac{\tau}{\sqrt{\tau^{2}-\tilde{r}_{\delta,b}^{2}}}-1\right)}+2\delta\right),
		\end{align*}
		where 
		\begin{align*}
			\tilde{r}_{b}^{2} & :=\frac{2\tau\left(r_{\sigma}^{2}+\epsilon(2\tau-\epsilon)\right)}{\tau+\sqrt{\tau^{2}-\left(r_{\sigma}^{2}+\epsilon(2\tau-\epsilon)\right)}},\\
			\tilde{r}_{\delta,b}^{2} & :=\min\left\{ \delta^{2}+\epsilon(2\tau-\epsilon),\frac{1}{2}\tilde{r}_{b}^{2}\right\} .
		\end{align*}
		Further, if 
		\[
		\delta+\sqrt{r_{\sigma}^{2}-\tilde{l}^{2}+\epsilon(2\tau-\epsilon)-((\tau-\epsilon)^{2}-r_{\sigma}^{2}+\tilde{l}^{2}+(\tau-\epsilon_{\tilde{l}})^{2})\left(\frac{\tau}{\sqrt{\tau^{2}-\tilde{r}_{\delta,c}^{2}}}-1\right)}\leq r_{\min},
		\]
		where 
		\begin{align*}
		\tilde{l} & :=\frac{1}{2}\left(r_{\min}-\tau+\sqrt{(\tau-\epsilon)^{2}-r_{\sigma}^{2}}-\delta\right),\\
		\epsilon_{\tilde{l}} & :=\tau-\sqrt{(\tau-\epsilon)^{2}-r_{\sigma}^{2}}+\tilde{l},\\
		\tilde{r}_{\delta,c}^{2} & :=\min\left\{ \delta^{2}+\epsilon(2\tau-\epsilon),\frac{1}{2}(r_{\sigma}^{2}-\tilde{l}^{2}+\epsilon(2\tau-\epsilon)+\epsilon_{\tilde{l}}(2\tau-\epsilon_{\tilde{l}}))\right\} .
		\end{align*}
		then $\sigma\in\textrm{\v{C}ech}_{\mathbb{R}^{d}}(\mathcal{X},r)$
		implies that 
		\[
		\{x_{1}\cdots x_{j}y_{\{x_{1}\cdots x_{j}\}}y_{\{x_{1}\cdots x_{j+1}\}}\cdots y_{\{x_{1}\cdots x_{k}\}}\}\in\textrm{\v{C}ech}_{\mathbb{R}^{d}}(\mathcal{X},r).
		\]
		\item[(iii)]  ${\rm bc}$ and ${\rm Rad}$ satisfy the following: for each $\sigma=\{x_{1},\ldots,x_{k}\}\subset\mathcal{X}$,
		$y_{\sigma}\in\mathcal{X}$ can be chosen so that if $r_{\sigma}:={\rm Rad}(\sigma)<\tau-\epsilon_{\sigma}$,
		then 
		\[
		{\rm Rad}(\{x_{1}y_{\{x_{1}x_{2}\}}\ldots y_{\{x_{1}\cdots x_{k}\}}\})\leq\sqrt{\frac{d}{2(d+1)}}\left(\sqrt{\tilde{r}_{b}^{2}-(2\tau^{2}-\tilde{r}_{b}^{2})\left(\frac{\tau}{\sqrt{\tau^{2}-\tilde{r}_{\delta,b}^{2}}}-1\right)}+2\delta\right),
		\]
		where 
		\begin{align*}
			\tilde{r}_{b}^{2} & :=\frac{2\tau\left(r_{\sigma}^{2}+\epsilon(2\tau-\epsilon)\right)}{\tau+\sqrt{\tau^{2}-\left(r_{\sigma}^{2}+\epsilon(2\tau-\epsilon)\right)}},\\
			\tilde{r}_{\delta,b}^{2} & :=\min\left\{ \delta^{2}+\epsilon(2\tau-\epsilon),\frac{1}{2}\tilde{r'}_{b}^{2}\right\} .
		\end{align*}
		Further, if 
		\[
		\sqrt{\tilde{r}_{b}^{2}-(2\tau^{2}-\tilde{r}_{b}^{2})\left(\frac{\tau}{\sqrt{\tau^{2}-\tilde{r}_{\delta,b}^{2}}}-1\right)}+2\delta\leq2r_{\min},
		\]
		then $\sigma\in\textrm{Rips}(\mathcal{X},r)$ implies that 
		\[
		[x_{1}\cdots x_{j}y_{\{x_{1}\cdots x_{j}\}}y_{\{x_{1}\cdots x_{j+1}\}}\cdots y_{\{x_{1}\cdots x_{k}\}}]\in\textrm{Rips}(\mathcal{X},r).
		\]
	\end{enumerate}
\end{claim}

\begin{proof}[Proof of Claim~\ref{claim:homotopy_cech_rips_radius}]
	
	(i)
	
	From ${\rm bc}_{r}(\sigma)=\arg\min_{y\in\mathbb{R}^{d}}\max_{x\in\sigma}\frac{\left\Vert x-y\right\Vert }{r_{x}}$,
	
	\begin{align*}
		{\rm Rad}_{r}(\sigma) & =\max_{x\in\sigma}\left\Vert x-{\rm bc}_{r}(\sigma)\right\Vert \leq r_{\max}\max_{x\in\sigma}\frac{\left\Vert x-{\rm bc}_{r}(\sigma)\right\Vert }{r_{x}}\\
		& \leq r_{\max}\max_{x\in\sigma}\frac{\left\Vert x-{\rm bc}(\sigma)\right\Vert }{r_{x}}\leq\frac{r_{\max}}{r_{\min}}\max_{x\in\sigma}\left\Vert x-{\rm bc}(\sigma)\right\Vert \\
		& =\frac{r_{\max}}{r_{\min}}{\rm Rad}(\sigma).
	\end{align*}
	Conversely, from ${\rm bc}(\sigma)=\arg\min_{y\in\mathbb{R}^{d}}\max_{x\in\sigma}\left\Vert x-y\right\Vert $,
	\begin{align*}
		{\rm Rad}(\sigma) & =\max_{x\in\sigma}\left\Vert x-{\rm bc}(\sigma)\right\Vert \leq\max_{x\in\sigma}\left\Vert x-{\rm bc}_{r}(\sigma)\right\Vert ={\rm Rad}_{r}(\sigma).
	\end{align*}
	
	(ii)
	
	For each $\sigma\subset\mathcal{X}$, we set $b_{\sigma}:={\rm bc}_{r}(\sigma)$
	and choose the corresponding $y_{\sigma}\in\mathcal{X}$ according
	to Claim~\ref{claim:homotopy_cech_rips}. Note that ${\rm Rad}_{r}(\sigma)=\max_{x\in\sigma}\left\Vert x-b_{\sigma}\right\Vert $.
	Then Claim~\ref{claim:homotopy_cech_rips} (ii) implies that 
	\[
	\left\Vert x_{i}-y_{\sigma}\right\Vert <\sqrt{\tilde{r}_{b}^{2}-\epsilon_{\sigma}(2\tau-\epsilon_{\sigma})}+\delta,
	\]
	and Claim~\ref{claim:homotopy_cech_rips} (iii) implies that for any
	$\varsigma\subset\sigma$, 
	\[
	\left\Vert y_{\varsigma}-y_{\sigma}\right\Vert <\sqrt{\tilde{r}_{b}^{2}-(2\tau^{2}-\tilde{r}_{b}^{2})\left(\frac{\tau}{\sqrt{\tau^{2}-\tilde{r}_{\delta,b}^{2}}}-1\right)}+2\delta.
	\]
	Then from Claim~\ref{claim:homotopy_cech_rips} (iii), $\sqrt{\tilde{r}_{b}^{2}-\epsilon_{\sigma}(2\tau-\epsilon_{\sigma})}+\delta\leq\sqrt{\tilde{r}_{b}^{2}-(2\tau^{2}-\tilde{r}_{b}^{2})\left(\frac{\tau}{\sqrt{\tau^{2}-\tilde{r}_{\delta,b}^{2}}}-1\right)}+2\delta$,
	and hence 
	\[
	\{x_{1},y_{\{x_{1}x_{2}\}},\ldots,y_{\{x_{1}\cdots x_{k}\}}\}\in{\rm Rips}\left(\mathcal{X},\frac{1}{2}\left(\sqrt{\tilde{r}_{b}^{2}-(2\tau^{2}-\tilde{r}_{b}^{2})\left(\frac{\tau}{\sqrt{\tau^{2}-\tilde{r}_{\delta,b}^{2}}}-1\right)}+2\delta\right)\right).
	\]
	And Lemma~\ref{lem:homotopy_interleaving_rips_ambientcech} implies
	that 
	\begin{align*}
	& \{x_{1},y_{\{x_{1}x_{2}\}},\ldots,y_{\{x_{1}\cdots x_{k}\}}\}\\
	& \in\textrm{\v{C}ech}_{\mathbb{R}^{d}}\left(\mathcal{X},\sqrt{\frac{d}{2(d+1)}}\left(\sqrt{\tilde{r}_{b}^{2}-(2\tau^{2}-\tilde{r}_{b}^{2})\left(\frac{\tau}{\sqrt{\tau^{2}-\tilde{r}_{\delta,b}^{2}}}-1\right)}+2\delta\right)\right).
	\end{align*}
	And hence 
	\begin{align*}
	& {\rm Rad}(\{x_{1},y_{\{x_{1}x_{2}\}},\ldots,y_{\{x_{1}\cdots x_{k}\}}\})\\
	& \leq\sqrt{\frac{d}{2(d+1)}}\left(\sqrt{\tilde{r}_{b}^{2}-(2\tau^{2}-\tilde{r}_{b}^{2})\left(\frac{\tau}{\sqrt{\tau^{2}-\tilde{r}_{\delta,b}^{2}}}-1\right)}+2\delta\right).
	\end{align*}
	Then from (i), 
	\begin{align*}
	& {\rm Rad}_{r}(\{x_{1},y_{\{x_{1}x_{2}\}},\ldots,y_{\{x_{1}\cdots x_{k}\}}\})\\
	& \leq\sqrt{\frac{d}{2(d+1)}}\frac{r_{\max}}{r_{\min}}\left(\sqrt{\tilde{r}_{b}^{2}-(2\tau^{2}-\tilde{r}_{b}^{2})\left(\frac{\tau}{\sqrt{\tau^{2}-\tilde{r}_{\delta,b}^{2}}}-1\right)}+2\delta\right).
	\end{align*}
	Further, Claim~\ref{claim:homotopy_cech_rips} (iv) implies that if
	\[
	\delta+\sqrt{r_{\sigma}^{2}-\tilde{l}^{2}+\epsilon(2\tau-\epsilon)-((\tau-\epsilon)^{2}-r_{\sigma}^{2}+\tilde{l}^{2}+(\tau-\epsilon_{\tilde{l}})^{2})\left(\frac{\tau}{\sqrt{\tau^{2}-\tilde{r}_{\delta,c}^{2}}}-1\right)}\leq r_{\min},
	\]
	then $\sigma\in\textrm{\v{C}ech}_{\mathbb{R}^{d}}(\mathcal{X},r)$
	implies that $b_{\varsigma}={\rm bc}_{r}(\varsigma)\in\bigcap_{x\in\varsigma}\mathbb{B}_{\mathbb{R}^{d}}(x,r_{x})$.
	And hence 
	\[
	\{x_{1},\ldots,x_{j},y_{\{x_{1}\cdots x_{j}\}},y_{\{x_{1}\cdots x_{j+1}\}},\ldots,y_{\{x_{1}\cdots x_{k}\}}\}\in\textrm{\v{C}ech}_{\mathbb{R}^{d}}(\mathcal{X},r).
	\]
	
	(iii)
	
	For each $\sigma\subset\mathcal{X}$, we set $b_{\sigma}:={\rm bc}(\sigma)$
	and choose the corresponding $y_{\sigma}\in\mathcal{X}$ according
	to Claim~\ref{claim:homotopy_cech_rips}. Note that ${\rm Rad}(\sigma)=\max_{x\in\sigma}\left\Vert x-b_{\sigma}\right\Vert $.
	Then Claim~\ref{claim:homotopy_cech_rips} (ii) implies that 
	\[
	\left\Vert x_{i}-y_{\sigma}\right\Vert <\sqrt{\tilde{r}_{b}^{2}-\epsilon_{\sigma}(2\tau-\epsilon_{\sigma})}+\delta,
	\]
	and Claim~\ref{claim:homotopy_cech_rips} (iii) implies that for any
	$\varsigma\subset\sigma$, 
	\[
	\left\Vert y_{\varsigma}-y_{\sigma}\right\Vert <\sqrt{\tilde{r}_{b}^{2}-(2\tau^{2}-\tilde{r}_{b}^{2})\left(\frac{\tau}{\sqrt{\tau^{2}-\tilde{r}_{\delta,b}^{2}}}-1\right)}+2\delta.
	\]
	Then from Claim~\ref{claim:homotopy_cech_rips} (iii), $\sqrt{\tilde{r}_{b}^{2}-\epsilon_{\sigma}(2\tau-\epsilon_{\sigma})}+\delta\leq\sqrt{\tilde{r}_{b}^{2}-(2\tau^{2}-\tilde{r}_{b}^{2})\left(\frac{\tau}{\sqrt{\tau^{2}-\tilde{r}_{\delta,b}^{2}}}-1\right)}+2\delta$,
	and hence 
	\[
	\{x_{1},y_{\{x_{1}x_{2}\}},\ldots,y_{\{x_{1}\cdots x_{k}\}}\}\in{\rm Rips}\left(\mathcal{X},\frac{1}{2}\left(\sqrt{\tilde{r}_{b}^{2}-(2\tau^{2}-\tilde{r}_{b}^{2})\left(\frac{\tau}{\sqrt{\tau^{2}-\tilde{r}_{\delta,b}^{2}}}-1\right)}+2\delta\right)\right).
	\]
	And Lemma~\ref{lem:homotopy_interleaving_rips_ambientcech} implies
	that 
	\begin{align*}
	& \{x_{1},y_{\{x_{1}x_{2}\}},\ldots,y_{\{x_{1}\cdots x_{k}\}}\}\\
	& \in\textrm{\v{C}ech}_{\mathbb{R}^{d}}\left(\mathcal{X},\sqrt{\frac{d}{2(d+1)}}\left(\sqrt{\tilde{r}_{b}^{2}-(2\tau^{2}-\tilde{r}_{b}^{2})\left(\frac{\tau}{\sqrt{\tau^{2}-\tilde{r}_{\delta,b}^{2}}}-1\right)}+2\delta\right)\right).
	\end{align*}
	And hence
	\begin{align*}
	& {\rm Rad}(\{x_{1},y_{\{x_{1}x_{2}\}},\ldots,y_{\{x_{1}\cdots x_{k}\}}\})\\
	& \leq\sqrt{\frac{d}{2(d+1)}}\left(\sqrt{\tilde{r}_{b}^{2}-(2\tau^{2}-\tilde{r}_{b}^{2})\left(\frac{\tau}{\sqrt{\tau^{2}-\tilde{r}_{\delta,b}^{2}}}-1\right)}+2\delta\right).
	\end{align*}
	Further, Claim~\ref{claim:homotopy_cech_rips} (v) implies that if
	\[
	\sqrt{\tilde{r}_{b}^{2}-(2\tau^{2}-\tilde{r}_{b}^{2})\left(\frac{\tau}{\sqrt{\tau^{2}-\tilde{r}_{\delta,b}^{2}}}-1\right)}+2\delta\leq2r_{\min},
	\]
	then $\sigma\in\textrm{Rips}(\mathcal{X},r)$ implies that 
	\[
	\{x_{1},\ldots,x_{j},y_{\{x_{1}\cdots x_{j}\}},y_{\{x_{1}\cdots x_{j+1}\}},\ldots,y_{\{x_{1}\cdots x_{k}\}}\}\in\textrm{Rips}(\mathcal{X},r).
	\]
	
\end{proof}

For Theorem~\ref{thm:homotopy_cech}, Claim~\ref{claim:homotopy_cech_rips}
and \ref{claim:homotopy_cech_rips_radius} imply that there exists
a map $\rho:\textrm{\v{C}ech}_{\mathbb{R}^{d}}(\mathcal{X},r)\to\textrm{\v{C}ech}_{\mathbb{X}}(\mathcal{X},r)$
that $\imath_{\textrm{\v{C}ech}_{\mathbb{X}}(\mathcal{X},r)\to\textrm{\v{C}ech}_{\mathbb{R}^{d}}(\mathcal{X},r)}\circ\rho:\textrm{\v{C}ech}_{\mathbb{R}^{d}}(\mathcal{X},r)\to\textrm{\v{C}ech}_{\mathbb{R}^{d}}(\mathcal{X},r)$
is homotopic to the identity $id_{\textrm{\v{C}ech}_{\mathbb{R}^{d}}(\mathcal{X},r)}$.
Then from Lemma~\ref{lem:homotopy_interleaving_ambientcech_restrictedcech},
the following diagram commutes:
\[
\xymatrix{ & \mathbb{X}\ar[dl]_{\phi}\\
	\textrm{\v{C}ech}_{\mathbb{X}}(\mathcal{X},r)\ar@/_/[dr]_{\imath_{\textrm{\v{C}ech}_{\mathbb{X}}(\mathcal{X},r)\to\textrm{\v{C}ech}_{\mathbb{R}^{d}}(\mathcal{X},r)}}\ar[rr]^{\imath_{\textrm{\v{C}ech}_{\mathbb{X}}(\mathcal{X},r)\to\textrm{\v{C}ech}_{\mathbb{X}}(\mathcal{X},r')}} &  & \textrm{\v{C}ech}_{\mathbb{X}}(\mathcal{X},r')\ar[ul]_{\psi'}\\
	& \mathcal{S}\ar@/_/[ul]_{\rho}\ar[ur]_{\imath_{\textrm{\v{C}ech}_{\mathbb{R}^{d}}(\mathcal{X},r)\to\textrm{\v{C}ech}_{\mathbb{X}}(\mathcal{X},r')}}
}
.
\]
Then Lemma~\ref{lem:homotopy_simplexnerve} implies that $\mathbb{X}$
is homotopy equivalent to the ambient \v{C}ech complex $\textrm{\v{C}ech}_{\mathbb{R}^{d}}(\mathcal{X},r)$.
We restate Theorem~\ref{thm:homotopy_cech} and formally write its
proof below.

\textbf{Theorem~\ref{thm:homotopy_cech}.} \textit{Let $\mathbb{X}\subset\mathbb{R}^{d}$
	be a subset with reach $\tau>0$ and let $\mathcal{X}\subset\mathbb{R}^{d}$
	be a closed discrete set of points. Let $\{r_{x}>0:x\in\mathcal{X}\}$ be a set of
	radii indexed by $x\in\mathcal{X}$ with $r_{\min}:=\inf_{x\in\mathcal{X}}\{r_{x}\}>0$
	and $r_{\max}:=\sup_{x\in\mathcal{X}}\{r_{x}\}<\infty$, and let $\epsilon:=\sup\{d_{\mathbb{X}}(x):x\in\mathcal{X}\}$.
	Suppose $\mathbb{X}$ is covered by the union of balls centered at
	$x\in\mathcal{X}$ and radius $\delta$ as 
	\[
	\mathbb{X}\subset\bigcup_{x\in\mathcal{X}}\mathbb{B}_{\mathbb{R}}(x,\delta).%\label{eq:homotopy_cech_covering}
	\]
	Suppose that the maximum radius $r_{\max}$ is bounded as 
	\[
	r_{\max}\leq\tau-\epsilon.%\label{eq:homotopy_cech_condition_radius}
	\]
	Also, suppose $\delta$ satisfies the following condition: %\begin{align}
	%	& \delta+\sqrt{r_{\max}^{2}+\epsilon(2\tau-\epsilon)-\frac{1}{4}\left(r_{\min}-\frac{r_{\max}^{2}}{\tau-\epsilon+\sqrt{(\tau-\epsilon)^{2}-r_{\max}^{2}}}-\epsilon-\delta\right)^{2}}\leq r_{\min},\nonumber \\
	%	& \delta\leq\frac{1}{2}\left(\sqrt{\frac{2(d+1)}{d}}\sqrt{\frac{r_{\min}^{2}-\epsilon(2\tau-\epsilon)}{2}}-\sqrt{\frac{2\tau(r_{\min}^{2}+\epsilon(2\tau-\epsilon))}{2\tau+\sqrt{4\tau^{2}-2\left(r_{\min}^{2}+\epsilon(2\tau-\epsilon)\right)}}}\right).
	%\end{align}
	\begin{align*}
	& \delta+\sqrt{r_{\max}^{2}-\tilde{l}^{2}+\epsilon(2\tau-\epsilon)-((\tau-\epsilon)^{2}-r_{\max}^{2}+\tilde{l}^{2}+(\tau-\epsilon_{\tilde{l}})^{2})\left(\frac{\tau}{\sqrt{\tau^{2}-\tilde{r}_{\delta,c}}}-1\right)} \\
	& \leq r_{\min}, \\
	& \sqrt{\frac{d}{2(d+1)}}\frac{r_{\max}}{r_{\min}}\left(\sqrt{\tilde{r}_{b}^{2}-(2\tau^{2}-\tilde{r}_{b}^{2})\left(\frac{\tau}{\sqrt{\tau^{2}-\tilde{r}_{\delta,b}^{2}}}-1\right)}+2\delta\right)\leq r_{\min}'',%\label{eq:homotopy_cech_condition_covering}
	\end{align*}
	\begin{align*}
	& \tilde{l}:=\frac{1}{2}\left(r_{\min}-\tau+\sqrt{(\tau-\epsilon)^{2}-r_{\max}^{2}}-\delta\right),\qquad\epsilon_{\tilde{l}}:=\tau-\sqrt{(\tau-\epsilon)^{2}-r_{\max}^{2}}+\tilde{l},\\
	& \tilde{r}_{\delta,c}^{2}:=\min\left\{ \delta^{2}+\epsilon(2\tau-\epsilon),\frac{1}{2}(r_{\max}^{2}-\tilde{l}^{2}+\epsilon(2\tau-\epsilon)+\epsilon_{\tilde{l}}(2\tau-\epsilon_{\tilde{l}}))\right\} ,\\
	& r_{\min}'':=\sqrt{\tau^{2}-\epsilon(2\tau-\epsilon)-\frac{(2\tau^{2}-r_{\min}^{2}-\epsilon(2\tau-\epsilon))^{2}}{4\tau^{2}}},\\
	& \tilde{r}_{b}^{2}:=\frac{2\tau\left((r_{\min}'')^{2}+\epsilon(2\tau-\epsilon)\right)}{\tau+\sqrt{\tau^{2}-\left((r_{\min}'')^{2}+\epsilon(2\tau-\epsilon)\right)}},\qquad\tilde{r}_{\delta,b}^{2}:=\min\left\{ \delta^{2}+\epsilon(2\beta-\epsilon),\frac{1}{2}\tilde{r}_{b}^{2}\right\} .
	\end{align*}
	Then $\mathbb{X}$ is homotopy equivalent to the ambient \v{C}ech
	complex $\textrm{\v{C}ech}_{\mathbb{R}^{d}}(\mathcal{X},r)$.}

\begin{proof}[Proof of Theorem~\ref{thm:homotopy_cech}]

	For a simplex $\sigma=[x_{1}\cdots x_{k}]\in\textrm{\v{C}ech}_{\mathbb{R}^{d}}(\mathcal{X},r)$,
	let $r_{\sigma}:=Rad_{r}(\sigma)$, and choose $y_{\sigma}\in\mathcal{X}$
	according to Claim~\ref{claim:homotopy_cech_rips_radius}. Then as
	long as $r_{\sigma}<\tau-\epsilon_{\sigma}$, Claim~\ref{claim:homotopy_cech_rips_radius}
	(ii) asserts that $[x_{1}\cdots x_{k}y_{[x_{1}\cdots x_{k}]}]\in\textrm{\v{C}ech}_{\mathbb{R}^{d}}(\mathcal{X},r)$
	holds. Now, define the homotopy map $F_{1}:[x_{1}\cdots x_{k}]\times[0,1]\to[x_{1}\cdots x_{k}y_{[x_{1}\cdots x_{k}]}]$
	as 
	\[
	F_{1}\left(\sum_{i=1}^{k}\lambda_{i}x_{i},t\right)=\sum_{i=1}^{k}(\lambda_{i}-\min\lambda_{i})x_{i}+k\min\lambda_{i}\left(ty_{\sigma}+(1-t)\frac{1}{k}\sum_{i=1}^{k}x_{i}\right).
	\]
	Then $F_{1}$ gives homotopy between $\imath_{\sigma\to\sigma*y_{\sigma}}$
	and $f_{1}:\sigma\to\sigma*y_{\sigma}$ defined as 
	\[
	f_{1}\left(\sum_{i=1}^{k}\lambda_{i}x_{i}\right)=\sum_{i=1}^{k}(\lambda_{i}-\min\lambda_{i})x_{i}+(k\min\lambda_{i})y_{\sigma},
	\]
	giving homotopy equivalence as 
	\[
	[x_{1}\cdots x_{k}]\simeq\sum_{i=1}^{k}[x_{1}\cdots\hat{x}_{i}\cdots x_{k}y_{[x_{1}\cdots x_{k}]}]\text{ in }\textrm{\v{C}ech}_{\mathbb{R}^{d}}(\mathcal{X},r).
	\]
	And again, Claim~\ref{claim:homotopy_cech_rips_radius} (ii) asserts
	that $[x_{1}\cdots x_{k-1}y_{[x_{1}\cdots x_{k-1}]}y_{[x_{1}\cdots x_{k}]}]\in\textrm{\v{C}ech}_{\mathbb{R}^{d}}(\mathcal{X},r)$
	holds. Now, the homotopy map $F_{2}:[x_{1}\cdots x_{k-1}y_{[x_{1}\cdots x_{k}]}]\times[0,1]\to[x_{1}\cdots x_{k-1}y_{[x_{1}\cdots x_{k-1}]}y_{[x_{1}\cdots x_{k}]}]$
	is defined as 
	\begin{align*}
	F_{2}\left(\sum_{i=1}^{k-1}\lambda_{i}x_{i}+\lambda_{k}y_{[x_{1}\cdots x_{k}]},t\right) & =\lambda_{k}y_{[x_{1}\cdots x_{k}]}+\sum_{i=1}^{k-1}(\lambda_{i}-\min\lambda_{i})x_{i}\\
	& \quad+(k-1)\min\lambda_{i}\left(ty_{[x_{1}\cdots x_{k-1}]}+(1-t)\frac{1}{k-1}\sum_{i=1}^{k-1}x_{i}\right).
	\end{align*}
	Then $F_{2}$ gives homotopy between $\imath_{[x_{1}\cdots x_{k-1}y_{[x_{1}\cdots x_{k}]}]\to[x_{1}\cdots x_{k-1}y_{[x_{1}\cdots x_{k-1}]}y_{[x_{1}\cdots x_{k}]}]}$
	and \\
	$f_{2}:[x_{1}\cdots x_{k-1}y_{[x_{1}\cdots x_{k}]}]\to[x_{1}\cdots x_{k-1}y_{[x_{1}\cdots x_{k-1}]}y_{[x_{1}\cdots x_{k}]}]$
	defined as 
	\[
	f_{2}\left(\sum_{i=1}^{k-1}\lambda_{i}x_{i}+\lambda_{k}y_{[x_{1}\cdots x_{k}]}\right)=\lambda_{k}y_{[x_{1}\cdots x_{k}]}+\sum_{i=1}^{k-1}(\lambda_{i}-\min\lambda_{i})x_{i}+(k-1)\min\lambda_{i}y_{[x_{1}\cdots x_{k-1}]},
	\]
	giving the homotopy equivalence as 
	\[
	[x_{1}\cdots x_{k-1}y_{[x_{1}\cdots x_{k}]}]\simeq\sum_{i=1}^{k-1}[x_{1}\cdots\hat{x}_{i}\cdots x_{k-1}y_{[x_{1}\cdots x_{k-1}]}y_{[x_{1}\cdots x_{k}]}]\text{ in }\textrm{\v{C}ech}_{\mathbb{R}^{d}}(\mathcal{X},r).
	\]
	By repeating this and concatenating the homotopy maps, we have the
	homotopy map $F_{\sigma}:\sigma\times[0,1]\to\textrm{\v{C}ech}_{\mathbb{R}^{d}}(\mathcal{X},r)$
	giving homotopy between $\imath_{\sigma\to\textrm{\v{C}ech}_{\mathbb{R}^{d}}(\mathcal{X},r)}$
	and $f_{\sigma}:\sigma\to\textrm{\v{C}ech}_{\mathbb{R}^{d}}(\mathcal{X},r)$
	with $f_{\sigma}(\sigma)=\sum_{\omega\in S_{k}}[x_{\omega(1)}y_{[x_{\omega(1)}x_{\omega(2)}]}\ldots y_{[x_{1}\cdots x_{k}]}]$,
	i.e. $F_{\sigma}$ is giving homotopy equivalence between $[x_{1},\ldots,x_{k}]$
	and $\sum_{\omega\in S_{k}}[x_{\omega(1)}y_{[x_{\omega(1)}x_{\omega(2)}]}\ldots y_{[x_{1}\cdots x_{k}]}]$
	in $\textrm{\v{C}ech}_{\mathbb{R}^{d}}(\mathcal{X},r)$.
	
	Now, we extend $f_{\sigma}$ and $F_{\sigma}$ to the entire ambient \v{C}ech complex $\textrm{\v{C}ech}_{\mathbb{R}^{d}}(\mathcal{X},r)$. Define $f:\textrm{\v{C}ech}_{\mathbb{R}^{d}}(\mathcal{X},r)\to\textrm{\v{C}ech}_{\mathbb{R}^{d}}(\mathcal{X},r)$
	and $F:\textrm{\v{C}ech}_{\mathbb{R}^{d}}(\mathcal{X},r)\times[0,1]\to\textrm{\v{C}ech}_{\mathbb{R}^{d}}(\mathcal{X},r)$
	as $f|_{\sigma}=f_{\sigma}$ and $F|_{\sigma\times[0,1]}=F_{\sigma}$
	for each $\sigma\in\textrm{\v{C}ech}_{\mathbb{R}^{d}}(\mathcal{X},r)$.
	Then for $\sigma,\tau\in\textrm{\v{C}ech}_{\mathbb{R}^{d}}(\mathcal{X},r)$,
	$F_{\sigma}$ and $F_{\tau}$ coincides on $\sigma\cap\tau\times[0,1]$,
	so $f$ and $F$ are well defined. Also, from $\mathcal{X}$ being
	closed and discrete, $\textrm{\v{C}ech}_{\mathbb{R}^{d}}(\mathcal{X},r)$
	is locally finite. Hence for any compact set $C\subset\textrm{\v{C}ech}_{\mathbb{R}^{d}}(\mathcal{X},r)$,
	$C$ intersects with only finite number of simplices $\sigma_{1},\ldots,\sigma_{k}\in\textrm{\v{C}ech}_{\mathbb{R}^{d}}(\mathcal{X},r)$,
	so $F^{-1}(C)=\bigcup_{i=1}^{k}F_{\sigma_{i}}^{-1}(C)$ is compact,
	and hence $F$ is continuous. So $F$ gives the homotopy between $id_{\textrm{\v{C}ech}_{\mathbb{R}^{d}}(\mathcal{X},r)}$
	and $f$.
	
	Now, applying Claim~\ref{claim:homotopy_cech_rips_radius} (ii) gives
	that 
	\begin{align*}
	& {\rm Rad}_{r}([x_{\omega(1)}y_{[x_{\omega(1)}x_{\omega(2)}]}\cdots y_{[x_{1}\cdots x_{k}]}])\\
	& \leq\sqrt{\frac{d}{2(d+1)}}\frac{r_{\max}}{r_{\min}}\left(\sqrt{\tilde{r}_{b}^{2}-(2\tau^{2}-\tilde{r}_{b}^{2})\left(\frac{\tau}{\sqrt{\tau^{2}-\tilde{r}_{\delta,b}^{2}}}-1\right)}+2\delta\right),
	\end{align*}
	where 
	\begin{align*}
	\tilde{r}_{b}^{2} & :=\frac{2\tau\left(r_{\sigma}^{2}+\epsilon(2\tau-\epsilon)\right)}{\tau+\sqrt{\tau^{2}-\left(r_{\sigma}^{2}+\epsilon(2\tau-\epsilon)\right)}},\\
	\tilde{r}_{\delta,b}^{2} & :=\min\left\{ \delta^{2}+\epsilon(2\tau-\epsilon),\frac{1}{2}\tilde{r}_{b}^{2}\right\} .
	\end{align*}
	Hence by repeating this sufficiently many times (say $N$), we can
	guarantee $f^{(N)}(\textrm{\v{C}ech}_{\mathbb{R}^{d}}(\mathcal{X},r))$
	to be consisting of simplices with their radii (i.e. $Rad(\sigma)$)
	at most $\tilde{r}_{\sigma}$, where $\tilde{r}_{\sigma}$ is the
	solution of 
	\[
	f(t)=\sqrt{\frac{d}{2(d+1)}}\frac{r_{\max}}{r_{\min}}\left(\sqrt{\tilde{r}_{b}^{2}(t)-(2\tau^{2}-\tilde{r}_{b}^{2}(t))\left(\frac{\tau}{\sqrt{\tau^{2}-\tilde{r}_{\delta,b}^{2}(t)}}-1\right)}+2\delta\right)-t=0,
	\]
	with $\tilde{r}_{b}^{2}(t)=\frac{2\tau\left(t^{2}+\epsilon(2\tau-\epsilon)\right)}{\tau+\sqrt{\tau^{2}-\left(t^{2}+\epsilon(2\tau-\epsilon)\right)}}$
	and $\tilde{r}_{\delta,b}^{2}(t)=\min\left\{ \delta^{2}+\epsilon(2\tau-\epsilon),\frac{1}{2}\tilde{r}_{b}^{2}(t)\right\} $.
%	Then $f\left(\sqrt{\frac{r_{\min}^{2}-\epsilon(2\tau-\epsilon)}{2}}\right)\geq0$
%	implies that $\tilde{r}_{\sigma}\leq\sqrt{\frac{r_{\min}^{2}-\epsilon(2\tau-\epsilon)}{2}}$.
%	Hence $\delta$ satisfying $\delta\leq\frac{1}{2}\left(\sqrt{\frac{2(d+1)}{d}}\sqrt{\frac{r_{\min}^{2}-\epsilon(2\tau-\epsilon)}{2}}-\sqrt{\frac{2\tau(r_{\min}^{2}+\epsilon(2\tau-\epsilon))}{2\tau+\sqrt{4\tau^{2}-2\left(r_{\min}^{2}+\epsilon(2\tau-\epsilon)\right)}}}\right)$
%	implies that $f^{(N)}(\textrm{\v{C}ech}_{\mathbb{R}^{d}}(\mathcal{X},r))\subset\textrm{\v{C}ech}_{\mathbb{R}^{d}}\left(\mathcal{X},\sqrt{\frac{r_{\min}^{2}-\epsilon(2\tau-\epsilon)}{2}}\right)$.
%	And Lemma~\ref{lem:homotopy_interleaving_ambientcech_restrictedcech}
%	implies that 
%	\[
%	f^{(N)}(\textrm{\v{C}ech}_{\mathbb{R}^{d}}(\mathcal{X},r))\subset\textrm{\v{C}ech}_{\mathbb{R}^{d}}\left(\mathcal{X},\sqrt{\frac{r_{\min}^{2}-\epsilon(2\tau-\epsilon)}{2}}\right)\subset\textrm{\v{C}ech}_{\mathbb{X}}(\mathcal{X},r_{\min})\subset\textrm{\v{C}ech}_{\mathbb{X}}(\mathcal{X},r),
%	\]
	Let
	\[
		r_{\min}'':=\sqrt{\tau^{2}-\epsilon(2\tau-\epsilon)-\frac{(2\tau^{2}-r_{\min}^{2}-\epsilon(2\tau-\epsilon))^{2}}{4\tau^{2}}},
	\]
	then $f\left(r_{\min}''\right)\leq0$ implies that $\tilde{r}_{\sigma}\leq r_{\min}''$.
	Hence satisfying $f(r_{\min}'')\leq0$ implies that $f^{(N)}(\textrm{\v{C}ech}_{\mathbb{R}^{d}}(\mathcal{X},r))\subset\textrm{\v{C}ech}_{\mathbb{R}^{d}}\left(\mathcal{X},r_{\min}''\right)$.
	And Lemma~\ref{lem:homotopy_interleaving_ambientcech_restrictedcech}
	implies that 
	\[
	f^{(N)}(\textrm{\v{C}ech}_{\mathbb{R}^{d}}(\mathcal{X},r))\subset\textrm{\v{C}ech}_{\mathbb{R}^{d}}\left(\mathcal{X},r_{\min}''\right)\subset\textrm{\v{C}ech}_{\mathbb{X}}(\mathcal{X},r_{\min})\subset\textrm{\v{C}ech}_{\mathbb{X}}(\mathcal{X},r),
	\]
	i.e. $f^{(N)}:\textrm{\v{C}ech}_{\mathbb{R}^{d}}(\mathcal{X},r)\to\textrm{\v{C}ech}_{\mathbb{X}}(\mathcal{X},r)$.
	Then by construction, $\imath_{\textrm{\v{C}ech}_{\mathbb{X}}(\mathcal{X},r)\to\textrm{\v{C}ech}_{\mathbb{R}^{d}}(\mathcal{X},r)}\circ f^{(N)}$
	is homotopy equivalent to $id_{\textrm{\v{C}ech}_{\mathbb{R}^{d}}(\mathcal{X},r)}$.
	Hence by applying Lemma~\ref{lem:homotopy_simplexnerve}, $\mathbb{X}$
	and $\textrm{\v{C}ech}_{\mathbb{R}^{d}}(\mathcal{X},r)$ are homotopy
	equivalent.
\end{proof}

For Theorem~\ref{thm:homotopy_rips}, Claim~\ref{claim:homotopy_cech_rips}
and \ref{claim:homotopy_cech_rips_radius} imply that there exists
a map $\rho:\textrm{Rips}(\mathcal{X},r)\to\textrm{\v{C}ech}_{\mathbb{X}}(\mathcal{X},r)$
that $\imath_{\textrm{\v{C}ech}_{\mathbb{X}}(\mathcal{X},r)\to\textrm{Rips}(\mathcal{X},r)}\circ\rho:\textrm{Rips}(\mathcal{X},r)\to\textrm{Rips}(\mathcal{X},r)$
is homotopic to the identity $id_{\textrm{Rips}(\mathcal{X},r)}$. Then from Corollary
\ref{cor:homotopy_interleaving_rips_restrictedcech}, the following
diagram commutes:
\[
\xymatrix{ & \mathbb{X}\ar[dl]_{\phi}\\
	\textrm{\v{C}ech}_{\mathbb{X}}(\mathcal{X},r)\ar@/_/[dr]_{\imath_{\textrm{\v{C}ech}_{\mathbb{X}}(\mathcal{X},r)\to\textrm{Rips}(\mathcal{X},r)}}\ar[rr]^{\imath_{\textrm{\v{C}ech}_{\mathbb{X}}(\mathcal{X},r)\to\textrm{\v{C}ech}_{\mathbb{X}}(\mathcal{X},r''')}} &  & \textrm{\v{C}ech}_{\mathbb{X}}(\mathcal{X},r''')\ar[ul]_{\psi'}\\
	& \textrm{Rips}(\mathcal{X},r)\ar@/_/[ul]_{\rho}\ar[ur]_{\imath_{\textrm{Rips}(\mathcal{X},r)\to\textrm{\v{C}ech}_{\mathbb{X}}(\mathcal{X},r''')}}
}
.
\]
Then Lemma~\ref{lem:homotopy_simplexnerve} implies that $\mathbb{X}$
is homotopy equivalent to the Vietoris-Rips complex $\textrm{Rips}(\mathcal{X},r)$.
We restate Theorem~\ref{thm:homotopy_rips} and formally write its
proof below.

\textbf{Theorem~\ref{thm:homotopy_rips}.} \textit{Let $\mathbb{X}\subset\mathbb{R}^{d}$
	be a subset with reach $\tau>0$ and let $\mathcal{X}\subset\mathbb{R}^{d}$
	be a closed discrete set of points. Let $\{r_{x}>0:x\in\mathcal{X}\}$ be a set of
	radii indexed by $x\in\mathcal{X}$ with $r_{\min}:=\inf_{x\in\mathcal{X}}\{r_{x}\}>0$
	and $r_{\max}:=\sup_{x\in\mathcal{X}}\{r_{x}\}<\infty$, and let $\epsilon:=\sup\{d_{\mathbb{X}}(x):x\in\mathcal{X}\}$.
	Suppose $\mathbb{X}$ is covered by the union of balls centered at
	$x\in\mathcal{X}$ and radius $\delta$ as 
	\[
	\mathbb{X}\subset\bigcup_{x\in\mathcal{X}}\mathbb{B}_{\mathbb{R}}(x,\delta).%\label{eq:homotopy_rips_covering}
	\]
	Suppose that the maximum radius $r_{\max}$ is bounded as 
	\[
	r_{\max}\leq\sqrt{\frac{d+1}{2d}}\left(\tau-\epsilon\right).%\label{eq:homotopy_rips_condition_radius}
	\]
	Also, suppose $\delta$ satisfies the following condition: %\begin{align*}
	%	& \delta\leq r_{\min}-\frac{1}{2}\sqrt{\frac{2\tau\left(\frac{d}{2(d+1)}r_{\max}^{2}+\epsilon(2\tau-\epsilon)\right)}{\tau+\sqrt{\tau^{2}-\left(\frac{d}{2(d+1)}r_{\max}^{2}+\epsilon(2\tau-\epsilon)\right)}}},\\
	%	& \delta\leq\frac{1}{2}\left(\sqrt{\frac{2(d+1)}{d}}\sqrt{\frac{r_{\min}^{2}-\epsilon(2\tau-\epsilon)}{2}}-\sqrt{\frac{2\tau(r_{\min}^{2}+\epsilon(2\tau-\epsilon))}{2\tau+\sqrt{4\tau^{2}-2\left(r_{\min}^{2}+\epsilon(2\tau-\epsilon)\right)}}}\right).
	%\end{align*}
	\begin{align*}
	& \sqrt{\tilde{r}_{b}^{2}(r_{\max})-(2\tau^{2}-\tilde{r}_{b}^{2}(r_{\max}))\left(\frac{\tau}{\sqrt{\tau^{2}-\tilde{r}_{\delta,b}^{2}(r_{\max})}}-1\right)}+2\delta\leq2r_{\min}, \\
	& \sqrt{\frac{d}{2(d+1)}}\left(\sqrt{\tilde{r}_{b}^{2}(r_{\min}'')-(2\tau^{2}-\tilde{r}_{b}^{2}(r_{\min}''))\left(\frac{\tau}{\sqrt{\tau^{2}-\tilde{r}_{\delta,b}^{2}(r_{\min}'')}}-1\right)}+2\delta\right)\leq r_{\min}'',%\label{eq:homotopy_rips_condition_covering}
	\end{align*}
	where 
	\begin{align*}
	& r_{\min}'':=\sqrt{\tau^{2}-\epsilon(2\tau-\epsilon)-\frac{(2\tau^{2}-r_{\min}^{2}-\epsilon(2\tau-\epsilon))^{2}}{4\tau^{2}}},\\
	& \tilde{r}_{b}^{2}(t):=\frac{2\tau\left(t^{2}+\epsilon(2\tau-\epsilon)\right)}{\tau+\sqrt{\tau^{2}-\left(t^{2}+\epsilon(2\tau-\epsilon)\right)}},\qquad\tilde{r}_{\delta,b}^{2}(t):=\min\left\{ \delta^{2}+\epsilon(2\tau-\epsilon),\frac{1}{2}\tilde{r}_{b}^{2}(t)\right\} .
	\end{align*}
	Then $\mathbb{X}$ is homotopy equivalent to the Vietoris-Rips complex
	$\textrm{Rips}(\mathcal{X},r)$.}

\begin{proof}[Proof of Theorem~\ref{thm:homotopy_rips}]

	For a simplex $\sigma=[x_{1}\cdots x_{k}]\in\textrm{Rips}(\mathcal{X},r)$,
	let $r_{\sigma}:=Rad(\sigma)$, and choose $y_{\sigma}\in\mathcal{X}$
	according to Claim~\ref{claim:homotopy_cech_rips_radius}. Then as
	long as $r_{\sigma}<\tau-\epsilon_{\sigma}$, Claim~\ref{claim:homotopy_cech_rips_radius}
	(iii) asserts that $[x_{1}\cdots x_{k}y_{[x_{1}\cdots x_{k}]}]\in\textrm{Rips}(\mathcal{X},r)$
	holds. Now, define the homotopy map $F_{1}:[x_{1}\cdots x_{k}]\times[0,1]\to[x_{1}\cdots x_{k}y_{[x_{1}\cdots x_{k}]}]$
	as 
	\[
	F_{1}\left(\sum_{i=1}^{k}\lambda_{i}x_{i},t\right)=\sum_{i=1}^{k}(\lambda_{i}-\min\lambda_{i})x_{i}+k\min\lambda_{i}\left(ty_{\sigma}+(1-t)\frac{1}{k}\sum_{i=1}^{k}x_{i}\right).
	\]
	Then $F_{1}$ gives homotopy between $\imath_{\sigma\to\sigma*y_{\sigma}}$
	and $f_{1}:\sigma\to\sigma*y_{\sigma}$ defined as 
	\[
	f_{1}\left(\sum_{i=1}^{k}\lambda_{i}x_{i}\right)=\sum_{i=1}^{k}(\lambda_{i}-\min\lambda_{i})x_{i}+(k\min\lambda_{i})y_{\sigma},
	\]
	giving homotopy equivalence as 
	\[
	[x_{1}\cdots x_{k}]\simeq\sum_{i=1}^{k}[x_{1}\cdots\hat{x}_{i}\cdots x_{k}y_{[x_{1}\cdots x_{k}]}]\text{ in }\textrm{Rips}(\mathcal{X},r)
	\]
	And again, Claim~\ref{claim:homotopy_cech_rips_radius} (iii) asserts
	that $[x_{1}\cdots x_{k-1}y_{[x_{1}\cdots x_{k-1}]}y_{[x_{1}\cdots x_{k}]}]\in\textrm{Rips}(\mathcal{X},r)$
	holds. Now, the homotopy map $F_{2}:[x_{1}\cdots x_{k-1}y_{[x_{1}\cdots x_{k}]}]\times[0,1]\to[x_{1}\cdots x_{k-1}y_{[x_{1}\cdots x_{k-1}]}y_{[x_{1}\cdots x_{k}]}]$
	is defined as 
	\begin{align*}
	F_{2}\left(\sum_{i=1}^{k-1}\lambda_{i}x_{i}+\lambda_{k}y_{[x_{1}\cdots x_{k}]},t\right) & =\lambda_{k}y_{[x_{1}\cdots x_{k}]}+\sum_{i=1}^{k-1}(\lambda_{i}-\min\lambda_{i})x_{i}\\
	& \quad+(k-1)\min\lambda_{i}\left(ty_{[x_{1}\cdots x_{k-1}]}+(1-t)\frac{1}{k-1}\sum_{i=1}^{k-1}x_{i}\right).
	\end{align*}
	Then $F_{2}$ gives homotopy between $\imath_{[x_{1}\cdots x_{k-1}y_{[x_{1}\cdots x_{k}]}]\to[x_{1}\cdots x_{k-1}y_{[x_{1}\cdots x_{k-1}]}y_{[x_{1}\cdots x_{k}]}]}$
	and \\
	$f_{2}:[x_{1}\cdots x_{k-1}y_{[x_{1}\cdots x_{k}]}]\to[x_{1}\cdots x_{k-1}y_{[x_{1}\cdots x_{k-1}]}y_{[x_{1}\cdots x_{k}]}]$
	defined as 
	\[
	f_{2}\left(\sum_{i=1}^{k-1}\lambda_{i}x_{i}+\lambda_{k}y_{[x_{1}\cdots x_{k}]}\right)=\lambda_{k}y_{[x_{1}\cdots x_{k}]}+\sum_{i=1}^{k-1}(\lambda_{i}-\min\lambda_{i})x_{i}+(k-1)\min\lambda_{i}y_{[x_{1}\cdots x_{k-1}]},
	\]
	giving the homotopy equivalence as 
	\[
	[x_{1}\cdots x_{k-1}y_{[x_{1}\cdots x_{k}]}]\simeq\sum_{i=1}^{k-1}[x_{1}\cdots\hat{x}_{i}\cdots x_{k-1}y_{[x_{1}\cdots x_{k-1}]}y_{[x_{1}\cdots x_{k}]}]\text{ in }\textrm{Rips}(\mathcal{X},r)
	\]
	By repeating this and concatenating the homotopy maps, we have the
	homotopy map $F_{\sigma}:\sigma\times[0,1]\to\textrm{Rips}(\mathcal{X},r)$
	giving homotopy between $\imath_{\sigma\to\textrm{Rips}(\mathcal{X},r)}$
	and $f_{\sigma}:\sigma\to\textrm{Rips}(\mathcal{X},r)$ with $f_{\sigma}(\sigma)=\sum_{\omega\in S_{k}}[x_{\omega(1)}y_{[x_{\omega(1)}x_{\omega(2)}]}\ldots y_{[x_{1}\cdots x_{k}]}]$,
	i.e. $F_{\sigma}$ is giving homotopy equivalence between $[x_{1},\ldots,x_{k}]$
	and $\sum_{\omega\in S_{k}}[x_{\omega(1)}y_{[x_{\omega(1)}x_{\omega(2)}]}\ldots y_{[x_{1}\cdots x_{k}]}]$
	in $\textrm{Rips}(\mathcal{X},r)$.
	
	Now, we extend $f_{\sigma}$ and $F_{\sigma}$ to the entire Vietoris-Rips complex $\textrm{Rips}(\mathcal{X},r)$. Define $f:\textrm{Rips}(\mathcal{X},r)\to\textrm{Rips}(\mathcal{X},r)$
	and $F:\textrm{Rips}(\mathcal{X},r)\times[0,1]\to\textrm{Rips}(\mathcal{X},r)$
	as $f|_{\sigma}=f_{\sigma}$ and $F|_{\sigma\times[0,1]}=F_{\sigma}$
	for each $\sigma\in\textrm{Rips}(\mathcal{X},r)$. Then for $\sigma,\tau\in\textrm{Rips}(\mathcal{X},r)$,
	$F_{\sigma}$ and $F_{\tau}$ coincides on $\sigma\cap\tau\times[0,1]$,
	so $f$ and $F$ are well defined. Also, from $\mathcal{X}$ being
	closed and discrete, $\textrm{Rips}(\mathcal{X},r)$ is locally finite.
	Hence for any compact set $C\subset\textrm{Rips}(\mathcal{X},r)$,
	$C$ intersects with only finite number of simplices $\sigma_{1},\ldots,\sigma_{k}\in\textrm{Rips}(\mathcal{X},r)$,
	so $F^{-1}(C)=\bigcup_{i=1}^{k}F_{\sigma_{i}}^{-1}(C)$ is compact,
	and hence $F$ is continuous. So $F$ gives the homotopy between $id_{\textrm{Rips}(\mathcal{X},r)}$
	and $f$.
	
	Now, applying Claim~\ref{claim:homotopy_cech_rips_radius} (iii) gives
	that 
	\begin{align*}
	& {\rm Rad}([x_{\omega(1)}y_{[x_{\omega(1)}x_{\omega(2)}]}\cdots y_{[x_{1}\cdots x_{k}]}])\\
	& \leq\sqrt{\frac{d}{2(d+1)}}\frac{r_{\max}}{r_{\min}}\left(\sqrt{\tilde{r}_{b}^{2}-(2\tau^{2}-\tilde{r}_{b}^{2})\left(\frac{\tau}{\sqrt{\tau^{2}-\tilde{r}_{\delta,b}^{2}}}-1\right)}+2\delta\right),
	\end{align*}
	where 
	\begin{align*}
	\tilde{r}_{b}^{2} & :=\frac{2\tau\left(r_{\sigma}^{2}+\epsilon(2\tau-\epsilon)\right)}{\tau+\sqrt{\tau^{2}-\left(r_{\sigma}^{2}+\epsilon(2\tau-\epsilon)\right)}},\\
	\tilde{r}_{\delta,b}^{2} & :=\min\left\{ \delta^{2}+\epsilon(2\tau-\epsilon),\frac{1}{2}\tilde{r}_{b}^{2}\right\} .
	\end{align*}
	Hence by repeating this sufficiently many times (say $N$), we can
	guarantee $f^{(N)}(\textrm{Rips}(\mathcal{X},r))$ to be consisting
	of simplices with their radii (i.e. $Rad(\sigma)$) at most $\tilde{r}_{\sigma}$,
	where $\tilde{r}_{\sigma}$ is the solution of 
	\[
	f(t)=\sqrt{\frac{d}{2(d+1)}}\left(\sqrt{\tilde{r}_{b}^{2}(t)-(2\tau^{2}-\tilde{r}_{b}^{2}(t))\left(\frac{\tau}{\sqrt{\tau^{2}-\tilde{r}_{\delta,b}^{2}(t)}}-1\right)}+2\delta\right)-t=0,
	\]
	with $\tilde{r}_{b}^{2}(t)=\frac{2\tau\left(t^{2}+\epsilon(2\tau-\epsilon)\right)}{\tau+\sqrt{\tau^{2}-\left(t^{2}+\epsilon(2\tau-\epsilon)\right)}}$
	and $\tilde{r}_{\delta,b}^{2}(t)=\min\left\{ \delta^{2}+\epsilon(2\tau-\epsilon),\frac{1}{2}\tilde{r}_{b}^{2}(t)\right\} .$
%	Then $f\left(\sqrt{\frac{r_{\min}^{2}-\epsilon(2\tau-\epsilon)}{2}}\right)\geq0$
%	implies that $\tilde{r}_{\sigma}\leq\sqrt{\frac{r_{\min}^{2}-\epsilon(2\tau-\epsilon)}{2}}$.
%	Hence $\delta$ satisfying $\delta\leq\frac{1}{2}\left(\sqrt{\frac{2(d+1)}{d}}\sqrt{\frac{r_{\min}^{2}-\epsilon(2\tau-\epsilon)}{2}}-\sqrt{\frac{2\tau(r_{\min}^{2}+\epsilon(2\tau-\epsilon))}{2\tau+\sqrt{4\tau^{2}-2\left(r_{\min}^{2}+\epsilon(2\tau-\epsilon)\right)}}}\right)$
%	implies that $f^{(N)}(\textrm{Rips}(\mathcal{X},r))\subset\textrm{\v{C}ech}_{\mathbb{R}^{d}}\left(\mathcal{X},\sqrt{\frac{r_{\min}^{2}-\epsilon(2\tau-\epsilon)}{2}}\right)$.
%	And Lemma~\ref{lem:homotopy_interleaving_ambientcech_restrictedcech}
%	implies that 
%	\[
%	f^{(N)}(\textrm{Rips}(\mathcal{X},r))\subset\textrm{\v{C}ech}_{\mathbb{R}^{d}}\left(\mathcal{X},\sqrt{\frac{r_{\min}^{2}-\epsilon(2\tau-\epsilon)}{2}}\right)\subset\textrm{\v{C}ech}_{\mathbb{X}}(\mathcal{X},r_{\min})\subset\textrm{\v{C}ech}_{\mathbb{X}}(\mathcal{X},r),
%	\]
	Let \[
		r_{\min}'':=\sqrt{\tau^{2}-\epsilon(2\tau-\epsilon)-\frac{(2\tau^{2}-r_{\min}^{2}-\epsilon(2\tau-\epsilon))^{2}}{4\tau^{2}}},
	\]
	then $f\left(r_{\min}''\right)\leq0$ implies that $\tilde{r}_{\sigma}\leq r_{\min}''$.
	Hence satisfying $f\left(r_{\min}''\right)\leq0$ implies that $f^{(N)}(\textrm{Rips}(\mathcal{X},r))\subset\textrm{\v{C}ech}_{\mathbb{R}^{d}}\left(\mathcal{X},r_{\min}''\right)$.
	And Lemma~\ref{lem:homotopy_interleaving_ambientcech_restrictedcech}
	implies that 
	\[
	f^{(N)}(\textrm{Rips}(\mathcal{X},r))\subset\textrm{\v{C}ech}_{\mathbb{R}^{d}}\left(\mathcal{X},r_{\min}''\right)\subset\textrm{\v{C}ech}_{\mathbb{X}}(\mathcal{X},r_{\min})\subset\textrm{\v{C}ech}_{\mathbb{X}}(\mathcal{X},r),
	\]
	i.e. $f^{(N)}:\textrm{Rips}(\mathcal{X},r)\to\textrm{\v{C}ech}_{\mathbb{X}}(\mathcal{X},r)$.
	Then by construction, $\imath_{\textrm{\v{C}ech}_{\mathbb{X}}(\mathcal{X},r)\to\textrm{Rips}(\mathcal{X},r)}\circ f^{(N)}$
	is homotopy equivalent to $id_{\textrm{Rips}(\mathcal{X},r)}$. Hence
	by applying Lemma~\ref{lem:homotopy_simplexnerve}, $\mathbb{X}$
	and $\textrm{Rips}(\mathcal{X},r)$ are homotopy equivalent.

\end{proof}

Now, Corollary~\ref{cor:homotopy_cech_rips_mureach} is from the combination
of Corollary~\ref{cor:defretract_doubleoffset} and Theorem~\ref{thm:homotopy_cech}
and \ref{thm:homotopy_rips}. We restate Corollary~\ref{cor:homotopy_cech_rips_mureach}
and formally write its proof below.

\textbf{Corollary~\ref{cor:homotopy_cech_rips_mureach}.} \textit{
	Let $\mathbb{X}\subset\mathbb{R}^{d}$ be a subset with positive $\mu$-reach
	$\tau^{\mu}>0$ and let $\mathcal{X}\subset\mathbb{R}^{d}$ be a set
	of points. Let $\{r_{x}>0:x\in\mathcal{X}\}$ be a set of radii indexed
	by $x\in\mathcal{X}$ with $r_{\min}:=\min_{x\in\mathcal{X}}\{r_{x}\}$
	and $r_{\max}:=\max_{x\in\mathcal{X}}\{r_{x}\}$. Let $s,t,\epsilon\geq0$
	with $\frac{t}{\mu} < s < \tau^{\mu}$, and let $\mathbb{Y}:=(((\mathbb{X}^{s})^{\complement})^{t})^{\complement}$
	be the double offset, with $d_{\mathbb{Y}}(x)\leq\epsilon$ for all
	$x\in\mathcal{X}$. Suppose $\mathbb{Y}$ is covered by the union
	of balls centered at $x\in\mathcal{X}$ and radius $\delta$ as 
	\[
	\mathbb{Y}\subset\bigcup_{x\in\mathcal{X}}\mathbb{B}_{\mathbb{R}}(x,\delta).
	\]
	\begin{enumerate}
		\item[(i)] Suppose $r_{\max}\leq t-\epsilon$, and $\delta$ satisfies the following condition: 
		\begin{align*}
		& \delta+\sqrt{r_{\max}^{2}-\tilde{l}^{2}+\epsilon(2t-\epsilon)-((t-\epsilon)^{2}-r_{\max}^{2}+\tilde{l}^{2}+(t-\epsilon_{\tilde{l}})^{2})\left(\frac{t}{\sqrt{t^{2}-\tilde{r}_{\delta,c}}}-1\right)}\\
		& \leq r_{\min},\\
		& \sqrt{\frac{d}{2(d+1)}}\frac{r_{\max}}{r_{\min}}\left(\sqrt{\tilde{r}_{b}^{2}-(2t^{2}-\tilde{r}_{b}^{2})\left(\frac{t}{\sqrt{t^{2}-\tilde{r}_{\delta,b}^{2}}}-1\right)}+2\delta\right)\leq r_{\min}'',
		\end{align*}
		where 
		\begin{align*}
		& \tilde{l}:=\frac{1}{2}\left(r_{\min}-t+\sqrt{(t-\epsilon)^{2}-r_{\max}^{2}}-\delta\right),\qquad\epsilon_{\tilde{l}}:=t-\sqrt{(t-\epsilon)^{2}-r_{\max}^{2}}+\tilde{l},\\
		& \tilde{r}_{\delta,c}^{2}:=\min\left\{ \delta^{2}+\epsilon(2t-\epsilon),\frac{1}{2}(r_{\max}^{2}-\tilde{l}^{2}+\epsilon(2t-\epsilon)+\epsilon_{\tilde{l}}(2t-\epsilon_{\tilde{l}}))\right\} ,\\
		& r_{\min}'':=\sqrt{t^{2}-\epsilon(2t-\epsilon)-\frac{(2t^{2}-r_{\min}^{2}-\epsilon(2t-\epsilon))^{2}}{4t^{2}}},\\
		& \tilde{r}_{b}^{2}:=\frac{2t\left((r_{\min}'')^{2}+\epsilon(2t-\epsilon)\right)}{t+\sqrt{t^{2}-\left((r_{\min}'')^{2}+\epsilon(2t-\epsilon)\right)}},\qquad\tilde{r}_{\delta,b}^{2}:=\min\left\{ \delta^{2}+\epsilon(2t-\epsilon),\frac{1}{2}\tilde{r}_{b}^{2}\right\} .
		\end{align*}
		Then $\mathbb{X}$ is homotopy equivalent to the ambient \v{C}ech
		complex $\textrm{\v{C}ech}_{\mathbb{R}^{d}}(\mathcal{X},r)$. 
		\item[(ii)] Suppose $r_{\max}\leq\sqrt{\frac{d+1}{2d}}\left(t-\epsilon\right)$, and $\delta$ satisfies the following condition: 
		\begin{align*}
		& \sqrt{\tilde{r}_{b}^{2}(r_{\max})-(2t^{2}-\tilde{r}_{b}^{2}(r_{\max}))\left(\frac{t}{\sqrt{t^{2}-\tilde{r}_{\delta,b}^{2}(r_{\max})}}-1\right)}+2\delta\leq2r_{\min},\\
		& \sqrt{\frac{d}{2(d+1)}}\left(\sqrt{\tilde{r}_{b}^{2}(r_{\min}'')-(2t^{2}-\tilde{r}_{b}^{2}(r_{\min}''))\left(\frac{t}{\sqrt{t^{2}-\tilde{r}_{\delta,b}^{2}(r_{\min}'')}}-1\right)}+2\delta\right)\leq r_{\min}'',
		\end{align*}
		where 
		\begin{align*}
		& r_{\min}'':=\sqrt{t^{2}-\epsilon(2t-\epsilon)-\frac{(2t^{2}-r_{\min}^{2}-\epsilon(2t-\epsilon))^{2}}{4t^{2}}},\\
		& \tilde{r}_{b}^{2}(t):=\frac{2t\left(t^{2}+\epsilon(2t-\epsilon)\right)}{t+\sqrt{t^{2}-\left(t^{2}+\epsilon(2t-\epsilon)\right)}},\qquad\tilde{r}_{\delta,b}^{2}(t):=\min\left\{ \delta^{2}+\epsilon(2t-\epsilon),\frac{1}{2}\tilde{r}_{b}^{2}(t)\right\} .
		\end{align*}
		Then $\mathbb{X}$ is homotopy equivalent to the Vietoris-Rips complex
		$\textrm{Rips}(\mathcal{X},r)$. 
	\end{enumerate}
}

\begin{proof}[Proof of Corollary~\ref{cor:homotopy_cech_rips_mureach}]

Consider the double offset $\mathbb{Y}:=(((\mathbb{X}^{s})^{\complement})^{t})^{\complement}$.
Since $\mathbb{X}$ has a positive $\mu$-reach $\tau^{\mu}$, $s\leq\tau^{\mu}$
and $t\leq\mu s$, Corollary~\ref{cor:defretract_doubleoffset}
implies that the reach of $\mathbb{Y}$ is bounded by $\tau_{\mathbb{Y}}\geq t$.

(i)

For the ambient \v{C}ech complex case, $r_{x}\leq t-\epsilon\leq\tau_{\mathbb{Y}}-\epsilon$ holds,
so Theorem~\ref{thm:homotopy_cech} applies under the appropriate
condition of $\delta$.

(ii)

For the Vietoris-Rips complex case, $r_{x}\leq\sqrt{\frac{d+1}{2d}}(t-\epsilon)\leq\sqrt{\frac{d+1}{2d}}(\tau_{\mathbb{Y}}-\epsilon)$
as well, so Theorem~\ref{thm:homotopy_rips} applies under the appropriate
condition of $\delta$.

\end{proof}

To prove the covering lemma~\ref{lem:density_covering_probability}, we first show the following upper bound on the covering number holds.
\begin{claim} \label{claim:density_covering_support} Suppose the distribution
	$P$ satisfies the $(a, b)$-condition in \eqref{eq::ab-condition}. Then for all $\epsilon < \epsilon_{0}$, the covering number $\mathcal{N}(\mathbb{X},\left\Vert \cdot\right\Vert ,2\epsilon)$
	is bounded as 
	\[
	\mathcal{N}(\mathbb{X},\left\Vert \cdot\right\Vert ,2\epsilon)\leq\frac{1}{a}\epsilon^{-b}.
	\]
\end{claim}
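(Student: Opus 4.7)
The plan is to use the standard relation between packing numbers and covering numbers, combined with the $(a,b)$-condition to give a lower bound on the $P$-mass of small disjoint balls. Let me outline the steps.

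First, I would consider a maximal $\epsilon$-packing of $\mathbb{X}$, i.e., a maximal collection of points $\{x_1, \ldots, x_N\} \subset \mathbb{X}$ such that $\|x_i - x_j\| \geq \epsilon$ for all $i \neq j$. A standard argument shows that maximality forces every point of $\mathbb{X}$ to lie within distance $\epsilon$ of some $x_i$, so the open balls $\{\mathbb{B}_{\mathbb{R}^d}(x_i, 2\epsilon)\}_{i=1}^N$ form a cover of $\mathbb{X}$. Consequently, $\mathcal{N}(\mathbb{X}, \|\cdot\|, 2\epsilon) \leq N$, and it suffices to bound $N$.

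Next, since the packing points are pairwise at distance at least $\epsilon$, the balls $\mathbb{B}_{\mathbb{R}^d}(x_i, \epsilon/2)$ are pairwise disjoint. I would like to use slightly bigger balls to directly invoke the $(a,b)$-condition, so instead I consider the balls $\mathbb{B}_{\mathbb{R}^d}(x_i, \epsilon)$ and apply the $(a,b)$-condition at each center $x_i \in \mathbb{X}$: for $\epsilon < \epsilon_0$, $P(\mathbb{B}_{\mathbb{R}^d}(x_i, \epsilon)) \geq a\epsilon^b$. (Strictly speaking, to ensure disjointness with the bound $a \epsilon^b$, I will use the smaller radius $\epsilon/2$ in the packing argument and instead start from a maximal $2\epsilon$-packing so that the radius-$\epsilon$ balls are disjoint while each still contains $P$-mass at least $a\epsilon^b$; the cover is then by radius-$2\epsilon$ balls, matching the statement.)

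Putting this together, since the disjoint balls $\mathbb{B}_{\mathbb{R}^d}(x_i, \epsilon)$ each have $P$-measure at least $a\epsilon^b$ and $P$ is a probability measure, we get
\[
1 \geq P\!\left(\bigsqcup_{i=1}^N \mathbb{B}_{\mathbb{R}^d}(x_i, \epsilon)\right) = \sum_{i=1}^N P(\mathbb{B}_{\mathbb{R}^d}(x_i, \epsilon)) \geq N \cdot a\epsilon^b,
\]
which yields $N \leq \frac{1}{a}\epsilon^{-b}$, and hence $\mathcal{N}(\mathbb{X}, \|\cdot\|, 2\epsilon) \leq \frac{1}{a}\epsilon^{-b}$, as desired. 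There is no real obstacle here; the only care needed is the bookkeeping between the packing radius and the covering radius so that the disjointness of the radius-$\epsilon$ balls and the $2\epsilon$-covering property are both achieved from a single maximal packing (a maximal $2\epsilon$-packing does both).
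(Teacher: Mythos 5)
Your proposal is correct and takes essentially the same approach as the paper: a maximal $2\epsilon$-packing yields disjoint $\epsilon$-balls whose $P$-masses are each bounded below by $a\epsilon^{b}$ via the $(a,b)$-condition, sum at most one, and the same packing furnishes a $2\epsilon$-cover, giving $\mathcal{N}(\mathbb{X},\|\cdot\|,2\epsilon)\leq\mathcal{M}(\mathbb{X},\|\cdot\|,2\epsilon)\leq a^{-1}\epsilon^{-b}$. The only difference is stylistic: the paper cites the standard packing--covering inequality directly, while you spell out why the maximal packing also covers; otherwise the argument is identical.
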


\begin{proof}[Proof of Claim~\ref{claim:density_covering_support}]
	
	Let $x_{1},\ldots,x_{M}$ be a maximal $2\epsilon$-packing of $\mathbb{X}$, with $M=\mathcal{M}(\mathbb{X},\left\Vert \cdot\right\Vert ,2\epsilon)$. Then $\mathbb{B}_{\mathbb{R}^{d}}(x_{i},\epsilon)$ and $\mathbb{B}_{\mathbb{R}^{d}}(x_{j},\epsilon)$ do not intersect for any $i,j$, and hence 
	\begin{equation}
	\sum_{i=1}^{M}P(\mathbb{B}_{\mathbb{R}^{d}}(x_{i},\epsilon))\leq P(\mathbb{R}^{d})=1.\label{eq:density_probability_covering}
	\end{equation}
	Then for all $\epsilon<\epsilon_{0}$, the $(a,b)$-condition
	implies 
	\[
	P(\mathbb{B}_{\mathbb{R}^{d}}(x_{i},\epsilon))\geq a\epsilon^{b},
	\]
	hence applying this to \eqref{eq:density_probability_covering} gives that
	\[
	\mathcal{M}(\mathbb{X},\left\Vert \cdot\right\Vert ,2\epsilon)\leq\frac{1}{a}\epsilon^{-b}.
	\]
	Then from the relationship between the covering number and the packing number,
	\[
	\mathcal{N}(\mathbb{X},\left\Vert \cdot\right\Vert ,2\epsilon)\leq\mathcal{M}(\mathbb{X},\left\Vert \cdot\right\Vert ,2\epsilon)\leq\frac{1}{a}\epsilon^{-b}.
	\]
	
\end{proof}

Now, we restate Lemma~\ref{lem:density_covering_probability} and
formally write its proof below.

\textbf{Lemma~\ref{lem:density_covering_probability}.} \textit{Let
	$\{X_{1},\dots,X_{n}\}$ be an i.i.d. sample from the distribution $P$ and let $\{r_{n}=(r_{n,1},\dots,r_{n,n})\}_{n\in\mathbb{N}}$
	be a triangular array of positive numbers such that, for each $n$,
	\[
	2\left(\frac{\log n}{an}\right)^{1/b}\leq\min_{i}r_{n,i}\leq2\epsilon_{0}.
	\]
	Then, the probability that the sample is an $r_{n}$-covering of $\mathbb{X}$
	is bounded as 
	\[
	P\left(\mathbb{X}\subset\bigcup_{i=1}^{n}\mathbb{B}_{\mathbb{R}^{d}}(X_{i},r_{n,i})\right)\geq1-\frac{1}{2^{b}\log n}.%\label{eq:density_covering_whp}
	\]
}

\begin{proof}[Proof of Lemma~\ref{lem:density_covering_probability}]	
	
	Now, to prove Lemma~\ref{lem:density_covering_probability}, set $\epsilon:=\frac{1}{4}\min_{i}r_{n,i}$. Under the $(a,b)$-condition, the previous claim~\ref{claim:density_covering_support} implies that there exists $x_{1},\ldots,x_{N}$
	with $N\leq a^{-1}\epsilon^{-b}$ satisfying 
	\[
	\mathbb{X}\subset\bigcup_{j=1}^{N}\mathbb{B}_{\mathbb{R}^{d}}(x_{j},2\epsilon).
	\]
	Let $E$ be the event that all $\mathbb{B}_{\mathbb{R}^{d}}(x_{j},2\epsilon)$
	have intersections with $\{X_{1},\ldots,X_{n}\}$, that is, for each $1\leq j\leq N$,
	there exists $1\leq i\leq n$ with $X_{i}\in \mathbb{B}_{\mathbb{R}^{d}}(x_{j},2\epsilon)$.
	Then note that $4\epsilon=\min_{i}r_{n,i}\leq r_{n,i}$, and hence we have the following relations between balls:
	\[
	\mathbb{B}_{\mathbb{R}^{d}}(x_{j},2\epsilon)\subset \mathbb{B}_{\mathbb{R}^{d}}(X_{i},4\epsilon)\subset \mathbb{B}_{\mathbb{R}^{d}}(X_{i},r_{n,i}).
	\]
	Therefore, under the event $E$, we have
	\[
	\mathbb{X}\subset\bigcup_{j=1}^{N}\mathbb{B}_{\mathbb{R}^{d}}(x_{j},2\epsilon)\subset\bigcup_{i=1}^{n}\mathbb{B}_{\mathbb{R}^{d}}(X_{i},r_{n,i}),
	\]
which implies 
	\begin{equation}
		\mathbb{P}\left(\mathbb{X}\subset\bigcup_{i=1}^{n}\mathbb{B}_{\mathbb{R}^{d}}(X_{i},r_{n,i})\right)\geq\mathbb{P}(E).\label{eq:density_covering_events}
	\end{equation}
	Now, $P(E)$ can be lower bounded as 
	\begin{align*}
		\mathbb{P}(E) & =\mathbb{P}\left(\bigcap_{j=1}^{N}\bigcup_{j=1}^{n}\{X_{i}\in \mathbb{B}_{\mathbb{R}^{d}}(x_{j},2\epsilon)\}\right)\\
		& =1-\mathbb{P}\left(\bigcup_{j=1}^{N}\bigcap_{i=1}^{n}\{X_{i}\notin \mathbb{B}_{\mathbb{R}^{d}}(x_{j},2\epsilon)\}\right)\\
		& \geq1-\sum_{j=1}^{N}\mathbb{P}\left(\bigcap_{i=1}^{n}\{X_{i}\notin \mathbb{B}_{\mathbb{R}^{d}}(x_{j},2\epsilon)\}\right)\\
		& =1-\sum_{j=1}^{N}\prod_{i=1}^{n}(1-P(\mathbb{B}_{\mathbb{R}^{d}}(x_{j},2\epsilon))\\
		& \geq1-\sum_{j=1}^{N}\exp\left(-\sum_{i=1}^{n}P(\mathbb{B}_{\mathbb{R}^{d}}(x_{j},2\epsilon))\right),
	\end{align*}
	where the last line is from that $1-t\leq\exp(-t)$ for all $t\in\mathbb{R}$.
	Now, from the covering number bound $N\leq a^{-1}\epsilon^{-b}$ with the condition, $2\epsilon = \frac{1}{2}\min_i r_{n,i} \geq \left(\frac{\log n}{an}\right)^{1/b}$, we can further lower bound $P(E)$ as following:
	\begin{align*}
		P(E) & \geq1-N\exp\left(-an(2\epsilon)^{b}\right) \\
		& \geq1-a^{-1}\epsilon^{-b} \exp\left(-an(2\epsilon)^{b}\right) \\
	    & 1- \frac{n}{2^b\log n}\exp\left(-\log n\right)\\
		& =1-\frac{1}{2^b\log n},
	\end{align*}
    which implies 
	\begin{align*}
	\mathbb{P}\left(\mathbb{X}\subset\bigcup_{i=1}^{n}\mathbb{B}_{\mathbb{R}^{d}}(X_{i},r_{n,i})\right) \geq 1-\frac{1}{2^b\log n},
	\end{align*}
	as desired.
\end{proof}

\end{document}